\theoremstyle{plain}
\newtheorem{theorem}{Theorem}[section]
\newtheorem{lemma}[theorem]{Lemma}
\newtheorem{proposition}[theorem]{Proposition}
\newtheorem{prop}[theorem]{Proposition}
\newtheorem{corollary}[theorem]{Corollary}
\newtheorem{cor}[theorem]{Corollary}
\newtheorem*{theorem*}{Theorem}
\newtheorem{definition}[theorem]{Definition}
\newtheorem{conjecture}[theorem]{Conjecture}
\newtheorem{example}[theorem]{Example}
\theoremstyle{remark}
\newtheorem{remark}[theorem]{Remark}
\newcommand{\anto}[1]{{\color{ForestGreen}   [#1]}}
\newcommand{\joaquin}[1]{{\color{blue}  [#1]}}
\newcommand{\francesco}[1]{{\color{violet}  [#1]}}
\newcommand{\mycomment}[1]{%
}%
\def\Q{{\bf Q}}
\def\Z{{\bf Z}}
\def\C{{\bf C}}
\def\N{{\bf N}}
\def\R{{\bf R}}
\def\F{{\bf F}}
\def\H{{H}}
\def\Qbar{{\overline{{\bf Q}}}}
\def\SL{{\mathrm{SL}}}
\def\zp{{\Z_p}}
\def\qp{{\Q_p}}
\newcommand{\ind}{\mathrm{ind}}
\def\1{\mathbf{1}}
\def\Hom{\mathrm{Hom}}
\def\Sym{\mathrm{Sym}}
\def\A{\mathbf{A}}
\def\Af{{\bf A}_{f}}
\def\epsilon{\varepsilon}
\def\det{\mathrm{det}}
\def\Sh{{\operatorname{Sh}}}
\def\SU{\mathrm{SU}}
\def\GSp{{\mathrm{GSp}}}
\def\PGSp{{\mathrm{PGSp}}}
\def\Sp{{\mathrm{Sp}}}
\def\GL{\mathrm{GL}}
\def\G2{{\mathrm{G}_2}}
\def\G{\mathbf{G}}
\def\H{\mathbf{H}}
\def\B{\mathbf{B}}
\def\T{\mathbf{T}}
\def\matrix#1#2#3#4{{\big(\begin{smallmatrix}#1&#2\\ #3&#4\end{smallmatrix}\big)}}
\title{Algebraic cycles and functorial lifts from $\mathbf{G}_2$ to $\mathbf{PGSp}_6$}
\author{Antonio Cauchi}
\address{A.C.: Dept. of Mathematics, School of Science, Tokyo Institute of Technology, 2-12-1 Ookayama, Meguro-ku, Tokyo, 152-8551 Japan.}
\email{cauchi.a.aa@m.titech.ac.jp}
\author{Francesco Lemma}
\address{F.L.: Universit\'e Paris Cit\'e, CNRS, IMJ--PRG, b\^atiment Sophie Germain, case 7012, 75205 Paris Cedex 13, France}
\email{francesco.lemma@imj-prg.fr}
\author{Joaqu\'in Rodrigues Jacinto}
\address{J.R.J.: Aix--Marseille Universit\'e, Campus de Luminy, Avenue de Luminy, Case 930, 13288 Marseille Cedex 9, France}
\email{joaquin.rodrigues-jacinto@univ-amu.fr}
\def\@tocline#1#2#3#4#5#6#7{\relax
  \ifnum #1>\c@tocdepth % then omit
  \else
    \par \addpenalty\@secpenalty\addvspace{#2}%
    \begingroup \hyphenpenalty\@M
    \@ifempty{#4}{%
      \@tempdima\csname r@tocindent\number#1\endcsname\relax
    }{%
      \@tempdima#4\relax
    }%
    \parindent\z@ \leftskip#3\relax \advance\leftskip\@tempdima\relax
    \rightskip\@pnumwidth plus4em \parfillskip-\@pnumwidth
    #5\leavevmode\hskip-\@tempdima
      \ifcase #1
       \or\or \hskip 1em \or \hskip 2em \else \hskip 3em \fi%
      #6\nobreak\relax
    \dotfill\hbox to\@pnumwidth{\@tocpagenum{#7}}\par
    \nobreak
    \endgroup
  \fi}
\thanks{A. Cauchi was supported
by the European Research Council (ERC) under the European Union's
Horizon 2020 research and innovation programme (grant agreement
No. 682152) as well as by the NSERC grant RGPIN-2018-04392 and Concordia Horizon postdoc fellowship n.8009. F. Lemma was supported by the ANR Ferplay and the ANR ClapClap. J. Rodrigues
Jacinto was financially supported by the ERC-2018-COG-818856-HiCoShiVa and by the project ANR-19-CE40-0015
COLOSS. 
}
\begin{document}
\maketitle
\selectlanguage{english}

\setcounter{tocdepth}{1}

\begin{abstract} We study instances of Beilinson--Tate conjectures for automorphic representations of $\PGSp_6$ whose Spin $L$-function has a pole at $s=1$. We construct algebraic cycles of codimension three in the Siegel--Shimura variety of dimension six and we relate its regulator to the residue at $s=1$ of the $L$-function of certain cuspidal forms of $\PGSp_6$. Using the exceptional theta correspondence between the  split group of type $G_2$ and $\PGSp_6$ and assuming the non-vanishing of a certain archimedean integral, this allows us to confirm a conjecture of Gross and Savin on rank $7$ motives of type $G_2$.
\end{abstract}

\tableofcontents

\section{Introduction}

In this paper we establish a connection between algebraic cycles in Siegel sixfolds and the residue at $s=1$ of Spin $L$-functions of automorphic representations of $\mathrm{GSp}_6$, as predicted by conjectures of Beilinson and Tate. Moreover, we exploit an exceptional theta correspondence between the split group of type $G_2$ and $\mathrm{PGSp}_6$ to answer a question of Gross and Savin.

\subsection{Motivation}

Let $\pi = \pi_\infty \otimes \pi_f$ be a cohomological cuspidal automorphic representation of $\mathrm{PGSp}_6(\A)$, let $M(\pi_f)$ denote the Spin Chow motive with coefficients in a number field $L$ conjecturally attached to $\pi$ and let $L(s, M(\pi_f)(3))$ be its Hasse-Weil $L$-function. Let $$
r_\mathcal{H} : H^1_\mathcal{M}(M(\pi_f)(4)) \oplus N(M(\pi_f)(3)) \rightarrow H^1_\mathcal{H}(M(\pi_f)(4))
$$
denote Beilinson-Deligne regulator. Here $H^1_\mathcal{M}(M(\pi_f)(4))$ denotes the first motivic cohomology group of $M(\pi_f)(4)$, the group $N(M(\pi_f)(3))$ denotes algebraic cycles in $M(\pi_f)(3)$ up to homological equivalence and $H^1_\mathcal{H}(M(\pi_f)(4))$ denotes the first absolute Hodge cohomology group of $M(\pi_f)(4)$. 

\begin{conjecture}(Beilinson--Tate) \label{ConjectureBT}
\begin{enumerate}
\item  The map $r_\mathcal{H}$ induces an isomorphism $$
(H^1_\mathcal{M}(M(\pi_f)(4)) \oplus N(M(\pi_f)(3))) \otimes_\Q \R \rightarrow H^1_\mathcal{H}(M(\pi_f)(4)),
$$
\item $\mathrm{ord}_{s=0} L(s, M(\pi_f)(3)) = \dim_L H^1_\mathcal{M}(M(\pi_f)(4)),$\\
\item $- \mathrm{ord}_{s = 1} L(s, M(\pi_f)(3)) = \dim_L N(M(\pi_f)(3))$\\
\item $\det(\mathrm{Im} \, r_\mathcal{H}) = L^*(1, M(\pi_f)(3)) \mathcal{D}(M(\pi_f)(4))$, where $\mathcal{D}(M(\pi_f)(4))$ denotes the Deligne $L$-structure of $\det(H^1_\mathcal{H}(M(\pi_f)(4))$.
\end{enumerate}
\end{conjecture}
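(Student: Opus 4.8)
We propose to prove Conjecture~\ref{ConjectureBT} for those cohomological cuspidal $\pi$ whose (unitarily normalised) Spin $L$-function has a pole at $s=1$, granting the non-vanishing of a certain archimedean integral; part~(3) is the substantive statement, the others following from the resulting structure together with standard inputs. The starting point is the classification of such $\pi$ via the exceptional theta correspondence between the split group $\G_2$ and $\PGSp_6$: by work of Gan--Gurevich, Pollack and others, $L(s,\pi,\mathrm{Spin})$ has a (necessarily simple) pole at $s=1$ exactly when $\pi=\Theta(\sigma)$ for a cuspidal automorphic representation $\sigma$ of $\G_2(\A)$. Since $\widehat{\PGSp_6}=\Spin_7$, $\G_2\subset\Spin_7$ is the stabiliser of a generic spinor, and the $8$-dimensional spin representation restricts to $\G_2$ as $\mathbf{1}\oplus\mathbf{7}$, one gets $L(s,\pi,\mathrm{Spin})=\zeta(s)\,L(s,\sigma,\mathbf{7})$ up to elementary local factors. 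Realising $M(\pi_f)$ (pure of weight $6$) in the middle cohomology $H^6$ of the Siegel sixfold $\Sh_{\GSp_6}$, the pole predicts a copy of $\Q(-3)$ inside $M(\pi_f)$, and~(3) asserts that it is cut out by an algebraic cycle.

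The geometric heart is to construct that cycle. As $\dim\Sh_{\GSp_6}=6$, a codimension-$3$ cycle $Z$ gives a class in $H^6_{\et}(\Sh_{\GSp_6,\overline\Q},\Q_\ell(3))$ of Hodge type $(3,3)$, a candidate Tate class. I would take $Z$ to be the fundamental class of a $3$-dimensional sub-Shimura variety $\iota\colon\Sh_{\H}\hookrightarrow\Sh_{\GSp_6}$ attached to an embedding of a Hermitian-type group $\H\hookrightarrow\GSp_6$ with $\dim\Sh_{\H}=3$ --- for instance $\H=\mathrm{Res}_{F/\Q}\GL_2$ for an \'etale cubic $\Q$-algebra $F$ acting on $F\oplus F$ via the symplectic form $\mathrm{Tr}_{F/\Q}$, so that $\Sh_{\H}$ is a Hilbert--Blumenthal threefold (a product of three modular curves when $F=\Q^3$). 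Applying the Hecke idempotent cutting out $\pi_f$ to $\iota_{*}[\Sh_{\H}]$ produces a class $[\Sh_{\H}]_\pi\in N(M(\pi_f)(3))\otimes_\Q L$, and the claim to be established is that, for a suitable $\H$ and level structure, this class is non-zero.

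To prove non-vanishing I would pair $[\Sh_{\H}]_\pi$ against the de Rham realisation of $\pi_f$ --- i.e. compute the period $P(\varphi)=\int_{\Sh_{\H}}\iota^{*}\omega_\varphi$ for a suitable cohomological form $\varphi$ in $\pi$, lying in the $(3,3)$-part of $H^{6}_{\mathrm{dR}}[\pi_f]$ where the predicted Tate line sits. The key identity expresses $P(\varphi)$, via a global integral combining the restriction to $\Sh_{\H}$ with the theta correspondence between $\sigma$ and $\pi$ (and, up to non-zero local zeta integrals at the finite places, to be computed), as a residue at $s=1$ of $L(s,\pi,\mathrm{Spin})$ --- non-zero since $L(1,\sigma,\mathbf{7})\neq0$ by edge non-vanishing, equivalently since the theta lift is non-zero --- times an archimedean integral $I_\infty$. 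Granting $I_\infty\neq0$ --- the standing hypothesis, which encodes a branching / $(\mathfrak g,K)$-cohomology statement for the non-tempered $\pi_\infty$ --- we obtain $P(\varphi)\neq0$, hence $[\Sh_{\H}]_\pi\neq0$ and $\dim_L N(M(\pi_f)(3))\geq1$. For the reverse inequality I would use the Galois representation $\rho_\pi\colon\mathrm{Gal}(\overline\Q/\Q)\to\GSpin_7(\Q_\ell)$ attached to $\pi$: its $\Q_\ell(-3)$-isotypic part has dimension $-\mathrm{ord}_{s=1}L(s,M(\pi_f)(3))=1$, whence $\dim_L N(M(\pi_f)(3))\leq1$ and~(3) follows. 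The non-zero Tate class is then a Galois-fixed spinor, which forces the image of $\rho_\pi(3)$ into $\mathrm{Stab}_{\Spin_7}(\text{spinor})=\G_2$ and splits $M(\pi_f)=\Q(-3)\oplus M_7$ with $M_7$ of rank $7$ and motivic Galois group contained in $\G_2$ --- precisely the Gross--Savin prediction. Parts~(1), (2) and~(4) then follow from this picture together with standard inputs: (2) from identifying $\mathrm{ord}_{s=0}L(s,M(\pi_f)(3))$ --- governed by the $\Gamma$-factors and the pole at $s=1$ --- with $\dim_L H^{1}_{\mathcal M}(M(\pi_f)(4))$, the relevant motivic cohomology being controlled in the Gross--Savin setting; (1) from $r_{\mathcal H}$ inducing an isomorphism of $(H^{1}_{\mathcal M}(M(\pi_f)(4))\oplus N(M(\pi_f)(3)))\otimes_\Q\R$ onto $H^{1}_{\mathcal H}(M(\pi_f)(4))$, whose dimension is a Hodge-theoretic computation from the Hodge numbers of $M(\pi_f)$ and the real Frobenius and which is injective on the cycle part by the non-degeneracy already shown; and~(4) from tracking the archimedean factor $I_\infty$ in the period computation, matching it with the Deligne period so that $\det(\mathrm{Im}\,r_{\mathcal H})=L^{*}(1,M(\pi_f)(3))\,\mathcal D(M(\pi_f)(4))$ up to $\Q^{\times}$.

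The main obstacle is the non-vanishing of the archimedean integral $I_\infty$: for the non-tempered component $\pi_\infty$ this is a delicate question about restriction of cohomology classes at the real place (equivalently, about an archimedean model of the minimal representation), which we do not resolve --- it remains the hypothesis under which everything is conditional. The remaining points, all of which I expect to be surmountable, are: choosing $\H$ and organising the global integral so that it is genuinely the Spin $L$-function --- rather than some other automorphic $L$-function --- that appears, and then evaluating the ramified local integrals; making precise the comparison between the topological / coherent cycle class and the automorphic period (the $\omega_\varphi\leftrightarrow\varphi$ dictionary, with the correct Hodge component and normalisations) to the accuracy needed for~(4); and importing the requisite facts on $\rho_\pi$ --- existence, $\GSpin_7$-valuedness, semisimplicity, and the dimension of its $\Q_\ell(-3)$-part --- to secure the upper bound in~(3) and the $\G_2$-structure of $M_7$.
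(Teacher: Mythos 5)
The statement you are asked about is a \emph{conjecture} (Beilinson--Tate), and the paper never claims to prove it; it only establishes partial evidence. Concretely, the paper proves: a formula expressing the pairing of the cycle class $\mathcal{Z}_{\H,B}^{[\lambda,\mu]}$ (coming from the Hilbert modular threefold $\Sh_\H \hookrightarrow \Sh_\G$) against $[\omega_\Psi]$ as a nonzero multiple of $\mathrm{Res}_{s=1}L^S(s,\pi,\mathrm{Spin})$, conditional on $\mathcal{I}_S(\Phi,\Psi^{[\lambda,\mu]},1)\neq 0$ (Theorem \ref{theoremcyclebetti1}); the resulting \emph{one-sided inequality} $-\mathrm{ord}_{s=1}L^S(s,\pi,\mathrm{Spin}) \leq \dim_L H^6_{\mathcal{M}}(\Sh_\G(U),\mathscr{V}^\lambda_F(3))_{hom}[\pi_f^\vee]$ (Corollary \ref{cor:tateconjecture}); and, under a Steinberg hypothesis, that the cycle and its Hecke translates generate $H^1_\mathcal{H}(M(\pi_f^\vee)_\R(4))$ (Theorem \ref{theoremcyclebetti2}). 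Your proposal reproduces this strategy for the lower bound --- the choice of $\H$, the period $\int_{\Sh_\H}\iota^*\omega_\varphi$, the Pollack--Shah unfolding to the Spin $L$-function, the relation $\mathrm{Res}_{s=1}L^S(s,\pi,\mathrm{Spin})=L^S(1,\sigma,\mathrm{Std})\,\mathrm{Res}_{s=1}\zeta^S(s)$, and the conditional non-vanishing of the archimedean integral --- and on this portion you are essentially aligned with the paper.

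The gap is that everything you assert beyond this lower bound is unsupported, and it is precisely the part that would be needed to ``prove'' the conjecture. For the equality in (3), your upper bound argues from ``the $\Q_\ell(-3)$-isotypic part of $\rho_\pi$ has dimension $-\mathrm{ord}_{s=1}L(s,M(\pi_f)(3))=1$''; but the identification of the order of pole of the (full, not partial) motivic $L$-function with the dimension of the Galois-invariant subspace is itself part of the Tate-conjecture framework you are trying to establish, so the argument is circular, and in any case the paper only controls partial $L$-functions and nonzero constants, never the bad Euler factors. Knowing that $\mathrm{Std}\circ\rho_\sigma$ has no trivial constituent requires the Kret--Shin image results under Steinberg-type hypotheses (which the paper invokes only for Theorem \ref{TheoGS}), and even then one gets an upper bound on the \emph{\'etale realizations} of cycles, not on $N(M(\pi_f)(3))$ itself without further injectivity assumptions. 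Parts (1), (2) and (4) are dismissed as ``standard inputs,'' but none of them is: (2) requires computing the motivic cohomology $H^1_\mathcal{M}(M(\pi_f)(4))$, which is not accessible by any current method; (1) requires surjectivity of the regulator onto $H^1_\mathcal{H}$, whereas the paper proves only generation by the cycle classes under extra hypotheses; and (4) requires exact rationality of the period relative to the Deligne $L$-structure, whereas the constants $C$, $C'$ in the paper (involving $\mathcal{I}_S(\Phi,\Psi^{[\lambda,\mu]},1)$, $\widehat{\Phi}(0)$, volumes) are only shown to be nonzero complex numbers. So your proposal should be framed as a conditional proof of a lower bound in (3) plus a generation statement toward (1) --- which is what the paper actually does --- not as a proof of Conjecture \ref{ConjectureBT}.
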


In \cite{CLR}, we studied the contribution of the motivic cohomology to this conjecture. This corresponds to the case where $L(s, M(\pi_f)(3))$ does not have a pole at $s=1$. In this article, we focus on the contribution of algebraic cycles, which corresponds to the case where $L(s, M(\pi_f)(3))$ has a simple pole at $s=1$. \\

The $\ell$-adic \'etale realization $M_\ell(\pi_f)$ of $M(\pi_f)$ is expected to be a $\mathrm{GL}_8(\overline{\Q}_\ell)$-valued Galois representation factoring through the Spin representation $\mathrm{Spin}: \mathrm{Spin}_7(\overline{\Q}_\ell) \rightarrow \mathrm{GL}_8(\overline{\Q}_\ell)$. If $L(s, M(\pi_f)(3))$ has a pole at $s = 1$, Conjecture \ref{ConjectureBT} (3) implies the existence of an invariant vector in this $8$-dimensional Galois representation. As the stabilizer in $\mathrm{Spin}_7(\overline{\Q}_\ell)$ of a generic vector in the Spin representation is the exceptional group $G_2(\overline{\Q}_\ell)$, by   Langlands reciprocity principle,  $\pi$ should be a functorial lift from a group $G$ of type $G_2$. In fact, we have $\mathrm{Spin}_{|_{G_2}}=\mathrm{Std} \oplus \mathbf{1}$, where Std denotes the standard representation of $G_2$ and $\mathbf{1}$ denotes the trivial representation. Then, if $\sigma$ is a cuspidal automorphic representation of $G(\A)$ lifting to $\pi$, Gross and Savin \cite{GrossSavin} conjectured that the motive $M(\pi_f)$ decomposes as the direct sum of the rank $7$ motive $M(\sigma_f)$ attached to $\sigma$ and the rank $1$ trivial motive generated by the class given in Conjecture \ref{ConjectureBT}. Moreover, inspired by local calculations, they conjectured that this class should arise from a Hilbert modular threefold.  \\

\subsection{Main results}

Let $F$ denote a real \'etale quadratic $\Q$-algebra, i.e., $F$ is either a quadratic extension of $\Q$ or $\Q \times \Q$. Associated to the totally real \'etale cubic algebra $E = \Q \times F$ of $\Q$ there is a Hilbert modular threefold $\Sh_{\H}/\Q$, with underlying reductive group $\H= \{ g \in \mathrm{Res}_{E / \Q} \GL_{2, E} \; | \; \det(g)\in \G_m \}$. The group $\H$ embeds naturally into $\G = \GSp_6$ and one has a closed embedding $\iota: \Sh_\H \hookrightarrow \Sh_\G$ of codimension $3$ in the Shimura variety attached to $\G$, which is the Siegel variety of dimension $6$. Let $V^\lambda$ be the irreducible algebraic representation of $\G$ of highest weight $\lambda = (\lambda_1, \lambda_2, \lambda_3, c)$ (cf. \S \ref{sectPreliminaries} for notations on algebraic representations). The representation $V^\lambda$ contains the trivial $\H$-representation if and only if $c=0$ and $\lambda_1 = \lambda_2 + \lambda_3$. When this holds $\iota^* V^\lambda$ contains  $\lambda_2 - \lambda_3 +1$ copies of the trivial representation of $\H$, which we index by the values $\lambda_2 \geq \mu \geq \lambda_3$. Then, for any such $\mu$, the cycle $\Sh_\H$ of $\Sh_\G$ induces a class
\[ \mathcal{Z}_{\H, \mathcal{M}}^{[\lambda, \mu]} \in H^6_{\mathcal{M}}(\Sh_\G, \mathscr{V}^\lambda_\mathcal{M}(3)), \]
where $\mathscr{V}^\lambda_\mathcal{M}$ is the Chow local system associated to $V^\lambda$ and $H^6_{\mathcal{M}}(\Sh_\G, \mathscr{V}^\lambda_\mathcal{M}(3))$ is the motivic cohomology group of $\Sh_\G$ with coefficients in $\mathscr{V}_\mathcal{M}^\lambda(3)$. 
%For any irreducible algebraic representation $V$ of $\G$ such that $\iota^*V$ contains the trivial representation of $\H$, the cycle $\Sh_\H$ of $\Sh_\G$ induces a class
%\[ \mathcal{Z}_{\H, \mathcal{M}} \in H^6_{\mathcal{M}}(\Sh_\G, \mathscr{V}_\mathcal{M}(3)), \]
%where $\mathscr{V}_\mathcal{M}$ is the Chow local system associated to $V$ and $H^6_{\mathcal{M}}(\Sh_\G, \mathscr{V}_\mathcal{M}(3))$ is the motivic cohomology group of $\Sh_\G$ with coefficients in $\mathscr{V}_\mathcal{M}(3)$. 
We denote by $\mathcal{Z}^{[\lambda, \mu]}_{\H, \mathcal{H}} \in H^7_\mathcal{H}(\Sh_\G, \mathscr{V}^\lambda_\mathcal{H}(4))$, resp. $\mathcal{Z}_{\H, B}^{[\lambda, \mu]} \in H^6_B(\Sh_\G, \mathscr{V}^\lambda_B(3))$ the image of $\mathcal{Z}^{[\lambda, \mu]}_{\H, \mathcal{M}}$ in absolute Hodge cohomology, resp. Betti cohomology (see Definition \ref{HodgeCycleClass} for the precise definition of $\mathcal{Z}^{[\lambda, \mu]}_{\H, \mathcal{H}}$). Let $\pi$ be a cuspidal automorphic representation of $\PGSp_6(\A)$  whose archimedean component belongs to the discrete series $L$-packet of $V^\lambda$ and has Hodge type $(3,3)$. For a cusp form $\Psi = \Psi_\infty \otimes \Psi_f$ in the space of $\pi$, whose archimedean component $\Psi_\infty$ is a highest weigth vector in the minimal $K$-type of $\pi_\infty$, we have a vector valued harmonic differential form $\omega_\Psi$ whose cohomology class $[\omega_\Psi]$ is an element of $H^6_{dR,c}(\Sh_\G, \mathscr{V}_{dR}^\lambda)$. Poincar\'e duality induces maps
\[\langle \cdot , [\omega_\Psi] \rangle_B : H^6_B(\Sh_\G, \mathscr{V}_B^\lambda(3))  \to \C, \] 
\[\langle \cdot , [\omega_\Psi] \rangle_\mathcal{H} : H^7_\mathcal{H}(\Sh_\G, \mathscr{V}_\mathcal{H}^\lambda(4))  \to \C. \]
The pairings $\langle \mathcal{Z}_{\H, B}^{[\lambda, \mu]}, [\omega_\Psi] \rangle_B$ and $\langle \mathcal{Z}_{\H, \mathcal{H}}^{[\lambda, \mu]}, [\omega_\Psi] \rangle_\mathcal{H}$ are computed in terms of the residue of a certain adelic integral of Rankin-Selberg type considered in \cite{Pollack-Shah}. In \textit{loc. cit.} it is shown that, if $\pi$ supports certain Fourier coefficients associated to $F$, then the local factors at unramified places $v$ of this integral represent the degree $8$ local spin $L$-function $L(s, \pi_v, \mathrm{Spin})$ of $\pi_v$. The following result gives evidence for Conjecture \ref{ConjectureBT} for the motive associated to $\pi$.

\begin{theorem}[Theorem \ref{theoremcyclebetti1}] \label{IntroTheo1}
Let $\pi = \pi_\infty \otimes \pi_f$ be a cuspidal automorphic representation of $\mathrm{PGSp}_6(\A)$ such that $\pi_\infty$ is a discrete series of Hodge type $(3,3)$ in the discrete series $L$-packet of $V^\lambda$. Then 
\begin{align*}
         \langle  \mathcal{Z}_{\H, B}^{[\lambda, \mu]} , [\omega_\Psi] \rangle_B = \langle  \mathcal{Z}_{\H, \mathcal{H}}^{[\lambda, \mu]} , [\omega_\Psi] \rangle_\mathcal{H} = C \cdot 
       \mathrm{Res}_{s=1} \left (  \mathcal{I}_S(\Phi, \Psi^{[\lambda, \mu]} , s) L^S(s,\pi , {\rm Spin}) \right),
       \end{align*}
where $C$ is an explicit non-zero constant independent of $\pi$, $S$ is a sufficiently large set of places containing the ramified and archimedean places, $\Psi^{[\lambda, \mu]} = A^{[\lambda, \mu]} \cdot \Psi$ for some weight lowering operator $A^{[\lambda, \mu]}$ defined in Proposition \ref{period}, $\Phi$ is a Schwartz-Bruhat function and $\mathcal{I}_S(\Phi, \Psi^{[\lambda, \mu]} , s)$ is the integral defined in Theorem \ref{Pollackshahongsp6}. 
\end{theorem}

\begin{remark}
We point out that, according to \cite[Proposition 12.1]{Gan-Gurevich} there exist a Schwartz-Bruhat function $\Phi$ and a vector $\Psi \in \pi$ such that $\mathcal{I}_S(\Phi, \Psi , 1)$ is non-zero. However we do not know if this holds for $\Psi_\infty$ in the minimal $K$-type of $\pi_\infty$. 
 Moreover, one can show that there exists a cusp form $\tilde{\Psi} \in \pi$, which coincides with $\Psi$ at the archimedean place and away from $S$, such that \[ \mathcal{I}_S(\Phi, \tilde{\Psi}^{[\lambda, \mu]}, s) = \mathcal{I}_\infty(\Phi_\infty, \Psi_\infty^{[\lambda, \mu]}, s).\]
Although we have not been able to calculate it, we expect that for a natural choice of $\Phi_\infty$ the archimedean integral $\mathcal{I}_\infty(\Phi_\infty, \Psi^{[\lambda, \mu]}_\infty , s)$ is the Gamma factor of the Spin motive attached to $\pi$ by the rule of Serre, and hence holomorphic and non-zero at $s=1$.
\end{remark}

As a corollary of this theorem, one can deduce, under the additional assumption that $\pi$ is the Steinberg representation at a finite place, a weak version of Conjecture \ref{ConjectureBT}(1) (Corollary \ref{theoremcyclebetti2}) and Conjecture \ref{ConjectureBT}(3) (Corollary \ref{cor:tateconjecture}). \\

When $\mathrm{Res}_{s=1}  L^S(s,\pi , {\rm Spin})$ is non-zero then (cf. \cite[Theorem 1.1]{Gan-SavinExceptionalSW}) $\pi$ is a weak functorial lift of a cuspidal automorphic representation $\sigma$ of an exceptional group of type $G_2$. Moreover (cf. Proposition \ref{equivalences}), we have \[\mathrm{Res}_{s=1}  L^S(s,\pi , {\rm Spin}) = L^S(1, \sigma, {\rm Std})\mathrm{Res}_{s=1}  \zeta^S(s).\]
Hence, up to controlling the value of the archimedean integral at $s=1$, Theorem \ref{IntroTheo1} above gives a cohomological formula for the critical value $L^S(1, \sigma, {\rm Std})$. \\

Our second main result concerns the program of Gross and Savin on rank seven motives of Galois type $G_2$. The first step towards the conjecture of Gross and Savin was done by Kret and Shin in \cite{KretShin}, where they more generally constructed $\mathrm{GSpin}$-valued Galois representations associated to cohomological cuspidal automorphic forms of symplectic groups. Moreover, based on the calculations of \cite{GrossSavin}, they verified (\cite[Theorem 11.1]{KretShin}) that, for suitable automorphic representations of $\PGSp_6(\A)$ in the image of the exceptional theta correspondence from the compact form $G_2^c$ of type $G_2$, the image of their Galois representation lies actually in $G_2(\overline{\Q}_\ell)$. More precisely, let $\rho_\pi$ be the ${\rm Spin}_7(\overline{\Q}_\ell)$-valued Galois representation attached to $\pi$. Assuming that $\pi$ is a non-trivial small theta lift of $\sigma$, we have 
\begin{equation} \label{eqintrodesc}
\mathrm{Spin} \circ \rho_\pi = \mathrm{Std} \circ \rho_\sigma \oplus \mathbf{1}
\end{equation}
where $\mathrm{Std} \circ \rho_\sigma$ is the standard Galois representation attached to $\sigma$ and $\mathbf{1}$ denotes the one dimensional trivial representation.

\begin{remark}
Technically speaking, only the dual pair $(G_2^c, \PGSp_6)$ is considered in \cite{GrossSavin}, but their conjecture also applies to the dual pair $(G_2, \PGSp_6)$. Using the results of \cite{KretShin} and the study of the exceptional theta correspondence for $(G_2, \PGSp_6)$ (see Theorem \ref{IntroTheo3} below), we construct (Theorem \ref{thmonsplitGalois}), under some assumptions, Galois representations associated to cohomological cuspidal automorphic representations $\sigma$ of $G_2(\A)$, which sit in a decomposition as that of Equation \eqref{eqintrodesc}.
\end{remark}

%We obtain a similar decomposition using the exceptional theta correspondence from the split form $G_2$ (cf. Theorem \ref{thmonsplitGalois}).

\begin{theorem} [Theorem \ref{TheoGS}] \label{TheoIntro2} Let $\sigma$ be an irreducible cuspidal automorphic representation of $G_2^c(\A)$ or $G_2(\A)$ such that the big theta lift $\Theta(\sigma)$ to $\PGSp_6(\A)$ has an irreducible subquotient $\pi=\bigotimes'_v \pi_v$, which is a cuspidal automorphic representation such that $\pi_\infty$ is cohomological for $V$ as above and $\pi_p$ is the Steinberg representation for some prime number $p$. Assume that the integral $\mathcal{I}_S(\Phi, \Psi^{[\lambda, \mu]} , 1)$ is non-zero for some $\Phi$ and $\Psi^{[\lambda, \mu]}$ as above. Then, the trivial representation $\mathbf{1}$ in \eqref{eqintrodesc} is generated by the \'etale realization of $\mathcal{Z}_{\H, \mathcal{M}}^{[\lambda, \mu]}$.
\end{theorem}

\begin{remark}
    Note that the archimedean part $\pi_\infty$ of $\pi$ is not necessarily of Hodge type $(3,3)$. However, it is one of the main results of \cite{KretShin} that the $L$-packet of $\pi$ is stable at infinity. In particular, there exists a cuspidal automorphic representation $\pi^{3,3}=\pi_\infty^{3,3} \otimes \pi_f$ whose archimedean part is cohomological and of Hodge type $(3,3)$ and whose non-archimedean part is equivalent to $\pi_f$. In the integral appearing in the statement of Theorem \ref{IntroTheo1}, the archimedean part of the cusp form $\Psi^{[\lambda, \mu]}$ is a suitable vector in the minimal $K$-type of $\pi^{3,3}_\infty$. 
\end{remark}

\begin{remark}
    In Proposition \ref{prop:assumptions} we give a list of cases where $\sigma$ is known to have a small theta lift $\pi=\bigotimes'_v \pi_v$ of $\sigma$ to $\PGSp_6(\A)$ which is a cuspidal automorphic representation such that $\pi_\infty$ is cohomological for $V$ as above and $\pi_p$ is the Steinberg representation for some prime number $p$, as in the previous theorem.
\end{remark}

We conclude this introduction explaining a result which provides cases where Theorem \ref{TheoIntro2} can be applied and which has its own interest. Indeed, note that a necessary condition for the integral $\mathcal{I}_S(\Phi, \Psi^{[\lambda, \mu]} , 1)$ to be non-zero, is that $\pi$ supports a rank $2$ Fourier coefficient associated to $F$. By a result of Gan \cite[Theorem 3.1]{ganmultcubic}, every cuspidal automorphic representation $\sigma$ of $G_2(\A)$ supports a Fourier coefficient associated to an \'etale cubic algebra $E$.

\begin{theorem} [{Theorem  \ref{cuspidality}, Proposition \ref{prop:comparisonbetweenFC}}] \label{IntroTheo3}
    Let $\sigma$ be a cuspidal automorphic representation of $G_2(\A)$. Assume that \begin{itemize}
        \item $\sigma$ is not globally generic;
        \item $\sigma_p$ is generic at some finite place $p$.
    \end{itemize}  Then the big theta lift $\Theta(\sigma)$ is cuspidal. Moreover $\Theta(\sigma)$ supports a rank $2$ Fourier coefficient associated to $F$ (and is in particular non-zero) if and only if $\sigma$ supports a Fourier coefficient associated to $\Q \times F$.
\end{theorem}

\begin{comment}
\begin{remark}
The assumptions of Theorem \ref{IntroTheo3} are implied by the ones of Theorem \ref{TheoIntro2} as the quaternionic discrete series are not generic, while the Steinberg representation is.
\end{remark}
\end{comment}

\subsection{Overview of the proofs} The main difficulty for calculating the pairing of Theorem \ref{IntroTheo1} between the motivic class and the cohomology class $[\omega_\Psi]$ resides on the fact that the first class is constructed from the decomposition into irreducible components of the restriction of $V$ to the subgroup $\H$, while the test vector is constructed from its restriction to the maximal compact subgroup $\mathrm{U}(3)$ of $\G(\R)$. One needs to carefully study the relationship between these two different decompositions (Theorem \ref{theo:non-vanishing}). As a consequence we get a formula for the pairing in terms of a period integral (Proposition \ref{period} and Proposition \ref{period2}). These adelic integrals are in turn related to the residue of the partial Spin $L$-function of $\pi$ by means of the work of Pollack and Shah (Proposition \ref{periodvsresidue}), which allows to conclude the proof. Theorem \ref{TheoIntro2} follows basically from Theorem \ref{IntroTheo1} and \ref{IntroTheo3}. The proof of Theorem \ref{IntroTheo3} goes as follows. We first prove (Theorem \ref{cuspidality} and Corollary \ref{corocusptheta}) that $\sigma$ lifts to a cuspidal representation using the tower of exceptional correspondences for $G_2$ studied in \cite{Ginzburg-Rallis-Soudry2}, which reduces the problem to the vanishing of certain automorphic period integrals. Finally, we establish (Proposition \ref{prop:comparisonbetweenFC}) a correspondence between Fourier coefficients of $\sigma$ and its theta lift, which in particular implies the non-vanishing of the latter.

\subsection{Structure of the manuscript} In section \ref{sectPreliminaries} we fix notation, conventions, and basic results that will be useful in the body of the article. In particular, we prove that, under some mild assumptions, the localization at a maximal ideal of the Hecke algebra of the cohomology of the Siegel sixfold is cuspidal and concentrated in the middle degree. We also introduce Absolute Hodge cohomology and compute the dimension of its $\pi_f$-isotypical component. In section \ref{section:motivicclass} we explain the construction of the motivic class $\mathcal{Z}_{\mathcal{M}}^{[\lambda, \mu]}$ and its realizations. In Section \ref{Section4} we construct the harmonic differential form $\omega_\Psi$ associated to a suitable cuspidal form $\Psi$ in the space of $\pi$ and we prove our first main result concerning the calculation of the pairing between the motivic class and the cohomology class $[\omega_\Psi]$.  In Section \ref{Section5}, we use the results of Pollack and Shah to relate the pairing to the residue of the Spin $L$-function. Sections \ref{Section6} and \ref{sectiononcuspidalityandfouriercoefficients} are devoted to the study of the exceptional theta correspondence between $G_2$ and $\mathrm{PGSp}_6$ and contain the proof of Theorem \ref{IntroTheo3}. Finally, in Section \ref{sec:cycleclassformulaandstandardlvalues} we relate the pairing to a critical value of the standard $L$-function of $G_2$. We also deduce from the work of Kret and Shin the existence of Galois representations attached to certain cuspidal representations of $G_2$ and we conclude with a proof of Theorem \ref{TheoIntro2}.

\subsection{Acknowledgements}

We would like to heartily thank David Ginzburg for sharing with us the proof of Lemma \ref{lem:periodsl2n}. We also thank Nadir Matringe,  Aaron Pollack and Armando Gutierrez for fruitful exchanges. We thank Marc-Hubert Nicole and Vincent Pilloni for comments on an earlier draft of the article. Finally, we thank the anonymous referee for his/her careful reading of the manuscript, comments and corrections which have significantly improved the content of this article.

\section{Preliminaries} \label{sectPreliminaries}

\subsection{Algebraic groups and algebraic representations} Let $\psi$ denote an antisymmetric non-degenerate bilinear form on a finite dimensional $\Q$-vector space $V$. The symplectic group $\GSp(V, \psi)$ is the $\Q$-group scheme defined by
$$
\GSp(V, \psi)=\{g \in \mathrm{GL}(V)\,|\, \forall v,w \in V, \psi(gv,gw)=\nu(g)\psi(v,w), \nu(g) \in \mathbf{G}_m\}
$$
Then $\nu: \GSp(V, \psi) \rightarrow \mathbf{G}_m$ is a character. Let $I_n$ denote the identity matrix of size $n$. When $V$ is the $\Q$-vector space $\Q^{2n}$ endowed with the bilinear form whose matrix is $J={ \matrix 0 {I_n} {-I_n} 0}$, we let $\GSp_{2n}$ denote $\GSp(\Q^{2n}, J)$ and we let $\Sp_{2n}$ denote $\ker \nu$. In this paper, we are mainly interested in the case $n=3$. Hence we will denote by $\G$ the group $\GSp_6$ and by $\G_0$ the group $\Sp_6$. Let $\T \subset \G$ denote the maximal diagonal torus and $\B \subset \G$ denote the standard Borel. We have
$$
\T=\left\{\mathrm{diag}(\alpha_1, \alpha_2, \alpha_3, \alpha_1^{-1}\nu, \alpha_2^{-1}\nu, \alpha_3^{-1}\nu), \alpha_1, \alpha_2, \alpha_3, \nu \in \G_m\right\}. 
$$
We associate to any $4$-uple $(\lambda_1, \lambda_2, \lambda_3, c) \in \Z^4$ such that $c \equiv \lambda_1+\lambda_2+\lambda_3 \pmod{2}$ the algebraic character $\lambda(\lambda_1, \lambda_2, \lambda_3, c)$ of $\T$ defined by
$$
\lambda(\lambda_1, \lambda_2, \lambda_3, c): \mathrm{diag}(\alpha_1, \alpha_2, \alpha_3, \alpha_1^{-1}\nu, \alpha_2^{-1}\nu, \alpha_3^{-1}\nu) \mapsto \alpha_1^{\lambda_1} \alpha_2^{\lambda_2} \alpha_3^{\lambda_3} \nu^{\frac{c-\lambda_1-\lambda_2-\lambda_3}{2}}.
$$
This defines an isomorphism between the group of $4$-uples $(\lambda_1, \lambda_2, \lambda_3, c) \in \Z^4$ such that $c \equiv \lambda_1+\lambda_2+\lambda_3 \pmod{2}$ and the group of algebraic characters of $\T$. Let $\rho_1=\lambda(1,-1, 0, 0)$ and $\rho_2=\lambda(0, 1, -1, 0)$ denote the short simple roots and let $\rho_3=\lambda(0,0,2,0)$ denote the long simple root. The set of roots of $\T$ in $\G$ is $R=R^+ \cup R^-$ where 
$$
R^+=\{ \rho_1, \rho_2, \rho_1+\rho_2, \rho_2+\rho_3, \rho_1+\rho_2+\rho_3, \rho_1+2\rho_2+\rho_3, 2\rho_1+2\rho_2+\rho_3, 2\rho_2+\rho_3, \rho_3\}
$$
is the set of positive roots with respect to $\B$ and $R^-=-R^+$. A weight $\lambda=\lambda(\lambda_1, \lambda_2, \lambda_3, c)$ is dominant for $\B$ if $\lambda_1 \geq \lambda_2 \geq \lambda_3 \geq 0$. For any such $\lambda$, there exists a unique (up to isomorphism) irreducible algebraic representation $V^\lambda$ of $\G$ of highest weight $\lambda$ and every irreducible algebraic representation of $\G$ is obtained in this way (up to isomorphism). Similarly, irreducible algebraic representations of $\GSp_4$ are classified by their highest weight which is a character of the shape $\lambda(\lambda_1, \lambda_2, c)$ with $\lambda_1 \geq \lambda_2 \geq 0$ and $\lambda_1+\lambda_2 \equiv c \pmod{2}$ (see for example \cite[\S 2.3]{lemmaCM} for more details). We will also use the classification of irreducible algebraic representations of the groups $\G_0=\Sp_6$ and $\Sp_4$. Hence let us recall that the diagonal maximal torus $\T_0=\T \cap \G_0$ of $\G_0$ is
$$
\T_0=\left\{ \mathrm{diag}(\alpha_1, \alpha_2, \alpha_3, \alpha_1^{-1}, \alpha_2^{-1}, \alpha_3^{-1}), \alpha_1, \alpha_2, \alpha_3 \in \G_m\right\}.
$$
and that its group of algebraic characters is isomorphic to $\Z^3$ via $(\lambda_1, \lambda_2, \lambda_3) \mapsto \lambda(\lambda_1, \lambda_2, \lambda_3)$ where
\begin{equation} \label{algweights}
\lambda(\lambda_1, \lambda_2, \lambda_3): \mathrm{diag}(\alpha_1, \alpha_2, \alpha_3, \alpha_1^{-1}, \alpha_2^{-1}, \alpha_3^{-1}) \mapsto \alpha_1^{\lambda_1} \alpha_2^{\lambda_2} \alpha_3^{\lambda_3}.
\end{equation}
A weight $\lambda=\lambda(\lambda_1, \lambda_2, \lambda_3)$ is dominant with respect to the standard Borel $\B_0=\B \cap \G_0$ if $\lambda_1 \geq \lambda_2 \geq \lambda_3 \geq 0$ and for any such $\lambda$ there exists a unique (up to isomorphism) irreducible algebraic representation $V^\lambda$ of $\G_0$ of highest weight $\lambda$ and every irreducible algebraic representation of $\G_0$ is obtained in this way (up to isomorphism). Similarly, irreducible algebraic representations of $\Sp_4$ are classified by characters $\lambda(\lambda_1, \lambda_2)$ with $\lambda_1 \geq \lambda_2$, with obvious notation.

\subsection{Compact Lie groups and representations} \label{SectCompactLie} Let $\mathrm{U}(n)=\{g \in \mathrm{GL}_n(\C) \,|\, ^t\overline{g}g=I_n\}$ denote the unitary group and let $K_\infty \subset \G_0(\R)$ be the subgroup defined as
$$K_\infty = \left\{ {\matrix A B {-B} A} \; | \; A A^t + B B^t = 1, A B^t = B A^t \right\}.$$
We have an isomorphism $\kappa: \mathrm{U}(3) \simeq K_\infty$ defined by $A+iB \mapsto {\matrix A B {-B} A}$. In fact $K_\infty$ is a maximal compact subgroup of $\G_0(\R)$. Let $T_\infty \subset K_\infty$ denote $\{\kappa(\mathrm{diag}(z_1, z_2, z_3)), z_1, z_2, z_3 \in \mathrm{U}(1)\}$. Then $T_\infty$ is Cartan subgroup of $K_\infty$. Its group of algebraic characters is isomorphic to $\Z^3$ via $(\lambda_1, \lambda_2, \lambda_3) \mapsto \lambda'(\lambda_1, \lambda_2, \lambda_3)$, where
\[ \lambda'(\lambda_1, \lambda_2, \lambda_3) : \kappa(\mathrm{diag}(z_1, z_2, z_3)) \mapsto z_1^{\lambda_1} z_2^{\lambda_2} z_3^{\lambda_3}. \]
An algebraic character is dominant if $\lambda_1 \geq \lambda_2 \geq \lambda_3$. For any dominant integral weight $\lambda'$, there exists a unique (up to isomorphism) irreducible representation $\tau_{\lambda'}$ of $K_\infty$ in a finite dimensional $\C$-vector space and every irreducible representation of $K_\infty$ is obtained in this way (up to isomorphism). In what follows, we will simply denote the irreducible representation of highest weight $\lambda'(\lambda_1, \lambda_2, \lambda_3)$ by $\tau_{(\lambda_1, \lambda_2, \lambda_3)}$. Let us explain the connection between the weights $\lambda$ of $\T_0$ defined by equation \eqref{algweights} in the previous section and the weights $\lambda'$ defined above. Let $J \in \G_0(\C)$ denote the matrix $J=\frac{1}{\sqrt{2}}{\matrix {I_3} { i I_3} {i I_3} {I_3}}$. Then we have 
$$
J^{-1}\kappa(\mathrm{diag}(z_1, z_2, z_3))J=\mathrm{diag}(z_1, z_2, z_3)
$$
and so, for any $(\lambda_1, \lambda_2, \lambda_3) \in \Z^3$, we have
$$
\lambda(\lambda_1, \lambda_2, \lambda_3)(J^{-1}\kappa(z_1, z_2, z_3)J)=\lambda'(\lambda_1, \lambda_2, \lambda_3)(\mathrm{diag}(z_1, z_2, z_3)).
$$
In brief, the character $\lambda'(\lambda_1,\lambda_2, \lambda_3)$ of $T_\infty$ is conjugated to the restriction of $\lambda(\lambda_1, \lambda_2, \lambda_3)$ to $\mathrm{U}(1)^3 \subset \C^\times \times \C^\times \times \C^\times=\T_0(\C)$.

\subsection{Lie algebras} Let $\mathfrak{g}_0$, resp. $\mathfrak{k}$, denote the Lie algebra of $\G_0(\R)$, resp. $K_\infty$, and let $\mathfrak{g}_{0,\C}$, resp. $\mathfrak{k}_\C$, denote its complexification. Then
\begin{eqnarray*} 
\mathfrak{g}_0 &=& \left\{ {\matrix A B C D} \in M_6(\R) \; | \; B = B^t, C = C^t, A = - D^t \right\}, \\
\mathfrak{k} &=& \left\{ {\matrix A B {-B} A} \in M_6(\R) \; | \; A = - A^t, B = B^t \right\}.
\end{eqnarray*}
The Lie algebra $\mathfrak{k}$ is the $1$-eigenspace for the Cartan involution $\theta (X)=-X^t$. The $(-1)$-eigenspace is $\mathfrak{p}=\left\{ {\matrix A B {B} {-A}} \in M_6(\R) \; | \; A = A^t, B = B^t \right\}.$ Letting 
$$
\mathfrak{p}_{\C}^\pm=\left\{ {\matrix A {\pm i A} {\pm i A} {-A} } \in M_6(\C) \;| \; A = A^t \right\},
$$
we have $\mathfrak{p}_{\C}=\mathfrak{p}_{\C}^+ \oplus \mathfrak{p}_{\C}^-$ and one has the Cartan decomposition \[ \mathfrak{g}_{0,\C}=\mathfrak{k}_{\C} \oplus \mathfrak{p}^+_{\C} \oplus \mathfrak{p}^-_{\C}. \]
For $1 \leq j \leq 3$, let $D_j \in M_3(\C)$ be the matrix with entry $1$ at position $(j, j)$ and $0$ elsewhere. Define
$T_j = {\matrix 0 {D_j} {- D_j} 0}$. Then the Lie algebra $\mathfrak{h}$ of $T_\infty$ is $\mathfrak{h}=\R \cdot T_1 \oplus \R \cdot T_2 \oplus \R \cdot T_3$. This is a compact Cartan subalgebra of $\mathfrak{g}_0$. Let $(e_1, e_2, e_3)$ denote the basis of $\mathfrak{h}^*_\C$ dual to $(-iT_1, -iT_2, -iT_3)$. A system of positive roots for $(\mathfrak{g}_{0,\C}, \mathfrak{h}_\C)$ is then given by
\[\{e_1 \pm e_2, e_1 \pm e_3, e_2 \pm e_3, 2e_1 , 2 e_2,2 e_3  \}. \]
The simple roots are $e_1 - e_2, e_2 - e_3$ and $2 e_3$. We note that $\mathfrak{p}^+_{\C}$ is spanned by the root spaces corresponding to the positive roots of type $2 e_j$ and $e_j+e_k$. We denote $\Delta = \{ \pm 2 e_j, \pm (e_j \pm e_k) \}$ the set of all roots, $\Delta_\mathrm{c} = \{ \pm(e_j - e_k)\}$ the set of compact roots and $\Delta_{\rm nc} = \Delta - \Delta_{\rm c}$ the non-compact roots. Finally, we note $\Delta^+, \Delta_{\rm c}^+$ and $\Delta_{\rm nc}^+$ the set of positive, positive compact and positive non-compact roots, respectively.

\subsection{Weyl groups} \label{rmk:weylcompact} \label{weylgroups} Recall that the Weyl group of $\G_0$ is given by $\mathfrak{W}_{\G_0} = \{ \pm 1\}^3 \rtimes \mathfrak{S}_3$. The reflection $\sigma_j$ in the hyperplane orthogonal to $2 e_j$ simply reverses the sign of $e_j$ while leaving the other $e_k$ fixed. The reflection $\sigma_{jk}$ in the hyperplane orthogonal to $e_j - e_k$ exchanges $e_j$ and $e_k$ and leaves the remaining $e_\ell$ fixed. The Weyl group $\mathfrak{W}_{K_\infty}$ of $K_\infty \cong U(3)$ is isomorphic to $\mathfrak{S}_3$ and, via the embedding into $\G$, identifies with the subgroup of $\mathfrak{W}_{\G_0}$ generated by the $\sigma_{jk}$. With the identification $\mathfrak{W}_{\G_0} = N(\mathbf{T}_0) / Z(\mathbf{T}_0)$, an explicit description of $\mathfrak{W}_{\G_0}$ and $\mathfrak{W}_{K_\infty}$ is given as follows. The matrices corresponding to the reflections $\sigma_{jk}$ are ${\matrix {S_{jk}} 0 0 {-S_{jk}} }$,
where $S_{jk}$ is the matrix with entry $1$ at places $(\ell, \ell)$, $\ell \neq j, k$, $(k, j)$ and $(j, k)$ and zeroes elsewhere. The matrices corresponding to the reflection $\sigma_j$ in the hyperplane orthogonal to $2 e_j$ are of the form
$ {\matrix 0 {U_j} {-U_j} 0}$,
where $U_j$ denotes the diagonal matrix with $-1$ at the place $(j, j)$ and ones at the other entries of the diagonal. This gives an explicit description of the elements of $\mathfrak{W}_{K_\infty}$ and their length:
\[ \mathfrak{W}_{K_\infty} = \{ 1, \sigma_{12}, \sigma_{13}, \sigma_{23}, \sigma_{12}  \sigma_{13}, \sigma_{12} \sigma_{23} \} \xrightarrow[]{\ell(\bullet)} \{0,1,1,1,2,2 \}. \]

\subsection{Discrete series}\label{ss:discreteseries}
We recall standard facts on discrete series for $\G_0(\R)=\Sp_6(\R)$ and for $\PGSp_6(\R)$. For any non-singular weight $\Lambda$ define
\[ \Delta^+(\Lambda) := \{ \alpha \in \Delta \; | \; \langle \alpha, \Lambda \rangle > 0 \}, \;\;\; \Delta^+_c(\Lambda) = \Delta^+(\Lambda) \cap \Delta_c, \]
where $\langle \;,\;\rangle$ is the standard scalar product on $\R^3$. Let $\lambda=(\lambda_1,\lambda_2,\lambda_3)$ be a weight of $T_\infty$ such that $\lambda_1 \geq \lambda_2 \geq \lambda_3 \geq 0$ and let $\rho = \frac{1}{2} \sum_{\alpha \in \Delta^+} \alpha = (3,2, 1)$. As $| \mathfrak{W}_{\G_0} / \mathfrak{W}_{K_\infty}| = 8$, by \cite[Theorem 9.20]{knapp}, the set of equivalence classes of irreducible discrete series representations of $\G_0(\R)$ with Harish-Chandra parameter $\lambda + \rho$ contains $8$ elements. More precisely, choose representatives $\{w_1, \ldots, w_{8}\}$ of $\mathfrak{W}_{\G_0} / \mathfrak{W}_{K_\infty}$ of increasing length and such that for any $1 \leq i \leq 8$, then the weight $w_i (\lambda + \rho)$ is dominant for $K_\infty$. The representatives, defined by their action on $\rho$, are $w_1(3,2,1)=(3,2,1)$, $w_2(3,2,1)= (3,2,-1) $, $w_3(3,2,1)=(3,1,-2)$, $w_4(3,2,1)= (2,1,-3)$, $w_5(3,2,1)=(3,-1,-2)$, $w_6(3,2,1)=(2,-1,-3)$, $w_7(3,2,1)= (1,-2,-3)$, $w_8(3,2,1)=(-1,-2,-3)$. Then, for any $1 \leq i \leq 8$, there exists an irreducible discrete series $\pi_\infty^{\Lambda}$, where $\Lambda = w_i(\lambda + \rho)$, of Harish-Chandra parameter $\Lambda$ and containing with multiplicity $1$ the minimal $K_\infty$-type with highest weight $ \Lambda + \delta_{\G_0} - 2\delta_{K_\infty}$ where $\delta_{\G_0}$, resp. $\delta_{K_\infty}$, is the half-sum of roots, resp. of compact roots, which are positive with respect to the Weyl chamber in which $\Lambda$ lies, i.e., $2 \delta_{\G_0} := \sum_{\alpha \in \Delta^+(\Lambda)} \alpha$, $2 \delta_{K_\infty} := \sum_{\alpha \in \Delta^+_c(\Lambda)} \alpha$.
Moreover, for $i \neq j$, $\Lambda = w_i (\lambda + \rho)$, $\Lambda = w_j(\lambda + \rho)$, the representations $\pi_\infty^{\Lambda}$ and $\pi_\infty^{\Lambda}$ are not equivalent and any discrete series of $\G_0$ is obtained in this way. Let $V^\lambda$ be the irreducible algebraic representation of $\G_0$ of highest weight $\lambda=(\lambda_1,\lambda_2,\lambda_3)$ (for $\T_0$). 

\begin{definition}
The discrete series $L$-packet $P(V^\lambda)$ associated to $\lambda$ is the set of isomorphism classes of discrete series of $\G_0(\R)$ whose Harish-Chandra parameter is of the form $\Lambda = w_i(\lambda + \rho)$ as $i$ varies.
\end{definition}

By \cite[Theorem II.5.3]{Borel-Wallach}, for each $\pi_\infty^{\Lambda} \in P(V^\lambda)$, the space \[ {\rm Hom}_{K_\infty}\left(\bigwedge^6 \mathfrak{g}_{0, \C}/ \mathfrak{k}_{\C} \otimes V^\lambda, \pi_\infty^{\Lambda}\right)\]
has dimension 1.
This is a consequence of the fact (cf. the proof of \cite[Theorem II.5.3]{Borel-Wallach}) that the minimal $K_\infty$-type of $\pi_\infty^{\Lambda}$ appears uniquely in $\bigwedge^6 \mathfrak{g}_{0, \C}/ \mathfrak{k}_{\C} \otimes V^\lambda$. Using the Cartan decomposition, we get \[\bigwedge^6 \mathfrak{g}_{0, \C}/ \mathfrak{k}_{\C} = \bigoplus_{p+q=6} \bigwedge^p \mathfrak{p}^+_{\C} \otimes_\C  \bigwedge^q \mathfrak{p}^-_{\C}.\] 
Hence, there exists a unique pair $(p,q)$ such that
$\Hom_{K_\infty}\left( \bigwedge^p \mathfrak{p}_{\C}^+ \otimes \bigwedge^q \mathfrak{p}_{\C}^- \otimes V^\lambda , \pi_\infty^\Lambda \right)$ is non-zero
and hence of dimension one. We call such a pair $(p,q)$  the Hodge type of $\pi_\infty^\Lambda$.

\begin{lemma} \label{discrete-series}
There exist two elements $\pi_{\infty,1}^{3,3}$ and $\overline{\pi}_{\infty,1}^{3,3}$ in $P(V^\lambda)$ of Hodge type $(3,3)$. They are characterized by having Harish-Chandra parameters $(\lambda_2 + 2, \lambda_3 + 1, - \lambda_1-3)$ and $(\lambda_1  + 3, -\lambda_3 -1, -\lambda_2 -2)$  and  minimal $K_\infty$-types  $\tau_{(\lambda_2 +2, \lambda_3 + 2, - \lambda_1 -4)}$ and $\tau_{(\lambda_1 + 4,-\lambda_3-2, - \lambda_2-2)}$ respectively.
\end{lemma}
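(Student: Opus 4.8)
The strategy is a direct bookkeeping computation using the parametrization of discrete series recalled just above. First I would identify which of the eight Weyl chamber representatives $w_i$ produce a discrete series of Hodge type $(3,3)$. Recall that $\mathfrak{p}^+_\C$ is spanned by the root spaces for roots of the form $2e_j$ and $e_j+e_k$, i.e.\ the \emph{positive} non-compact roots $\Delta^+_{\mathrm{nc}}$ for the standard chamber. For $\pi_\infty^\Lambda$ with $\Lambda=w_i(\lambda+\rho)$, the Hodge type is $(p,q)$ with $p = \#\{\alpha \in \Delta^+_{\mathrm{nc}} : \langle \alpha, \Lambda\rangle < 0\}$ and $q = \#\{\alpha \in \Delta^+_{\mathrm{nc}} : \langle \alpha, \Lambda\rangle > 0\}$ (equivalently, counted via $\Delta^+_{\mathrm{nc}}(\Lambda)$). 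Since there are $|\Delta^+_{\mathrm{nc}}| = 6$ non-compact positive roots ($2e_1,2e_2,2e_3,e_1+e_2,e_1+e_3,e_2+e_3$), Hodge type $(3,3)$ means exactly three of them pair negatively with $\Lambda$. Running through the list $w_1(\rho),\ldots,w_8(\rho)$ — or rather through $\Lambda = w_i(\lambda+\rho)$ for generic dominant $\lambda$ — one checks that precisely the chambers corresponding to the ``middle'' representatives, namely $w_4$ and $w_5$ (up to the chosen enumeration), give signature $(3,3)$; these are the ones whose dominant-for-$K_\infty$ representative has the shape $(+,+,-)$ with the sign pattern making three of the six non-compact roots negative.

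Once the two chambers are pinned down, I would compute their Harish-Chandra parameters. We have $\rho = (3,2,1)$, so $\lambda + \rho = (\lambda_1+3, \lambda_2+2, \lambda_3+1)$. Applying the Weyl group elements of $\mathfrak{W}_{\G_0} = \{\pm 1\}^3 \rtimes \mathfrak{S}_3$ that permute and sign-change coordinates so as to land in the two $(3,3)$-chambers and be $K_\infty$-dominant, I get the two parameters
\[
(\lambda_2+2,\ \lambda_3+1,\ -\lambda_1-3) \qquad \text{and} \qquad (\lambda_1+3,\ -\lambda_3-1,\ -\lambda_2-2),
\]
which are manifestly $K_\infty$-dominant (strictly decreasing) given $\lambda_1 \geq \lambda_2 \geq \lambda_3 \geq 0$. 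For each I would then compute the minimal $K_\infty$-type via the recipe $\Lambda + \delta_{\G_0} - 2\delta_{K_\infty}$, where $2\delta_{\G_0} = \sum_{\alpha \in \Delta^+(\Lambda)}\alpha$ and $2\delta_{K_\infty} = \sum_{\alpha \in \Delta^+_c(\Lambda)}\alpha$. This requires, for each of the two chambers, listing the roots positive with respect to that chamber and separating compact from non-compact; then $\delta_{\G_0} - 2\delta_{K_\infty} = \delta_{\mathrm{nc}} - \delta_c$ where $\delta_{\mathrm{nc}}$, $\delta_c$ are the half-sums of the non-compact (resp. compact) $\Lambda$-positive roots. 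Carrying out this subtraction yields the shift vectors that transform $(\lambda_2+2,\lambda_3+1,-\lambda_1-3)$ into $(\lambda_2+2,\lambda_3+2,-\lambda_1-4)$ and $(\lambda_1+3,-\lambda_3-1,-\lambda_2-2)$ into $(\lambda_1+4,-\lambda_3-2,-\lambda_2-2)$, giving the asserted minimal $K_\infty$-types $\tau_{(\lambda_2+2,\lambda_3+2,-\lambda_1-4)}$ and $\tau_{(\lambda_1+4,-\lambda_3-2,-\lambda_2-2)}$.

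Finally I would observe that the two resulting discrete series are genuinely distinct (their Harish-Chandra parameters differ for generic $\lambda$, indeed they are swapped by an outer-type symmetry, which justifies writing the second as $\overline{\pi}_{\infty,1}^{3,3}$), and that by the general theory recalled above no other member of $P(V^\lambda)$ has Hodge type $(3,3)$ — this follows from the chamber count in the first step. The only genuinely delicate point is the combinatorics of the first and third steps: correctly matching the abstract enumeration $w_1,\ldots,w_8$ to sign patterns, and keeping the bookkeeping of compact vs.\ non-compact $\Lambda$-positive roots straight when computing $\delta_{\G_0} - 2\delta_{K_\infty}$ in each of the two tilted chambers. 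Everything else is routine linear algebra over $\Z^3$; I expect the root-bookkeeping for the minimal $K_\infty$-type to be the main place where sign errors could creep in, so I would double-check it by verifying that the minimal $K_\infty$-type indeed appears in $\bigwedge^3\mathfrak{p}^+_\C \otimes \bigwedge^3 \mathfrak{p}^-_\C \otimes V^\lambda$ as predicted by \cite[Theorem II.5.3]{Borel-Wallach}.
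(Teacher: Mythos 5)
Your proposal is correct and follows essentially the same route as the paper's proof: identify the two relevant chamber representatives ($w_4$, $w_5$), apply them to $\lambda+\rho$ to get the Harish--Chandra parameters, compute the minimal $K_\infty$-types via $\Lambda+\delta_{\G_0}-2\delta_{K_\infty}$, and verify the Hodge type by counting non-compact roots \`a la Vogan--Zuckerman. The only cosmetic discrepancy is that you swap the roles of $p$ and $q$ relative to the paper's convention (paper: $p$ counts positive non-compact roots pairing positively with $\Lambda$), which is immaterial here since the target type is $(3,3)$.
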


\begin{proof}

The discrete series $\pi_{\infty,1}^{3,3}$ and $\overline{\pi}_{\infty,1}^{3,3}$ correspond to the Weyl representatives $w_4$ and $w_5$. Since $w_4 \lambda = (\lambda_2, \lambda_3, -\lambda_1)$ and $w_5 \lambda = (\lambda_1, -\lambda_3, -\lambda_2)$, the Harish-Chandra parameters of $\pi_{\infty,1}^{3,3}$ and $\overline{\pi}_{\infty,1}^{3,3}$ are as desired. When $\Lambda = w_4(\lambda + \rho)$ (resp. $\Lambda = w_5(\lambda + \rho)$), observe that $\delta_{\G_0}$ equals to $(2,1,-3)$ (resp. $(3,-1,-2)$), while  $ \delta_{K_\infty} = (1,0,-1)$ in both cases. Hence, using the formula above, the minimal $K_\infty$-types of $\pi_{\infty,1}^{3,3}$ and $\overline{\pi}_{\infty,1}^{3,3}$  are $\tau_{(\lambda_2 +2, \lambda_3 + 2, - \lambda_1 -4)}$ and $\tau_{(\lambda_1 + 4,-\lambda_3-2, - \lambda_2-2)}$ respectively. 

Recall that, after \cite[Proposition 6.19]{VoganZuckerman}, the Hodge type of a discrete series representation of Harish-Chandra parameter $\Lambda$ is $(p, q)$, where $p$ (resp. $q$) is the number of positive non-compact roots in $\Delta^+(\Lambda)$ (resp. $\Delta^-(\Lambda)$). Using this, one easily checks that the Hodge type of $\pi_{\infty, 1}^{3,3}$ and $\overline{\pi}_{\infty, 1}^{3,3}$ is $(3,3)$.

\end{proof}

The picture for $\PGSp_6(\R)$ is similar, but the set of its Harish-Chandra parameters changes slightly. This is due to the fact that, since its maximal compact subgroup has two connected components, the set of parameters has to be considered up to the action of $\mathfrak{W}_{K_\infty}$ and of $w_8$, as the latter, which is the anti-diagonal matrix with all entries $-1$, now belongs to the connected component away from the identity of the maximal compact subgroup. Concretely, any parameter  $\mu = ( \mu_1, \mu_2, \mu_3)$ has to be identified with $ w_8 \mu = (-\mu_3, -\mu_2, -\mu_1)$. 
  If $\lambda = (\lambda_1, \lambda_2, \lambda_3)$ is such that $\lambda_1 \geq \lambda_2 \geq \lambda_3 \geq 0$ and $\sum_i \lambda_i \equiv 0 \pmod{2}$, then the  irreducible algebraic $\G$-representation $V^{(\lambda,0)}$ of highest weight $\lambda(\lambda_1, \lambda_2, \lambda_3, 0)$ defines a representation of $\PGSp_6$. The corresponding discrete series $L$-packet $P(V^{(\lambda,0)})$ for $\PGSp_6(\R)$ has thus four elements. Any element $\pi_\infty \in P(V^{(\lambda,0)})$ of Harish-Chandra parameter $\mu$, viewed as a $\G(\R)$-representation, decomposes when restricted to $\G_0(\R)$ as the direct sum of two discrete series in $P(V^\lambda)$ of Harish-Chandra parameters $\mu$ and $w_8\mu$. As a consequence, for any such $\pi_\infty$, the space
  $$
  H^6\left(\mathfrak{g}, K_\G; \pi_\infty \otimes V^{(\lambda,0)} \right)=\mathrm{Hom}_{K_\G}\left(\bigwedge^6 \mathfrak{g}_{\C}/\mathrm{Lie}(K_\G)_{\C}, \pi_\infty \otimes V^{(\lambda, 0)} \right),
  $$
  where $\mathfrak{g}=\mathrm{Lie}( \G)$, $\mathfrak{g}_\C$ is its complexification and $K_\G=\R_+^\times K_\infty$, is $2$-dimensional. The discussion above implies the following.

\begin{lemma}\label{discrete-seriesforPGS}
Let $\lambda = (\lambda_1, \lambda_2, \lambda_3)$ be a dominant weight for $\G_0$ such that $ \sum_i \lambda_i \equiv 0 \pmod{2}$. Then there exists a unique discrete series $\pi_\infty^{3,3} \in P(V^{(\lambda,0)})$ of $\PGSp_6(\R)$, with Harish-Chandra parameter $(\lambda_2 + 2, \lambda_3 + 1, - \lambda_1-3)$, such that \[{\pi_\infty^{3,3}}_{|_{\G_0(\R)}}= \pi_{\infty,1}^{3,3}\oplus \overline{\pi}_{\infty,1}^{3,3}.\]
\end{lemma}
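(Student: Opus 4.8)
The plan is to deduce this directly from Lemma~\ref{discrete-series} together with the description, recalled just before the statement, of how the four-element packet $P(V^{(\lambda,0)})$ of $\PGSp_6(\R)$ restricts to $\G_0(\R)$. Concretely: each $\pi_\infty \in P(V^{(\lambda,0)})$ with Harish-Chandra parameter $\mu$ restricts to $\G_0(\R)$ as the sum of the two discrete series in $P(V^\lambda)$ with parameters $\mu$ and $w_8\mu = (-\mu_3,-\mu_2,-\mu_1)$, and conversely $\pi_\infty$ is pinned down by the unordered pair $\{\mu, w_8\mu\}$ (this is exactly the identification $\mu \sim w_8\mu$ imposed by the two connected components of the maximal compact of $\PGSp_6(\R)$). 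So I only need to check that the pair $\{\mu, w_8\mu\}$ attached to $\mu=$ (Harish-Chandra parameter of $\pi_{\infty,1}^{3,3}$) is precisely the set of Harish-Chandra parameters of $\pi_{\infty,1}^{3,3}$ and $\overline{\pi}_{\infty,1}^{3,3}$.

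First I would record, from Lemma~\ref{discrete-series}, that $\pi_{\infty,1}^{3,3}$ has Harish-Chandra parameter $\mu = (\lambda_2+2,\lambda_3+1,-\lambda_1-3) = w_4(\lambda+\rho)$ and $\overline{\pi}_{\infty,1}^{3,3}$ has parameter $\mu' = (\lambda_1+3,-\lambda_3-1,-\lambda_2-2) = w_5(\lambda+\rho)$. Then a one-line computation gives
\[ w_8\mu = \bigl(-(-\lambda_1-3),\,-(\lambda_3+1),\,-(\lambda_2+2)\bigr) = (\lambda_1+3,\,-\lambda_3-1,\,-\lambda_2-2) = \mu', \]
so $\{\mu, w_8\mu\} = \{\mu, \mu'\}$. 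I would also note that $\mu \neq \mu'$ — otherwise $\lambda_2+2 = \lambda_1+3$, forcing $\lambda_2 > \lambda_1$ against dominance — so the restriction is genuinely a sum of two inequivalent discrete series, in accordance with the $2$-dimensionality of $H^6(\mathfrak g, K_\G; \pi_\infty \otimes V^{(\lambda,0)})$.

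To finish: since $\mu = w_4(\lambda+\rho)$ is $K_\infty$-dominant by the choice of representatives in \S\ref{ss:discreteseries}, it is the Harish-Chandra parameter of a well-defined member $\pi_\infty^{3,3}$ of $P(V^{(\lambda,0)})$, and by the restriction rule recalled above ${\pi_\infty^{3,3}}_{|_{\G_0(\R)}}$ is the sum of the discrete series with parameters $\mu$ and $w_8\mu = \mu'$, i.e. $\pi_{\infty,1}^{3,3}\oplus\overline{\pi}_{\infty,1}^{3,3}$. For uniqueness, any element of $P(V^{(\lambda,0)})$ with this restriction has parameter pair $\{\mu,\mu'\}$ and hence equals $\pi_\infty^{3,3}$. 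I do not anticipate any genuine obstacle here; the whole content is the bookkeeping identity $w_8\mu = \mu'$, which says that the two Hodge-type-$(3,3)$ members of $P(V^\lambda)$ are swapped by $w_8$ and therefore assemble into a single member of $P(V^{(\lambda,0)})$.
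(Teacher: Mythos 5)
Your proposal is correct and matches the paper's own treatment: the paper gives no separate proof, deriving the lemma directly from the preceding discussion that a parameter $\mu$ for $\PGSp_6(\R)$ is to be identified with $w_8\mu$ and that restriction to $\G_0(\R)$ yields the two discrete series with parameters $\mu$ and $w_8\mu$, which is exactly your bookkeeping identity $w_8(\lambda_2+2,\lambda_3+1,-\lambda_1-3)=(\lambda_1+3,-\lambda_3-1,-\lambda_2-2)$ combined with Lemma \ref{discrete-series}.
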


We will refer to $\pi_\infty^{3,3}$ as the discrete series of $\PGSp_6(\R)$ in $P(V^{(\lambda,0)})$ of Hodge type $(3,3)$.

\subsection{Shimura varieties}\label{sec:modularembedding}

Let $F$ denote a real \'etale quadratic $\Q$-algebra, i.e. $F$ is either a totally real quadratic extension of $\Q$ or $\Q \times \Q$.
Denote by $\GL_{2,F}^* /\Q$ the subgroup scheme of ${\rm Res}_{F/\Q} \GL_{2,F}$ sitting in the Cartesian diagram 
\[ \xymatrix{ 
\GL_{2,F}^* \ar@{^{(}->}[r] \ar[d] & {\rm Res}_{F/\Q} \GL_{2,F} \ar[d]^{{\rm det}} \\ 
\mathbf{G}_m  \ar@{^{(}->}[r] & {\rm Res}_{F/\Q} \mathbf{G}_{m,F}.
}
\]
For instance, when $F=\Q \times \Q$, we have
\[\GL_{2,F}^*=\{ (g_1, g_2) \in \GL_{2} \times \GL_2 \; | \; {\rm det}(g_1) = {\rm det}(g_2) \}.\]
Let $\H$ denote the group 
\begin{equation} \label{thegroupH}
\H = \GL_2 \boxtimes  \GL_{2,F}^* = \{(g_1, g_2) \in \GL_2 \times  \GL_{2,F}^* \; | \; {\rm det}(g_1) = {\rm det}(g_2) \}.
\end{equation}
We embed $\H$ into $\G$ as follows. Let us consider the $\Q \times F$-module \[ V := \Q e_1 \oplus F e_2 \oplus \Q f_1 \oplus F f_2,  \]
where $V_1:=\Q e_1 \oplus \Q f_1$ and $V_2:=F e_2 \oplus F f_2$ are respectively the standard representations of $\GL_2$ and $\GL_{2,F}^*.$ We equip $V$ with the $\Q \times F$-valued alternating form $\psi': V \times V \to  \Q \times F,$ such that $\psi'(e_1,f_1)=(1,0)$, $\psi'(e_2,f_2)=(0,\tfrac{1}{2})$ and $V_1$ is orthogonal to $V_2$. The group $\H$ acts naturally on $V$ and preserves $\psi'$ up to a scalar.
We can regard $V$ as a $6$-dimensional $\Q$-vector space with $\Q$-valued symplectic form $\psi:= {\rm tr}_{(\Q \times F)/\Q} \circ \psi' $. 
Explicitly, we have \[ \psi(a e_1 + \alpha e_2 , b f_1 + \beta f_2) = ab+ \tfrac{1}{2} {\rm tr}_{F/\Q}(\alpha \beta).\]
This identification defines an embedding $\H \hookrightarrow \GSp(V,\psi)$.
We now identify $\GSp(V,\psi)$ with $\G$ by choosing a suitable $\Q$-basis of $V$. Recall that the set of real quadratic $\Q$-algebras is parametrized by $D \in \Q^\times_{> 0} / ( \Q^\times _{> 0})^2,$ via $D \mapsto F = \Q \oplus \Q \sqrt{D}$. Using the decomposition $F = \Q \oplus \Q \sqrt{D}$, we consider the $\Q$-basis of $V$ given by \[\{ e_1, e_2, e_3 , f_1, f_2 , f_3\}:=\left\{ e_1, e_2, \sqrt{D} e_2 , f_1, f_2 , \tfrac{1}{\sqrt{D}} f_2 \right\}.  \]
In this basis, $\psi$ is represented by the matrix $J = { \matrix 0 {I_n} {-I_n} 0}$, thus we obtain and isomorphism $\GSp(V,\psi) \simeq \G$ and the embedding
$$
\iota: \H \hookrightarrow \G.
$$
Note that the group \[ \H':=\GL_2 \boxtimes \GSp_4 :=  \{(g_1, g_2) \in \GL_2 \times  \GSp_4 \; | \; {\rm det}(g_1) = {\nu}(g_2) \},  \]
is also naturally embedded in $\G$ and $\iota$ factors through $\H'$.

Recall from \cite[\S 2.2]{CLR} that there is a 3-dimensional Shimura variety $\Sh_\H$ associated to the $\H(\R)$-conjugacy class of \[h:\mathbf{S} \longrightarrow \H_{/\R}, \quad x+iy \mapsto \left( { \matrix {x} {y } {-y} {x} }, { \matrix {x} {y } {-y} {x} }, { \matrix {x} {y } {-y} {x} }  \right), \] where $\mathbf{S}={\rm Res}_{\C/\R}{\mathbf{G}_m}_{/\C}$ is the Deligne torus. The associated Shimura datum has reflex field is $\Q$ and the Shimura variety $\Sh_\H$ can be described as follows. If $V \subseteq \H(\Af)$ is a fibre product (over the similitude characters) $V_1 \times_{\A^\times_f} V_2 $ of sufficiently small subgroups, we have
\[ \Sh_{\H}(V) = \Sh_{\GL_2}(V_1) \times_{\mathbf{G}_m} \Sh_{\GL_{2,F}^*}(V_2), \]
where $\times_{\mathbf{G}_m}$ denotes the fibre product over the zero dimensional Shimura variety of level $W = {\rm det}(V_1)  = \det(V_2)$. The connected components are given by 
\[\pi_0(\Sh_{\H}(V)(\C)) = \hat{\Z}^\times/W.\]
Hence, $\Sh_\H$ can be thought as the fibre product of a modular curve and a Hilbert-Blumenthal modular surface.
We also recall that the complex points of $\Sh_{\H}(V)$ are given by
\[ \Sh_{\H}(V)(\C) = \H(\Q) \backslash \H(\A) / \Z_{\H}(\R) K_{\H, \infty} V, \]
where $\Z_{\H}$ denotes the center of $\H$ and $K_{\H, \infty} \subseteq \H(\R)$ is the maximal compact defined as the product $\mathrm{U}(1) \times \mathrm{U}(1) \times \mathrm{U}(1)$.

The embedding $\iota : \H \hookrightarrow \G$ induces a Shimura datum for $\G$ whose reflex field is $\Q$. 
For any sufficiently small compact open subgroup $U$ of $\G(\Af)$, denote by $\Sh_{\G}(U)$ the associated Shimura variety of dimension $6$.
We also write $\iota: \Sh_{\H}(U \cap \H) \hookrightarrow \Sh_{\G}(U)$ the closed embedding of codimension $3$ induced by the group homomorphism $\iota: \H \hookrightarrow \G$.

\begin{remark}
If $E / \Q$ is a totally real cubic field extension of $\Q$ then one can analogously define $\H = \{ g \in {\rm Res}_{E/\Q}\GL_{2, E} \; | \; \det(g) \in \mathbf{G}_m  \}$ and there is a natural embedding $\iota : \H \hookrightarrow \G$ (cf. \cite[\S 1]{PiatetskiShapiro-Rallis} for details) inducing closed embeddings $\iota : \Sh_\H(U \cap \H) \hookrightarrow \Sh_\G(U)$ for sufficiently small open compact $U$. All our results up to Section \S 5 will hold for any real \'etale cubic algebra $E$ over $\Q$. Our main interest in the case $E = \Q \times F$ for $F$ a real \'etale quadratic algebra over $\Q$ is motivated by the integral representation of the Spin $L$-function of $\G$ of \cite{Pollack-Shah}.
\end{remark}

\subsection{Cohomology of Siegel sixfolds}\label{ss:cohomologyloc} Let $\pi$ be a  cuspidal automorphic representation of $\G(\A)$ having non-zero fixed vectors by a neat compact open group $U \subseteq \G(\Af)$.  We assume that $\pi$ has trivial central character and hence we regard it as a cuspidal automorphic representation of $\PGSp_6(\A)$. Our purpose is to establish that, under mild assumptions, suitable localizations at $\pi$ of cuspidal, $L^2$, inner Betti and Betti cohomologies coincide and are concentrated in the middle degree. The assumptions are the following.
\begin{itemize}
    \item[\textbf{(DS)}] the archimedean component $\pi_\infty$ is a discrete series representation of $\PGSp_6(\R)$,
\item[\textbf{(St)}] at a finite place $p$ the component $\pi_p$ is the Steinberg representation of $\PGSp_6(\Q_p)$.
\end{itemize}
Let us fix for the rest of this section $\lambda=(\lambda_1, \lambda_2, \lambda_3) \in \Z^3$ satisfying $\lambda_1 \geq \lambda_2 \geq \lambda_3 \geq 0$ and $\sum \lambda_i \equiv 0 \pmod{2}$. We will denote by $V$, without mentioning $\lambda$ anymore, the irreducible algebraic representation of $\G$ of highest weight $(\lambda,0)$. As $V$ has trivial central character, it will be considered as an irreducible representation of $\PGSp_6$. Then $\pi_\infty$ belongs to the discrete series $L$-packet $P(V)$. As a consequence
$$
H^6(\mathfrak{g}, K_{\G}; \pi_\infty \otimes V)=\mathrm{Hom}_{K_{\G}}\left(\bigwedge^6 \mathfrak{g}_{\C} / \mathrm{Lie}(K_{\G})_\C; \pi_\infty \otimes V\right) \neq 0,
$$
where $K_\G = \R_{+}^\times K_\infty$.\\

\begin{comment}
For any commutative ring $A$ and any semisimple $A[\G(\A_f)]$-module $M$, we define its $\pi_f$-isotypic part as the direct sum of all the irreducible sub-objects of $M$ which are isomorphic to $\pi_f$:
\[ M[\pi_f] = \bigoplus_{\pi_f \cong M' \subseteq M} M'. \]
We will also denote by $e_{\pi_f}: M \to M[\pi_f]$ the $A$-linear projection to the $\pi_f$-isotypic part.
\end{comment}

There are natural inclusions of spaces of $\C$-valued functions
\[
\mathcal{C}_{\rm cusp}^\infty(\G(\Q) \backslash \G(\A)) \subseteq \mathcal{C}^\infty_{\rm rd} (\G(\Q) \backslash \G(\A)) \subseteq \mathcal{C}_{(2)}^\infty(\G(\Q) \backslash \G(\A)) \subseteq \mathcal{C}^\infty(\G(\Q) \backslash \G(\A)),
\]
where these spaces denote, respectively, the space of cuspidal square-integrable functions, rapidly decreasing functions, square-integrable functions and smooth functions, and
\[
\mathcal{C}^\infty_{\rm c / center}(\G(\Q) \backslash \G(\A)) \subseteq \mathcal{C}^\infty_{\rm rd}(\G(\Q) \backslash \G(\A)),
\]
where the first space is the space of compactly supported modulo the center functions (for the precise definition of these spaces, we refer to \cite{borel2}). Tensoring by $V$ the inclusions above and applying the $(\mathfrak{g}, K_\G)$-cohomology functor, we obtain the natural maps 
\[
\xymatrix@C=1em{ 
H^\bullet_{\rm cusp}(\Sh_\G(U), \mathcal{V}_\C) \ar[r] & H^\bullet_{\rm rd}(\Sh_\G(U) , \mathcal{V}_\C) \ar[r] & H^\bullet_{(2)}(\Sh_\G(U) , \mathcal{V}_\C) \ar[r] &  H^\bullet(\Sh_\G(U) , \mathcal{V}_\C), \\
& H^\bullet_{\rm c}(\Sh_\G(U) , \mathcal{V}_\C) \ar[u] & &
}
\]
where $\mathcal{V}_\C$ is the $\C$-local system associated to $V$. Let $H^\bullet_{!}(\Sh_\G(U) ,  \mathcal{V}_\C)$ denote the image of $H^\bullet_{c}(\Sh_\G(U) ,  \mathcal{V}_\C)$ in $H^\bullet(\Sh_\G(U) ,  \mathcal{V}_\C)$. Let $N$ denote the positive integer defined as the product of prime numbers $\ell$ such that $\pi_\ell$ is ramified.
The fact that $\pi_\infty$ is cohomological implies that there exists a number field $L$ whose ring of integers $\mathcal{O}_L$ contains the eigenvalues of the spherical Hecke algebra $\mathcal{H}^{sph, N}$ away from $N$ and with coefficients in $\Z$ acting on $\bigotimes'_{\ell \nmid N} \pi_\ell^{\G(\Z_\ell)}$. Let $\mathcal{H}^{sph, N}_L$ denote the spherical Hecke algebra away from $N$ with coefficients in $L$, let $\theta_\pi: \mathcal{H}^{sph, N}_L \to L$ denote the Hecke character of $\pi$ and let $\mathfrak{m}_\pi := {\rm ker}(\theta_\pi)$. Considering the localization at $\mathfrak{m}_\pi$ of the above cohomology groups, we have the following result.

\begin{proposition} \label{comparisoncohomologies}
Let $\pi$ satisfy the hypothesis \textbf{(DS)} and \textbf{(St)} above. Then 
 $$H^\bullet_{\rm cusp}(\Sh_\G(U) , \mathcal{V}_\C)_{\mathfrak{m}_\pi} =H^\bullet_{(2)}(\Sh_\G(U) ,  \mathcal{V}_\C)_{\mathfrak{m}_\pi} = H^\bullet_{!}(\Sh_\G(U) ,  \mathcal{V}_\C)_{\mathfrak{m}_\pi}=H^\bullet(\Sh_\G(U),  \mathcal{V}_\C)_{\mathfrak{m}_\pi}.$$
\end{proposition}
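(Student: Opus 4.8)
The plan is to reduce the whole statement to two inputs: the Steinberg hypothesis \textbf{(St)} forces the local representation $\pi_p$ to be non-tempered-free in a way that kills residual and Eisenstein contributions after localizing at $\mathfrak{m}_\pi$, and the discrete series hypothesis \textbf{(DS)} controls the archimedean place. The chain of inclusions of function spaces gives, after applying $(\mathfrak{g}, K_\G)$-cohomology and tensoring with $V$, the diagram of cohomology groups already displayed in the excerpt; all four groups in the claim sit inside $H^\bullet(\Sh_\G(U), \mathcal{V}_\C)$, so it suffices to show the outermost maps become isomorphisms after localizing at $\mathfrak{m}_\pi$. Since localization is exact, I will show that the cokernel of $H^\bullet_{\mathrm{cusp}} \hookrightarrow H^\bullet_{(2)}$ and the cokernels/kernels relating $H^\bullet_{(2)}$, $H^\bullet_!$ and $H^\bullet$ all vanish after this localization.

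First I would recall the Franke--Schwermer / Borel decomposition of $H^\bullet(\Sh_\G(U), \mathcal{V}_\C)$ (and of the $L^2$-cohomology via Borel--Casselman) into a cuspidal part plus contributions indexed by proper parabolic subgroups of $\G = \GSp_6$, i.e. associate classes of cuspidal data $(\M, \rho)$ with $\M \subsetneq \G$ a Levi. The key point is that any such Eisenstein/residual contribution is built from a representation that is a subquotient of a parabolic induction from a proper Levi, hence its local component at $p$ is a constituent of an induced representation $\mathrm{Ind}_{\M(\Q_p)}^{\G(\Q_p)}(\cdots)$; by the classification of the unramified-twist constituents and the fact that the Steinberg representation of $\PGSp_6(\Q_p)$ is not a subquotient of any representation parabolically induced from a proper parabolic (it is "new" / does not come from a proper Levi — equivalently its Hecke eigensystem away from $p$ is not shared with any such induced module, by strong multiplicity one / the description of the Satake parameters of the boundary), the Hecke eigensystem $\theta_\pi$ does not occur in the boundary/residual terms. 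Concretely, I would invoke the standard fact (used, e.g., in work on Siegel modular forms with a Steinberg place, and analogous to Proposition in \cite{CLR}) that for $\pi$ satisfying \textbf{(St)}, $\mathfrak{m}_\pi$ is not in the support of the non-cuspidal part of any of these cohomology groups; equivalently the localization at $\mathfrak{m}_\pi$ sees only the cuspidal spectrum. This gives $H^\bullet_{(2),\mathfrak{m}_\pi} = H^\bullet_{\mathrm{cusp},\mathfrak{m}_\pi}$ and also $H^\bullet_{\mathfrak{m}_\pi} = H^\bullet_{!,\mathfrak{m}_\pi}$, because the difference between full cohomology and interior cohomology is the image of boundary cohomology, which is again built from proper parabolics and hence annihilated by localization at $\mathfrak{m}_\pi$.

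Then I would use the Borel--Casselman identification of $L^2$-cohomology with $(\mathfrak{g}, K_\G)$-cohomology of the discrete automorphic spectrum together with the Zucker conjecture (Looijenga / Saper--Stern) to identify $H^\bullet_{(2)}$ with intersection cohomology, and observe that interior cohomology $H^\bullet_!$ maps to $H^\bullet_{(2)}$ compatibly; after localization, both equal the cuspidal contribution, so $H^\bullet_{!,\mathfrak{m}_\pi} = H^\bullet_{(2),\mathfrak{m}_\pi}$. Chaining these equalities yields
\[
H^\bullet_{\mathrm{cusp}}(\Sh_\G(U), \mathcal{V}_\C)_{\mathfrak{m}_\pi} = H^\bullet_{(2)}(\Sh_\G(U), \mathcal{V}_\C)_{\mathfrak{m}_\pi} = H^\bullet_!(\Sh_\G(U), \mathcal{V}_\C)_{\mathfrak{m}_\pi} = H^\bullet(\Sh_\G(U), \mathcal{V}_\C)_{\mathfrak{m}_\pi}.
\]
The main obstacle I anticipate is the second paragraph: showing rigorously that the Steinberg condition at $p$ excludes the Hecke eigensystem $\theta_\pi$ from every Eisenstein and residual term. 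This requires either a clean appeal to known results on the residual spectrum of $\mathrm{Sp}_6$/$\mathrm{GSp}_6$ (Kim, and the general Moeglin--Waldspurger theory) combined with a local argument that Steinberg is not a subquotient of a proper parabolic induction, or a softer argument via the analytic behavior of the constant term — I would expect the cleanest route is to cite the analogue already established in \cite{CLR} for the same group and adapt it, since there the Steinberg hypothesis is used for exactly this purpose. A secondary technical point is the neatness of $U$ (needed so the Shimura variety is smooth and the local systems behave) and making sure the concentration in the middle degree $\bullet = 6$ then follows from the fact that, after localization, everything is the $(\mathfrak{g}, K_\G)$-cohomology of a discrete series, which by the computation recalled in Section \ref{ss:discreteseries} is concentrated in degree $6$ — though the proposition as stated only claims the coincidence of the four theories, not the concentration, so that last remark can be relegated to the surrounding discussion.
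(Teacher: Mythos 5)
Your overall architecture is the same as the paper's (localize at $\mathfrak{m}_\pi$, show only the cuspidal spectrum survives, then sandwich interior cohomology), but the justification you give for the key step contains a genuine error. You claim that the Steinberg representation of $\PGSp_6(\Q_p)$ "is not a subquotient of any representation parabolically induced from a proper parabolic"; this is false — the Steinberg representation is precisely a constituent of the unramified principal series induced from the Borel at the special evaluation point, and more generally local components of Eisenstein data can perfectly well have Steinberg constituents. The fallback you offer ("strong multiplicity one" for the eigensystem away from $p$, or citing an analogue in \cite{CLR}) does not close this: strong multiplicity one in that form is not available for $\G$, and the statement you need is exactly the nontrivial point. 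What the paper actually uses is temperedness: by \cite[Lemma 8.1(2)]{KretShin}, the hypotheses \textbf{(DS)} and \textbf{(St)} force $\pi_\ell$ to be tempered and unitary at every unramified place, whereas any non-cuspidal representation in the discrete spectrum is a residue of an Eisenstein series and hence non-tempered at every place (\cite[Proposition 4.5.4]{Labesse}); this gives $H^\bullet_{\rm cusp,\mathfrak{m}_\pi}=H^\bullet_{(2),\mathfrak{m}_\pi}$ after writing $L^2$-cohomology via the discrete spectrum.

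For the comparison of $L^2$-cohomology with full cohomology, the paper does not pass through boundary cohomology of the Borel--Serre compactification or the Zucker conjecture as you propose; it uses Franke's filtration on the space of automorphic forms (\cite[\S 4.7--4.8]{Waldspurger}), whose $E_1$-terms are indexed by pairs $(w,P)$, and the proof of \cite[Lemma 8.1(1)]{KretShin} to see that after localization at $\mathfrak{m}_\pi$ all terms with $P\neq\G$ vanish, so the spectral sequence degenerates and yields $H^\bullet_{(2),\mathfrak{m}_\pi}=H^\bullet_{\mathfrak{m}_\pi}$. The equality with $H^\bullet_{!,\mathfrak{m}_\pi}$ then follows from Borel's comparison $H^\bullet_c\cong H^\bullet_{\rm rd}$ and the injectivity statements of \cite{borel2}, with no need for intersection cohomology. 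So your route could in principle be made to work (a boundary-cohomology argument ruling out $\theta_\pi$ in Eisenstein cohomology is a known alternative), but as written the decisive local claim is wrong and the actual mechanism — temperedness at unramified places forced by the Steinberg place, contrasted with the non-temperedness of residual and Eisenstein contributions — is missing.
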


\begin{proof}
By \cite[Theorem 5.3 \& Corollary 5.5]{borel2}, the compositions of the horizontal maps \[ H^\bullet_{\rm cusp}(\Sh_\G(U) ,  \mathcal{V}_\C) \hookrightarrow H^\bullet_{*}(\Sh_\G(U) ,  \mathcal{V}_\C), \]
for $* \in \{ {\rm rd}, (2), \emptyset \}$, are injections. By \cite[Theorem 5.2]{borel2}, one has an isomorphism
\[ H^\bullet_{\rm c}(\Sh_\G(U) ,  \mathcal{V}_\C) \cong H^\bullet_{\rm rd}(\Sh_\G(U) ,  \mathcal{V}_\C). \]
Hence, if the equality $H^\bullet_{\rm cusp}(\Sh_\G(U) ,  \mathcal{V}_\C)_{\mathfrak{m}_\pi} =H^\bullet_{(2)}(\Sh_\G(U) ,  \mathcal{V}_\C)_{\mathfrak{m}_\pi}$ holds, we have
\[ H^\bullet_{\rm cusp}(\Sh_\G(U) ,  \mathcal{V}_\C)_{\mathfrak{m}_\pi} = H^\bullet_{(2)}(\Sh_\G(U) ,  \mathcal{V}_\C)_{\mathfrak{m}_\pi} = H^\bullet_{!}(\Sh_\G(U) ,  \mathcal{V}_\C)_{\mathfrak{m}_\pi}. \]
We show the former equality as follows. By \cite[\S 4]{borelstablel2}, \begin{equation}\label{eq:L2coho}
    H^\bullet_{(2)}(\Sh_\G(U) , \mathcal{V}_\C) = \bigoplus_{\sigma \subset L^2_d} \sigma_f^U  \otimes H^{\bullet}(\mathfrak{g}, K_{\G}; \sigma_\infty \otimes V)^{m(\sigma)}, \end{equation}
where $\sigma$ runs over the set of isomorphism classes of automorphic representations appearing in the discrete spectrum $L^2_d$ of $L^2(Z(\A)\G(\Q) \backslash \G(\A))$. Similarly,
\[H^\bullet_{\rm cusp}(\Sh_\G(U) , \mathcal{V}_\C) = \bigoplus_{\sigma \subset L^2_0} \sigma_f^U  \otimes H^{\bullet}(\mathfrak{g}, K_{\G}; \sigma_\infty \otimes V)^{m_0(\sigma)}, \]
where $\sigma$ runs over the set of isomorphism classes of automorphic representations in the cuspidal spectrum $L^2_0 \subset L^2_d$. From \eqref{eq:L2coho}, we can write 
\[H^\bullet_{(2)}(\Sh_\G(U), \mathcal{V}_\C)_{\mathfrak{m}_\pi} = \bigoplus_{\sigma = \sigma_\infty \otimes \sigma_f} \sigma_f^U  \otimes H^{\bullet}(\mathfrak{g}, K_{\G} ; \sigma_\infty \otimes V)^{m(\sigma)},\]
where $\sigma \in L^2_d$ is such that $\sigma_\ell^{\G(\Z_\ell)} \simeq \pi_\ell^{\G(\Z_\ell)} \ne 0$ at all $\ell \nmid N$. Notice that the latter implies that $\sigma_f^{N} \simeq \pi^{N}_f$, where for any automorphic representation $\tau$ we have denoted $\tau_f^{N} = \otimes_{\ell \nmid N} \tau_\ell$.
By \cite[Lemma 8.1(2)]{KretShin}, the Steinberg condition implies that the representation $\pi_\ell$ is tempered and unitary at each $\ell \nmid N$ (as $\pi$ has trivial central character). Thus, if $\sigma$ contributes non-trivially to the above sum, its local component at a finite place $\ell \nmid N$ is tempered. This implies that $\sigma$ is necessarily cuspidal and thus appears in $H^\bullet_{\rm cusp}(\Sh_\G , \mathcal{V}_\C)_{\mathfrak{m}_\pi}$ with multiplicity $m_0(\sigma)=m(\sigma)$. This last statement follows from the fact that any non-cuspidal automorphic representation appearing in $L^2_d$ is obtained as a residue of an Eisenstein series and in particular it is non-tempered at every place (cf. \cite[Proposition 4.5.4]{Labesse}). We are left to show that \[H^\bullet_{(2)}(\Sh_\G(U) , \mathcal{V}_\C)_{\mathfrak{m}_\pi} = H^\bullet(\Sh_\G(U) , \mathcal{V}_\C)_{\mathfrak{m}_\pi}. \] 
Recall that Franke's decreasing filtration on the space of automorphic forms for $\G(\A )$ (cf. \cite[\S 4.7]{Waldspurger}) yields a spectral sequence $E_1^{p,q} \Rightarrow H^{p+q}(\Sh_\G(U), \mathcal{V}_\C)$, where
\[ E_1^{p,q}  =  \bigoplus_{\substack{(w,P) \in B(p) \\ \ell(w) \leq p+q}} \bigoplus_{\sigma = \sigma_\infty \otimes \sigma_f} ({\rm Ind}_{P(\A_f)}^{\G(\A_f)} \sigma_f )^U \otimes  H^{p+q - \ell(w)}( \mathfrak{m}, K_M; \sigma_\infty \otimes W^{w(\lambda + \rho) - \rho}),  \]
where, for all $p \in \Z_{\geq 0}$, $B(p)$ denotes a certain  subset depending on $p$ of elements $(w,P)$  (cf. \cite[\S 4.8]{Waldspurger}), with $w \in \mathfrak{W}_\G$ and $P=M \cdot U_P$ a standard parabolic subgroup of $\G$,  $W^{w(\lambda + \rho) - \rho}$ denotes the irreducible algebraic representation of $M$ of highest weight ${w(\lambda + \rho) - \rho}$, and $\sigma$ runs over the set of isomorphism classes of automorphic representations appearing in the discrete spectrum of $L^2(Z_M(\A) M(\Q) \backslash M(\A))$.  By the proof of \cite[Lemma 8.1(1)]{KretShin}, we have that $E^{p,q}_{1,\mathfrak{m}_\pi}$ are zero unless when $(w,P)=(1,\G)$, in which case there exists a unique $ p_0 \in \Z_{\geq 0}$, for which \[E^{p,q}_{1,\mathfrak{m}_\pi} = \begin{cases} H^{p+q}_{(2)}(\Sh_\G(U) , \mathcal{V}_\C)_{\mathfrak{m}_\pi} & \text{ if } p=p_0, \\ 
0 & \text{otherwise.}
\end{cases}  \]
Thus, the spectral sequence for the localization degenerates at the first page and gives \[ H^{p_0 + \bullet}_{(2)}(\Sh_\G(U) , \mathcal{V}_\C)_{\mathfrak{m}_\pi} =E^{p_0,\bullet}_{1,\mathfrak{m}_\pi} = H^{p_0 + \bullet}(\Sh_\G(U) , \mathcal{V}_\C)_{\mathfrak{m}_\pi}. \]
\end{proof}

\begin{proposition} \label{vanishingoutsidemiddledegree}
Let $\pi$ satisfy the hypothesis \textbf{(DS)} and \textbf{(St)} above. Then, we have  
\[H^\bullet(\Sh_\G(U) , \mathcal{V}_\C)_{\mathfrak{m}_\pi} = H^6(\Sh_\G(U) , \mathcal{V}_\C)_{\mathfrak{m}_\pi} \ne 0.\]

\end{proposition}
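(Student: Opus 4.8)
The plan is to reduce to the cuspidal part via Proposition~\ref{comparisoncohomologies}, expand it through the standard description of cuspidal cohomology, and then pin down the archimedean components of the automorphic representations that contribute.

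By Proposition~\ref{comparisoncohomologies}, $H^\bullet(\Sh_\G(U), \mathcal V_\C)_{\mathfrak m_\pi} = H^\bullet_{\rm cusp}(\Sh_\G(U), \mathcal V_\C)_{\mathfrak m_\pi}$, and, as recalled in the proof of that proposition, the latter decomposes as $\bigoplus_\sigma \sigma_f^U \otimes H^\bullet(\mathfrak g, K_\G; \sigma_\infty \otimes V)^{m_0(\sigma)}$, the sum running over the cuspidal automorphic representations $\sigma = \sigma_\infty \otimes \sigma_f$ of $\G(\A)$ with trivial central character such that $\sigma_\ell \cong \pi_\ell$ for all $\ell \nmid N$ and $H^\bullet(\mathfrak g, K_\G; \sigma_\infty \otimes V) \neq 0$. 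Non-vanishing in degree $6$ is then immediate: $\pi$ occurs in this sum, being cuspidal with $\pi^U \neq 0$ and $\pi_\infty \in P(V)$, and $H^6(\mathfrak g, K_\G; \pi_\infty \otimes V) \neq 0$ (indeed it is $2$-dimensional, by the discussion preceding Proposition~\ref{comparisoncohomologies} and Lemma~\ref{discrete-seriesforPGS}).

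For the concentration, the key claim is that every $\sigma$ occurring in the sum has $\sigma_\infty$ a discrete series. Granting this: a discrete series of $\PGSp_6(\R)$ contributes to $(\mathfrak g, K_\G)$-cohomology with coefficients in $V$ only when its infinitesimal character is that of $V$, hence only when it lies in $P(V)$, and then only in the middle degree $6 = \dim_\C \Sh_\G$; summing over $\sigma$ thus forces $H^i(\Sh_\G(U), \mathcal V_\C)_{\mathfrak m_\pi} = 0$ for $i \neq 6$. To prove the claim I would proceed as follows. By hypothesis \textbf{(St)} and \cite[Lemma 8.1(2)]{KretShin}, each $\pi_\ell$, hence each $\sigma_\ell$, is tempered for $\ell \nmid N$. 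On the other hand, being cohomological, $\sigma_\infty$ is a Vogan--Zuckerman module $A_{\mathfrak q}(\lambda)$; this is a discrete series when $\mathfrak q$ is a Borel subalgebra and is non-tempered otherwise. A non-tempered $\sigma_\infty$ would force the global Arthur parameter of $\sigma$ --- which coincides with that of $\pi$, the two representations being nearly equivalent --- to carry a non-trivial Arthur $\mathrm{SL}_2$; since this $\mathrm{SL}_2$ is present at every place, $\sigma_\ell$ would then be non-tempered for all $\ell \nmid N$ as well, contradicting the previous sentence. (Alternatively, one can invoke the $\GSpin_7$-valued Galois representation attached to $\sigma$ in \cite{KretShin} and purity at the infinitely many unramified places where $\sigma$ is tempered.) Hence $\sigma_\infty$ is a discrete series.

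The step I expect to be the main obstacle is this last one --- ruling out non-tempered cohomological archimedean components, which is precisely where the Steinberg hypothesis is needed. Making it rigorous requires either Arthur's classification of the discrete automorphic spectrum --- most conveniently applied after restricting $\sigma$ from $\PGSp_6(\A)$ to $\Sp_6(\A)$, whose dual group is $\mathrm{SO}_7$ --- or the Galois-theoretic input of Kret--Shin; in fact the whole statement can also be extracted from \cite[Lemma 8.1]{KretShin} combined with Proposition~\ref{comparisoncohomologies}.
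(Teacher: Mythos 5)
Your argument is correct in outline, but it follows a genuinely different route from the paper. The paper never classifies the archimedean components of the contributing representations: after reducing, as you do, to classes on which the Hecke eigenvalues agree with those of $\pi$ away from $N$, so that $\tau_\ell \simeq \pi_\ell$ is tempered for $\ell \nmid N$ by \cite[Lemma 8.1(2)]{KretShin}, it works at a single unramified prime $\ell$ and combines purity of Frobenius on intersection cohomology (\cite{Morel}) with the congruence relation of \cite{BlasiusRogawski} verified in \cite{Wedhorn}: the Frobenius eigenvalues on $H^i(\Sh_\G(U),\mathcal{V}_\C)_{\mathfrak{m}_\pi}$ are among those of $\ell^{3}\,{\rm spin}(\phi_{\pi_\ell}({\rm Fr}_\ell))$, hence of absolute value $\ell^{3}$ by temperedness, while purity says they have absolute value $\ell^{i/2}$, forcing $i=6$. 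Your route instead shows that every contributing $\sigma_\infty$ is a discrete series and then uses that discrete series of $\PGSp_6(\R)$ have $(\mathfrak{g},K_\G)$-cohomology with coefficients in $V$ only in degree $6$. What each buys: the paper's proof avoids Arthur's classification and any discussion of non-tempered cohomological modules, at the price of the geometric input (the congruence relation for the Siegel sixfold and purity of $IH$); your proof avoids that input but needs the heavy automorphic machinery, and this is exactly what the paper's own Remark points to --- the analogous statement in \cite{KretShin} is their Proposition 8.2, proved by a trace formula argument, so your parenthetical claim that the whole statement can be extracted from their Lemma 8.1 alone is slightly off.

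Two points in your key step need more care before the sketch closes. First, Arthur's classification applies to $\Sp_6$ (dual group $\mathrm{SO}_7$), not to $\PGSp_6$ directly, so the restriction to $\Sp_6(\A)$ you mention (or the $\GSp_{2n}$ extension carried out by Kret--Shin) is genuinely needed, together with the fact that the restriction of $\sigma_\infty$ is a sum of discrete series if and only if $\sigma_\infty$ is one. Second, the implication ``$\sigma_\infty$ non-tempered $\Rightarrow$ the Arthur $\mathrm{SL}_2$ is non-trivial'' is really the contrapositive of a statement that does not follow from temperedness at the finite places: if the $\mathrm{SL}_2$ is trivial, the parameter is an isobaric sum of self-dual cuspidal representations of general linear groups, and one must know these are essentially tempered at infinity; this holds because they are generic and, by regularity of the infinitesimal character of $V$, regular algebraic (Clozel's purity lemma), and it is this archimedean temperedness --- not the temperedness of $\sigma_\ell$ --- that puts $\sigma_\infty$ in a tempered $L$-packet with regular infinitesimal character, hence among the discrete series. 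With these references supplied your argument is a valid alternative proof, essentially the Kret--Shin route rather than the paper's.
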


\begin{proof} 
Suppose that $\tau_f$ contributes to $H^i(\Sh_\G(U) , \mathcal{V}_\C)_{\mathfrak{m}_\pi}$. As we noted in the proof of Proposition \ref{comparisoncohomologies}, this implies that, for every $\ell \nmid N$, $\tau_\ell \simeq \pi_\ell$ is tempered and unitary (cf. \cite[Lemma 8.1(2)]{KretShin}).  
Let us fix $\ell \nmid N$; the action of the Frobenius correspondence on intersection cohomology ${\rm Frob}_\ell$ on $IH^i(\Sh_\G(U), \mathcal{V}_\C)[\tau_f]$ and thus on $H^i(\Sh_\G(U) , \mathcal{V}_\C)[\tau_f]$ is pure of weight $i$, i.e.  its eigenvalues all have absolute value $\ell^{i/2}$ (cf. \cite[Remark 7.2.5]{Morel}). On the other hand, by the congruence relation conjectured in \cite[\S 6]{BlasiusRogawski} and verified in \cite{Wedhorn}, ${\rm Frob}_\ell$  is a root of the Hecke polynomial \[ H_\ell(T):= {\rm det}(T - \ell^3 {\rm spin}({\rm Fr}_\ell \ltimes \hat{g})),\]
which is a polynomial in $T$ whose coefficients are elements in the coordinate ring of the set of ${\rm Fr}_\ell$-conjugacy classes of semisimple elements of $\widehat{\G}(\C) = \mathbf{GSpin}_{7}(\C)$, for ${\rm Fr}_\ell$ a Frobenius element in the Weil group of $\Q_\ell$. By the untwisted Satake isomorphism, we can see $ H_\ell(T)$ as a polynomial with coefficients in the spherical Hecke algebra $\mathcal{H}(\G(\Q_\ell) // \G(\Z_\ell), \Q)$ (cf. \cite[(2.2.1) \& Corollary (2.8)]{Wedhorn}) and thus we can denote by $H_\ell(T ; \tau_\ell)$ the specialization of $H_\ell(T)$ to $\tau_\ell$, i.e. \[H_\ell(T ; \tau_\ell) = {\rm det}(T - \ell^3{\rm spin}( \phi_{\tau_\ell}({\rm Fr}_\ell))),\] where $\phi_{\tau_\ell}$ is the unramified Langlands parameter of $\tau_\ell$. The congruence relation gives that $H_\ell({\rm Frob}_\ell ; \tau_\ell) =0$ on $IH^\bullet(\Sh_\G(U), \mathcal{V}_\C)[\tau_f]$, which implies that the eigenvalues of ${\rm Frob}_\ell$ on $IH^\bullet(\Sh_\G(U), \mathcal{V}_\C)[\tau_f]$ are a subset of the ones of $\ell^{3} {\rm spin}(\phi_{\tau_\ell}({\rm Frob}_\ell))$. As $\tau_\ell$ is tempered, all the eigenvalues of ${\rm spin}(\phi_{\tau_\ell}({\rm Fr}_\ell))$ have absolute value equal to 1 (cf. \cite[\S 6]{Gross}). Hence the eigenvalues of  $\ell^3 {\rm spin}(\phi_{\tau_\ell}({\rm Fr}_\ell))$, and thus of ${\rm Frob}_\ell$, have all absolute value equal to $\ell^3$. In particular, $H^i(\Sh_\G(U), \mathcal{V}_\C)[\tau_f]$ is zero unless $i = 6$. Finally, notice that $H^6(\Sh_\G(U), \mathcal{V}_\C)_{\mathfrak{m}_\pi} \ne 0$  as the assumption \textbf{(DS)} implies $H^6(\Sh_\G(U), \mathcal{V}_\C)[\pi_f] \ne 0$. 
\end{proof}

\begin{remark}
 The proof of Proposition \ref{vanishingoutsidemiddledegree} is similar to the one of \cite[Proposition 8.2]{KretShin}, where the proof is carried on with a trace formula argument.
\end{remark}

\subsection{Hodge theory} \label{section:Hodgetheory}

We keep the same notation as \S \ref{ss:cohomologyloc}. In particular, $\pi = \pi_\infty \otimes \pi_f$ is a cuspidal automorphic representation of $\G$ with trivial central character which satisfies \textbf{(DS)} and  \textbf{(St)}, with $\pi_\infty \in P(V)$ for some irreducible algebraic representation $V$ of $\G$ as above.

Let $\mathcal{V}$ denote the $\Q$-local system on $\Sh_\G(U)$ attached to $V$. We can take the $\pi_f$-isotypic component 
$H^6_{B,*}[\pi_f]$ of $H^6_*(\Sh_\G(U), \mathcal{V}_\C)$, where $* \in \{ \emptyset, ! \}$ and where $\mathcal{V}_\C$ denotes $\mathcal{V} \otimes_{\Q} \C$. Propositions \ref{comparisoncohomologies} and \ref{vanishingoutsidemiddledegree} imply \begin{align}\label{proponcohomologies}
H^\bullet_{B}[\pi_f] =H^\bullet_{B,!}[\pi_f]= H^6_{B,!}[\pi_f] \ne 0. \end{align}
By \cite[(2.3.1)]{BlasiusRogawski} (see also \cite[Proposition 2.15]{shintemplier}), if $L$ is a sufficiently large number field, $H^6_{B}[\pi_f]$ appears as a sub-quotient of $H^6_{!}(\Sh_\G(U), \mathcal{V}_{L})$, where $\mathcal{V}_L$ denotes $\mathcal{V} \otimes_\Q L$. In particular, we have a projection \[ {\rm pr}_\pi : H^6(\Sh_\G(U), \mathcal{V}_{L})_{\mathfrak{m}_\pi}(n) \twoheadrightarrow H^6_{B}[\pi_f](n).\]
Since $ H^6_{!}(\Sh_\G(U), \mathcal{V}_{L})$ is a pure $L$-Hodge structure of weight $6$, we have \[ H^6_{B}[\pi_f] = \pi_f^U(L) \otimes M_B(\pi_f),\]
with $\pi_f^U(L)$ a realization of $\pi_f^U$ over $L$  and 
$M_B(\pi_f)$ a pure $L$-Hodge structure of weight $6$. Thus  we have a decomposition \[ M_B(\pi_f) \otimes \C =\bigoplus_{p+q=6} H^{p,q}(\pi_f) .\]

\begin{lemma}\label{lemmaonBettiisotypicpart}
Under the hypothesis \textbf{(DS)} and \textbf{(St)}\[{\rm dim}_\C \, H^{p,q}(\pi_f) = \begin{cases} 1 & \text{ if } p\ne 3,\\  2 & \text{ if } p= 3.\end{cases}\] 
In particular,  we have $ {\rm dim}_L M_B(\pi_f) = 8$. 
\end{lemma}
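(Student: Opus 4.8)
The plan is to evaluate $H^6_B[\pi_f] = H^6_{B,!}[\pi_f]$ (see \eqref{proponcohomologies}) by Matsushima's formula together with the classification of discrete series recalled in \S\ref{ss:discreteseries}. By Propositions \ref{comparisoncohomologies} and \ref{vanishingoutsidemiddledegree}, $H^6_B[\pi_f]$ is the $\pi_f$-isotypic part of cuspidal cohomology, so the cuspidal Matsushima decomposition gives
\[
H^6_B[\pi_f] \;=\; \bigoplus_{\sigma}\, \pi_f^U \otimes H^6\big(\mathfrak{g}, K_{\G};\, \sigma_\infty \otimes V\big)^{m_0(\sigma)},
\]
the sum running over cuspidal automorphic representations $\sigma$ of $\G(\A)$ with trivial central character and $\sigma_f \cong \pi_f$. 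First I would pin down the possible $\sigma_\infty$. By Wigner's lemma $\sigma_\infty$ has the infinitesimal character determined by $V$, which is regular, so $\sigma_\infty$ is cohomological for $V$ in the sense of Vogan--Zuckerman; if $\sigma_\infty$ were non-tempered its $(\mathfrak{g}, K_{\G})$-cohomology would occupy a nontrivial range of degrees, which, since the displayed sum involves no cancellation, contradicts the concentration in degree $6$ provided by Proposition \ref{vanishingoutsidemiddledegree}. Hence $\sigma_\infty$ ranges over the discrete series $L$-packet $P(V)$ for $\PGSp_6(\R)$, which by \S\ref{ss:discreteseries} has exactly four members, each with $H^6(\mathfrak{g}, K_{\G}; \sigma_\infty \otimes V)$ of dimension $2$.

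Next I would record the Hodge bidegrees carried by these $2$-dimensional spaces. Via the Cartan decomposition $\bigwedge^6 \mathfrak{g}_\C/\mathrm{Lie}(K_{\G})_\C = \bigoplus_{p+q=6}\bigwedge^p \mathfrak{p}^+_\C \otimes_\C \bigwedge^q \mathfrak{p}^-_\C$ and Borel--Wallach \cite{Borel-Wallach}, these bidegrees are read off from the Hodge types of the two discrete series of $\Sp_6(\R)$ into which $\sigma_\infty$ decomposes, namely those of Harish-Chandra parameter $\mu$ and $w_8\mu$. Computing the Hodge types of all eight discrete series of $\Sp_6(\R)$ — by the same method as in the proof of Lemma \ref{discrete-series}, i.e. via \cite[Proposition 6.19]{VoganZuckerman} — one finds that the four members of $P(V)$ restrict to the $\Sp_6(\R)$-packets $\{w_1,w_8\}$, $\{w_2,w_7\}$, $\{w_3,w_6\}$, $\{w_4,w_5\}$, with $\Sp_6(\R)$-Hodge types $\{(6,0),(0,6)\}$, $\{(5,1),(1,5)\}$, $\{(4,2),(2,4)\}$, $\{(3,3),(3,3)\}$ respectively; the last of these is the discrete series $\pi_\infty^{3,3}$ of Lemma \ref{discrete-seriesforPGS}. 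Thus each of the first three members contributes a one-dimensional space in each of two distinct off-diagonal bidegrees, while $\pi_\infty^{3,3}$ contributes both of its classes in bidegree $(3,3)$.

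The hard part will be the multiplicity input: $m_0(\sigma) = 1$ for each of the four $\sigma$ with $\sigma_f \cong \pi_f$ and $\sigma_\infty \in P(V)$. I would obtain it from Arthur's multiplicity formula for $\PGSp_6$, in the form employed in \cite{KretShin}. The Steinberg hypothesis \textbf{(St)} forces the global Arthur parameter $\psi$ of $\pi$ to be simple: since $\pi_p$ is Steinberg, the standard $7$-dimensional representation $\mathrm{std}\circ\psi$ is, at $p$, the indecomposable (special) Weil--Deligne representation of dimension $7$, so it cannot decompose globally, whence $\mathcal{S}_\psi = \pi_0\big(Z_{\mathrm{Spin}_7}(\psi)/Z(\mathrm{Spin}_7)\big)$ is trivial. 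With trivial component group Arthur's formula yields multiplicity $1$ for every choice of members in the local $L$-packets; in particular, for each $\sigma_\infty \in P(V)$ the representation $\sigma_\infty \otimes \pi_f$ is cuspidal of multiplicity one. (Equivalently, this is contained in the description of $H^6(\Sh_\G(U), \mathcal{V}_\C)_{\mathfrak{m}_\pi}$ as a Hecke-$\times$-Galois module in \cite{KretShin}, whose multiplicity space has dimension $\dim(\mathrm{spin}) = 8$.)

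It then remains to assemble the count. Summing over the four members with $m_0(\sigma) = 1$: bidegree $(3,3)$ receives exactly the two classes from $\pi_\infty^{3,3}$, and each bidegree in $\{(6,0),(5,1),(4,2),(2,4),(1,5),(0,6)\}$ receives exactly one class, from a single one of the other three members. Hence $\dim_\C H^{p,q}(\pi_f) = 1$ for $p \neq 3$ and $\dim_\C H^{3,3}(\pi_f) = 2$, so that $\dim_L M_B(\pi_f) = 1+1+1+2+1+1+1 = 8$.
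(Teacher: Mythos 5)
Your proposal is correct and follows the same skeleton as the paper's proof (Matsushima decomposition of $H^6_{B,{\rm cusp}}[\pi_f]$ over the four members of $P(V)$, each contributing a $2$-dimensional $(\mathfrak{g},K_\G)$-cohomology whose Hodge bidegrees are read off from the restriction to $\Sp_6(\R)$; your identification of the pairs $\{w_1,w_8\},\{w_2,w_7\},\{w_3,w_6\},\{w_4,w_5\}$ with Hodge types $(6,0),(5,1),(4,2),(3,3)$ up to conjugation is exactly the bookkeeping the paper relies on). The genuine difference is in how multiplicity one is obtained. You invoke Arthur's multiplicity formula directly: the Steinberg place forces the parameter to be simple (its standard $7$-dimensional Weil--Deligne representation at $p$ is indecomposable), hence $\mathcal{S}_\psi$ is trivial and every member of the archimedean packet paired with $\pi_f$ occurs with multiplicity one; the total dimension $8$ then comes out as a conclusion. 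The paper argues in the opposite direction: it quotes Kret--Shin's Theorem 12.1 (multiplicity is $0$ or $1$) together with their Corollaries 8.4 and 12.4 (which give ${\rm dim}_L M_B(\pi_f)=8$ as an input), and the equality $8=\sum_{\sigma_\infty\in P(V)}2\,m(\sigma)$ then forces $m(\sigma)=1$ for all four members. Both routes ultimately rest on the same endoscopic classification packaged in \cite{KretShin}; yours makes the mechanism (simple parameter, trivial component group) explicit and is self-contained modulo the availability of the multiplicity formula for $\PGSp_6$ in the form you use, while the paper's is a shorter black-box citation that avoids having to verify that the formula applies as stated — which is precisely what the quoted results of Kret--Shin certify. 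Your preliminary step ruling out non-tempered cohomological $\sigma_\infty$ (via concentration in degree $6$) is also a point the paper leaves implicit, and it is a sound way to justify restricting the sum to $P(V)$.
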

\begin{proof}
Thanks to \eqref{proponcohomologies}, we have that \[ H^6_{B}[\pi_f] \otimes \C = H^6_{B,!}[\pi_f] \otimes \C =H^6_{B,{\rm cusp}}[\pi_f] \otimes \C,  \]
hence \[  H^6_{B}[\pi_f] \otimes \C =\pi_f^U \otimes \bigoplus_{\sigma_\infty}  H^6(\mathfrak{g}, K_{\G}; \sigma_\infty \otimes V)^{m(\sigma)},\]
where $\sigma_\infty $ runs over the elements of the discrete series $L$-packet $P(V)$ of $\PGSp_6(\R)$ and $m(\sigma)$ denotes the multiplicity of $\sigma=\sigma_\infty \otimes \pi_f$. 
Notice that $H^6(\mathfrak{g}, K_\G; \sigma_\infty \otimes V )$ equals \begin{align}\label{eq:gKcohomologyDS}
     \mathrm{Hom}_{K_\infty}\left( \bigwedge^6 \mathfrak{g}_0/\mathfrak{k}, \sigma_\infty^1 \otimes V \right) \oplus \mathrm{Hom}_{K_\infty}\left( \bigwedge^6 \mathfrak{g}_0/\mathfrak{k}, \bar\sigma_\infty^1 \otimes V \right)
\end{align}
where we have denoted ${\sigma_\infty}_{|_{\G_0(\R)}}  = \sigma_\infty^1 \oplus \bar\sigma_\infty^1$. According to \cite[Theorem II.5.3 b)]{Borel-Wallach}, each space in the decomposition above is 1-dimensional. Moreover there exists a unique pair of integers $(r_{\sigma_\infty},s_{\sigma_\infty})$ satisfying $r_{\sigma_\infty}+s_{\sigma_\infty}=6$ such that \eqref{eq:gKcohomologyDS} equals

$$
\mathrm{Hom}_{K_\infty}\left( \bigwedge^{r_{\sigma_\infty}} \mathfrak{p}^+_\C \otimes \bigwedge^{s_{\sigma_\infty}} \mathfrak{p}^-_\C, \sigma^1_\infty  \otimes V \right) \oplus \mathrm{Hom}_{K_\infty}\left( \bigwedge^{s_{\sigma_\infty}} \mathfrak{p}^+_\C \otimes \bigwedge^{r_{\sigma_\infty}} \mathfrak{p}^-_\C, \bar\sigma^1_\infty  \otimes V \right).
$$       
As we remarked in \S \ref{ss:discreteseries}, the set $P(V)$ has four elements and is in bijection with the set of Hodge types up to conjugation. Since the Hodge structure in  $H^6_{B,{\rm cusp}}[\pi_f]$ is induced by this splitting, we deduce that 
\[ {\rm dim}_\C \, H^{r_{\sigma_\infty},s_{\sigma_\infty}}(\pi_f) = \begin{cases} m(\sigma) & \text{ if } r_{\sigma_\infty} \ne 3,\\  2 m(\sigma) & \text{ if } r_{\sigma_\infty} = 3.\end{cases} \]
By \cite[Theorem 12.1]{KretShin}, the multiplicity of $\sigma$ is either $0$ or $1$, while thanks to \cite[Corollary 8.4 \& Corollary 12.4]{KretShin} the dimension of $M_B(\pi_f)$ equals 8. Hence $m(\sigma) = 1$ for all $\sigma_\infty \in P(V)$, which concludes the proof.    
  \end{proof}

\subsection{Absolute Hodge cohomology} \label{sectAHC}

Let us first recall some definitions from \cite{BeilinsonAHC}. A mixed $\R$-Hodge structure consists of a finite dimensional $\R$-vector space $M_\R$ equipped with an increasing finite filtration $W_*$ called the weight filtration and a decreasing finite filtration $F^*$ on $M_\C = M_\R \otimes_\R \C$ called the Hodge filtration, such that each pair $(\mathrm{Gr}^W_n M_\R, (\mathrm{Gr}^W_n M_\C, F^*))$ is a  pure $\R$-Hodge structure of weight $n$ (\cite[D\'efinition 2.1.10]{DeligneTH2}). The category of mixed $\R$-Hodge structures is an abelian category \cite[T\'eor\`eme (2.3.5)]{DeligneTH2} and we denote it by $\mathrm{MHS}_\R$.

\begin{definition}
A real mixed $\R$-Hodge structure is given by a mixed $\R$-Hodge structure such that $M_\R$ is equipped with an involution $F_\infty^*$ stabilizing the weight filtration and whose $\C$-antilinear complexification $\overline{F_\infty^*} = F_\infty^* \otimes c$, where $c$ denotes the complex conjugation, defines an involution on $M_\C$ stabilizing the Hodge filtration. 
\end{definition}

We will refer to $F_\infty^*$ as the real Frobenius and to $\overline{F_\infty^*}$ as the de Rham involution. We denote by $\mathrm{MHS}_\R^+$ the abelian category of real mixed Hodge $\R$-structures. For any pair of objects $M, N \in D(\mathrm{MHS}_\R^+)$, one has $R \Hom_{\mathrm{MHS}_\R^+}(M, N) = R \Hom_{\mathrm{MHS}_\R}(M, N)^{\overline{F_\infty^*}}$, since taking invariants by $\overline{F_\infty^*}$ is an exact functor.

\begin{definition}
If $M = (M_\R, F_\infty^*) \in C(\mathrm{MHS}_\R^+)$ is a complex of real mixed $\R$-Hodge structure, its absolute Hodge cohomology is defined as
\[ R \Gamma_{\mathcal{H}}(M) = R \Hom_{\mathrm{MHS}_\R}(\R(0), M_\R). \]
Its real absolute Hodge cohomology is defined as
\[ R \Gamma_{\mathcal{H / \R}}(M) := R \Hom_{\mathrm{MHS}_\R^+}(\R(0), M) ) = R \Gamma_{\mathcal{H}}(M_\R)^{\overline{F_\infty^*}}. \]
\end{definition}

The cohomology groups $H^i_B(\Sh_{\G}(U), \mathcal{V}_\R)$, where $\mathcal{V}_\R = \mathcal{V} \otimes_\Q \R$, are equipped with a real Frobenius $F_\infty^*$ acting as the complex conjugation on (the complex points) $\Sh_{\G}(U)$ and on $\mathcal{V}_\R$, define real mixed $\R$-Hodge structures. This can be deduced directly from \cite{DeligneTH2} since the cohomology with coefficients is a direct factor of the cohomology of a fiber product of the universal abelian variety of $\Sh_\G(U)$, or from the theory of mixed Hodge modules of \cite{MorihikoSaito}. We let $M \in C(\mathrm{MHS}_\R^+)$ be the complex of real mixed $\R$-Hodge structures given by $(\bigoplus_{i \in \N} H^i_B(\Sh_{\G}(U), \mathcal{V}_\R)[-i], F_\infty^*)$ and we define the absolute real Hodge cohomology $H^7_\mathcal{H}(\Sh_\G(U) / \R, \mathcal{V}_\R(4))$ of $\Sh_\G(U)$ and coefficients in $\mathcal{V}_\R(4)$ to be $H^1(R \Gamma_{\mathcal{H}/\R}(M(4)))$.
%By a slight abuse of notation, let $\mathcal{V}_\R$ denote the real admissible polarizable variation of $\R$-Hodge structures on $\Sh_{\G}(U)$ attached to the algebraic representation $V$ of $\G$. We can define real absolute Hodge cohomology $H^{7}_{\mathcal{H}}(\Sh_{\G}(U)/\R, \mathcal{V}_\R(4))$ with coefficients in $\mathcal{V}_\R(4)$ as in \cite[\S 2.2]{asianJM}.
Then we have the short exact sequence
$$
0 \rightarrow \mathrm{Ext}^1_{\mathrm{MHS}_{\R}^+}(\R(0), H^6_B(\Sh_{\G}(U),\mathcal{V}_\R(4))) \rightarrow
$$
$$
H^{7}_{\mathcal{H}}(\Sh_{\G}(U)/\R, \mathcal{V}_\R(4)) \rightarrow \mathrm{Hom}_{\mathrm{MHS}_{\R}^+}(\R(0), H^7_B(\Sh_{\G}(U),\mathcal{V}_\R(4))) \rightarrow 0
$$
If $\pi = \pi_\infty \otimes \pi_f$ is as above, we denote by \[H^1_\mathcal{H}(M(\pi_f)_\R(4)):=  \left(H^{7}_{\mathcal{H}}(\Sh_{\G}(U)/\R, \mathcal{V}_\R(4)) \otimes L \right ) [\pi_f]\]
the $\pi_f$-isotypical component.

\begin{lemma} \label{exloc}
Under the hypothesis \textbf{(DS)} and \textbf{(St)}, we have a canonical short exact sequence of finite rank free $\R \otimes_{\Q} L$-modules
\[
 0 \rightarrow F^{4}H^6_{\rm dR}[\pi_f] \rightarrow H^6_{\rm B}[\pi_f]^{F_\infty^\star = -1 }(3)  \rightarrow  H^1_\mathcal{H}(M(\pi_f)_\R(4)) \rightarrow 0.
\]
Moreover, we have
\[{\rm dim}_{\R \otimes_\Q L} \, H^1_\mathcal{H}(M(\pi_f)_\R(4)) = {\rm dim}_\C \, \pi_f^U.\]
\end{lemma}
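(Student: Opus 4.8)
The plan is to identify $H^1_\mathcal{H}(M(\pi_f)_\R(4))$ with an $\mathrm{Ext}^1$-group in $\mathrm{MHS}_\R^+$, rewrite it through the standard description of absolute Hodge cohomology, and then carry out a dimension count using the Hodge numbers supplied by Lemma \ref{lemmaonBettiisotypicpart}. By \eqref{proponcohomologies} the cohomology $H^\bullet_B[\pi_f]$ is concentrated in degree $6$; in particular $H^7_B(\Sh_\G(U), \mathcal{V}_\R)[\pi_f] = 0$, so taking $\pi_f$-isotypic parts in the short exact sequence preceding the statement gives $H^1_\mathcal{H}(M(\pi_f)_\R(4)) = \mathrm{Ext}^1_{\mathrm{MHS}_\R^+}(\R(0), H^6_B[\pi_f]_\R(4))$. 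Since $H^6_{!}(\Sh_\G(U), \mathcal{V}_{L})$ is pure of weight $6$, the object $H^6_B[\pi_f]_\R(4)$ is a pure real Hodge structure of weight $-2 < 0$, so the corresponding $\mathrm{Hom}$-group vanishes, and Beilinson's computation of $\mathrm{Ext}^1$ in $\mathrm{MHS}_\R^+$ (\cite{BeilinsonAHC}) realizes $H^1_\mathcal{H}(M(\pi_f)_\R(4))$ as the cokernel of the map $F^{4}H^6_{\rm dR}[\pi_f] \to H^6_B[\pi_f]^{F_\infty^\star = -1}(3)$ induced by the Betti--de Rham comparison. This map is injective because $F^4 H^6_{\rm dR}[\pi_f]$, which is spanned by classes of Hodge type $(p,6-p)$ with $p\geq 4$, meets its complex conjugate (spanned by types with $p\leq 2$) only in $0$. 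This yields the asserted short exact sequence.

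For the dimension count, write $h^{p,q} = \dim_\C H^{p,q}(\pi_f)$; by Lemma \ref{lemmaonBettiisotypicpart}, $h^{p,q} = \dim_\C\pi_f^U$ for $p \neq 3$ and $h^{3,3} = 2\dim_\C\pi_f^U$, so $\dim_{\R\otimes_\Q L} F^4 H^6_{\rm dR}[\pi_f] = h^{6,0} + h^{5,1} + h^{4,2} = 3\dim_\C\pi_f^U$. Since $F_\infty^\star$ interchanges $H^{p,q}(\pi_f)$ with $H^{q,p}(\pi_f)$, each of the three conjugate pairs of Hodge pieces with $p \neq 3$ contributes $\dim_\C\pi_f^U$ to the $(-1)$-eigenspace of $F_\infty^\star$, giving $3\dim_\C\pi_f^U$. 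For the middle piece, recall from the proof of Lemma \ref{lemmaonBettiisotypicpart} (together with Lemma \ref{discrete-series}) that $H^{3,3}(\pi_f) = \pi_f^U \otimes H^6(\mathfrak{g}, K_\G; \pi_\infty^{3,3}\otimes V)$ and that, restricted to $\G_0(\R)$, the two-dimensional space $H^6(\mathfrak{g}, K_\G; \pi_\infty^{3,3}\otimes V)$ splits as the sum of the lines attached to $\pi_{\infty,1}^{3,3}$ and $\overline{\pi}_{\infty,1}^{3,3}$. Complex conjugation on $\Sh_\G(U)$, which induces $F_\infty^\star$ on cohomology, acts on this decomposition through the non-trivial component of the (disconnected) maximal compact of $\PGSp_6(\R)$, represented by $w_8$; by the discussion preceding Lemma \ref{discrete-seriesforPGS}, $w_8$ sends the Harish--Chandra parameter $(\lambda_2+2, \lambda_3+1, -\lambda_1-3)$ of $\pi_{\infty,1}^{3,3}$ to $(\lambda_1+3, -\lambda_3-1, -\lambda_2-2)$, the parameter of $\overline{\pi}_{\infty,1}^{3,3}$, hence it exchanges the two lines. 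Therefore $F_\infty^\star$ acts on $H^{3,3}(\pi_f)$ with each of its two eigenspaces of dimension $\dim_\C\pi_f^U$, so $\dim_{\R\otimes_\Q L} H^6_B[\pi_f]^{F_\infty^\star = -1}(3) = 4\dim_\C\pi_f^U$, and the short exact sequence gives $\dim_{\R\otimes_\Q L} H^1_\mathcal{H}(M(\pi_f)_\R(4)) = 4\dim_\C\pi_f^U - 3\dim_\C\pi_f^U = \dim_\C\pi_f^U$.

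The main obstacle is pinning down the action of complex conjugation on the middle Hodge piece $H^{3,3}(\pi_f)$ — that it interchanges the contributions of the two Hodge-type-$(3,3)$ discrete series, forcing an even split of its $(\pm1)$-eigenspaces. Granting this, the remaining assertions are a formal consequence of Lemma \ref{lemmaonBettiisotypicpart} and the general formalism of absolute Hodge cohomology.
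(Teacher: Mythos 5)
Your proof is correct and takes essentially the same route as the paper's: identify $H^1_\mathcal{H}(M(\pi_f)_\R(4))$ with $\mathrm{Ext}^1_{\mathrm{MHS}_\R^+}(\R(0), H^6_B[\pi_f]_\R(4))$ via the concentration in degree $6$, apply Beilinson's description of this $\mathrm{Ext}^1$ to get the exact sequence, and count dimensions with Lemma \ref{lemmaonBettiisotypicpart}, the decisive point being that $F_\infty^*$ exchanges the two Hodge-type-$(3,3)$ constituents so the middle piece splits evenly (the paper's $h^{3,+}=1$). The only difference is that you spell out the injectivity of $F^4H^6_{\rm dR}[\pi_f]\to H^6_B[\pi_f]^{F_\infty^\star=-1}(3)$ and the $w_8$-swap explicitly, whereas the paper delegates these steps to the cited references \cite{lemmaCM} and \cite{CLR}.
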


\begin{proof} It follows from the existence of the short exact sequence above and from Proposition \ref{vanishingoutsidemiddledegree} that we have a canonical isomorphism 
$$
H^1_\mathcal{H}(M(\pi_f)_\R(4)) \simeq \mathrm{Ext}^1_{\mathrm{MHS}_{\R}^+}(\R(0), H^6_B(\Sh_{\G}(U),\mathcal{V}_\R(4))[\pi_f]).
$$
Hence, the first statement of the Lemma follows as in \cite[Lemma 4.11]{lemmaCM}. In particular,  the map $F^{4}H^6_{\rm dR}[\pi_f] \to H^6_{\rm B}[\pi_f]^{F_\infty^\star = -1 }(3)$ is defined by the composition of 
\[F^{4}H^6_{\rm dR}[\pi_f] \rightarrow H^6_{\rm dR}[\pi_f] \otimes \C \simeq  H^6_{B}(\Sh_\G(U), \mathcal{V}_{\C})[\pi_f],  \]
of the projection to
$ H^{6}_{B}(\Sh_{\G}(U), \mathcal{V}_{\C})[\pi_f]^{\overline{F_\infty^*}=1}$, where $\overline{F_\infty^*}$ is the complexification $F_\infty^* \otimes c$, with $c$ denoting the complex conjugation, and of the projection $$ H^{6}_{B}(\Sh_{\G}(U), \mathcal{V}_{\C})[\pi_f]^{\overline{F_\infty^*}=1}= H^6_{\rm B}[\pi_f]^{F_\infty^\star = -1 }(3) \oplus H^6_{\rm B}[\pi_f]^{F_\infty^\star = 1 }(4) \rightarrow  H^6_{\rm B}[\pi_f]^{F_\infty^\star = -1 }(3).$$ Finally, by Lemma \ref{lemmaonBettiisotypicpart}, we have that \[{\rm dim}_{\R \otimes_\Q L} \, F^{4}H^6_{\rm dR}[\pi_f] = 3{\rm dim}_L \, \pi_f^U(L)=3{\rm dim}_\C \, \pi_f^U.\] On the other hand, \[{\rm dim}_{\R \otimes_\Q L} \, H^6_{\rm B}[\pi_f]^{F_\infty^\star = -1 }(3) = (3 + h^{3,+} ){\rm dim}_\C \, \pi_f^U,\]
 where $h^{3,+}$ is the dimension of the $\C$-vector space $\{ x \in H^{3,3}(\pi_f)\,:\, F_\infty^*(x) = - x \}$  (cf. \cite[\S 3.4.2]{CLR}). By the proof of Lemma \ref{lemmaonBettiisotypicpart}, we have $h^{3,+} = 1$, which implies the result.
\end{proof}

\section{Construction of the motivic class} \label{section:motivicclass}

\subsection{Cartan product}

Before starting, we briefly recall some properties of the Cartan product of irreducible representations that will be needed (cf. \cite[\S 2.5]{sun} for more details). Let $A$ denote either a connected compact Lie group or a semisimple algebraic group. Fix a Cartan subgroup of $A$ and an orientation of the roots. Irreducible algebraic representations of $A$ are parametrized by dominant weights. If $\lambda$ and $\sigma$ are two dominant weights with corresponding representations $V^\lambda$ and $V^\sigma$, then the representation $V ^{\lambda + \sigma}$ appears in $V^\lambda \otimes V^\sigma$ with multiplicity one. We denote it by $V^\lambda \cdot V^\sigma$ and we call it the Cartan component of $V^\lambda \otimes V^\sigma$. Clearly, the tensor product of two highest weight vectors maps to a corresponding highest weight vector. We denote by $v \otimes w \mapsto v \cdot w$ the projection from $V^\lambda \otimes V^\sigma$ to its Cartan component $V^\lambda \cdot V^\sigma$.
 
\begin{lemma} \cite[Lemma 2.12]{sun} \label{lemma:sun}
Every non-zero pure tensor in $V^\lambda \otimes V^\sigma$ projects non trivially to the Cartan component.
\end{lemma}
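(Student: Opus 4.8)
The plan is to reduce to the case of highest weight vectors. Let $v \otimes w$ be a non-zero pure tensor in $V^\lambda \otimes V^\sigma$, where $A$ is a connected compact Lie group or a semisimple algebraic group. Since $A$ acts irreducibly on each factor, the orbit of any non-zero vector spans the whole space; in particular, the $A$-submodule (or, in the algebraic case, the submodule generated under the action of the enveloping algebra $U(\mathfrak{a})$) generated by $v$ is all of $V^\lambda$, and likewise for $w$. Fix a highest weight vector $v_\lambda \in V^\lambda$; then there is a group element (or algebra element) $a$ with $a \cdot v = v_\lambda$ up to a non-zero scalar, and similarly $b \cdot w$ proportional to $v_\sigma$. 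Actually, to keep the argument clean I would argue as follows: the projection $p : V^\lambda \otimes V^\sigma \to V^\lambda \cdot V^\sigma$ is $A$-equivariant, so $p(v \otimes w) = 0$ would imply $p(a v \otimes a w) = a \cdot p(v \otimes w) = 0$ for all $a \in A$ (resp. all $a \in U(\mathfrak{a})$ acting diagonally).

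The key step is then to show that the set of pure tensors $x \otimes y$ with $p(x \otimes y) = 0$ is not all pure tensors, together with an argument that propagates vanishing. Concretely, suppose $p(v \otimes w) = 0$. The span of $\{a v \otimes a' w : a, a' \in A\}$ — or rather the image of $v \otimes w$ under $\mathrm{span}\{g \otimes g : g \in A\}$ acting diagonally, extended linearly — is too small to separate things, so instead I use the enveloping algebra. Consider the subspace $Z = \{ \xi \in V^\lambda \otimes V^\sigma : p(\xi) = 0\}$, which is an $A$-submodule (the kernel of an equivariant map), hence equals $\bigoplus$ of all isotypic components other than $V^{\lambda+\sigma}$. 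The claim is that no such submodule contains a non-zero pure tensor. I would prove this by a weight argument: decompose $v = \sum_\mu v_\mu$ and $w = \sum_\nu w_\nu$ into weight vectors; the component of $v \otimes w$ of weight $\lambda + \sigma$ (the highest weight of the Cartan component) is exactly $v_\lambda \otimes v_\sigma$ where $v_\lambda, v_\sigma$ are the (unique up to scalar) highest-weight components of $v, w$. Since $V^{\lambda+\sigma}$ is the unique summand containing the weight $\lambda+\sigma$ with multiplicity one, and its highest weight vector is $v_\lambda \cdot v_\sigma = p(v_\lambda \otimes v_\sigma)$, we get $p(v \otimes w) = 0 \iff$ the $(\lambda+\sigma)$-weight space of $V^{\lambda+\sigma}$-isotypic part vanishes $\iff v_\lambda \otimes v_\sigma$ has zero component along $V^{\lambda+\sigma}$.

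So it remains to see that for a pure tensor, $v_\lambda = 0$ or $v_\sigma = 0$ forces $v = 0$ or $w = 0$. This is where irreducibility is used essentially: if $v_\lambda = 0$, i.e. $v$ lies in the span of weight spaces for weights strictly less than $\lambda$, I claim $v = 0$. This follows because the highest weight $\lambda$ is the unique maximal weight and every non-zero vector, when hit by a suitable product of raising operators, lands in the highest weight line; more precisely, the sum of all weight spaces $V^\lambda_\mu$ with $\mu \neq \lambda$ does not contain a non-zero highest weight vector, but it is not a submodule, so I must be slightly more careful. The clean statement I would invoke: in an irreducible representation, any non-zero vector has non-zero component on the extreme weight spaces of its $U(\mathfrak{n}^-)$-... — hmm. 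Let me instead phrase it as: since $V^\lambda$ is irreducible with highest weight $\lambda$, a vector annihilated by the projection onto the $\lambda$-weight space after acting by all of $U(\mathfrak{n}^+)$ would be zero (as $U(\mathfrak{n}^+)\cdot v$ meets the highest weight line for any $v \neq 0$). Thus $v_\lambda \otimes v_\sigma \neq 0$, hence $p(v \otimes w) \neq 0$.

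The main obstacle I anticipate is the bookkeeping around the last point — making precise and rigorous the claim that a non-zero vector in an irreducible highest-weight module cannot have vanishing highest-weight component after it has been suitably translated, i.e. reconciling the "act by the group first" strategy with the weight-space strategy. The cleanest route is probably: replace $v$ by $g v$ for generic $g$ so that $v_\lambda \neq 0$ (possible since $\{u : u_\lambda \neq 0\}$ is a non-empty Zariski-open, equivalently dense, subset once we know some translate has non-zero $\lambda$-component, which holds because $V^\lambda_\lambda \neq 0$ and the $A$-orbit of any non-zero vector spans), do the same for $w$, use $A$-equivariance of $p$ to reduce to that case, and then conclude by the weight argument above that $p(gv \otimes g'w) = v_\lambda \cdot v_\sigma' \neq 0$, whence $p(v \otimes w) \neq 0$. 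I would cite \cite[Lemma 2.12]{sun} itself or reproduce this short argument; either way the content is exactly the density of the "non-zero highest-weight component" locus plus equivariance.
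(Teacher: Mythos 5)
Your proof is essentially correct, but it follows a genuinely different route from the paper's argument (the paper invokes Sun's Lemma 2.12; its own sketch realizes $V^\lambda$ and $V^\sigma$ inside the coordinate ring of the group as left $\overline{U}$-invariant regular functions). In that model the Cartan projection becomes pointwise multiplication of functions, the canonical highest weight vectors multiply to the canonical highest weight vector of weight $\lambda+\sigma$, and non-vanishing of the Cartan component of $v\otimes w$ is immediate because a product of two non-zero regular functions on a connected group is non-zero (the coordinate ring is a domain). Your route stays inside the tensor product: the $(\lambda+\sigma)$-weight space of $V^\lambda\otimes V^\sigma$ is one-dimensional, lies in the Cartan component, and the weight-$(\lambda+\sigma)$ component of $v\otimes w$ is $v_\lambda\otimes w_\sigma$; so the projection is non-zero as soon as both top-weight components are non-zero, and the general case is reduced to this one by translating by a group element and using equivariance. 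The function-model proof buys brevity and the extra structural fact about multiplicativity of canonical highest weight vectors; yours buys elementarity, using only weight theory plus a genericity argument.

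Two points need repair. First, your ``iff'' ($p(v\otimes w)=0$ if and only if $v_\lambda\otimes w_\sigma$ has zero Cartan component) holds only in the direction you actually use: $p(v\otimes w)$ may well be non-zero while its top-weight component vanishes. Second, and more seriously as written, in the final step you act by $g$ on $v$ and by a possibly different $g'$ on $w$; since $p$ is equivariant only for the diagonal action, $p(gv\otimes g'w)\neq 0$ does not imply $p(v\otimes w)\neq 0$. You must pick a single $g$ with $(gv)_\lambda\neq 0$ and $(gw)_\sigma\neq 0$ simultaneously. This is easily arranged: each locus $\{g\in A : (gv)_\lambda\neq 0\}$ and $\{g\in A : (gw)_\sigma\neq 0\}$ is non-empty (the span of an orbit is the whole irreducible module, so it cannot lie in the proper subspace of vectors with vanishing top component) and open with dense complement excluded — Zariski-open in the algebraic case, and with nowhere dense complement in the compact case since the relevant matrix coefficients are real-analytic — hence by connectedness of $A$ the two loci intersect. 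Choosing $g$ there, $p(gv\otimes gw)=g\cdot p(v\otimes w)$ has non-zero $(\lambda+\sigma)$-component, and you conclude. (Your intermediate claim that $v_\lambda=0$ forces $v=0$ is false — any non-highest weight vector is a counterexample — but you retracted it yourself and it plays no role in the corrected argument.)
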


\subsection{Branching laws} In what follows, we fix a totally real field $F$ over which $\H$ splits. Since $\H$ is split over $F$, its finite dimensional irreducible representations are determined by the highest weight theory and we can thus use the branching laws for algebraic representations from $\G$ to $\H$ established in \cite{CRDocumenta}.

\begin{lemma} \label{lemma:branchinglaw}
The $\G$-representation $V^\lambda$ over $F$ of highest weight $\lambda=(\lambda_1, \lambda_2, \lambda_3, c)$ contains the trivial $\H$-representation if and only if $c=0$ and $\lambda_1 = \lambda_2 + \lambda_3$. When this holds the trivial representation of $\H$ appears in $(V^\lambda)_{|_\H}$ with multiplicity $\lambda_2 - \lambda_3 +1$.
\end{lemma}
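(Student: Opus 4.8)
The plan is to deduce this branching statement from the explicit branching laws from $\G = \GSp_6$ to $\H = \GL_2 \boxtimes \GL_{2,F}^*$ worked out in \cite{CRDocumenta}. First I would recall that, since we are asking for the multiplicity of the \emph{trivial} $\H$-representation in $(V^\lambda)_{|_\H}$, we only need to understand when the trivial character of the maximal torus $\T_\H$ of $\H$ appears among the extreme weights picked out by the branching recipe, together with the constraint coming from the similitude/central characters. The condition $c = 0$ is forced immediately: the central torus $\G_m \hookrightarrow \T$ acts on $V^\lambda$ through the character $\nu^{c/2}$ (in the normalization of \S2.1), and this central torus maps into $\H$ via the common similitude factor, so a nonzero $\H$-invariant vector forces $c = 0$. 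Similarly, restricting along the factor $\GL_2 \subset \H$ acting on $V_1 = \Q e_1 \oplus \Q f_1$, the $\GL_2$-central character of any constituent must be trivial, which together with the $\GL_{2,F}^*$-side similitude matching pins down the relation among the $\lambda_i$.

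Next I would invoke the branching law of \cite{CRDocumenta} directly: it expresses $(V^\lambda)_{|_{\H}}$ (or an intermediate restriction to $\H' = \GL_2 \boxtimes \GSp_4$ and then from $\GSp_4$ down) as a multiplicity formula indexed by interlacing patterns of the $\lambda_i$. Plugging in the condition that the target $\H$-representation be trivial, the interlacing inequalities collapse to: $\lambda_1 = \lambda_2 + \lambda_3$ (this is the single equality making the relevant box non-empty), and the remaining freedom is a single integer parameter $t$ ranging over $\lambda_3 \le t \le \lambda_2$ (or an equivalent range of the same cardinality), each value contributing multiplicity one. Counting gives $\lambda_2 - \lambda_3 + 1$. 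I would present this by first reducing $\GSp_6 \to \GSp_4 \times \GL_1$-type steps, applying the classical $\GSp_4 \downarrow \GL_2 \times \GL_2$ (or $\GL_2 \boxtimes \GL_2^*$) branching, and then solving for the trivial weight; the arithmetic is the routine part and I would only sketch it.

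The main obstacle is bookkeeping: matching the normalization of highest weights $\lambda(\lambda_1,\lambda_2,\lambda_3,c)$ used here with the conventions of \cite{CRDocumenta}, and correctly tracking the similitude character through the fibre-product definitions of $\GL_{2,F}^*$ and $\H$ so that the ``$c=0$'' and ``$\lambda_1 = \lambda_2+\lambda_3$'' conditions come out in exactly this form rather than some twisted variant. A secondary subtlety is that $F$ may be a field (so $\GL_{2,F}^* = \mathrm{Res}_{F/\Q}\GL_2$ cut out by the determinant) rather than $\Q\times\Q$; but since we have base-changed to a splitting field $F$ (as set up just before the lemma), $\H_{/F}$ becomes a product of $\GL_2$'s and the branching is purely combinatorial, so this causes no real difficulty beyond noting that the multiplicity is computed after this base change and is insensitive to it. Once the conventions are aligned, the equality of multiplicities $\lambda_2-\lambda_3+1$ is a direct count of lattice points in a one-dimensional interval.
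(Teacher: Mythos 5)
Your proposal is correct and follows essentially the same route as the paper: the paper simply cites the packaged branching result \cite[Lemma 2.10]{CRDocumenta}, which gives the $\operatorname{Sym}^{(k,0,0)}$-isotypic part of $(V^\lambda)_{|_\H}$ with multiplicity $\lambda_2-\lambda_3+1$ and forces $\lambda_1=\lambda_2+\lambda_3$ from the lower bound $k=|\lambda_1-\lambda_2-\lambda_3|=0$, exactly the count you obtain from the stepwise interlacing argument. Your explicit central-character argument for $c=0$ is fine and just makes explicit what the paper leaves implicit in the determinant twist.
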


\begin{proof}
From \cite[Lemma 2.10]{CRDocumenta},  the sum of all irreducible sub-$\H$-representations of $V^\lambda$ isomorphic (up to a twist) to $\operatorname{Sym}^{(k, 0, 0)}$ for some $k \geq 0$  is given by

\[ \bigoplus_{\substack{k = | \lambda_1 - \lambda_2 - \lambda_3 | \\ k \equiv |\lambda| \, (\text{mod } 2)}}^{\lambda_1 - \lambda_2 + \lambda_3} r \cdot \operatorname{Sym}^{(k, 0, 0)} \otimes \det^{\frac{|\lambda| - k}{2}}, \]
for  $r= \lambda_2 - \lambda_3 +1$. From this we deduce that $V^\lambda$
contains the trivial $\H$-representation with multiplicity $r= \lambda_2 - \lambda_3 +1$ if and only if $\lambda_1 - \lambda_2 -\lambda_3 = 0 $.
\end{proof}

It will be useful to construct explicitly generators of the trivial $\H$-representations inside $V^\lambda$ given by the branching laws. We achieve this by constructing some vectors in the representations $V^{(1, 1 , 0, 0)}$ and $V^{(2,1,1,0)}$ and then by taking their Cartan product. From now on, all the representations are defined over $F$. Moreover, since the branching laws are determined by the restriction to the derived subgroups, in the following we work with the groups \[ \H_0 := \SL_2 \times \SL_{2} \times \SL_2 \hookrightarrow \H_0' := \SL_2 \times \Sp_4 \hookrightarrow \G_0 = \Sp_6. \] Recall that  we associate to any $\lambda=(\lambda_1, \lambda_2) \in \Z^2$ such that $\lambda_1 \geq \lambda_2 \geq 0$, the irreducible $\Sp_4$-representation with highest weight $\lambda$. Applying the branching laws \cite[Proposition 2.8]{CRDocumenta},  we get the following decompositions of representations of  $\H_ 0'$:

\begin{align*}
 V^{(1 , 1, 0)} &= (\operatorname{Sym}^0 \boxtimes V^{(0,0)}) \oplus  (\operatorname{Sym}^0 \boxtimes V^{(1, 1)}) \oplus (\operatorname{Sym}^1 \boxtimes V^{(1,0)}), \\ 
 V^{(2,1,1)} &= (\operatorname{Sym}^0 \boxtimes V^{(1, 1)}) \oplus  (\operatorname{Sym}^0 \boxtimes V^{(2,0)}) \oplus (\operatorname{Sym}^1 \boxtimes V^{(1, 0)}) \oplus  (\operatorname{Sym}^1 \boxtimes V^{(2,1)})\\
 &\,\,\,\,\,\,\,\,\,\,\oplus  (\operatorname{Sym}^2 \boxtimes V^{(1,1)}).
\end{align*}
By Lemma \ref{lemma:branchinglaw}, $V^{(1 , 1, 0)}$ contains two copies of the trivial $\H_0$-representation, each of which lies  resp. in $\operatorname{Sym}^0 \boxtimes V^{(0,0)}$ and $\operatorname{Sym}^0 \boxtimes V^{(1, 1)}$, while $V^{(2,1,1)}$ contains a unique trivial $\H_0$-representation appearing in $\operatorname{Sym}^0 \boxtimes V^{(1, 1)}$. Using these facts, we can explicitly define generators of these three trivial representations of $\H_0$. 

Let $V$ be the standard representation of $\G_0$ with its symplectic basis $\langle e_1, e_2, e_3, f_1, f_2, f_3 \rangle$ given in \S \ref{sec:modularembedding}. According to our choice of embedding $\H'_0 \hookrightarrow \G_0$, $\langle e_1,f_1 \rangle$, resp. $\langle  e_2, e_3,  f_2, f_3 \rangle$, defines a basis of the standard representation of $\SL_2$, resp. $\Sp_4$. We first recall how one can realize the representations $V^{(1,1,0)}$ and $V^{(1,1,1)}$. As explained in \cite[\S 17.1]{FultonHarris}, $V^{(1,1,0)}$ is realized inside $\bigwedge^2 V$ as the complement of the $\G_0$-invariant subspace generated by the vector $e_1 \wedge f_1 + e_2 \wedge f_2 + e_3\wedge f_3$ corresponding to the symplectic form or, in other words, as the kernel of the map $\bigwedge^2 V \to V$ sending $v_1 \wedge v_2$ to $\psi(v_1, v_2)$. By \cite[Theorem 17.5]{FultonHarris}, the irreducible representation $V^{(1 , 1 , 1)}$ is identified with the kernel of the map $\varphi : \bigwedge^3 V \to V,\; \; v_1 \wedge v_2 \wedge v_3 \mapsto \sum_{i<j, k \ne i,j} \psi(v_i,v_j)(-1)^{i-j+1}v_k$.

\begin{lemma} \label{vwz}Let $F(0)$ denote the trivial $\H_0$-representation. We have the following
\begin{align*}
    v &:= e_2 \wedge f_2 - e_3 \wedge f_3 \in F(0) \subseteq \operatorname{Sym}^0 \boxtimes V^{(1, 1)} \subseteq V^{(1 , 1 , 0)}, \\
    w &:= e_2 \wedge f_2 + e_3 \wedge f_3 - 2 e_1 \wedge f_1 \in F(0) \subseteq \operatorname{Sym}^0 \boxtimes V^{(0, 0)} \subseteq V^{(1 , 1 , 0)},\\
    z &:= z_1 - z_2 \in F(0) \subseteq \operatorname{Sym}^0 \boxtimes V^{(1, 1)} \subseteq V^{(2 , 1 , 1)},
\end{align*}
where \[ z_1 :=e_1 \cdot( f_1 \wedge e_2 \wedge f_2 - f_1  \wedge e_3 \wedge f_3),\] \[ z_2:= f_1 \cdot( e_1 \wedge e_2 \wedge f_2 - e_1  \wedge e_3 \wedge f_3),\] and $\cdot$ denotes the Cartan product.

\end{lemma}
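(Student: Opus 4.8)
The statement breaks up, for each of $v$, $w$, $z$, into two checks: that the vector lies in the indicated irreducible subrepresentation of an exterior power of the standard representation $V$ of $\G_0=\Sp_6$, and that it is a non-zero $\H_0$-fixed vector. Everything is governed by the block structure of $\H_0=\SL_2\times\SL_2\times\SL_2\hookrightarrow\Sp_6$: the $i$-th copy of $\SL_2$ acts symplectically on the hyperbolic plane $\langle e_i,f_i\rangle$ and trivially on the other two, so each bivector $e_i\wedge f_i$, spanning $\bigwedge^2\langle e_i,f_i\rangle$, is fixed by all of $\H_0$, and hence so is any linear combination of $e_1\wedge f_1$, $e_2\wedge f_2$, $e_3\wedge f_3$. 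Recalling that $V^{(1,1,0)}$ is the kernel of the contraction $\bigwedge^2 V\to F$, $x_1\wedge x_2\mapsto\psi(x_1,x_2)$, and that $\psi(e_i,f_i)=1$, we see at once that $v=e_2\wedge f_2-e_3\wedge f_3$ and $w=e_2\wedge f_2+e_3\wedge f_3-2\,e_1\wedge f_1$ are contracted to $0$, so $v,w\in V^{(1,1,0)}$, and the invariance was just noted. The refinements follow because $v$ is the difference of the two hyperbolic forms on $\langle e_2,f_2\rangle$ and $\langle e_3,f_3\rangle$, hence orthogonal to the $\Sp_4$-form $e_2\wedge f_2+e_3\wedge f_3$, so $v\in\operatorname{Sym}^0\boxtimes V^{(1,1)}$; while $w$ is a combination of $e_1\wedge f_1$ and of that $\Sp_4$-form, both $\Sp_4$-invariant, so $w\in\operatorname{Sym}^0\boxtimes V^{(0,0)}$.

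For $z$ write $z_1=e_1\cdot(f_1\wedge v)$ and $z_2=f_1\cdot(e_1\wedge v)$. The map $\varphi$ takes the same value on $f_1\wedge e_2\wedge f_2$ and $f_1\wedge e_3\wedge f_3$ (namely $\pm f_1$), and likewise with $e_1$ in place of $f_1$, so $f_1\wedge v$ and $e_1\wedge v$ lie in $\ker\varphi=V^{(1,1,1)}$; hence $z_1,z_2\in V^{(1,0,0)}\cdot V^{(1,1,1)}=V^{(2,1,1)}$, and so does $z=z_1-z_2$. For the invariance, the second and third factors of $\H_0$ fix $e_1,f_1$ and $v$, hence fix $z_1,z_2$; for the first factor, with $\mathfrak{sl}_2$-triple $(X,H,Y)$ on $\langle e_1,f_1\rangle$ satisfying $Xf_1=e_1$, $Ye_1=f_1$ and killing all other basis vectors, one computes $Hz_i=0$, $Xz_1=Xz_2=e_1\cdot(e_1\wedge v)$ and $Yz_1=Yz_2=f_1\cdot(f_1\wedge v)$, so $\mathfrak{sl}_2^{(1)}$ annihilates $z$. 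Thus $z$ is $\H_0$-fixed.

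It remains to prove non-vanishing; this is obvious for $v,w$, and for $z$ it is the only genuinely delicate point. By Lemma \ref{lemma:sun} the Cartan products $z_1,z_2$ are non-zero, so one must rule out $z_1=z_2$. The plan is to work inside $V^{(1,1,1)}\otimes V=V^{(2,1,1)}\oplus V^{(1,1,0)}$, realizing the second summand as the image of the $\G_0$-equivariant comultiplication $\delta\colon\bigwedge^2 V\to V^{(1,1,1)}\otimes V$, $\omega\mapsto\sum_i\bigl((\omega\wedge e_i)_{\mathrm{prim}}\otimes f_i-(\omega\wedge f_i)_{\mathrm{prim}}\otimes e_i\bigr)$, where $(\,\cdot\,)_{\mathrm{prim}}$ denotes the primitive projection $\bigwedge^3 V\to V^{(1,1,1)}$. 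With $t:=(f_1\wedge v)\otimes e_1-(e_1\wedge v)\otimes f_1$, the element $z$ is the image of $t$ under the Cartan projection onto $V^{(2,1,1)}$. A short computation gives $\delta(v)=-t+R$ with $R\neq0$ supported on the weight spaces $(V^{(1,1,1)})_{\pm e_j}\otimes V_{\mp e_j}$ for $j=2,3$, disjoint from those supporting $t$; on the other hand the natural contraction $V^{(1,1,1)}\otimes V\to\bigwedge^2 V$ sends $t$ to a non-zero multiple of $v$ and is non-zero, hence injective by Schur, on the $V^{(1,1,0)}$-summand. Hence if $z=0$, i.e. $t\in\operatorname{Im}\delta$, then $t$ would be a non-zero scalar multiple of $\delta(v)$, forcing $R=0$ — a contradiction. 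Therefore $z\neq0$; being a non-zero $\H_0$-invariant in $V^{(2,1,1)}$ it spans the unique trivial $\H_0$-subrepresentation there (Lemma \ref{lemma:branchinglaw}), which also identifies the summand $\operatorname{Sym}^0\boxtimes V^{(1,1)}$ of $V^{(2,1,1)}$ in which it lies. The main obstacle is precisely this non-vanishing of $z$; the membership and invariance statements are routine manipulations with the explicit models.
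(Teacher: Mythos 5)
Your proof is correct, and on the one genuinely delicate point it is both more complete than and different from the paper's own argument. For $w$ you argue exactly as the paper does ($\H_0'$-invariance plus uniqueness of the trivial $\H_0'$-constituent of $V^{(1,1,0)}$). For $v$ the paper instead produces the vector by applying explicit lowering operators to the highest weight vector $e_1\wedge e_2$ and tracks which $\H_0'$-constituent survives, whereas you check membership directly from the contraction model of $V^{(1,1,0)}$ and the splitting $\bigwedge^2\langle e_2,e_3,f_2,f_3\rangle = V^{(1,1)}\oplus F\cdot(e_2\wedge f_2+e_3\wedge f_3)$; both work, yours is the more elementary. For $z$ the paper only records that $z_1\neq 0$ (Lemma \ref{lemma:sun}) and that $z_1$ is fixed by the last two $\SL_2$-factors, then sets $z=z_1+h\cdot z_1=z_1-z_2$ and asserts, ``as it is easy to verify,'' that $z$ generates the unique trivial $\H_0$-subrepresentation; in particular the invariance under the first $\SL_2$ and the non-vanishing $z_1\neq z_2$ are left to the reader (the latter is in effect re-established only later, in Lemma \ref{lemma:KweightsforHvectors}, via the non-zero $K_\infty$-projections of $z$). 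You supply both: invariance by the explicit $\mathfrak{sl}_2$-triple computation $Hz_i=0$, $Xz_1=Xz_2$, $Yz_1=Yz_2$ (correct, using equivariance of the Cartan product), and non-vanishing by a self-contained argument inside $V^{(1,1,1)}\otimes V=V^{(2,1,1)}\oplus V^{(1,1,0)}$ (a decomposition the paper also uses), comparing $t$ with the image of the equivariant map $\delta$ and with the symplectic contraction back to $\bigwedge^2 V$. I verified the computations you only assert: $\delta(v)=-t+R$ with $R\neq 0$ because, e.g., $v\wedge e_2=-\,e_2\wedge e_3\wedge f_3$ is not proportional to $(e_1\wedge f_1+e_3\wedge f_3)\wedge e_2$, so its primitive part is non-zero; the contraction sends $t$ to $-2v\neq 0$; and the Schur-lemma bookkeeping (injectivity of the contraction on the $V^{(1,1,0)}$-summand, non-vanishing of the composite with $\delta$) then yields $t\in F^\times\cdot\delta(v)$, which the disjointness of weight supports rules out. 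Two points worth making explicit in a write-up: identifying $\operatorname{Im}\delta$ with the $V^{(1,1,0)}$-summand uses $\delta(v)\neq 0$, and, like the paper's proof, your argument is carried out in the split model in which the three $\SL_2$-factors preserve the coordinate hyperbolic planes $\langle e_i,f_i\rangle$ — the convention the paper adopts implicitly in this section.
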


\begin{proof}
 The vector $v$ is obtained from the highest weight vector $e_1 \wedge e_2$ in $V^{(1,1,0)}$ by applying the composition $X_{(0,1,-1)} \circ X_{(0,-2,0)} \circ X_{(-1,0,1)}$, where $X_{(-1, 0 , 1)}, X_{(0,-2,0)}, X_{(0,1,-1)}  \in \mathfrak{sp}_6$ denote the weight vectors for $\lambda(-1,0,1)$, $\lambda(0,-2,0)$, and $\lambda(0,1,-1)$ respectively (cf. \cite[\S 16.1]{FultonHarris} for the precise description). Moreover, the vector $X_{(-1,0,1)}(e_1 \wedge e_2) = - e_2 \wedge e_3$ is of weight $(0,1,1)$, which appears only in the component $\Sym^0 \boxtimes V^{(1,1)}$, and $X_{(0,1,- 1)}, X_{(0,-2,0)} \in \mathfrak{sp}_4 \subseteq \mathfrak{sp}_6$ so $v$ still lies inside $\Sym^0 \boxtimes V^{(1,1)}$. The vector $w$ is $\H_0'$-invariant and therefore it generates the only trivial $\H_0'$-representation in $V^{(1,1,0)}$. We now explain the definition of $z$.
Note that $e_1 \in V^{(1,0,0)}$ and $ f_1 \wedge e_2 \wedge f_2 - f_1  \wedge e_3 \wedge f_3 \in V^{(1 , 1 , 1)}$. Thus, by the properties of the Cartan product
 \[ V^{(1 , 0 , 0)} \otimes V^{(1 , 1 , 1)} = V^{(1,1,0)} \oplus V^{(2,1,1)} \to V^{(2,1,1)}, \; \; v_1  \otimes v_2 \mapsto v_1 \cdot v_2,  \] $z_1$ is a non-zero vector in $V^{(2,1,1)}$ by Lemma \ref{lemma:sun}. The vector $z_1$ is fixed by $\{ I_2 \} \times \SL_{2}^2$, but not by $\SL_2 \times \{ I_2 \} \times \{ I_2 \}$, however, as it is easy to verify, we have that 
\[ z = z_1 + h \cdot z_1 = z_1 -z_2 \in F(0) \subset V^{(2,1,1)}, \; \; \text{ with } h=  \left({\matrix {}{1}{ -1}{} }, I_2, I_2\right), \]
generates the unique trivial $\H_0$-representation of $V^{(2,1,1)}$.
\end{proof}
 
\begin{lemma} \label{lemma:choiceofsomehighestwv}
 Let $\lambda = (\lambda_2 + \lambda_3, \lambda_2, \lambda_3,0)$ with $\lambda_2 \geq \lambda_3 \geq 0$. For each $ \lambda_2 \geq \mu \geq \lambda_3$, the vector
 \begin{eqnarray*} v^{[\lambda, \mu]}:= v^{\lambda_2 - \mu} \cdot w^{ \mu - \lambda_3} \cdot z^{\lambda_3} \in F(0) \subseteq (V^\lambda)_{|_\H} \end{eqnarray*}
realizes a distinct copy of the trivial representation $F(0)$ of $\H$ inside $(V^\lambda)_{|_\H}$.
\end{lemma}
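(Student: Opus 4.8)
The plan is to leverage the multiplicity-one statement of Lemma~\ref{lemma:branchinglaw}: the space of $\H$-invariants in $(V^\lambda)_{|_\H}$ has dimension exactly $\lambda_2 - \lambda_3 + 1$, and there are precisely $\lambda_2 - \lambda_3 + 1$ values of $\mu$ in the range $\lambda_3 \leq \mu \leq \lambda_2$. So it suffices to check two things: first, that each $v^{[\lambda,\mu]}$ is a \emph{nonzero} $\H$-invariant vector in $V^\lambda$; second, that the $v^{[\lambda,\mu]}$ for distinct $\mu$ are linearly independent (equivalently, since there are exactly $\lambda_2-\lambda_3+1$ of them, that they span the invariant subspace). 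The point of the explicit construction is that each factor $v$, $w$, $z$ is already $\H_0$-invariant by Lemma~\ref{vwz} (indeed $w$ is even $\H_0'$-invariant), hence so is any Cartan product of their powers, and the highest weight of $v^{[\lambda,\mu]}$ is $(\lambda_2-\mu)(1,1,0)+(\mu-\lambda_3)(1,1,0)+\lambda_3(2,1,1) = (\lambda_2+\lambda_3,\lambda_2,\lambda_3)$, i.e. it genuinely lands in $V^\lambda$.

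First I would record that the Cartan product $v^{\lambda_2-\mu}\cdot w^{\mu-\lambda_3}\cdot z^{\lambda_3}$ is well-defined and nonzero. Nonvanishing is the crux: since $v$, $w$, $z$ are highest-weight-type vectors only after one tracks their weights carefully, the cleanest route is to observe that each of $v$, $w$, $z$ is a nonzero pure tensor (or can be massaged into one) in the relevant exterior-power realization, and then to invoke Lemma~\ref{lemma:sun} repeatedly: the Cartan component of a product of nonzero vectors, at each stage, receives a nonzero image of any nonzero pure tensor. More precisely, one forms the iterated Cartan product inside $V^{(1,1,0)}{}^{\otimes(\lambda_2-\lambda_3)}\otimes V^{(2,1,1)}{}^{\otimes\lambda_3}$, whose Cartan component is $V^\lambda$, and uses that $v^{\otimes(\lambda_2-\mu)}\otimes w^{\otimes(\mu-\lambda_3)}\otimes z^{\otimes\lambda_3}$ is a nonzero decomposable tensor — hence by Lemma~\ref{lemma:sun} has nonzero image in $V^\lambda$. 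That image is exactly $v^{[\lambda,\mu]}$, and it is $\H$-invariant because it is the image of an $\H$-invariant tensor under an $\H$-equivariant map. (One should note $\H$ versus $\H_0$: since the trivial representation and the weights involved all have trivial central/similitude character — $c=0$ — invariance under the derived group $\H_0$ upgrades to invariance under $\H$, cf. the reduction made just before Lemma~\ref{vwz}.)

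The second and more delicate step is to see that the $\lambda_2-\lambda_3+1$ vectors $v^{[\lambda,\mu]}$ are linearly independent, so that they form a basis of $((V^\lambda)_{|_\H})^{\H}$. The idea is to distinguish them by their behaviour under the intermediate group $\H_0' = \SL_2\times\Sp_4$, or rather under its $\Sp_4$-factor: by Lemma~\ref{vwz}, $v$ lies in the $\Sp_4$-summand $V^{(1,1)}$, $w$ lies in the $\Sp_4$-summand $V^{(0,0)}$ (it is $\Sp_4$-invariant), and $z$ lies in $V^{(1,1)}$. Passing to the Cartan product, one expects $v^{[\lambda,\mu]}$ to have a nonzero component in the $\Sp_4$-isotypic piece $V^{(\lambda_2-\mu+\lambda_3,\,\lambda_2-\mu+\lambda_3)}$ of $(V^\lambda)_{|_{\Sp_4}}$ — the number $\lambda_2-\mu+\lambda_3$ being the "number of $v$'s and $z$'s" contributing a $V^{(1,1)}$, with the $w$'s contributing nothing — and these target $\Sp_4$-types are pairwise distinct as $\mu$ ranges over $[\lambda_3,\lambda_2]$. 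Hence the $v^{[\lambda,\mu]}$ project nontrivially to pairwise distinct $\Sp_4$-isotypic components of $V^\lambda$, forcing linear independence. I would phrase this using the branching decomposition of $(V^\lambda)_{|_{\H_0'}}$ from \cite[Proposition 2.8]{CRDocumenta}, exactly as displayed in the excerpt for the small cases, and a bookkeeping of which $\Sp_4$-highest weight each factor carries.

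The main obstacle I anticipate is precisely this last weight-bookkeeping: one has to verify that taking the Cartan product in $V^\lambda$ does not accidentally kill the expected $\Sp_4$-component, i.e. that the $V^{(1,1)}$-components of the factors combine (under the $\H_0'$-equivariant Cartan projection) to land in, and survive into, the top $\Sp_4$-type $V^{(\lambda_2-\mu+\lambda_3,\lambda_2-\mu+\lambda_3)}$ rather than only in lower ones — this is again a nonvanishing-of-a-Cartan-product assertion, now for the $\Sp_4$-action, and is handled by another application of Lemma~\ref{lemma:sun} together with an explicit check that $v$ and $z$ are nonzero pure tensors (or close enough) inside the ambient exterior powers. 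Once that is in place, the dimension count $\lambda_2-\lambda_3+1$ from Lemma~\ref{lemma:branchinglaw} closes the argument: the $v^{[\lambda,\mu]}$ are $\lambda_2-\lambda_3+1$ linearly independent $\H$-invariants in a space of that exact dimension, hence a basis, and in particular each realizes a distinct copy of $F(0)$.
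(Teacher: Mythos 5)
Your proposal follows the same route as the paper's proof: the invariance and nonvanishing of $v^{[\lambda,\mu]}$ are obtained from the Cartan-product formalism (Lemma \ref{lemma:sun}), and the different values of $\mu$ are separated by tracking the $\H_0'$-type, i.e.\ the $\Sp_4$-constituent $\operatorname{Sym}^0 \boxtimes V^{(\lambda_2+\lambda_3-\mu,\lambda_2+\lambda_3-\mu)}$, exactly as in the paper (the dimension count from Lemma \ref{lemma:branchinglaw} that you add at the end is harmless but not needed for the statement; the paper only uses it later, in Proposition \ref{prop:doublebranch}).

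There is, however, one logical soft spot in your separation step. You conclude linear independence from ``each $v^{[\lambda,\mu]}$ projects nontrivially to a distinct $\Sp_4$-isotypic component.'' That inference is not valid by itself: vectors can have nonzero projections onto pairwise distinct components and still be linearly dependent, as long as they also have components elsewhere. What the paper actually asserts (and uses) is the stronger containment $v^{[\lambda,\mu]} \in \operatorname{Sym}^0 \boxtimes V^{(\lambda_2+\lambda_3-\mu,\,\lambda_2+\lambda_3-\mu)} \otimes \nu^{\mu-\lambda_2-\lambda_3}$, i.e.\ the vector \emph{lies in} a single $\H_0'$-subrepresentation, and these subrepresentations are pairwise non-isomorphic as $\mu$ varies; distinctness (indeed independence) is then immediate. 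Your own framework repairs this with one more observation, which you almost make when you invoke equivariance: since $v,z \in \operatorname{Sym}^0 \boxtimes V^{(1,1)}$ and $w \in \operatorname{Sym}^0 \boxtimes V^{(0,0)}$ (Lemma \ref{vwz}), and the Cartan projection onto $V^\lambda$ is $\G$- hence $\H_0'$-equivariant, the image of the pure tensor $v^{\otimes(\lambda_2-\mu)} \otimes w^{\otimes(\mu-\lambda_3)} \otimes z^{\otimes\lambda_3}$ lies in the image of the $\H_0'$-subrepresentation $\operatorname{Sym}^0 \boxtimes (V^{(1,1)})^{\otimes(\lambda_2+\lambda_3-\mu)}$ (up to the similitude twist), so only $\Sp_4$-types occurring there, in particular none ``larger'' than $V^{(\lambda_2+\lambda_3-\mu,\lambda_2+\lambda_3-\mu)}$, can contribute. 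Combined with your nonzero projection onto that top type, a leading-term (triangularity in $\mu$) argument gives the independence; this is the precise form of the ``weight-bookkeeping'' you correctly flagged as the crux, and it is the content the paper packages into its containment assertion.
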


\begin{proof}
For $p, q, r\in \N$, we have
\[ v^p \cdot w^q \cdot z^r \in F(0) \subseteq \operatorname{Sym}^0 \boxtimes V^{(p + r, p + r)} \subseteq V^{( p+q+2r , p+ q + r , r)}. \]
The vectors $v,w,z$ are $\H$-highest weight vectors, thus $v^{[\lambda, \mu]}$ is such one too. We are left to show that each of the vectors is different. This follows from the fact that each $v^{[\lambda, \mu]}$ lies in $\operatorname{Sym}^0 \boxtimes V^{(\lambda_2 + \lambda_3 - \mu , \lambda_2 + \lambda_3 - \mu )} \otimes \nu^{\mu - \lambda_2 - \lambda_3} $ and these representations are all different as $\mu$ varies.

 \end{proof}
 
\subsection{The motivic class}\label{subsec:tateclass}

As in the section above, we fix a totally real field $F$ such that $\H$ splits over $F$. For a smooth quasi-projective scheme $S$ over a field of characteristic zero, let $\operatorname{CHM}_{L}(S)$ denote the tensor  category of relative Chow motives over $S$ with coefficients in a number field $L$ and denote by $M : \mathrm{Var} / S \to \operatorname{CHM}_{L}(S)$ the contravariant functor from the category of smooth projective schemes over $S$ to the category of relative Chow motives over $S$ (cf. \cite[\S 2.1]{ancona}). By \cite[Corollary 3.2]{DeningerMurre}, if $A/S$ is an abelian scheme of relative dimension $g$, there is a decomposition $M(A) = \bigoplus_{i = 1}^{2g} h^i(A)$ in $\operatorname{CHM}_{L}(S)$. 
Let $G$ temporarily denote one of the groups $\H$ or $\G$, and  denote by $\operatorname{Rep}_{F}(G)$ the category of finite dimensional algebraic representations of $G $ defined over $F$. In \cite{ancona}, Ancona constructs an additive functor  

\[\mu^{G}_{U}: \operatorname{Rep}_{F}(G) \longrightarrow \operatorname{CHM}_{F}(\Sh_G(U)),\] 
where $U$ is a sufficiently small open compact subgroup of $G(\A_f)$. We recall some of its properties.
 
\begin{proposition}[{\cite[Th\'eor\`eme 8.6]{ancona}}]   The functor $\mu^{G}_{U}$ respects duals, tensor products and satisfies the following properties.

\begin{enumerate}
\item If $V$ is the standard representation of $G$, then $ \mu^{G}_{U}(V)=h^1(\mathscr{A}_G)$, where $\mathscr{A}_G$ is the universal abelian scheme over $\Sh_G(U)$.
\item If $\nu: G \to \mathbf{G}_m$ is the multiplier, then $ \mu^{G }_{U }(\nu)$ is the Lefschetz motive $F(-1)$.
\item For a $G$-representation $V$ defined over $F$, the Betti realization of $\mu^{G}_{U}(V)$ is the local system $\mathcal{V}_F$ associated to the vector bundle  
\[ G(\Q) \backslash ( X_G \times V \times (G(\A_f)/U)) \to \Sh_G(U)(\C).\]
\item For any prime $v$ of $F$ above $\ell$ and $G$-representation $V$, the $v$-adic \'etale realization $\mathcal{V}_v$ of $\mu^{G}_{U}(V)$ is the \'etale sheaf associated to $V \otimes_F F_v$, with $U$ acting on the left via $U \hookrightarrow G(\A_f) \to G(\Q_\ell)$.
\end{enumerate}
\end{proposition}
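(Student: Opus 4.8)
This proposition is \cite[Th\'eor\`eme 8.6]{ancona}; the plan is to recall Ancona's construction of $\mu^G_U$ out of the universal abelian scheme and then to read off the four listed properties, the only non-formal input being the algebraicity of certain idempotent correspondences. The preliminary step is to record the relevant motives over $\Sh_G(U)$: by \cite[Corollary 3.2]{DeningerMurre} the Chow motive $M(\mathscr{A}_G)$ decomposes canonically as $\bigoplus_{i=0}^{2g}h^i(\mathscr{A}_G)$ via \emph{algebraic} idempotent correspondences, with $h^i(\mathscr{A}_G)=\bigwedge^i h^1(\mathscr{A}_G)$ in $\operatorname{CHM}_F(\Sh_G(U))$ and in particular $h^{2g}(\mathscr{A}_G)=F(-g)$; the K\"unneth isomorphism $M(\mathscr{A}_G^{\times n})=M(\mathscr{A}_G)^{\otimes n}$ exhibits $h^1(\mathscr{A}_G)^{\otimes n}$ as an algebraic direct summand of $M(\mathscr{A}_G^{\times n})$; and the polarisation of $\mathscr{A}_G$ yields an algebraic isomorphism $h^1(\mathscr{A}_G)^{\vee}\xrightarrow{\ \sim\ }h^1(\mathscr{A}_G)(1)$.

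Next I would construct $\mu^G_U$ itself. Let $W$ be the faithful representation underlying the chosen Shimura datum for $G$ -- the standard symplectic representation when $G=\G$, its restriction along $\iota$ when $G=\H$ -- so that, by fiat, $\mu^G_U(W)=h^1(\mathscr{A}_G)$. Since $\operatorname{Rep}_F(G)$ is the rigid tensor category generated by $W$, every object $V$ is the image of a $G$-equivariant idempotent $e_V$ acting on a finite direct sum of objects of the form $W^{\otimes a}\otimes(W^{\vee})^{\otimes b}$, and every morphism of representations is a $G$-equivariant map between such sums. Granting that $e_V$ and all these intertwiners are induced by algebraic correspondences on the corresponding products $\mathscr{A}_G^{\times(a+b)}$ -- using the summand $h^1(\mathscr{A}_G)^{\otimes(a+b)}\subseteq M(\mathscr{A}_G^{\times(a+b)})$ and the duality isomorphism of the previous step to absorb the $W^{\vee}$-factors -- one sets $\mu^G_U(V)$ to be the corresponding direct factor, suitably Tate-twisted. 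That $\mu^G_U$ is then independent of the chosen presentation, is a tensor functor, and respects duals is formal, using $e_{V\otimes V'}=e_V\otimes e_{V'}$ under $M(\mathscr{A}_G^{\times m})\otimes M(\mathscr{A}_G^{\times m'})=M(\mathscr{A}_G^{\times(m+m')})$ and the polarisation isomorphism.

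The one genuinely delicate step -- and the one I expect to be the main obstacle -- is the algebraicity of the $G$-equivariant idempotents $e_V$. When $G=\G=\GSp(W,\psi)$ this follows from Weyl's fundamental theorem of invariant theory for the symplectic group: $\operatorname{End}_{\G}(W^{\otimes n})$ is spanned by the ``Brauer diagrams'', i.e.\ by compositions of permutations of the tensor factors and of the pairing $\psi$ and its inverse; each permutation is realised by the graph of the corresponding coordinate automorphism of $\mathscr{A}_{\G}^{\times n}$, while $\psi$ (up to the multiplier twist $F(-1)$) is realised by the polarisation pairing $h^1(\mathscr{A}_{\G})\otimes h^1(\mathscr{A}_{\G})\to F(-1)$ and its transpose, which belong to the rigid structure on Chow motives. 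For $G=\H$ I would instead exploit the product structure: since $\Sh_{\H}$ is a fibre product of a modular curve and a Hilbert--Blumenthal surface and $\mathscr{A}_{\H}=\iota^{*}\mathscr{A}_{\G}$ decomposes accordingly as a product of the universal elliptic curve and the universal abelian surface with multiplication by $\mathcal{O}_F$, the splitting $W|_{\H}=V_1\oplus V_2$ and the $\mathcal{O}_F$-action on $V_2$ are induced by algebraic correspondences (projections onto factors and genuine endomorphisms of the abelian scheme), so that $\operatorname{End}_{\H}(W^{\otimes n})$ is generated by permutations of factors and $\mathcal{O}_F$-multiplications, all of which are algebraic. (These are special cases of the general algebraicity result of \cite{ancona}.)

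Finally the four listed properties drop out. Properties (1) and (2) hold by construction: the standard representation is $W$, with $\mu^G_U(W)=h^1(\mathscr{A}_G)$, while the multiplier $\nu$ occurs in $\bigwedge^2 W$ as the line spanned by the inverse symplectic form, whose associated direct summand of $h^2(\mathscr{A}_G)=\bigwedge^2 h^1(\mathscr{A}_G)$ is the class of the polarisation, namely $F(-1)$. For (3) and (4) one uses that the Betti and the $v$-adic \'etale realizations are tensor functors on $\operatorname{CHM}_F(\Sh_G(U))$ carrying $h^1(\mathscr{A}_G)$ to $R^1 f_*F$, respectively $R^1 f_*F_v$, which by the moduli interpretation of $\Sh_G(U)$ is exactly the local system, respectively the \'etale sheaf, attached by the canonical construction to the $G$-representation $W$; since $\mu^G_U$ and the canonical construction $V\mapsto\mathcal{V}_F$ (resp.\ $V\mapsto\mathcal{V}_v$) are tensor functors agreeing on the generator $W$, they agree on all of $\operatorname{Rep}_F(G)$.
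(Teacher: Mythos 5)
There is nothing in the paper to compare your argument with: the proposition is quoted verbatim from Ancona and the authors give no proof, the entire content being \cite[Th\'eor\`eme 8.6]{ancona}. Your sketch does reconstruct the architecture of Ancona's construction correctly in outline: the Deninger--Murre decomposition and K\"unneth isomorphism to get $h^1(\mathscr{A}_G)^{\otimes n}$ as an algebraic summand, the polarisation giving $h^1(\mathscr{A}_G)^{\vee}\simeq h^1(\mathscr{A}_G)(1)$, the fact that every object of $\operatorname{Rep}_F(G)$ is cut out of sums of $W^{\otimes a}\otimes(W^{\vee})^{\otimes b}$ by a $G$-equivariant idempotent, and the identification of the realizations on the generator $W$ to get properties (3) and (4); properties (1) and (2) are indeed immediate from the normalisation $\mu^G_U(W)=h^1(\mathscr{A}_G)$ and the copy of $\nu$ inside $\bigwedge^2W$ spanned by the inverse symplectic form, whose class is the polarisation, i.e.\ $F(-1)$.

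Two caveats. First, your description of the commutant for $\H$ is not right as stated: since $\H$ preserves the symplectic form up to a $\Q^{\times}$-similitude, $\operatorname{End}_{\H}(W^{\otimes n})$ is \emph{not} generated by permutations and $\mathcal{O}_F$-multiplications alone; one also needs the contraction $W\otimes W\to\nu$ and its insertion (the Brauer-type operations), exactly as in the $\GSp$ case, now taken $E=\Q\times F$-linearly. Second, the genuinely hard point -- that \emph{all} the required equivariant projectors and intertwiners are induced by algebraic correspondences, in a way compatible with change of group and level (which is what makes the functor usable together with $\iota^*$, Gysin maps and Hecke operators later in the paper) -- is precisely the content of Ancona's theorem, and at that step your proposal ultimately cites \cite{ancona} rather than proving it. That is consistent with what the paper itself does, but it means your text should be read as a guide to Ancona's proof, not as an independent one.
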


\begin{definition}
Let $V^\lambda$ be the finite dimensional irreducible algebraic representation over $\Q$ of $\G$ of highest weight $\lambda$. We denote by $\mathscr{V}^\lambda_F$ the relative Chow motive associated to $V^\lambda \otimes F$.
\end{definition}

Let $U \subset \G(\A_f)$ be a sufficiently small  compact open subgroup and let $U'=U \cap \H(\A_f)$. Recall that we have a closed embedding $\iota: \Sh_{\H}(U') \hookrightarrow \Sh_{\G}(U)$ which is of codimension $3$. Let $V^\lambda$ the algebraic representation of $\G$ (over $F$) of highest weight $\lambda=(\lambda_1, \lambda_2, \lambda_3, c)$ such that $\lambda_1=\lambda_2 + \lambda_3$ and $c=0$. Using the branching laws of Lemma \ref{lemma:branchinglaw} and \cite[Theorem 1.2]{torzewski}, we get the following (cf. \cite[Proposition 2.17]{CRDocumenta}).

\begin{proposition}
For any $\lambda_2 \geq \mu \geq \lambda_3$, we have a Gysin morphism  
\[\iota_*^{[\lambda,\mu]}:  H^0_{\mathcal{M}}(\Sh_\H(U'), F(0)) \to H^6_{\mathcal{M}}(\Sh_\G(U), \mathscr{V}^\lambda_F(3)), \] 
corresponding to the embedding of $F(0) \subset \iota^* V^\lambda$ given by the $\H$-trivial vector $v^{[\lambda,\mu]}$ of Lemma \ref{lemma:choiceofsomehighestwv}.
\end{proposition}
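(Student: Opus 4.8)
The plan is to obtain $\iota_*^{[\lambda,\mu]}$ as a composite of two maps: first a pushforward on $H^0_{\mathcal{M}}(\Sh_\H(U'),-)$ along a morphism of relative Chow motives $F(0)\to\iota^*\mathscr{V}^\lambda_F$ induced by the $\H$-invariant vector $v^{[\lambda,\mu]}$, and then the Gysin pushforward attached to the codimension-$3$ closed immersion $\iota:\Sh_\H(U')\hookrightarrow\Sh_\G(U)$. This is the precise analogue for the pair $(\H,\G)$ of the construction of \cite[Proposition 2.17]{CRDocumenta}; its only inputs are the branching law of Lemma \ref{lemma:branchinglaw} (together with the explicit vectors of Lemma \ref{lemma:choiceofsomehighestwv}), the functoriality of Ancona's functor under $\iota^*$, and the Gysin formalism for motivic cohomology with coefficients in relative Chow motives.

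For the first map, since $\iota$ and $U'=U\cap\H(\A_f)$ come from the inclusion $\H\hookrightarrow\G$, \cite[Theorem 1.2]{torzewski} provides a canonical isomorphism of functors $\iota^*\circ\mu^{\G}_{U}\cong\mu^{\H}_{U'}\circ\mathrm{Res}^{\G}_{\H}$ from $\operatorname{Rep}_F(\G)$ to $\operatorname{CHM}_F(\Sh_\H(U'))$; applied to $V^\lambda$ it gives $\iota^*\mathscr{V}^\lambda_F\cong\mu^{\H}_{U'}\big((V^\lambda)_{|_\H}\big)$. By Lemma \ref{lemma:branchinglaw} the hypotheses $\lambda_1=\lambda_2+\lambda_3$, $c=0$ guarantee that $(V^\lambda)_{|_\H}$ contains copies of the trivial representation, and by Lemma \ref{lemma:choiceofsomehighestwv} the vector $v^{[\lambda,\mu]}$ is $\H$-fixed, hence spans one such copy and defines an inclusion $F(0)\hookrightarrow(V^\lambda)_{|_\H}$ of $\H$-representations. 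Applying the additive monoidal functor $\mu^{\H}_{U'}$, which sends the trivial representation to the unit motive $F(0)$ over $\Sh_\H(U')$, we obtain a morphism $F(0)\to\mu^{\H}_{U'}\big((V^\lambda)_{|_\H}\big)\cong\iota^*\mathscr{V}^\lambda_F$ of relative Chow motives (whose Betti realization is the horizontal section of $\iota^*\mathcal{V}_F$ given by $v^{[\lambda,\mu]}$), and hence a map $H^0_{\mathcal{M}}(\Sh_\H(U'),F(0))\to H^0_{\mathcal{M}}(\Sh_\H(U'),\iota^*\mathscr{V}^\lambda_F)$. By Lemma \ref{lemma:choiceofsomehighestwv} distinct $\mu\in[\lambda_3,\lambda_2]$ land in distinct $\H$-isotypic summands, so they produce a priori different such maps.

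For the second map, $\Sh_\H(U')$ and $\Sh_\G(U)$ are smooth quasi-projective over $\Q$ and $\iota$ is a closed immersion of pure codimension $3$. Realizing $\mathscr{V}^\lambda_F$ as a direct summand, cut out by an algebraic projector, of the motive of a fibre power of the universal abelian scheme $\mathscr{A}_\G\to\Sh_\G(U)$ and using the Gysin triangle (absolute purity) for the induced closed immersion of smooth schemes, one obtains a Gysin pushforward
\[
\iota_*:\ H^i_{\mathcal{M}}(\Sh_\H(U'),\iota^*\mathscr{V}^\lambda_F(j))\longrightarrow H^{i+6}_{\mathcal{M}}(\Sh_\G(U),\mathscr{V}^\lambda_F(j+3)).
\]
Taking $i=j=0$ and composing with the map of the previous paragraph defines $\iota_*^{[\lambda,\mu]}$, and the bookkeeping of Tate twist and degree shift matches the codimension $3$ of $\iota$.

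The genuinely non-formal content is packaged entirely in the two cited inputs: the compatibility $\iota^*\circ\mu^{\G}_{U}\cong\mu^{\H}_{U'}\circ\mathrm{Res}^{\G}_{\H}$ of Ancona's functor with pullback (which rests on identifying the pullback of the universal abelian scheme of $\Sh_\G(U)$ with that of $\Sh_\H(U')$ up to isogeny and checking that the defining projectors are compatible, \cite{torzewski}), and the existence and naturality of the Gysin map for relative Chow motives in this non-proper setting. Once these are granted — exactly as in \cite[Proposition 2.17]{CRDocumenta} — the construction above is purely formal, so the main obstacle is really the verification, already carried out in the references, of the motivic functoriality of $\iota^*$ and of the Gysin pushforward at the level of the motives $\mu^{(-)}_{(-)}(V)$ rather than of the ambient abelian schemes.
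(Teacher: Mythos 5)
Your proposal is correct and follows the same route the paper takes: the paper's proof is precisely the citation of the branching law (Lemma \ref{lemma:branchinglaw}) together with the explicit $\H$-invariant vectors of Lemma \ref{lemma:choiceofsomehighestwv}, Torzewski's compatibility $\iota^*\circ\mu^{\G}_{U}\cong\mu^{\H}_{U'}\circ\mathrm{Res}$, and the Gysin pushforward for the codimension-$3$ closed immersion as carried out in \cite[Proposition 2.17]{CRDocumenta}. Your write-up simply makes these formal steps explicit, so there is nothing to correct.
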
 

\begin{definition}
We let $\mathcal{Z}_{\H, \mathcal{M}}^{[\lambda,\mu]} \in H^6_{\mathcal{M}}(\Sh_\G(U), \mathscr{V}^\lambda_F(3))$ be the image by  $\iota_*^{[\lambda,\mu]}$ of  \[ \mathbf{1}_{\Sh_\H(U')} \in {\rm CH}^0(\Sh_\H(U'))_F = H^0_{\mathcal{M}}(\Sh_\H(U'), F(0)).\] 
\end{definition}

\subsection{Realizations} \label{SectRealizations}

\subsubsection{\'Etale realization}

Let $\mathfrak{l}$ be a prime of $F$ above $\ell$.
We have an \'etale cycle class map \[ \rm{cl}_{\text{\'et}}:H^6_{\mathcal{M}}(\Sh_\G(U), \mathscr{V}^\lambda_F(3)) \to H^6_{\text{\'et}}(\Sh_\G(U), \mathcal{V}^\lambda_\mathfrak{l}(3)) \to H^6_{\text{\'et}}(\Sh_\G(U)_{\overline{\Q}}, \mathcal{V}^\lambda_\mathfrak{l}(3))^{G_\Q}, \]
where the last arrow is the natural map obtained from the Hochschild-Serre spectral sequence. We define the following.

\begin{definition} \label{etaleclass}
We let  $\mathcal{Z}_{\H, \text{\'et}}^{[\lambda,\mu]} := \rm{cl}_{\text{\'et}}(\mathcal{Z}_{\H, \mathcal{M}}^{[\lambda,\mu]}) \in H^6_{\text{\'et}}(\Sh_\G(U)_{\overline{\Q}}, \mathcal{V}^\lambda_\mathfrak{l}(3))^{G_\Q}$.
\end{definition}

\begin{remark}\leavevmode
\begin{itemize}
    \item  Notice that $\mathcal{Z}_{\H, \text{\'et}}^{[\lambda,\mu]}$ equals to the image of $\mathbf{1} \in H^0_{\text{\'et}}(\Sh_\H(U')_{\overline{\Q}}, F_\mathfrak{l}(0))$ via the \'etale Gysin map 
\[\iota_{\text{\'et},*}^{[\lambda,\mu]} : H^0_{\text{\'et}}(\Sh_\H(U')_{\overline{\Q}}, F_\mathfrak{l}(0))  \to H^0_{\text{\'et}}(\Sh_\H(U')_{\overline{\Q}}, \iota^* \mathcal{V}^\lambda_\mathfrak{l})\to H^6_{\text{\'et}}(\Sh_\G(U)_{\overline{\Q}}, \mathcal{V}^\lambda_\mathfrak{l}(3)).\]  
\item 
As the representation $V^\lambda$ is self dual, we have a Galois equivariant perfect pairing

\[ H^6_{\text{\'et},c}(\Sh_{\G}(U)_{\Qbar{}},\mathcal{V}_\mathfrak{l}^\lambda(3)) \times H^6_{\text{\'et}}(\Sh_{\G}(U)_{\Qbar{}},\mathcal{V}_\mathfrak{l}^\lambda(3)) \rightarrow F_\mathfrak{l}(0).\]
Hence, by duality $\mathcal{Z}_{\H, \text{\'et}}^{[\lambda,\mu]}$ determines a map  \[ H^6_{\text{\'et},c}(\Sh_{\G}(U)_{\Qbar{}},\mathcal{V}_\mathfrak{l}^\lambda(3)) \to F_\mathfrak{l}(0).\]
\end{itemize}
\end{remark}
 
\subsubsection{Betti realizations}

As in the previous subsection, we define the class \[\mathcal{Z}_{\H, B}^{[\lambda,\mu]} \in H^6_{B}(\Sh_\G(U)(\C), \mathcal{V}^\lambda_{F}(3))\] as the image of $\mathcal{Z}_{\H, \mathcal{M}}^{[\lambda,\mu]}$ via the Betti cycle class map
\[ \rm{cl}_{B}:H^6_{\mathcal{M}}(\Sh_\G(U), \mathscr{V}^\lambda_F(3)) \to H^6_{B}(\Sh_\G(U)(\C), \mathcal{V}^\lambda_{F}(3)).  \]
Note that, as $F$ is totally real, the image \[{\rm Im}(\rm{cl}_{B}) \subset  H^{6}_{B}(\Sh_{\G}(U)(\C), \mathcal{V}^\lambda_{\R}(3))^{{F_\infty^\star} = 1 }, \]
where $F_\infty^*$ denotes the composition of the map induced by complex conjugation on the $\C$-points of $\Sh_\G(U)$ with complex conjugation on the coefficients.

\subsubsection{Absolute Hodge realizations} \label{section:ahreal} Let $H^6_{\mathcal{M}}(\Sh_\G(U), \mathscr{V}^\lambda_F(3))^0=\ker(\mathrm{cl}_B)$ denote the subgroup of homologically trivial classes and let $H^6_{\mathcal{M}}(\Sh_\G(U), \mathscr{V}^\lambda_F(3))_{hom}$ denote the quotient $H^6_{\mathcal{M}}(\Sh_\G(U), \mathscr{V}^\lambda_F(3))/H^6_{\mathcal{M}}(\Sh_\G(U), \mathscr{V}^\lambda_F(3))^0$. Note that when $\lambda_2=\lambda_3=0$, i.e. the representation $V^\lambda$ is the trivial representation, then $H^6_{\mathcal{M}}(\Sh_\G(U), \mathscr{V}^\lambda_F(3))=H^6_{\mathcal{M}}(\Sh_\G(U), F(3))=\mathrm{CH}^3(\Sh_\G(U))_F$ is the usual Chow group of $3$-codimensional cycles modulo rational equivalence and the space $H^6_{\mathcal{M}}(\Sh_\G(U), \mathscr{V}^\lambda_F(3))_{hom}=\mathrm{N}^3(\Sh_\G(U))_F$ is the space of $3$-codimensional cycles modulo homological equivalence, with coefficients in $F$. In this section, we define a natural injective map
\begin{equation} \label{cycleHodge}
H^6_{\mathcal{M}}(\Sh_\G(U), \mathscr{V}^\lambda_F(3))_{hom} \rightarrow H^7_\mathcal{H}(\Sh_\G(U), \mathcal{V}^\lambda_{\R}(4)).
\end{equation}
The definition is similar to the one for smooth projective varieties (see \cite[\S 5]{Schneider}) and we recall it for the convenience of the reader. The cycle class map is an injection 
$$
\mathrm{cl}_B: H^6_{\mathcal{M}}(\Sh_\G(U), \mathscr{V}^\lambda_F(3))_{hom} \rightarrow H^6_B(\Sh_\G(U), \mathcal{V}_F(3))^{F_\infty^* = 1} \cap H^6_{B}(\Sh_\G(U), \mathcal{V}^\lambda_{\C}(3))^{0,0}
$$
where $H^6_{B}(\Sh_\G(U), \mathcal{V}^\lambda_{\C}(3))^{0,0}$ denotes the subspace of $$W_0 H^6_{B}(\Sh_\G(U), \mathcal{V}^\lambda_{\C}(3))=\mathrm{Gr}_0^WH^6_{B}(\Sh_\G(U), \mathcal{V}^\lambda_{\C}(3))$$
of vectors which have Hodge type $(0,0)$. The composite of the inclusions
\begin{eqnarray*}
H^6_B(\Sh_\G(U), \mathcal{V}_F(3))^{F_\infty^* = 1} \cap H^6_{B}(\Sh_\G(U), \mathcal{V}^\lambda_\C(3))^{0,0} & \hookrightarrow& W_0 H^6_{dR}(\Sh_\G(U),\mathcal{V}_{\R}(3))\\
&\hookrightarrow& W_2 H^6_{dR}(\Sh_\G(U),\mathcal{V}_{\R}(3))\\
&=& W_0 H^6_{dR}(\Sh_\G(U),\mathcal{V}_{\R}(4))
\end{eqnarray*}
and of the projection
$$
W_0 H^6_{dR}(\Sh_\G(U),\mathcal{V}_{\R}(4)) \rightarrow
$$
$$
W_0 H^6_{B}(\Sh_\G(U),\mathcal{V}_{\R}(4))^+ \backslash W_0 H^6_{dR}(\Sh_\G(U),\mathcal{V}_{\R}(4))/F^0W_0 H^6_{dR}(\Sh_\G(U),\mathcal{V}_{\R}(4))
$$
is injective. As the last space above is canonically isomorphic to $$\mathrm{Ext}^1_{\mathrm{MHS}_{\R}^+}(\R(0), H^6_{B}(\Sh_\G(U),\mathcal{V}_{\R}(4))),$$
we obtain a natural injective map
$$
H^6_{\mathcal{M}}(\Sh_\G(U), \mathscr{V}^\lambda_F(3))_{hom} \rightarrow \mathrm{Ext}^1_{\mathrm{MHS}_{\R}^+}(\R(0), H^6_{B}(\Sh_\G(U),\mathcal{V}_{\R}(4))).
$$
Composing this map with the canonical injection 
$$
\mathrm{Ext}^1_{\mathrm{MHS}_{\R}^+}(\R(0), H^6_{B}(\Sh_\G(U),\mathcal{V}_{\R}(4))) \rightarrow H^7_\mathcal{H}(\Sh_\G(U), \mathcal{V}^\lambda_{\R}(4))
$$
we obtain the map \eqref{cycleHodge}. We denote by $\overline{ \rm cl}_{\mathcal{H}} : H^6_{\mathcal{M}}(\Sh_\G(U), \mathscr{V}^\lambda_F(3)) \rightarrow H^7_\mathcal{H}(\Sh_\G(U), \mathcal{V}^\lambda_{\R}(4))$ the composition of the map \eqref{cycleHodge} with the projection $H^6_{\mathcal{M}}(\Sh_\G(U), \mathscr{V}^\lambda_F(3))  \rightarrow H^6_{\mathcal{M}}(\Sh_\G(U), \mathscr{V}^\lambda_F(3))_{hom} $.
 
\begin{definition} \label{HodgeCycleClass}
We define \[\mathcal{Z}_{\H, \mathcal{H}}^{[\lambda,\mu]} := \overline{ \rm cl}_{\mathcal{H}} (\mathcal{Z}_{\H, \mathcal{M}}^{[\lambda,\mu]}) \in H^7_\mathcal{H}(\Sh_\G(U), \mathcal{V}^\lambda_{\R}(4)).\] 
\end{definition} 
 
\begin{remark}
Let $\pi$ be a cuspidal automorphic representation of $\PGSp_6(\A)$ which satisfies the hypotheses of Lemma \ref{exloc} and let $S$ be a finite set of places containing the ramified places of $\pi_f$ and $\infty$. By the conjectures of Beilinson and Tate and the local calculations of Gross and Savin in \cite{GrossSavin}, there should exist a cubic \'etale algebra $E/\Q$ such that the cycle $\mathcal{Z}_{\H, \mathcal{H}}^{[\lambda,\mu]}$, with $\H$ defined by $E/\Q$, and their Hecke translates are expected to generate $H^1_\mathcal{H}(M(\pi_f)_\R(4))$ when ${\rm ord}_{s=1} L^S(s, \pi, {\rm Spin})= -1$. Assuming the non-vanishing of the archimedean integral, Corollary \ref{theoremcyclebetti2} confirms this expectation. 
\end{remark}

 \section{Construction of the differential form and pairing with the motivic class} \label{Section4}
 
The purpose of this chapter is to study the Betti and Hodge realizations of the cycle constructed in \S \ref{subsec:tateclass} by relating their pairing with a suitable cuspidal harmonic differential form to an automorphic period. 

\subsection{Test vectors}
 Recall that the discrete series $L$-packets for $\PGSp_6(\R)$ have four elements, each indexed by a Hodge type (and its conjugate).
 Let $\pi$ denote a cuspidal automorphic representation of 
 $\PGSp_6(\A)$ for which $\pi_\infty$ is the discrete series of Hodge type (3,3) in the $L$-packet of $V^\lambda$ where $\lambda=(\lambda_1, \lambda_2, \lambda_3,0)$ and $\lambda_1=\lambda_2+\lambda_3$. This translates into saying that $\pi$ is a cuspidal automorphic representation of $\G(\A)$ with trivial central character for which
\[ H^6(\mathfrak{g},K_\G; \pi_\infty \otimes V^\lambda) \neq 0,\] and such that    $\pi_\infty|_{\G_0(\R)} = \pi_{\infty, 1}^{3,3} \oplus \overline{\pi}_{\infty,1}^{3,3}$ is the direct sum of the discrete series representations of respective Harish Chandra parameters $(\lambda_2+2, \lambda_3+1, -\lambda_1-3)$ and $(\lambda_1+3, -\lambda_3-1, -\lambda_2-2)$. Recall that these discrete series contain with multiplicity one their minimal $K_\infty$-types $\tau_{(\lambda_2+2, \lambda_3+2,-\lambda_1-4)}$ and $\tau_{(\lambda_1+4, -\lambda_3-2, -\lambda_2-2)}$ respectively. On the other hand, as $K_\infty$-representations we have 
\[ \bigwedge^6 \mathfrak{p}_{\C} \supseteq \bigwedge^3 \mathfrak{p}_\C^+ \otimes \bigwedge^3 \mathfrak{p}_{\C}^- = \bigoplus_i \tau_i \supseteq \tau_{(2,2,-4)} \oplus \tau_{(4, -2, -2)}, \]
where the equality expresses the decomposition of the tensor product into irreducible $K_\infty$-representations.
This fact will be useful to construct an element in
$$
H^6(\mathfrak{g},K_\G; \pi_\infty \otimes V^\lambda)=\Hom_{K_\infty}\left(\bigwedge^6 \mathfrak{p}_{\C}, \pi_\infty \otimes V^\lambda \right) \simeq \Hom_{K_\infty}\left(\bigwedge^6 \mathfrak{p}_{\C} \otimes V^\lambda, \pi_\infty \right),
$$
where the last equality follows from the fact that $V^\lambda$ is self-dual. Before stating the next result, let us fix the following data.
\begin{itemize}
    \item A highest weight vector $\Psi_\infty$, resp. $\overline{\Psi}_\infty$, of the minimal $K_\infty$-type $\tau_{(\lambda_2+2, \lambda_3+2,-\lambda_1-4)}$, resp. $\tau_{(\lambda_1+4, -\lambda_3-2, -\lambda_2-2)}$ of $\pi_{\infty, 1}^{3,3}$, resp. $\overline{\pi}_{\infty, 1}^{3,3}$.
     \item  A highest weight vector $X_{(2,2,-4)}$, resp. $X_{(4,-2,-2)}$, of $\tau_{(2,2,-4)}$, resp. $\tau_{(4,-2,-2)}$.
     \item  A highest weight vector $v^{\lambda'}$, resp. $v^{\overline{\lambda}'}$, of $\tau_{\lambda'} \subseteq V^\lambda$, resp. $\tau_{\overline{\lambda}'} \subseteq V^\lambda$, where $\tau_{\lambda'}$ and $\tau_{\overline{\lambda}'}$ denote the irreducible algebraic $K_\infty$-representations of highest weight $\lambda' = (\lambda_2, \lambda_3, -\lambda_1)$ and $\overline{\lambda}' = (\lambda_1, -\lambda_3, -\lambda_2)$ respectively.
 \end{itemize}
 
 \begin{lemma} \label{test-vector} The spaces $\Hom_{K_\infty}(\bigwedge^6 \mathfrak{p}_{\C} \otimes V^\lambda, \pi_{\infty, 1}^{3,3})$ and $\Hom_{K_\infty}(\bigwedge^6 \mathfrak{p}_{\C} \otimes V^\lambda, \overline{\pi}_{\infty, 1}^{3,3})$
 are of dimension one and the elements $$\omega_{\Psi_\infty} \in \Hom_{K_\infty}(\wedge^6 \mathfrak{p}_{\C} \otimes V^\lambda, \pi_{\infty,1}^{3,3}), \omega_{\overline{\Psi}_\infty} \in \Hom_{K_\infty}(\wedge^6 \mathfrak{p}_{\C} \otimes V^\lambda, \overline{\pi}_{\infty,1 }^{3,3})$$ defined by \[ \omega_{\Psi_\infty}(X_{(2,2,-4)} \otimes v^{\lambda'})=\Psi_\infty \]
 \[ \omega_{\overline{\Psi}_\infty}(X_{(4, -2,-2)} \otimes v^{\overline{\lambda}'}) = \overline{\Psi}_\infty \]
 are generators of these spaces.
 \end{lemma}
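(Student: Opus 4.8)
The plan is to reduce everything to the branching of the minimal $K_\infty$-type under the Cartan product. First, the one-dimensionality: since $\pi_{\infty,1}^{3,3}$ and $\overline{\pi}_{\infty,1}^{3,3}$ belong to $P(V^\lambda)$ and the Cartan decomposition gives a $K_\infty$-equivariant isomorphism $\mathfrak{p}_\C\cong\mathfrak{g}_{0,\C}/\mathfrak{k}_\C$, this is exactly \cite[Theorem II.5.3]{Borel-Wallach}, as recalled in \S\ref{ss:discreteseries}. From the proof of that theorem one gets the sharper fact that the minimal $K_\infty$-type $\tau_0:=\tau_{(\lambda_2+2,\lambda_3+2,-\lambda_1-4)}$ of $\pi_{\infty,1}^{3,3}$ occurs in $\bigwedge^6\mathfrak{p}_\C\otimes V^\lambda$ with multiplicity exactly one; since $\tau_0$ also occurs in $\pi_{\infty,1}^{3,3}$ with multiplicity one and the Hom-space has dimension one, $\tau_0$ must be the \emph{only} $K_\infty$-type common to $\bigwedge^6\mathfrak{p}_\C\otimes V^\lambda$ and $\pi_{\infty,1}^{3,3}$. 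Consequently any nonzero $\phi\in\Hom_{K_\infty}(\bigwedge^6\mathfrak{p}_\C\otimes V^\lambda,\pi_{\infty,1}^{3,3})$ restricts to a $K_\infty$-isomorphism of the $\tau_0$-isotypic line of $\bigwedge^6\mathfrak{p}_\C\otimes V^\lambda$ onto $\tau_0\subseteq\pi_{\infty,1}^{3,3}$ and annihilates every other isotypic component; likewise for $\overline{\pi}_{\infty,1}^{3,3}$ with $\overline{\tau}_0:=\tau_{(\lambda_1+4,-\lambda_3-2,-\lambda_2-2)}$.

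Next I would locate the prescribed test vector inside the $\tau_0$-line. By the data fixed before the statement, $X_{(2,2,-4)}$ is a highest weight vector of weight $(2,2,-4)$ lying in the $K_\infty$-submodule $\bigwedge^3\mathfrak{p}_\C^+\otimes\bigwedge^3\mathfrak{p}_\C^-\subseteq\bigwedge^6\mathfrak{p}_\C$, while $v^{\lambda'}$ is a highest weight vector of weight $\lambda'=(\lambda_2,\lambda_3,-\lambda_1)$ of the subrepresentation $\tau_{\lambda'}\subseteq V^\lambda$. By the Leibniz rule the nonzero pure tensor $X_{(2,2,-4)}\otimes v^{\lambda'}$ is annihilated by all positive compact root vectors and has $T_\infty$-weight $(2,2,-4)+\lambda'=(\lambda_2+2,\lambda_3+2,-\lambda_1-4)$, which is $K_\infty$-dominant. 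Hence it generates an irreducible $K_\infty$-submodule of $\bigwedge^6\mathfrak{p}_\C\otimes V^\lambda$ isomorphic to $\tau_0$ --- equivalently, it spans the Cartan component $\tau_{(2,2,-4)}\cdot\tau_{\lambda'}=\tau_0$ of $\tau_{(2,2,-4)}\otimes\tau_{\lambda'}$, cf.\ Lemma \ref{lemma:sun} --- and by the multiplicity-one statement this submodule is precisely the $\tau_0$-isotypic line.

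Combining the two steps, $\phi(X_{(2,2,-4)}\otimes v^{\lambda'})$ is a nonzero $T_\infty$-weight vector of weight $(\lambda_2+2,\lambda_3+2,-\lambda_1-4)$ in $\pi_{\infty,1}^{3,3}$, hence a nonzero scalar multiple of the highest weight vector $\Psi_\infty$ of the minimal $K_\infty$-type; after rescaling $\phi$ we obtain the (unique) element $\omega_{\Psi_\infty}$ of the Hom-space with $\omega_{\Psi_\infty}(X_{(2,2,-4)}\otimes v^{\lambda'})=\Psi_\infty$, and it is a generator. The argument for $\omega_{\overline{\Psi}_\infty}$ is identical after replacing $(2,2,-4)$, $\lambda'$, $\tau_0$, $\Psi_\infty$ by $(4,-2,-2)$, $\overline{\lambda}'=(\lambda_1,-\lambda_3,-\lambda_2)$, $\overline{\tau}_0$, $\overline{\Psi}_\infty$, using $(4,-2,-2)+\overline{\lambda}'=(\lambda_1+4,-\lambda_3-2,-\lambda_2-2)$. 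I expect the only genuinely non-formal point to be the input from the proof of \cite[Theorem II.5.3]{Borel-Wallach}: one must know not just that the Hom-space is one-dimensional, but that the minimal $K_\infty$-type is the unique common constituent, so that $\phi$ cannot kill the $\tau_0$-line; the rest is weight bookkeeping of the kind already carried out in the (commented) proof of Lemma \ref{discrete-series}.
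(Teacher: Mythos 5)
Your proof is correct and takes essentially the same route as the paper, whose proof is the one-line citation of \cite[Theorem II.5.3 b)]{Borel-Wallach} and its proof: you have simply made explicit the two ingredients that citation encodes, namely the one-dimensionality of the Hom-space together with the unique occurrence of the minimal $K_\infty$-type in $\bigwedge^6\mathfrak{p}_\C\otimes V^\lambda$, and the observation that $X_{(2,2,-4)}\otimes v^{\lambda'}$ spans the highest-weight line of that Cartan component. Nothing in your argument deviates from or adds to the paper's intended reasoning, so it can stand as a detailed version of the same proof.
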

 
 \begin{proof}
 This is a consequence of \cite[Theorem II.5.3 b)]{Borel-Wallach} and its proof.
 \end{proof}
 
 \begin{comment}
 \begin{remark}
  Note that contrarily to what is suggested by the notation, the element $\alpha_{\Psi_\infty}$ also depends on the choice of $X_{(2,2,-4)}$ and of $v^{\lambda'}$. Hence the notation $\alpha_{\Psi_\infty}$ is slightly abusive, but should not lead to confusion.
 \end{remark}
 \end{comment}
 
\subsection{Restriction to $\H$} \label{sectionrestriction} Let $\lambda=(\lambda_1,\lambda_2,\lambda_3,0)$, with $\lambda_1 = \lambda_2+\lambda_3$ and let $V^\lambda$ be as above. Let $\mathfrak{h}$, resp. $\mathfrak{k}_{\H}$ denote the Lie algebra of $\H(\R)$, resp. the maximal compact modulo the center $K_\H$. Observe that via the embedding $\iota: \H(\R) \hookrightarrow \G(\R)$, the group $K_\H$ is isomorphic to $T_\infty$. One has a Cartan decomposition $\mathfrak{h}_\C = \mathfrak{k}_{\H,\C} \oplus  \mathfrak{p}_{\H, \C}$, where $\mathfrak{p}_{\H, \C}$ is 6-dimensional and is spanned by the non-compact root spaces.  We fix once and for all a generator $X_0$ of the $1$-dimensional $\C$-vector space $\bigwedge^6 \mathfrak{p}_{\H, \C} \subseteq \bigwedge^6 \mathfrak{p}_{\C}$ as in \cite[\S 5.2]{CLR}. The main result of this section is the following.

\begin{theorem} \label{theo:non-vanishing} 
Let $ \omega_{ \Psi_\infty}$ and $ \omega_{\overline{\Psi}_\infty}$ be the elements of  $\Hom_{K_\infty}\left(\bigwedge^6 \mathfrak{p}_{\C} \otimes V^\lambda, \pi_\infty\right)$ defined in Lemma \ref{test-vector}. Let $X_0$ be as above and let $v$ be any $\H$-invariant vector in $V^\lambda$. Then
%\[ \omega_{\Psi_\infty}(X_0 \otimes  \mathrm{pr}_{W^{\lambda'}}(v^{[\lambda, \mu]})) = A'' \cdot \Psi_\infty \ne 0, \] \[ \omega_{\overline{\Psi}_\infty}(X_0 \otimes  \mathrm{pr}_{W^{\bar{\lambda}'}}(v^{[\lambda, \mu]})) = \bar{A}'' \cdot \overline{\Psi}_\infty \ne 0. \]
\[ \omega_{\Psi_\infty}\left(X_0 \otimes  v \right) \ne 0, \] \[ \omega_{\overline{\Psi}_\infty}\left(X_0 \otimes  v \right) \ne 0. \]
\end{theorem}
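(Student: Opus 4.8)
The plan is to reduce the claimed non-vanishing, via $(\mathfrak g,K_\infty)$-cohomology, to the injectivity of an explicit linear map between zero-weight spaces of $K_\infty$-representations, and to settle that injectivity using the branching computations of \S\ref{section:motivicclass} together with Lemma \ref{lemma:sun} on Cartan products. I will treat $\omega_{\Psi_\infty}$; the case of $\omega_{\overline\Psi_\infty}$ is obtained by replacing $\tau_{\min}:=\tau_{(\lambda_2+2,\lambda_3+2,-\lambda_1-4)}$ throughout by $\tau_{(\lambda_1+4,-\lambda_3-2,-\lambda_2-2)}$ and $\tau_{(\lambda_2,\lambda_3,-\lambda_1)}$ by $\tau_{(\lambda_1,-\lambda_3,-\lambda_2)}$.

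First I would reduce to the minimal $K_\infty$-type. By Lemma \ref{test-vector} the space $\Hom_{K_\infty}(\bigwedge^6\mathfrak{p}_{\C}\otimes V^\lambda,\pi_{\infty,1}^{3,3})$ is one-dimensional; decomposing $\Hom_{K_\infty}$ over isotypic components, this forces $\tau_{\min}$ to be the unique $K_\infty$-type occurring in both $\bigwedge^6\mathfrak{p}_{\C}\otimes V^\lambda$ and $\pi_{\infty,1}^{3,3}$, with multiplicity one in each. Hence $\omega_{\Psi_\infty}$ is, up to a non-zero scalar, the composite of the projection $\mathrm{pr}_{\tau_{\min}}\colon\bigwedge^6\mathfrak{p}_{\C}\otimes V^\lambda\twoheadrightarrow\tau_{\min}$ onto the unique copy of $\tau_{\min}$ with the inclusion $\tau_{\min}\hookrightarrow\pi_{\infty,1}^{3,3}$, so that $\omega_{\Psi_\infty}(X_0\otimes v)\neq0$ if and only if $\mathrm{pr}_{\tau_{\min}}(X_0\otimes v)\neq0$. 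Under $\iota$, the Cartan decomposition of $\mathfrak h_{\C}$ identifies $\mathfrak{p}_{\H,\C}$ with the span of the root spaces for $\pm2e_1,\pm2e_2,\pm2e_3$, so $X_0$ is a non-zero multiple of $X_{2e_1}\wedge X_{2e_2}\wedge X_{2e_3}\wedge X_{-2e_1}\wedge X_{-2e_2}\wedge X_{-2e_3}$, which lies in $\bigwedge^3\mathfrak{p}^{+}_{\C}\otimes\bigwedge^3\mathfrak{p}^{-}_{\C}$; since $\pi_{\infty,1}^{3,3}$ has Hodge type $(3,3)$, $\tau_{\min}$ occurs in $\bigwedge^6\mathfrak{p}_{\C}\otimes V^\lambda$ only inside this summand. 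As $X_0$ and every $\H$-invariant $v\in V^\lambda$ have trivial $T_\infty$-weight, $v\mapsto\mathrm{pr}_{\tau_{\min}}(X_0\otimes v)$ defines a linear map $\Phi\colon (V^\lambda)^{\H}\to(\tau_{\min})_{0}$ into the zero-weight space. By Lemma \ref{lemma:branchinglaw} the source is $(\lambda_2-\lambda_3+1)$-dimensional, and a standard weight-multiplicity computation gives $\dim_{\C}(\tau_{\min})_{0}=\lambda_2-\lambda_3+1$ as well; thus it suffices to prove that $\Phi$ is injective.

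Next I would factor $\Phi$ through Cartan products. Using the $K_\infty$-decomposition of $\bigwedge^3\mathfrak{p}^{+}_{\C}\otimes\bigwedge^3\mathfrak{p}^{-}_{\C}$ of \cite[Lemma 3.1]{CLR}, in which $\tau_{(2,2,-4)}$ appears with multiplicity one, together with the fact that $\tau_{\min}=\tau_{(2,2,-4)}\cdot\tau_{(\lambda_2,\lambda_3,-\lambda_1)}$ is the Cartan component of the only pair of $K_\infty$-types of $\bigwedge^3\mathfrak{p}^{+}_{\C}\otimes\bigwedge^3\mathfrak{p}^{-}_{\C}$ and $V^\lambda$ whose tensor product contains $\tau_{\min}$, the multiplicity-one statements give $\mathrm{pr}_{\tau_{\min}}(x\otimes w)=\mathrm{pr}_{\tau_{(2,2,-4)}}(x)\cdot\mathrm{pr}_{\tau_{(\lambda_2,\lambda_3,-\lambda_1)}}(w)$ for all $x\in\bigwedge^3\mathfrak{p}^{+}_{\C}\otimes\bigwedge^3\mathfrak{p}^{-}_{\C}$ and $w\in V^\lambda$, with $\cdot$ the Cartan projection onto $\tau_{\min}$; hence $\Phi(v)=\xi\cdot\mathrm{pr}_{\tau_{(\lambda_2,\lambda_3,-\lambda_1)}}(v)$ with $\xi:=\mathrm{pr}_{\tau_{(2,2,-4)}}(X_0)$. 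A direct computation in the $\operatorname{Sym}^2$-models of $\mathfrak{p}^{\pm}_{\C}$ shows $\xi\neq0$: the weight-$(2,2,-4)$ subspace of $\bigwedge^3\mathfrak{p}^{+}_{\C}\otimes\bigwedge^3\mathfrak{p}^{-}_{\C}$ is one-dimensional, spanned by $X_{(2,2,-4)}$, and applying a suitable monomial in the compact lowering operators $X_{e_j-e_i}$ ($i<j$) produces a non-zero multiple of $X_0$ plus terms in other $K_\infty$-isotypic components. Since $\xi\neq0$, Lemma \ref{lemma:sun} applied to the pure tensors $\xi\otimes u$ in $\tau_{(2,2,-4)}\otimes\tau_{(\lambda_2,\lambda_3,-\lambda_1)}$ shows $u\mapsto\xi\cdot u$ is injective; therefore $\Phi$ is injective on $(V^\lambda)^{\H}$ if and only if $\mathrm{pr}_{\tau_{(\lambda_2,\lambda_3,-\lambda_1)}}$ is.

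The hard part will be the last step: showing that the $\tau_{(\lambda_2,\lambda_3,-\lambda_1)}$-components of the basis vectors $v^{[\lambda,\mu]}=v^{\lambda_2-\mu}\cdot w^{\mu-\lambda_3}\cdot z^{\lambda_3}$ ($\lambda_3\le\mu\le\lambda_2$) of $(V^\lambda)^{\H}$ from Lemma \ref{lemma:choiceofsomehighestwv} are linearly independent. As recorded in the proof of that lemma, $v^{[\lambda,\mu]}$ lies in the pairwise non-isomorphic summands $\operatorname{Sym}^0\boxtimes V^{(\lambda_2+\lambda_3-\mu,\lambda_2+\lambda_3-\mu)}\otimes\nu^{\mu-\lambda_2-\lambda_3}$ of the restriction of $V^\lambda$ to $\SL_2\times\Sp_4$. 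Restricting $\tau_{(\lambda_2,\lambda_3,-\lambda_1)}$ to the maximal compact subgroup of $\SL_2(\R)\times\Sp_4(\R)$ and tracking weights, one checks that the non-zero $\tau_{(\lambda_2,\lambda_3,-\lambda_1)}$-components of the $v^{[\lambda,\mu]}$ lie in distinct isotypic pieces for this subgroup and are hence linearly independent; as they span a $(\lambda_2-\lambda_3+1)$-dimensional subspace of the $(\lambda_2-\lambda_3+1)$-dimensional space $(\tau_{(\lambda_2,\lambda_3,-\lambda_1)})_{0}$, the restriction of $\mathrm{pr}_{\tau_{(\lambda_2,\lambda_3,-\lambda_1)}}$ to $(V^\lambda)^{\H}$ — and with it $\Phi$ — is an isomorphism. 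In particular $\Phi(v)\neq0$ for every non-zero $\H$-invariant $v$, which is the assertion. The delicate ingredient is precisely this weight bookkeeping, which genuinely requires the explicit models of \S\ref{section:motivicclass}, since neither $\mathrm{pr}_{\tau_{(\lambda_2,\lambda_3,-\lambda_1)}}$ nor the Cartan projection $\tau_{(2,2,-4)}\otimes V^\lambda\to\tau_{\min}$ is injective on all of its natural domain.
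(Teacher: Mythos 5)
Your overall strategy is the same as the paper's: reduce, via multiplicity one of the minimal $K_\infty$-type in $\bigwedge^6\mathfrak{p}_{\C}\otimes V^\lambda$, to the non-vanishing of $\mathrm{pr}_{\tau_{\min}}(X_0\otimes v)$; factor that projection through the Cartan product $\tau_{(2,2,-4)}\cdot\tau_{\lambda'}$; prove $\mathrm{pr}_{\tau_{(2,2,-4)}}(X_0)\neq 0$ (this is Lemma \ref{Lemma:projX0}, quoted from \cite{CLR}, and your proposed direct computation is an acceptable substitute); and then invoke Lemma \ref{lemma:sun} to reduce everything to the injectivity of $\mathrm{pr}_{\tau_{\lambda'}}$ on $(V^\lambda)^{\H}$, which in the paper is exactly the content of Proposition \ref{prop:doublebranch}. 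Up to that point your argument is sound, and your insistence on injectivity (rather than non-vanishing only on the basis vectors $v^{[\lambda,\mu]}$) is the right way to get the statement for an arbitrary $\H$-invariant $v$.

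The gap is in your final step. You assert that the $\tau_{\lambda'}$-components of the $v^{[\lambda,\mu]}$ are linearly independent because they ``lie in distinct isotypic pieces for the maximal compact subgroup of $\SL_2(\R)\times\Sp_4(\R)$'', justified by ``tracking weights''. This does not follow from anything you have set up: the projection $\mathrm{pr}_{\tau_{\lambda'}}$ is $K_\infty$-equivariant but not $\H_0'$-equivariant, so the fact that the $v^{[\lambda,\mu]}$ lie in pairwise non-isomorphic $\H_0'$-subrepresentations $\operatorname{Sym}^0\boxtimes V^{(\lambda_2+\lambda_3-\mu,\lambda_2+\lambda_3-\mu)}$ of $V^\lambda$ gives no control on the $\mathrm{U}(1)\times\mathrm{U}(2)$-isotypic decomposition of their images inside the single irreducible $K_\infty$-module $\tau_{\lambda'}$; moreover the restrictions to $\mathrm{U}(2)$ of the various $\Sp_4$-representations $V^{(m,m)}$ share many constituents, so even before projecting there is no ``distinct isotypic pieces'' statement at the level of the compact subgroup, and weight bookkeeping cannot produce one since every vector in play has $T_\infty$-weight $(0,0,0)$. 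The paper settles precisely this point by a different device: Lemma \ref{lemma:KweightsforHvectors} locates $v,w$ in $\tau_{(1,0,-1)}$ and identifies the two $K_\infty$-constituents $s_1,s_2$ of $z$; a commutative diagram of Cartan projections then shows $\mathrm{pr}_{\tau_{\lambda'}}(v^{[\lambda,\mu]})=v^{\lambda_2-\mu}\cdot w^{\mu-\lambda_3}\cdot s_1^{\lambda_3}$ (iterated $K_\infty$-Cartan products), and linear independence of these explicit vectors, combined with the dimension count of Lemma \ref{lemma:multiplicityK000}, yields that they form a basis of the $(0,0,0)$-weight space. Without an argument of this explicit kind (or some other mechanism separating the projections), your injectivity claim, and hence the theorem, is not established.
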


The proof of Theorem \ref{theo:non-vanishing} will be constructive and occupies the rest of this section. We start by recalling the following result.

\begin{lemma}[{\cite[Lemma 5.4]{CLR}}] \label{Lemma:projX0}
Let $X_0$ be as above. Then the image of $X_0$ by
$$
\bigwedge^6 \mathfrak{p}_{\H, \C} \rightarrow \bigwedge^6 \mathfrak{p}_{\C} \rightarrow \bigwedge^3 \mathfrak{p}_\C^+ \otimes \bigwedge^3 \mathfrak{p}_{\C}^- \rightarrow \tau_{(2,2,-4)},
$$
where the first map is induced by the embedding $\H \rightarrow \G$ and the second and the third maps are the natural projections, is non zero.
\end{lemma}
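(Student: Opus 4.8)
The plan is to make every map in the composition explicit and reduce the statement to a Cartan-product nonvanishing handled by Lemma~\ref{lemma:sun}.

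\textbf{Step 1 (identifying $X_0$).} By the construction of the embedding $\iota$ in \S\ref{sec:modularembedding}, the derived group of $\H$ is $\SL_2\times\SL_2\times\SL_2$, embedded in $\G_0=\Sp_6$ so that the three factors act on three mutually $\psi$-orthogonal hyperbolic planes; with respect to the compact Cartan $\mathfrak h$ this forces the roots of $\H$ occurring in $\mathfrak p_\C$ to be exactly the long roots $\pm 2e_1,\pm 2e_2,\pm 2e_3$, so that $\mathfrak p_{\H,\C}=\bigoplus_{j=1}^{3}\bigl(\C X_{2e_j}\oplus\C X_{-2e_j}\bigr)$, with $X_{\pm 2e_j}$ spanning the root space of $\pm 2e_j$ inside $\mathfrak p_\C^{\pm}$. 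Hence, up to a nonzero scalar,
\[ X_0 \;=\; X_{2e_1}\wedge X_{2e_2}\wedge X_{2e_3}\wedge X_{-2e_1}\wedge X_{-2e_2}\wedge X_{-2e_3}. \]
Since this wedge is homogeneous of bidegree $(3,3)$ for $\mathfrak p_\C=\mathfrak p_\C^+\oplus\mathfrak p_\C^-$, its image under $\bigwedge^6\mathfrak p_\C\to\bigwedge^3\mathfrak p_\C^+\otimes\bigwedge^3\mathfrak p_\C^-$ is a nonzero multiple of the pure tensor $Y_0:=u^+\otimes u^-$, with $u^+=X_{2e_1}\wedge X_{2e_2}\wedge X_{2e_3}$ and $u^-=X_{-2e_1}\wedge X_{-2e_2}\wedge X_{-2e_3}$. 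It therefore suffices to show that $Y_0$ has nonzero image in $\tau_{(2,2,-4)}$.

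\textbf{Step 2 (locating $\tau_{(2,2,-4)}$).} Identifying $\mathfrak p_\C^+\cong\Sym^2(\C^3)$ as a $K_\infty\cong\mathrm U(3)$-module, one has $\bigwedge^3\mathfrak p_\C^+=V^{(3,3,0)}\oplus V^{(4,1,1)}$ (both summands are $10$-dimensional; that these are the only constituents, each with multiplicity one, is forced by the multiplicity $2$ of the trivial representation in $\bigwedge^3\mathfrak p_\C^+\otimes\bigwedge^3\mathfrak p_\C^-$ recorded in \cite[Lemma~3.1]{CLR}), and dually $\bigwedge^3\mathfrak p_\C^-=V^{(0,-3,-3)}\oplus V^{(-1,-1,-4)}$. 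A short computation with the dominance order on $\mathrm U(3)$-weights shows that $\tau_{(2,2,-4)}$ occurs among the four external products $V^{(3,3,0)}\otimes V^{(0,-3,-3)}$, $V^{(3,3,0)}\otimes V^{(-1,-1,-4)}$, $V^{(4,1,1)}\otimes V^{(0,-3,-3)}$, $V^{(4,1,1)}\otimes V^{(-1,-1,-4)}$ only in the second one, and there only as the Cartan component $V^{(3,3,0)}\cdot V^{(-1,-1,-4)}$ (note $(3,3,0)+(-1,-1,-4)=(2,2,-4)$). Consequently the projection $\bigwedge^3\mathfrak p_\C^+\otimes\bigwedge^3\mathfrak p_\C^-\to\tau_{(2,2,-4)}$ equals, up to a nonzero scalar, the Cartan projection composed with $q^+\otimes q^-$, where $q^+\colon\bigwedge^3\mathfrak p_\C^+\to V^{(3,3,0)}$ and $q^-\colon\bigwedge^3\mathfrak p_\C^-\to V^{(-1,-1,-4)}$ are the isotypic projections. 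Thus it is enough to prove $q^+(u^+)\neq 0$ and $q^-(u^-)\neq 0$: then $q^+(u^+)\otimes q^-(u^-)$ is a nonzero pure tensor in $V^{(3,3,0)}\otimes V^{(-1,-1,-4)}$, hence projects nontrivially onto its Cartan component $\tau_{(2,2,-4)}$ by Lemma~\ref{lemma:sun}.

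\textbf{Step 3 (nonvanishing of the components).} In the model $\mathfrak p_\C^+\cong\Sym^2\C^3$ the vector $u^+$ is $x_1^2\wedge x_2^2\wedge x_3^2$, of weight $(2,2,2)$. Letting $E_{13},E_{23}\in\mathfrak k_\C$ denote the root vectors for $e_1-e_3$ and $e_2-e_3$ (acting on $\C^3$ as the matrix units, hence as derivations of $\Sym^\bullet\C^3$), a direct Leibniz computation gives $E_{23}E_{13}(u^+)=2\,x_1^2\wedge x_2^2\wedge x_1x_2\neq 0$, a nonzero vector of weight $(3,3,0)$. Since the $(3,3,0)$-weight space of $\bigwedge^3\mathfrak p_\C^+$ is one-dimensional and contained in $V^{(3,3,0)}$, whereas $E_{23}E_{13}$ carries the $(2,2,2)$-weight space of $V^{(4,1,1)}$ into its $(3,3,0)$-weight space, which is zero, it follows that $u^+$ has nonzero $V^{(3,3,0)}$-component, i.e.\ $q^+(u^+)\neq 0$. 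A parallel computation in $\mathfrak p_\C^-\cong\Sym^2\bigl((\C^3)^*\bigr)$, with $y_i$ dual to $x_i$, gives $u^-=y_1^2\wedge y_2^2\wedge y_3^2$ and $E_{23}E_{13}(u^-)=4\,y_1y_3\wedge y_2y_3\wedge y_3^2\neq 0$, a nonzero vector spanning the one-dimensional $(-1,-1,-4)$-weight space of $\bigwedge^3\mathfrak p_\C^-$, which lies in $V^{(-1,-1,-4)}$; hence $q^-(u^-)\neq 0$. Combining with Step 2 proves the lemma.

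\textbf{Expected main obstacle.} The raising-operator computations are routine; the substance lies in the bookkeeping of Steps 1--2, namely (i) reading off from the explicit modular embedding that $\mathfrak p_{\H,\C}$ is spanned precisely by the six \emph{long}-root vectors $X_{\pm 2e_j}$, so that $Y_0$ is the particular pure tensor above, and (ii) the dominance-order verification that $\tau_{(2,2,-4)}$ sits in $\bigwedge^3\mathfrak p_\C^+\otimes\bigwedge^3\mathfrak p_\C^-$ only as the Cartan component $V^{(3,3,0)}\cdot V^{(-1,-1,-4)}$ --- it is exactly this that allows Lemma~\ref{lemma:sun} to be invoked to rule out cancellation in the image of the pure tensor $Y_0$.
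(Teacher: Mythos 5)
Your proposal is correct, and all three steps check out. In Step 1 the identification of $\mathfrak{p}_{\H,\C}$ with the span of the six long noncompact root vectors is justified: since $\iota$ identifies $K_\H$ with the compact Cartan $T_\infty$ (as recalled in \S\ref{sectionrestriction}), $\mathfrak{p}_{\H,\C}$ is a six-dimensional sum of $T_\infty$-root spaces inside $\mathfrak{p}_\C$, and each $\mathfrak{sl}_2$-factor, commuting with the other two circle factors, can only contribute $\pm 2e_j$; hence $X_0$ is (up to scalar) the stated pure tensor of bidegree $(3,3)$. The plethysm $\bigwedge^3\Sym^2(\C^3)=V^{(3,3,0)}\oplus V^{(4,1,1)}$ and the dominance-order check that $(2,2,-4)$ occurs in only one of the four external products, and there only as the Cartan component, are both correct, so the projection to $\tau_{(2,2,-4)}$ does factor through $q^+\otimes q^-$ followed by the Cartan projection. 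The raising-operator computations are right ($E_{23}E_{13}(x_1^2\wedge x_2^2\wedge x_3^2)=2\,x_1^2\wedge x_2^2\wedge x_1x_2$, and similarly on the dual side up to sign conventions), and since $(3,3,0)$, resp. $(-1,-1,-4)$, is not a weight of $V^{(4,1,1)}$, resp. $V^{(0,-3,-3)}$, the nonvanishing of these images does force $q^+(u^+)\neq0$ and $q^-(u^-)\neq0$. Note that the present paper gives no proof of this lemma but defers to \cite[Lemma 4.21]{CLR}, where the statement is established by an explicit weight-vector computation of the projection; your argument is in the same computational spirit but is organized differently and arguably more economically, in that the appeal to Lemma \ref{lemma:sun} applied to the pure tensor $q^+(u^+)\otimes q^-(u^-)$ lets you avoid writing down the projection onto $\tau_{(2,2,-4)}$ at all, at the cost of the plethysm decomposition and the multiplicity-one bookkeeping you carry out in Step 2.
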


We next study the interaction between the branching laws of $V^\lambda$ to the subgroup $\H$ of $\G$ and to its maximal compact subgroup. More precisely, we show that the $\H$-invariant vectors constructed in Lemma \ref{lemma:choiceofsomehighestwv} project non-trivially to $\tau_{\lambda'}$ and $\tau_{\overline{\lambda}'}$ and moreover that their projections form  a basis of the corresponding $(0,0,0)$-weight spaces for the action of $T_\infty$.
 
\begin{lemma}\label{lemma:multiplicityK000}
Let $\tau_{\lambda'}$ and $\tau_{\overline{\lambda}'}$ be the irreducible algebraic sub-$K_\infty$-representations of $V^\lambda$ of highest weight $\lambda'=(\lambda_2, \lambda_3, -\lambda_1)$ and $\overline{\lambda}'=(\lambda_1, -\lambda_3, -\lambda_2)$. Then the weight $(0,0,0)$ appears in both $\tau_{\lambda'}$ and $\tau_{\overline{\lambda}'}$ with multiplicity $\lambda_2-\lambda_3 + 1$. 
\end{lemma}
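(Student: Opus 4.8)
The plan is to reduce the statement to a weight-multiplicity computation for irreducible representations of $K_\infty\cong\mathrm{U}(3)$ and to carry it out via the Gelfand--Tsetlin description of a weight basis (equivalently, by the iterated branching $\mathrm{U}(3)\downarrow\mathrm{U}(2)\downarrow\mathrm{U}(1)$). First I would observe that the two cases are equivalent: the dual of the $\mathrm{U}(3)$-representation of highest weight $(a_1,a_2,a_3)$ has highest weight $(-a_3,-a_2,-a_1)$, so (using $\lambda_1=\lambda_2+\lambda_3$) one has $\tau_{\overline\lambda'}\cong\tau_{\lambda'}^{\vee}$, and since $\dim W_\mu=\dim (W^{\vee})_{-\mu}$ for any finite dimensional representation $W$, the multiplicity of the zero weight agrees in $\tau_{\lambda'}$ and in $\tau_{\overline\lambda'}$. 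Thus it suffices to treat $\tau_{\lambda'}$ with $\lambda'=(\lambda_2,\lambda_3,-\lambda_2-\lambda_3)$.

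Next I would recall that in the irreducible $\mathrm{U}(3)$-representation of highest weight $(a_1,a_2,a_3)$ the subspace of weight $(w_1,w_2,w_3)$ has a basis indexed by integers $b_1\ge b_2$ and $c_1$ subject to the interlacing inequalities $a_1\ge b_1\ge a_2\ge b_2\ge a_3$ and $b_1\ge c_1\ge b_2$, together with $c_1=w_1$ and $b_1+b_2=w_1+w_2$. Specializing to $(w_1,w_2,w_3)=(0,0,0)$ forces $c_1=0$ and $b_2=-b_1$, and the remaining inequalities collapse to $\max(a_2,-a_2,0)\le b_1\le\min(a_1,-a_3)$; hence the zero-weight multiplicity equals the number of integers in that interval. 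For $a=(\lambda_2,\lambda_3,-\lambda_2-\lambda_3)$ with $\lambda_2\ge\lambda_3\ge 0$ the lower bound is $\max(\lambda_3,-\lambda_3,0)=\lambda_3$ and the upper bound is $\min(\lambda_2,\lambda_2+\lambda_3)=\lambda_2$, so the interval is $[\lambda_3,\lambda_2]$, which contains exactly $\lambda_2-\lambda_3+1$ integers; in particular it is nonempty, so the zero weight does occur, as is also visible from $a_1+a_2+a_3=\lambda_2+\lambda_3-\lambda_1=0$. This yields the claim.

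I expect no serious obstacle here; the computation is routine. The only point that needs care is bookkeeping: checking that $\lambda'=(\lambda_2,\lambda_3,-\lambda_1)$ and $\overline\lambda'=(\lambda_1,-\lambda_3,-\lambda_2)$ are genuinely $\mathrm{U}(3)$-dominant in the sign conventions of \S\ref{SectCompactLie} (so that the Gelfand--Tsetlin recipe applies with the paper's labelling), together with the duality $\tau_{\overline\lambda'}\cong\tau_{\lambda'}^{\vee}$. As an alternative to Gelfand--Tsetlin one could instead invoke the ``hexagon rule'' for $\mathrm{SL}_3$ weight multiplicities---they increase by one on each concentric shell about the origin until the shell degenerates to a triangle, after which they stabilize at $1$ plus the minimum of the two Dynkin labels---which for $\lambda'$ has Dynkin labels $(\lambda_2-\lambda_3,\lambda_2+2\lambda_3)$ and hence gives $1+\min(\lambda_2-\lambda_3,\lambda_2+2\lambda_3)=\lambda_2-\lambda_3+1$.
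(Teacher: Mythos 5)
Your proof is correct, but it follows a different route from the paper. The paper computes the zero-weight multiplicity directly from Kostant's multiplicity formula for $K_\infty\cong\mathrm{U}(3)$, summing $(-1)^{\ell(w)}P(w(\lambda'+\rho_{K_\infty})-\rho_{K_\infty})$ over $\mathfrak{W}_{K_\infty}\cong\mathfrak{S}_3$ with $P$ the partition function attached to the positive compact roots $e_1-e_2,\,e_1-e_3,\,e_2-e_3$; only the terms for $w=1$ and $w=\sigma_{12}$ survive, giving $(\lambda_2+1)-\lambda_3$, and the case of $\overline{\lambda}'=w_8\lambda'$ is disposed of by the same conjugation/duality observation you make. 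Your argument replaces the alternating sum by a Gelfand--Tsetlin (iterated branching) count: the zero-weight vectors correspond to patterns with $c_1=0$, $b_2=-b_1$, and the interlacing inequalities pin $b_1$ to the interval $[\lambda_3,\lambda_2]$, so the multiplicity is read off as a single lattice-point count rather than a cancellation among partition-function values; your computation does use $\lambda_1=\lambda_2+\lambda_3$ (in force throughout this section) to ensure the $\mathrm{U}(3)$-weight $(0,0,0)$ can occur at all, since the coordinates of $\lambda'$ must sum to zero. What each approach buys: the paper's computation stays entirely within the Weyl-group formalism it has already set up for discrete series and minimal $K$-types, at the cost of the ``tedious but straightforward'' bookkeeping it alludes to; yours is more transparent and positive (no cancellations), and your closing remark via the $\mathrm{SL}_3$ hexagon rule, giving $1+\min(\lambda_2-\lambda_3,\lambda_2+2\lambda_3)=\lambda_2-\lambda_3+1$, is a correct third confirmation. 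Your dominance check for $\lambda'$ and $\overline{\lambda}'$ and the identification $\tau_{\overline{\lambda}'}\cong\tau_{\lambda'}^{\vee}$ are both valid (the latter in fact needs no relation among the $\lambda_i$), so there is no gap.
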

 
\begin{proof}
Let $n_0(\lambda')$ denote the multiplicity of the weight $(0,0,0)$ in $\tau_{\lambda'}$. Kostant multiplicity formula reads as \[ n_0(\lambda') = \sum_{w \in \mathfrak{W}_{K_\infty}} (-1)^{\ell(w)} P( w  (\lambda' + \rho_{K_\infty}) - \rho_{K_\infty}),  \] 
 where $\rho_{K_\infty}= \tfrac{1}{2}\sum_{\alpha \in \Delta^+_c} \alpha = (1,0,-1)$ and the function $\mu \mapsto P(\mu)$ calculates the number of ways for which the weight $\mu$ can be expressed as a linear combination 
 \[\alpha (e_1-e_2) + \beta(e_1-e_3) + \gamma (e_2-e_3),  \]
 with $\alpha,\beta,\gamma \in \Z_{\geq 0}$ (cf. \cite{FultonHarris}). Using this formula, it is a tedious but straightforward calculation to verify that  $n_0(\lambda') = \lambda_2 - \lambda_3 + 1$ and the same for $\overline{\lambda}' = w_8 \lambda'$.

\end{proof}

According to Lemma \ref{lemma:multiplicityK000}, there are $\lambda_2-\lambda_3+1$ linearly independent vectors of weight $(0,0,0)$ in $\tau_{\lambda'}$. We now show that these weight vectors correspond one to one to the $\H$-invariant vectors of Lemma \ref{lemma:branchinglaw}.
 
\begin{lemma}\label{lemma:KweightsforHvectors}
Let $v, w$ be the vectors of $V^{(1,1,0)}$ and let $z$ be the vector of $V^{(2,1,1)}$ defined in Lemma \ref{vwz}. The irreducible algebraic representation $\tau_{(1,0,-1)}$, resp. $\tau_{(1,1,-2)}$ and $\tau_{(2,-1,-1)}$, appear in the restriction of $V^{(1,1,0)}$, resp. of $V^{(1,1,1)}$, to $K_\infty$ with multiplicity $1$. Moreover, we have $v,w \in \tau_{(1,0,-1)} \subseteq V^{(1,1,0)}$, and $z \in \tau_{(1,1,-2)} \oplus \tau_{(2,-1,-1)} \subseteq V^{(1,1,1)}$, with $z$ projecting non-trivially to each factor of this decomposition. 
\end{lemma}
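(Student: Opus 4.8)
The plan is to reduce the statement to the branching of representations of $\G_0 = \Sp_6$ to the Siegel Levi subgroup $L \simeq \GL_3$ for which $\langle e_1, e_2, e_3\rangle$ is the standard representation, $\langle f_1, f_2, f_3\rangle$ its dual, and $\T_0$ the maximal torus. This is legitimate for the present statement: conjugation by the element $J$ of \S\ref{SectCompactLie} carries $L_\C$ onto $K_{\infty,\C}$, and since $J$ preserves each plane $\langle e_j, f_j\rangle$ and acts there with determinant $1$, we have $J \in \H_0(\C)$; hence $J$ fixes the $\H_0$-invariant vectors $v, w, z$ of Lemma \ref{vwz}, and the inclusions to be proved for $K_\infty$ are equivalent to the same inclusions for $L$, where the weights $(1,0,-1)$, $(1,1,-2)$, $(2,-1,-1)$ are now honest $\T_0$-weights. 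Once this reduction is in place the proof becomes a direct computation of the two branchings.

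Realizing $V^{(1,1,0)}$ as the kernel of the contraction $\bigwedge^2 V \to \Q$, $v_1\wedge v_2 \mapsto \psi(v_1, v_2)$, one reads off $V^{(1,1,0)}|_L = \bigwedge^2\langle e\rangle \oplus \mathfrak{sl}_3 \oplus \bigwedge^2\langle f\rangle = \tau_{(1,1,0)} \oplus \tau_{(1,0,-1)} \oplus \tau_{(0,-1,-1)}$, where $\mathfrak{sl}_3$ is the traceless part $\{\sum_{i,j} a_{ij}\, e_i\wedge f_j : \sum_i a_{ii} = 0\}$ of $\langle e\rangle \otimes\langle f\rangle \subseteq \bigwedge^2 V$ and the quotiented-out invariant line is $\langle\sum_j e_j\wedge f_j\rangle$; in particular $\tau_{(1,0,-1)}$ occurs with multiplicity one and is the only summand with a non-zero weight-$(0,0,0)$ space. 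As $v$ and $w$ are traceless combinations of the $e_j\wedge f_j$, they lie in $\mathfrak{sl}_3 = \tau_{(1,0,-1)}$, which proves the first assertion. For $V^{(2,1,1)}$ I would start from the identity $V^{(1,0,0)}\otimes V^{(1,1,1)} = V^{(2,1,1)}\oplus V^{(1,1,0)}$ (a dimension count, $6\cdot 14 = 70 + 14$, shows these are the only constituents) and from $V^{(1,1,1)}|_L = \tau_{(1,1,1)}\oplus\tau_{(1,1,-1)}\oplus\tau_{(1,-1,-1)}\oplus\tau_{(-1,-1,-1)}$ (again via the kernel of $\bigwedge^3 V \to V$); expanding $(\langle e\rangle\oplus\langle f\rangle)\otimes(\cdots)$ and subtracting $V^{(1,1,0)}|_L$ gives $V^{(2,1,1)}|_L$, in which $\tau_{(2,-1,-1)}$ and $\tau_{(1,1,-2)}$ each occur once and, together with $\tau_{(1,0,-1)}$, are the only summands carrying the weight $(0,0,0)$. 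This proves the remaining multiplicity statements, and reduces the assertion on $z$ to showing: (i) the $\tau_{(1,0,-1)}$-component of $z$ vanishes, and (ii) its $\tau_{(2,-1,-1)}$- and $\tau_{(1,1,-2)}$-components are non-zero.

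For (ii) I would use Lemma \ref{lemma:sun}. Write $z = z_1 - z_2$ with $z_1 = e_1\cdot(f_1\wedge v)$ and $z_2 = f_1\cdot(e_1\wedge v)$, $v = e_2\wedge f_2 - e_3\wedge f_3$; one checks $f_1\wedge v, e_1\wedge v \in V^{(1,1,1)}$, that $f_1\wedge v$ has $\T_0$-weight $(-1,0,0)$ and so spans a weight line of the unique $\tau_{(1,-1,-1)}$-summand of $V^{(1,1,1)}|_L$, and symmetrically $e_1\wedge v$ lies in the $\tau_{(1,1,-1)}$-summand. The Cartan projection $c\colon V^{(1,0,0)}\otimes V^{(1,1,1)} \to V^{(2,1,1)}$ is $L$-equivariant and surjective; since $\tau_{(2,-1,-1)}$ (resp.\ $\tau_{(1,1,-2)}$) occurs exactly once in $(V^{(1,0,0)}\otimes V^{(1,1,1)})|_L$, namely inside $\langle e\rangle\otimes\tau_{(1,-1,-1)}$ (resp.\ $\langle f\rangle\otimes\tau_{(1,1,-1)}$), the composite of $c$ with the projection onto that summand of $V^{(2,1,1)}$ is, up to a non-zero scalar, the Cartan projection $\langle e\rangle\otimes\tau_{(1,-1,-1)}\to\tau_{(2,-1,-1)}$ (resp.\ $\langle f\rangle\otimes\tau_{(1,1,-1)}\to\tau_{(1,1,-2)}$). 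By Lemma \ref{lemma:sun} the Cartan component of the non-zero pure tensor $e_1\otimes(f_1\wedge v)$ (resp.\ $f_1\otimes(e_1\wedge v)$) is non-zero; moreover this tensor lies in $\langle e\rangle\otimes\tau_{(1,-1,-1)}$ (resp.\ $\langle f\rangle\otimes\tau_{(1,1,-1)}$), which has no $\tau_{(1,1,-2)}$- (resp.\ $\tau_{(2,-1,-1)}$-) constituent. Hence $z_1$ has non-zero $\tau_{(2,-1,-1)}$-component and zero $\tau_{(1,1,-2)}$-component, and symmetrically for $z_2$, so both components of $z = z_1 - z_2$ are non-zero.

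The subtle point, which I expect to be the main obstacle, is (i). I would bring in $J_0 = \matrix 0 {I_3} {-I_3} 0 \in \G_0$, which implements the Cartan involution $\theta$ (in particular $J_0$ centralizes $K_\infty$ and $\theta$ exchanges $\langle e\rangle$ with $\langle f\rangle$) and satisfies $J_0 z_1 = -z_2$, so that $J_0 z = z$. Since $\theta$ swaps $\langle e\rangle\leftrightarrow\langle f\rangle$, $J_0$ carries the $\tau_{(a,b,c)}$-summand of any $L$-restriction to the $\tau_{(-c,-b,-a)}$-summand; in particular it preserves $\tau_{(1,0,-1)}$, and in $V^{(1,0,0)}\otimes V^{(1,1,1)} = V^{(2,1,1)}\oplus V^{(1,1,0)}$ it interchanges the two copies of $\tau_{(1,0,-1)}$ present on the left-hand side. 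On the other hand $J_0$ fixes each $e_j\wedge f_j$, hence $v$ and $w$, hence acts by $+1$ on the weight-$(0,0,0)$ space of the $\tau_{(1,0,-1)}$-summand of $V^{(1,1,0)}$; as $J_0$ is an involution interchanging the two copies, it must then act by $-1$ on the weight-$(0,0,0)$ space of the $\tau_{(1,0,-1)}$-summand of $V^{(2,1,1)}$. Projecting $J_0 z_1 = -z_2$ onto that summand gives $J_0[z_1]^{(1,0,-1)} = -[z_2]^{(1,0,-1)}$, whence $[z]^{(1,0,-1)} = [z_1]^{(1,0,-1)} - [z_2]^{(1,0,-1)} = (1 + J_0)[z_1]^{(1,0,-1)} = 0$. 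This completes the argument; everything outside step (i) is bookkeeping with the three explicit branchings and Lemma \ref{lemma:sun}.
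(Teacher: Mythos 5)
Your proposal is correct, and its skeleton is the paper's: conjugating by $J$ so as to work with the Siegel Levi and $\T_0$-weights is literally the paper's change of basis to $v_r=e_r+if_r$, $w_r=ie_r+f_r$ (these are $\sqrt{2}\,Je_r$, $\sqrt{2}\,Jf_r$), and both arguments then run on the same branchings to $\mathrm{U}(3)\cong\GL_3$ together with Lemma \ref{lemma:sun}. Two remarks. First, your justification ``$J\in\H_0(\C)$'' presumes that the complexified $\H_0$ is the product of the $\SL_2$'s of the three planes $\langle e_j,f_j\rangle$; this is the normalization under which the vectors of Lemma \ref{vwz} are $\H_0$-invariant, so it is consistent with the paper, and in any case the fact you actually use, namely $Jv=v$, $Jw=w$, $Jz=z$, is a direct computation that the paper invokes as well. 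Second, and more substantially, your step (i) genuinely departs from the paper. The paper writes $z=\tfrac14(s_1-s_2)$ with $s_1,s_2$ the $\Sp_6$-Cartan products of the two rotated pure tensors and asserts, ``by the properties of the Cartan product'', that $s_1\in\tau_{(2,-1,-1)}$ and $s_2\in\tau_{(1,1,-2)}$. What Lemma \ref{lemma:sun} plus the multiplicity-one occurrence of $\tau_{(2,-1,-1)}$ and $\tau_{(1,1,-2)}$ in $V\otimes V^{(1,1,1)}$ (your point (ii), which makes explicit the transfer of non-vanishing through the $\Sp_6$-equivariant projection) yields directly is the non-vanishing of the two projections of $z$; the vanishing of the $\tau_{(1,0,-1)}$-component is less immediate, since that type occurs twice in $V\otimes V^{(1,1,1)}$ and the projection onto $V^{(2,1,1)}$ has no a priori reason to respect the $K_\infty$-Cartan pieces of each tensor factor. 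Your argument with $J_0=\kappa(iI_3)$ — which normalizes the Levi through $g\mapsto{}^t\!g^{-1}$, swaps the two transversal copies of $\tau_{(1,0,-1)}$ inside $\langle e\rangle\otimes\tau_{(1,-1,-1)}$ and $\langle f\rangle\otimes\tau_{(1,1,-1)}$, hence has exactly two-dimensional $+1$-eigenspace on the four-dimensional zero-weight space, which must be the zero-weight plane of the copy in $V^{(1,1,0)}$ because $J_0$ fixes $v$ and $w$, forcing $J_0=-1$ on the zero-weight plane of the copy in $V^{(2,1,1)}$ so that $(1+J_0)$ kills $[z_1]^{(1,0,-1)}$ — settles precisely the point the paper passes over most quickly, using only symmetries and no further explicit computation; the paper's route is shorter, yours is more airtight on the containment $z\in\tau_{(1,1,-2)}\oplus\tau_{(2,-1,-1)}$.
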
 
 
\begin{proof}
First observe that  $v,w \in V^{(1,1,0)}$ and $z \in V^{(2,1,1)}$ are vectors of weight $(0,0,0)$ both for the split and the compact tori of $\G_0(\R)$. Indeed these vectors are fixed (up to a constant) by the matrix $J$ sending the non-compact torus $\mathbf{T}_0$ to the compact torus $T_\infty$ defined in \S \ref{SectCompactLie}.  Using branching laws from $\G_0(\R)$ to $K_\infty$, we have a decomposition of $K_\infty$-representations  \[V^{(1,1,0)} = \tau_{(1,1,0)} \oplus \tau_{(1,0,-1)} \oplus \tau_{(0,-1,-1)}.  \]
The weight $(0,0,0)$ appears only in $\tau_{(1,0,-1)}$
and with multiplicity 2. Since it has also multiplicity $2$ in $V^{(1,1,0)}$, we deduce that $\{ v,w \}$ forms a basis for the $(0,0,0)$-eigenspace of $\tau_{(1,0,-1)}$. On the other hand, we have
\begin{align*} V^{(2,1,1)} &= \tau_{(-1,-1,-2)} \oplus \tau_{(1,-1,-2)} \oplus \tau_{(1,1,0)} \oplus \tau_{(1,1,-2)} \\ &\,\,\,\,\,\,\,\,\, \oplus   \tau_{(1,0,-1)}\oplus \tau_{(2,-1,-1)} \oplus \tau_{(2,1,-1)} \oplus \tau_{(2,1,1)} \oplus \tau_{(0,-1,-1)}. \end{align*} 
The weight $(0,0,0)$ only appears in $\tau_{(1,1,-2)}  \oplus   \tau_{(1,0,-1)}\oplus \tau_{(2,-1,-1)}$, which implies that \[z \in \tau_{(1,1,-2)}  \oplus   \tau_{(1,0,-1)}\oplus \tau_{(2,-1,-1)}.\]
  Notice that the decomposition of the standard representation of $\G_0$ \[ V  = \tau_{(1,0,0)} \oplus \tau_{(0,0,-1)} \] of $K_\infty$-representations can be realized by picking the basis $\{v_1, v_2, v_3, w_1, w_2, w_3\}$, where $v_r:= e_r+i f_r$ and $w_r:= i e_r + f_r$. The set $\{v_r \}_{1 \leq r \leq 3}$, resp. $\{w_r \}_{1 \leq r \leq 3}$, defines a basis for $\tau_{(1,0,0)}$, resp. $\tau_{(0,0,-1)}$. We now write $z$ in terms of this basis. By using the relations \begin{align*}
    e_r   &= \tfrac{1}{2} v_r - \tfrac{i}{2} w_r, \\
    f_r  &= \tfrac{1}{2} w_r - \tfrac{i}{2} v_r, 
\end{align*}
we have that \[ e_1 \otimes f_1 \wedge (e_2 \wedge f_2  - e_3 \wedge f_3  ) - f_1  \otimes e_1  \wedge (e_2  \wedge f_2  - e_3  \wedge f_3  )  \]
equals to 
\[ \frac{1}{4} \left(  v_1 \otimes w_1 \wedge (v_2 \wedge w_2 - v_3 \wedge w_3 ) - w_1 \otimes v_1 \wedge (v_2 \wedge w_2 - v_3 \wedge w_3 ) \right). \]
Thus, 
\[ z= z_1 - z_2 =  \frac{1}{4} \left(  v_1 \cdot w_1 \wedge (v_2 \wedge w_2 - v_3 \wedge w_3 ) - w_1 \cdot v_1 \wedge (v_2 \wedge w_2 - v_3 \wedge w_3 ) \right). \]
Notice that the vector $w_1 \wedge (v_2 \wedge w_2 - v_3 \wedge w_3 ) \in V^{(1,1,1)}$ is of weight $(-1,0,0)$ for $T_\infty$, while  $v_1 \wedge (v_2 \wedge w_2 - v_3 \wedge w_3 ) \in V^{(1,1,1)}$ is of weight $(1,0,0)$ for $T_\infty$. As \[ V^{(1,1,1)} = \tau_{(1,1,1)} \oplus \tau_{(1,-1,-1)} \oplus \tau_{(1,1,-1)} \oplus \tau_{(-1,-1,-1)},  \]
 and the fact that the weight $(-1,0,0)$ appears only in  $\tau_{(1,-1,-1)}$ and  $(1,0,0)$ only in  $\tau_{(1,1,-1)}$, we have that 
 \[w_1 \wedge (v_2 \wedge w_2 - v_3 \wedge w_3 ) \in \tau_{(1,-1,-1)}\] \[v_1 \wedge (v_2 \wedge w_2 - v_3 \wedge w_3 ) \in \tau_{(1,1,-1)} \] 
 By the properties of the Cartan product, the vector $ s_1 : = v_1 \cdot w_1 \wedge (v_2 \wedge w_2 - v_3 \wedge w_3 )$ is non-zero in $\tau_{(2,-1,-1)}$, while $ s_2 : = w_1 \cdot v_1 \wedge (v_2 \wedge w_2 - v_3 \wedge w_3 )$ is non-zero in $\tau_{(1,1,-2)}$. This shows that the vector $z \in V^{(2,1,1)}$ lives in $\tau_{(2,-1,-1)} \oplus \tau_{(1,1,-2)}$, thus finishing the proof.
\end{proof}

\begin{proposition} \label{prop:doublebranch} 
The set $\{ {\rm pr}_{\tau_{\lambda'}}( v^{[\lambda, \mu]})\}_{\mu}$, resp. $ \{ {\rm pr}_{\tau_{\overline{\lambda}'}}( v^{[\lambda, \mu]})\}_{\mu}$, forms a basis of the weight $(0,0,0)$-eigenspace of $\tau_{{\lambda}'} \subset V^\lambda$, resp. $\tau_{\overline{\lambda}'} \subset V^\lambda$.
\end{proposition}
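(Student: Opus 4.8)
The plan is to reduce the statement to the injectivity of a projection map via a dimension count, and then to prove that injectivity by unwinding the Cartan-product construction of the vectors $v^{[\lambda,\mu]}$; the argument for $\tau_{\overline\lambda'}$ will run in parallel with that for $\tau_{\lambda'}$. For the reduction: by Lemma \ref{lemma:branchinglaw} together with Lemma \ref{lemma:choiceofsomehighestwv}, the $\lambda_2-\lambda_3+1$ vectors $v^{[\lambda,\mu]}$, $\lambda_3\le\mu\le\lambda_2$, form a basis of the space $(V^\lambda)^{\H}$ of $\H$-invariants. Since $\iota$ identifies $K_\H$ with $T_\infty$, every $\H$-invariant vector is a $T_\infty$-weight vector of weight $(0,0,0)$, so each ${\rm pr}_{\tau_{\lambda'}}(v^{[\lambda,\mu]})$ lies in the $(0,0,0)$-weight subspace of $\tau_{\lambda'}$, which is $(\lambda_2-\lambda_3+1)$-dimensional by Lemma \ref{lemma:multiplicityK000}. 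Hence it is enough to prove that the $\lambda_2-\lambda_3+1$ projections ${\rm pr}_{\tau_{\lambda'}}(v^{[\lambda,\mu]})$ are linearly independent, equivalently that the linear map $(V^\lambda)^\H\to\tau_{\lambda'}$ induced by the projection is injective.

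To prove injectivity I would pass to Cartan products. Comparing highest weights gives $(\lambda_2-\lambda_3)\,(1,1,0)+\lambda_3\,(2,1,1)=(\lambda_1,\lambda_2,\lambda_3)$, so $V^\lambda$ is the $\G_0$-Cartan component of $(V^{(1,1,0)})^{\otimes(\lambda_2-\lambda_3)}\otimes(V^{(2,1,1)})^{\otimes\lambda_3}$, and $v^{[\lambda,\mu]}$ is the Cartan projection of $v^{\otimes(\lambda_2-\mu)}\otimes w^{\otimes(\mu-\lambda_3)}\otimes z^{\otimes\lambda_3}$. Substituting $z=\tfrac14(s_1-s_2)$ with $s_1\in\tau_{(2,-1,-1)}$, $s_2\in\tau_{(1,1,-2)}$ nonzero (Lemma \ref{lemma:KweightsforHvectors}) and using $v,w\in\tau_{(1,0,-1)}$, the key numerical coincidences are $\lambda'=(\lambda_2-\lambda_3)(1,0,-1)+\lambda_3(1,1,-2)$ and $\overline\lambda'=(\lambda_2-\lambda_3)(1,0,-1)+\lambda_3(2,-1,-1)$. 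Tracking $K_\infty$-weights through the Cartan projection then shows that the monomials in the expansion of $z^{\otimes\lambda_3}$ that contain any factor $s_1$, together with all contributions coming from the $K_\infty$-isotypic pieces of $V^{(1,1,0)}$ and $V^{(2,1,1)}$ other than $\tau_{(1,0,-1)}$ and $\tau_{(1,1,-2)}$, map into $K_\infty$-isotypic components of $V^\lambda$ different from $\tau_{\lambda'}$ and hence die under ${\rm pr}_{\tau_{\lambda'}}$. Consequently ${\rm pr}_{\tau_{\lambda'}}(v^{[\lambda,\mu]})$ is, up to a nonzero scalar, the $\tau_{\lambda'}$-component of the $K_\infty$-Cartan product $v^{\lambda_2-\mu}\cdot w^{\mu-\lambda_3}\cdot s_2^{\lambda_3}$, which may be computed inside the $K_\infty$-submodule $\tau_{(1,0,-1)}^{\otimes(\lambda_2-\lambda_3)}\otimes\tau_{(1,1,-2)}^{\otimes\lambda_3}$, whose own $K_\infty$-Cartan component is precisely $\tau_{\lambda'}$; symmetrically with $s_1$ in place of $s_2$ for $\tau_{\overline\lambda'}$.

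It then remains to show that the $\lambda_2-\lambda_3+1$ vectors $v^{\lambda_2-\mu}\cdot w^{\mu-\lambda_3}\cdot s_2^{\lambda_3}$ are linearly independent in the $(0,0,0)$-weight space of $\tau_{\lambda'}$ (and similarly with $s_1$). As $\{v,w\}$ is a basis of the $(0,0,0)$-weight space of $\tau_{(1,0,-1)}$, this is an assertion about $K_\infty$-Cartan powers of $\tau_{(1,0,-1)}$, which I would establish by placing a triangular structure on the family: Lemma \ref{lemma:sun} guarantees that every such Cartan product is nonzero, while the $\H'_0$-isotypic decomposition of $V^\lambda$ supplies the filtration making the family upper triangular — recall that $v^{[\lambda,\mu]}$ lies in $\operatorname{Sym}^0\boxtimes V^{(\lambda_2+\lambda_3-\mu,\lambda_2+\lambda_3-\mu)}\otimes\nu^{\mu-\lambda_2-\lambda_3}$, so distinct values of $\mu$ sit in distinct $\H'_0$-summands, and one compares these against the $\H'_0$-content of $\tau_{\lambda'}$.

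I expect the main obstacle to be exactly the rigidity needed in the last two paragraphs: controlling how the $\G_0$-Cartan projection interacts with the $K_\infty$- and $\H'_0$-isotypic decompositions of $V^\lambda$ — in particular the ``only the pure $s_2$ term survives'' reduction and the multiplicity-one input for $\tau_{\lambda'}$. Once that is in place, the remainder of the argument is weight bookkeeping and repeated application of Lemma \ref{lemma:sun}.
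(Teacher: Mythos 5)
Your outline reproduces the paper's strategy almost step for step (dimension count via Lemma \ref{lemma:multiplicityK000}, reduction of ${\rm pr}_{\tau_{\lambda'}}(v^{[\lambda,\mu]})$ to a pure $K_\infty$-Cartan product $v^{\lambda_2-\mu}\cdot w^{\mu-\lambda_3}\cdot s_i^{\lambda_3}$, non-vanishing via Lemma \ref{lemma:sun}, linear independence from the distinct $\H_0'$-summands as in Lemma \ref{lemma:choiceofsomehighestwv}), and your matching of $s_2$ with $\lambda'$ and $s_1$ with $\overline{\lambda}'$ is the one dictated by the weight arithmetic. But the step you defer --- that under ${\rm pr}_{\tau_{\lambda'}}$ only the pure power of $s_2$ (resp.\ $s_1$) in the expansion of $z^{\lambda_3}$ survives, and that the projection of the $\G_0$-Cartan product is computed by the $K_\infty$-Cartan product --- is exactly where the content of the proof lies, and the justification you offer, ``tracking $K_\infty$-weights through the Cartan projection,'' cannot work as stated: $v$, $w$, $s_1$, $s_2$ and every mixed monomial $s_1^as_2^b$ all have $T_\infty$-weight $(0,0,0)$, so weight bookkeeping alone cannot decide which $K_\infty$-isotypic component of $V^\lambda$ a given contribution lands in. You flag this yourself as the expected obstacle, but no mechanism is proposed to overcome it, so as written the argument has a genuine gap precisely at its crux.

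The paper closes this gap with a multiplicity-one/extremality argument rather than weight tracking. The weights $(2\lambda_3,-\lambda_3,-\lambda_3)$ and $(\lambda_3,\lambda_3,-2\lambda_3)$ are extremal in $V^{(2\lambda_3,\lambda_3,\lambda_3)}$ and occur there with multiplicity one, so the corresponding $K_\infty$-types occur once; this forces the square
\[
\xymatrix{
(V^{(2,1,1)})^{\otimes\lambda_3} \ar[r]^-{\cdot} \ar[d]_{({\rm pr}_1,{\rm pr}_1')} & V^{(2\lambda_3,\lambda_3,\lambda_3)} \ar[d]^{{\rm pr}_2} \\
(\tau_{(2,-1,-1)})^{\otimes\lambda_3}\oplus(\tau_{(1,1,-2)})^{\otimes\lambda_3} \ar[r]^-{\cdot} & \tau_{(2\lambda_3,-\lambda_3,-\lambda_3)}\oplus\tau_{(\lambda_3,\lambda_3,-2\lambda_3)}
}
\]
to commute, whence ${\rm pr}_2(z^{\lambda_3})=s_1^{\lambda_3}+s_2^{\lambda_3}$ with all cross terms killed; the same extremality reasoning for $\lambda'=w_4\lambda$ and $\overline{\lambda}'=w_5\lambda$ inside $V^\lambda$ (together with the analogous compatibility already used for $v^{\lambda_2-\mu}\cdot w^{\mu-\lambda_3}$ inside $V^{(\lambda_2-\lambda_3,\lambda_2-\lambda_3,0)}$) then yields the identities ${\rm pr}_{\tau_{\lambda'}}(v^{[\lambda,\mu]})=v^{\lambda_2-\mu}\cdot w^{\mu-\lambda_3}\cdot s_i^{\lambda_3}$ on which your triangularity and Lemma \ref{lemma:sun} arguments can legitimately operate. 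If you replace ``weight tracking'' by this extremal-weight, multiplicity-one compatibility, your proposal becomes essentially the paper's proof; without it, the ``only the pure $s_2$ term survives'' reduction is an unproved assertion.
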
 
 
\begin{proof}

%We first observe that if $\mu'$ is an extremal weight in the algebraic $\G_0$-representation $V^\mu$, namely $\mu' = w \mu$ for $w \in \mathfrak{W}_{\G_0} / \mathfrak{W}_{K_\infty}$, then $\tau_{\mu'}$ appears with multiplicity one in the restriction of $V^\mu$ to $K_\infty$ and we have a commutative diagram
%\[\xymatrix{
%(V^{\mu})^{ \otimes r} \ar@{->>}[r]^{\cdot} \ar@{->>}[d]_{{\rm pr}} &   V^{r \mu }\ar@{->>}[d]^{{\rm pr}}\\ 
%(\tau_{\mu'})^{ \otimes r}   \ar@{->>}[r]^{\cdot} & \tau_{r \mu'}, }\]
%where the horizontal maps denote Cartan products, and the vertical maps denote  the natural projections coming from the branching laws. Indeed, as $r\mu'$ is extremal for $r\mu$, we know that $\tau_{r \mu'}$ appears in $V^{r \mu}$ with multiplicity one.

Recall that we have defined  \[ v^{[\lambda, \mu]}:= v^{\lambda_2 - \mu} \cdot w^{ \mu - \lambda_3} \cdot z^{\lambda_3} \in F(0) \subseteq (V^\lambda)_{|_\H}. \]
By Lemma \ref{lemma:KweightsforHvectors}, we have that $v,w \in \tau_{(1,0,-1)} \subseteq V^{(1,1,0)}$ so that, for any $\lambda_3 \leq \mu \leq \lambda_2$, we have $v^{\lambda_2 - \mu} \otimes w^{ \mu - \lambda_3} \in \tau_{(1,0,-1)}^{\otimes \lambda_2 - \lambda_3}$ and we deduce that the projection of $v^{\lambda_2 - \mu} \cdot w^{ \mu - \lambda_3} \in V^{(\lambda_2 - \lambda_3, \lambda_2 - \lambda_3, 0)}$ to $\tau_{(\lambda_2-\lambda_3 , 0 , \lambda_3 - \lambda_2)}$ coincides with their Cartan product with respect to $K_\infty$. Moreover, each of these projections is non-zero because of Lemma \ref{lemma:sun}. Since the vectors
\[v^{\lambda_2 - \mu} \cdot w^{ \mu - \lambda_3} \in \tau_{(\lambda_2-\lambda_3 , 0 , \lambda_3 - \lambda_2)}\]
are all different as they live in different $\H'_0$ sub-representations (cf. the proof of Lemma \ref{lemma:choiceofsomehighestwv}), we conclude that they span the $ \lambda_2 - \lambda_3 + 1 $-dimensional weight $(0,0,0)$-eigenspace of $\tau_{(\lambda_2-\lambda_3 , 0 , \lambda_3 - \lambda_2)}$. We now show that  $z^{\lambda_3}$ projects non-trivially to both $\tau_{(2 \lambda_3, - \lambda_3, - \lambda_3)}$ and $\tau_{(\lambda_3 , \lambda_3, -2 \lambda_3)}$. 
Notice that, as the weights $(2\lambda_3,-\lambda_3,-\lambda_3)$ and $(\lambda_3,\lambda_3,-2\lambda_3)$ are extremal in $V^{(2\lambda_3, \lambda_3 , \lambda_3)}$ and appear uniquely, we have a commutative diagram 
\[\xymatrix{
(V^{(2,1,1)})^{ \otimes \lambda_3} \ar@{->>}[r]^{\cdot} \ar@{->>}[d]_{({\rm pr}_1,{\rm pr}_1')} &   V^{(2\lambda_3, \lambda_3 , \lambda_3)}\ar@{->>}[d]^{{\rm pr}_2}\\ 
(\tau_{(2,-1,-1)})^{ \otimes \lambda_3}  \oplus (\tau_{(1,1,-2)})^{ \otimes \lambda_3}  \ar@{->>}[r]^{\cdot} & 
 \tau_{(2\lambda_3,-\lambda_3,-\lambda_3)} \oplus \tau_{(\lambda_3,\lambda_3,-2\lambda_3)}, }\]
where the horizontal arrows are the Cartan projections and the vertical arrows are the natural projections given by the decomposition of $V^{(2r,r,r)}$ as $K_\infty$-representations. Thanks to the commutativity of the diagram, we know that the vector $z^{\otimes \lambda_3 } \in (V^{(2,1,1)})^{ \otimes \lambda_3}$ maps to \[ {{\rm pr}_2}(z^{\lambda_3}) = {{\rm pr}_1}(z)^{\lambda_3} + {{\rm pr}_1'}(z)^{\lambda_3} = s_1^{\lambda_3} + s_2^{\lambda_3}, \]
where $s_1,s_2$ are as in Lemma \ref{lemma:KweightsforHvectors}. This shows, again by Lemma \ref{lemma:sun}, that each $v^{[\lambda, \mu]} $ projects non-trivially to both $\tau_{\lambda'}$ and $\tau_{\overline{\lambda}'}$ and that each of these projections are different by Lemma \ref{lemma:choiceofsomehighestwv}. Indeed,   
\[ {\rm pr}_{\tau_{\lambda'}}( v^{[\lambda, \mu]}) = v^{\lambda_2 - \mu} \cdot w^{ \mu - \lambda_3} \cdot s_1^{\lambda_3}, \] \[ {\rm pr}_{\tau_{\overline{\lambda}'}}( v^{[\lambda, \mu]}) = v^{\lambda_2 - \mu} \cdot w^{ \mu - \lambda_3} \cdot s_2^{\lambda_3}. \]
By Lemma \ref{lemma:multiplicityK000}, this means that $ \{ {\rm pr}_{\tau_{\lambda'}}( v^{[\lambda, \mu]})\}_{\mu}$ (resp. $ \{ {\rm pr}_{\tau_{\overline{\lambda}'}}( v^{[\lambda, \mu]})\}_{\mu}$) defines a basis of the weight $(0,0,0)$-eigenspace of $\tau_{{\lambda}'}$, resp. $\tau_{\overline{\lambda}'}$. This finishes the proof.
\end{proof}
 
We are now in condition of finishing the proof of Theorem \ref{theo:non-vanishing}

\begin{proof}[Proof of Theorem \ref{theo:non-vanishing}]
By construction, the map $\omega_{\Psi_\infty}$ factors through $\tau_{\lambda' + (2,2,-4)} \subseteq \tau_{(2,2,-4)} \otimes \tau_{\lambda'}$. Lemma \ref{Lemma:projX0} shows that the projection of $X_0$ to $\tau_{(2,2,-4)}$ is non-zero, while Proposition \ref{prop:doublebranch} shows that $\mathrm{pr}_{\tau_{\lambda'}}(v^{[\lambda, \mu]})$ is non-zero. Since $\tau_{\lambda' + (2,2,-4)}$ is the Cartan product of $\tau_{(2,2,-4)}$ and $\tau_{\lambda'}$, we deduce from Lemma \ref{lemma:sun} that the image of the pure tensor $\mathrm{pr}_{\tau_{(2,2,-4)}}(X_0) \otimes \mathrm{pr}_{\tau_{\lambda'}}(v^{[\lambda, \mu]})$ is non-zero.

%Let $\mathrm{pr}_{W^{\lambda'}}(v^{[\lambda, \mu]})$ be any of the vectors of Proposition \ref{prop:doublebranch} that we can write as $A' \cdot v^{\lambda'} \neq 0$, with $A' \in U(\mathfrak{k}_\C)$. By the non-vanishing of the Cartan product of two pure tensors, we deduce that $(A \cdot X_{(2,2,-4)}) \cdot (A' \cdot v^{\lambda'})$ is non-zero in $W^{\lambda' + (2,2,-4)} \subseteq \tau_{(2,2,-4)} \otimes W^{\lambda'}$, and so it is of the form $A'' \cdot (X_{(2,2,-4)} \otimes v^{\lambda'}) \neq 0$ for some $A'' \in U(\mathfrak{k}_\C)$. The second assertion is shown using the same arguments. This finishes the proof of the Theorem.
\end{proof}
 
\subsection{The pairing}\label{ss:thepairing} 
Let $\pi$ denote a cuspidal automorphic representation of 
 $\PGSp_6(\A)$ for which $\pi_\infty$ is the discrete series of Hodge type (3,3) in the $L$-packet of $V^\lambda$ with $\lambda=(\lambda_2+\lambda_3, \lambda_2, \lambda_3,0)$.    
Let $\Psi=\Psi_\infty \otimes \Psi_f$ denote a cusp form in $\pi=\pi_\infty \otimes \pi_f$. We assume that $\Psi_\infty$ is a highest weight vector of the minimal $K_\infty$-type $\tau_{(\lambda_2+2, \lambda_3+2, -\lambda_1-4)}$ of $\pi_\infty |_{\G_0(\R)}$. We let $[\omega_{\Psi_\infty}] \in H^6(\mathfrak{g},K_\G; \pi_\infty \otimes V^\lambda)$ be the cohomology class of the harmonic differential form $\omega_{\Psi_\infty}$ defined in Lemma \ref{test-vector}. We also assume that $\Psi_f \in V_{\pi_f} $ is $U$-invariant. Then we have
$
[\omega_\Psi] := [\omega_{\Psi_\infty} \otimes \Psi_f] \in H^6(\mathfrak{g},K_\G; \pi^U \otimes V^\lambda).
 $
 
 \begin{lemma}\label{lemmaonthetestvectorinsidecohomology}
There is a Hecke-equivariant inclusion
 $$
 H^6(\mathfrak{g},K_\G; \pi^U \otimes V^\lambda) \subset H^6_{dR, c}(\Sh_{\G}(U), \mathcal{V}^\lambda_\C).
 $$
Moreover, if $\pi_w$ is the Steinberg representation for some finite palce $w$, such inclusion is unique.
 \end{lemma}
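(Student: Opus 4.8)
The plan is to realise $H^6(\mathfrak{g},K_\G;\pi^U\otimes V^\lambda)$ inside the compactly supported de Rham cohomology of $\Sh_\G(U)$ by passing through Borel's comparison theorems between cuspidal, rapidly decreasing and compactly supported cohomology, and then to extract the uniqueness clause from the multiplicity one statement that hypothesis \textbf{(St)} makes available.

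\emph{Existence.} First I would fix an embedding of $\pi$ into the space $\mathcal{C}^\infty_{\rm cusp}(\G(\Q)\backslash\G(\A))$ of cusp forms. Tensoring by $V^\lambda$ and applying the $(\mathfrak{g},K_\G)$-cohomology functor, the decomposition of $H^6_{\rm cusp}$ recalled in the proof of Proposition~\ref{comparisoncohomologies} exhibits
\[
H^6(\mathfrak{g},K_\G;\pi^U\otimes V^\lambda)=\pi_f^U\otimes H^6(\mathfrak{g},K_\G;\pi_\infty\otimes V^\lambda)
\]
as a direct summand of $H^6_{\rm cusp}(\Sh_\G(U),\mathcal{V}^\lambda_\C)$; it is non-zero since $\pi_\infty\in P(V^\lambda)$ (cf.\ \S\ref{ss:cohomologyloc}). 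By \cite[Theorem 5.3 \& Corollary 5.5]{borel2} the natural arrow $H^6_{\rm cusp}(\Sh_\G(U),\mathcal{V}^\lambda_\C)\hookrightarrow H^6_{\rm rd}(\Sh_\G(U),\mathcal{V}^\lambda_\C)$ is injective, while by \cite[Theorem 5.2]{borel2} the inclusion $\mathcal{C}^\infty_{\rm c/center}\subset\mathcal{C}^\infty_{\rm rd}$ induces an isomorphism $H^6_{\rm c}(\Sh_\G(U),\mathcal{V}^\lambda_\C)\cong H^6_{\rm rd}(\Sh_\G(U),\mathcal{V}^\lambda_\C)$. Composing with the de Rham comparison isomorphism $H^6_{\rm c}(\Sh_\G(U),\mathcal{V}^\lambda_\C)\cong H^6_{dR, c}(\Sh_\G(U),\mathcal{V}^\lambda_\C)$ for the smooth variety $\Sh_\G(U)$ with coefficients in $\mathcal{V}^\lambda$ produces the desired inclusion. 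Every arrow in this chain is induced by a morphism of coefficient systems of automorphic forms, hence is Hecke-equivariant.

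\emph{Uniqueness.} Suppose now that $\pi_w$ is the Steinberg representation at a finite place $w$. By \cite[Theorem 12.1]{KretShin}, $\pi$ occurs in the cuspidal spectrum with multiplicity one, so the space of $(\mathfrak{g},K_\G)\times\mathcal{H}$-equivariant embeddings of $\pi^U$ into $\mathcal{C}^\infty_{\rm cusp}(\G(\Q)\backslash\G(\A))^U$ is one dimensional, and hence the inclusion built above is canonical up to a non-zero scalar. To identify its image I would localise at $\mathfrak{m}_\pi$: by Propositions~\ref{comparisoncohomologies} and \ref{vanishingoutsidemiddledegree} the space $H^6_{dR, c}(\Sh_\G(U),\mathcal{V}^\lambda_\C)_{\mathfrak{m}_\pi}$ coincides with the cuspidal cohomology, is concentrated in degree $6$, and its $\pi_f$-isotypic part is $\pi_f^U\otimes\bigoplus_{\sigma_\infty\in P(V^\lambda)}H^6(\mathfrak{g},K_\G;\sigma_\infty\otimes V^\lambda)$, each member of the packet appearing once (as in the proof of Lemma~\ref{lemmaonBettiisotypicpart}). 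Isolating the summand attached to the Hodge type $(3,3)$ member $\pi_\infty$, via the refinement by $K_\infty$-type of \S\ref{ss:discreteseries}, shows that the image of our inclusion is the unique subspace of that localisation isomorphic, compatibly with the Hecke and $K_\infty$-type actions, to $H^6(\mathfrak{g},K_\G;\pi^U\otimes V^\lambda)$; together with multiplicity one this pins the inclusion down.

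\emph{Main obstacle.} The delicate part is the uniqueness clause rather than the existence of the inclusion. The prime-to-$N$ spherical Hecke algebra does not separate the four members of the discrete series $L$-packet $P(V^\lambda)$ inside $H^6_{dR, c}(\Sh_\G(U),\mathcal{V}^\lambda_\C)_{\mathfrak{m}_\pi}$, so one genuinely needs both the refinement by $K_\infty$-type, which singles out the Hodge type $(3,3)$ piece (\S\ref{ss:discreteseries}, Lemma~\ref{discrete-seriesforPGS}), and the multiplicity one input coming from \textbf{(St)} through \cite[Theorem 12.1]{KretShin}; these are exactly the ingredients that make the statement hold.
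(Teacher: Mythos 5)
Your argument is correct and is essentially the paper's own: existence comes from the fact that cusp forms are rapidly decreasing together with Borel's identification (\cite[Theorem 5.2]{borel2}) of the cohomology of rapidly decreasing (equivalently compactly supported) forms with $H^6_{dR,c}(\Sh_\G(U),\mathcal{V}^\lambda_\C)$, and uniqueness from the multiplicity one statement $m(\pi)=1$ furnished by \textbf{(St)} via \cite[Theorem 12.1]{KretShin}. The extra localisation at $\mathfrak{m}_\pi$ and the $K_\infty$-type refinement you add to identify the image are harmless but not needed: once $m(\pi)=1$, the embedding of $\pi$ (with its fixed archimedean component) into the cuspidal spectrum is unique up to scalar, which is all the paper uses.
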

 
 \begin{proof}
 Let $\mathcal{C}^\infty_{rd}(\G(\Q) \backslash \G(\A)/U, V^\lambda)$ denote the space of $V^\lambda$-valued $\mathcal{C}^\infty$-functions on the double quotient $\G(\Q) \backslash \G(\A)/U$ which, together with all their right $\mathfrak{U}(\mathfrak{g}_{\C})$-derivatives, are rapidly decreasing in the sense of \cite{harris}. As $\pi$ is cuspidal and cusp forms are rapidly decreasing, we have 
 $
 H^6(\mathfrak{g},K_\G; \pi_\infty \otimes V^\lambda_\C)^{m(\pi)} \otimes \pi_f^U\subset H^6(\mathfrak{g},K_\G;  \mathcal{C}^\infty_{rd}(\G(\Q) \backslash \G(\A)/U, V^\lambda)).$ 
 Thus the result follows from the fact that, according to \cite[Theorem 5.2]{borel2} (see also \cite[Theorem 1.4.1]{harris}), there exists a canonical Hecke equivariant isomorphism
 $
 H^6_{dR, c}(\Sh_{\G}(U), \mathcal{V}^\lambda) \simeq H^6(\mathfrak{g},K_\G; \mathcal{C}^\infty_{rd}(\G(\Q) \backslash \G(\A)/U, V^\lambda)).
 $ Finally, if $\pi_w$ is Steinberg at a finite place $w$, we have, as in Lemma \ref{lemmaonBettiisotypicpart}, that $m(\pi)=1$.
 \end{proof}

\subsubsection{The pairing in Betti cohomology} Poincar\'e duality is a perfect pairing
$$
\langle \, , \, \rangle : H^{6}_{B}(\Sh_{\G}(U), \mathcal{V}_F^\lambda(3)) \times H^{6}_{B,c}(\Sh_{\G}(U), \mathcal{V}_F^\lambda) \to F(-3),
$$
which is a morphism of mixed $F$-Hodge structures.
\begin{comment}
\joaquin{Comment the following? This induces a perfect pairing \[\langle \, , \, \rangle : (\pi_f^\vee)^U \otimes M_{B}(\pi_f^\vee)_\C(3) \times (\pi_f)^U \otimes M_{B}(\pi_f)_\C \to \C \otimes F(-3), \]
which further induces a morphism
\[\langle \, , \, \rangle : (\pi_f^\vee)^U \otimes H^{3,3}(\pi_f^\vee)(3) \times (\pi_f)^U \otimes H^{3,3}(\pi_f)  \to \C \otimes F(-3). \]
Since, after projecting to the $\pi_f$-isotypic component, the inclusion

\[H^6(\mathfrak{g},K_\G; \pi^U \otimes V^\lambda) \subset H^6_{dR, c}(\Sh_{\G}(U), \mathcal{V}^\lambda)\]
factors through $(\pi_f)^U \otimes H^{3,3}(\pi_f)$, the form $\omega_\Psi$ defines a map \[\langle \, , [\omega_\Psi] \rangle : (\pi_f^\vee)^U \otimes H^{3,3}(\pi_f^\vee)(3)  \to \C \otimes F(-3). \] 
As the image of the cycle class map \[{\rm pr}_{\pi^\vee} \circ {\rm cl}_B: H^6_{\mathcal{M}}(\Sh_\G(U), \mathcal{V}^\lambda_F(3)) \to H^6_B[\pi_f^\vee](3) \]
lands in $(\pi_f^\vee)^U \otimes H^{3,3}(\pi_f^\vee)(3)$, the value \[\langle  \mathcal{Z}_{\H, B}^{[\lambda,\mu]} , [\omega_\Psi] \rangle \]
is well defined.}
\end{comment}
Fix the choice of a measure $dh$ on $\H(\A)$ as follows. For each finite place $p$, we take the Haar measure $d h_p$ on $\H(\qp)$ that assigns volume $1$ to $\H(\zp)$. For the archimedean place, we let $X_0 \in \bigwedge^6 \mathfrak{p}_{\H, \C}$ be the generator fixed at the beginning of section \ref{sectionrestriction}. The choice of $X_0$ induces an equivalence between top differential forms on $X_\H = \H(\R) / K_{\H, \infty}$ and invariant measures $d h_\infty$ on $\H(\R)$ assigning measure one to $K_{\H, \infty}$ (cf. \cite[p. 83]{Harris97} for details). We let $dh_\infty$ denote the measure associated in this way to the pullback of $\iota^{[\lambda, \mu]*}\omega_\Psi$ to $X_\H$ and we then define $d h = dh_\infty \prod_{p} dh_p$.

\begin{prop} \label{period} We have
\[\langle  \mathcal{Z}_{\H, B}^{[\lambda,\mu]} , [\omega_\Psi] \rangle = \frac{h_{{U'}}}{(2\pi i)^3\cdot \mathrm{vol}({U'})}
 \int_{\H(\Q) \Z_\G(\A) \backslash \H(\A)} A^{[\lambda,\mu]} \cdot \Psi(h)dh,\] 
where $h_{U'} =4^{-1} | \Z_\G(\Q) \backslash \Z_\G(\Af) / (\Z_\G(\Af) \cap {U'})|$ and $A^{[\lambda,\mu]} \in U(\mathfrak{k}_\C)$ is an element for which $A^{[\lambda,\mu]} \cdot \Psi_\infty = \omega_{\Psi_\infty}(X_0 \otimes v^{[\lambda,\mu]})$.
\end{prop}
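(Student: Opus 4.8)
The plan is to unwind the definition of the cycle class $\mathcal{Z}_{\H,B}^{[\lambda,\mu]}$ as a Gysin pushforward and express the Poincar\'e pairing with $[\omega_\Psi]$ as an integral over the cycle $\Sh_\H(U')$. First I would recall that $\mathcal{Z}_{\H,B}^{[\lambda,\mu]}$ is, by construction (Definition of $\mathcal{Z}_{\H,\mathcal{M}}^{[\lambda,\mu]}$ together with the compatibility of $\mathrm{cl}_B$ with Gysin maps), the image of the fundamental class $\mathbf{1}_{\Sh_\H(U')}$ under the Betti Gysin morphism $\iota_*^{[\lambda,\mu]} \colon H^0_B(\Sh_\H(U'),F(0)) \to H^6_B(\Sh_\G(U),\mathcal{V}^\lambda_F(3))$ attached to the $\H$-invariant vector $v^{[\lambda,\mu]} \in (V^\lambda)_{|_\H}$. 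By the projection formula for the closed embedding $\iota$ of codimension $3$, pairing $\iota_*^{[\lambda,\mu]}(\mathbf 1)$ against a compactly supported class $[\omega_\Psi] \in H^6_{B,c}(\Sh_\G(U),\mathcal{V}^\lambda_F)$ equals the pairing over $\Sh_\H(U')$ of $\mathbf 1$ against $\iota^*[\omega_\Psi]$ contracted with $v^{[\lambda,\mu]}$; concretely, $\langle \mathcal{Z}_{\H,B}^{[\lambda,\mu]}, [\omega_\Psi]\rangle = \int_{\Sh_\H(U')(\C)} \langle v^{[\lambda,\mu]}, \iota^*\omega_\Psi\rangle$, where $\langle\,,\,\rangle$ denotes the pairing $V^\lambda \otimes (V^\lambda)^\vee \to F$ (using self-duality of $V^\lambda$) applied fibrewise. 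The factor $(2\pi i)^{-3}$ is the usual normalization coming from the Tate twist $F(3)$ in the codimension-$3$ Gysin map, and a bookkeeping of these conventions should be done carefully following \cite[\S 5]{Schneider} and \cite[\S 4.4]{CLR}.

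Next I would translate this geometric integral into an adelic one. The harmonic differential form $\omega_\Psi$ is, via the $(\mathfrak g, K_\G)$-cohomology description $H^6(\mathfrak g, K_\G;\pi^U\otimes V^\lambda) \subset H^6_{dR,c}(\Sh_\G(U),\mathcal{V}^\lambda_\C)$ of Lemma \ref{lemmaonthetestvectorinsidecohomology}, represented by $\omega_{\Psi_\infty}\otimes \Psi_f$ with $\omega_{\Psi_\infty} \in \Hom_{K_\infty}(\bigwedge^6\mathfrak p_\C\otimes V^\lambda,\pi_\infty)$. Pulling back to $\Sh_\H(U')$ and contracting with $v^{[\lambda,\mu]}$ picks out the value $\omega_{\Psi_\infty}(X_0\otimes v^{[\lambda,\mu]})$ at the archimedean place — here $X_0$ is the fixed generator of $\bigwedge^6\mathfrak p_{\H,\C}$, and by Theorem \ref{theo:non-vanishing} this is a nonzero vector in $\pi_\infty$, in fact of the form $A^{[\lambda,\mu]}\cdot\Psi_\infty$ for a suitable $A^{[\lambda,\mu]} \in U(\mathfrak k_\C)$ by the branching analysis (Proposition \ref{prop:doublebranch}) since $\omega_{\Psi_\infty}(X_0\otimes v^{[\lambda,\mu]})$ lies in the minimal $K_\infty$-type and is $K_\infty$-generated from $\Psi_\infty$. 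Using the standard unfolding of the de Rham pairing over a Shimura subvariety (as in \cite[\S 4.4]{CLR} or \cite{Harris97}), with the complex points $\Sh_\H(U')(\C) = \H(\Q)\backslash \H(\A)/\Z_\H(\R)K_{\H,\infty}U'$ and the chosen measure $dh = dh_\infty\prod_p dh_p$ normalized so that $X_0 \leftrightarrow dh_\infty$ and $\mathrm{vol}(\H(\Z_p))=1$, the integral becomes $\mathrm{vol}(U')^{-1}$ times an integral of $A^{[\lambda,\mu]}\cdot\Psi(h)$ over $\H(\Q)\backslash\H(\A)$ modulo the appropriate central/compact directions. The factor $h_{U'} = 4^{-1}|\Z_\G(\Q)\backslash\Z_\G(\A_f)/(\Z_\G(\A_f)\cap U')|$ arises because $\pi$ has trivial central character so one replaces $\Z_\H(\R)$-integration by $\Z_\G(\A)$-integration, producing the volume of the finite component group of the center and a factor $4$ from $|\mu_2(\Q)\times\cdots|$-type discrepancies between $\Z_\H$ and $\Z_\G$ and the connected components of $K_{\H,\infty}$; this constant must be computed by comparing $\pi_0(\Sh_\H(U')(\C)) = \hat{\Z}^\times/W$ with the center of $\G$.

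The main obstacle, and the step requiring the most care, is the precise matching of normalizations: relating the Gysin/Poincar\'e-duality pairing on the Shimura variety (with its Tate twists and the $(2\pi i)^3$) to the naive integral of the automorphic form, and pinning down the rational constants $h_{U'}$ and $\mathrm{vol}(U')$. This is where measure choices on $\H(\A)$, the identification of $X_0$ with $dh_\infty$, the passage from $\Z_\H(\R)K_{\H,\infty}$-invariance to $\Z_\G(\A)$-invariance using the trivial central character, and the handling of connected components all interact; none of these are deep but they are delicate, and the cleanest route is to follow verbatim the analogous computation in \cite[\S 4.4]{CLR}, only replacing the branching input there by Theorem \ref{theo:non-vanishing} and Proposition \ref{prop:doublebranch} of the present paper. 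The non-vanishing statement $\omega_{\Psi_\infty}(X_0\otimes v^{[\lambda,\mu]})\ne 0$ is exactly what guarantees that the archimedean contraction does not kill the integrand, so that the formula is not vacuous.
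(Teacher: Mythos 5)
Your proposal is correct and follows essentially the same route as the paper: the Gysin pushforward plus adjunction reduces the pairing to $\frac{1}{(2\pi i)^3}\int_{\Sh_\H(U')}\iota^{[\lambda,\mu]*}\omega_\Psi$, the archimedean contraction $\omega_{\Psi_\infty}(X_0\otimes v^{[\lambda,\mu]})=A^{[\lambda,\mu]}\cdot\Psi_\infty$ is supplied by Theorem \ref{theo:non-vanishing}, and the constants $h_{U'}$, $\mathrm{vol}(U')$ arise from the same central and measure bookkeeping. The only point you delegate to the references that should be made explicit is that, since $\omega_\Psi$ is merely rapidly decreasing and $\Sh_\H(U')$ is non-compact, one first replaces it by a compactly supported representative $\omega_c=\omega_\Psi+d\eta$ (via \cite[Corollary 5.5]{borel2}) and checks that $\int_{\Sh_\H(U')}\iota^{[\lambda,\mu]*}d\eta=0$, so that integrating $\iota^{[\lambda,\mu]*}\omega_\Psi$ itself indeed computes the cohomological pairing.
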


\begin{proof}
By \cite[Corollary 5.5]{borel2}, there exists a $\mathcal{V}^\lambda$-valued rapidly decreasing differential form $\eta$ of degree five on $\Sh_\G(U)$ such that $\omega_c := \omega_\Psi + d\eta$ is compactly supported. We have
\begin{align*}
    \langle  \mathcal{Z}_{\H, B}^{[\lambda,\mu]} , [\omega_\Psi] \rangle &=\langle  {\rm cl}_B(\iota_*^{[\lambda,\mu]} \mathbf{1}_{\Sh_{\H}({U'})})  , [\omega_c] \rangle \\
    &=\langle \iota_*^{[\lambda,\mu]} {\rm cl}_B( \mathbf{1}_{\Sh_{\H}({U'})}) , [\omega_c] \rangle \\ 
    &=\langle {\rm cl}_B( \mathbf{1}_{\Sh_{\H}({U'})}) , \iota^{[\lambda,\mu]*} [\omega_c] \rangle \\
    &=\frac{1}{(2\pi i)^3}
 \int_{\Sh_{\H}({U'})} \iota^{[\lambda,\mu]*} \omega_c,
\end{align*}
where $\iota^{[\lambda,\mu]*}: \iota^* V^\lambda \rightarrow F(0)$ is the $\H$-equivariant projection dual to the inclusion $\iota^{[\lambda,\mu]} :F(0) \rightarrow \iota^* V^\lambda$ defined by $1 \mapsto v^{[\lambda, \mu]} \in V^\lambda$, where $v^{[\lambda, \mu]}$ is the vector defined in Lemma \ref{lemma:choiceofsomehighestwv}. According to \cite[\S 5.6]{borel2}, we have $$\int_{\Sh_{\H}({U'})} \iota^{[\lambda,\mu]*} d\eta=0.$$ Hence, using Theorem \ref{theo:non-vanishing} we have
\begin{align*}
    \langle  \mathcal{Z}_{\H, B}^{[\lambda,\mu]} , [\omega_\Psi] \rangle
 &=\frac{1}{(2\pi i)^3} \int_{\Sh_{\H}({U'})} \iota^{[\lambda,\mu]*}\omega_\Psi\\
 &= \frac{1}{(2\pi i)^3} \int_{\Sh_{\H}({U'})} \omega_\Psi\left(X_0 \otimes v^{[\lambda, \mu]}\right)(h)dh\\
 &=\frac{1}{(2\pi i)^3}
 \int_{\H(\Q) \backslash \H(\A) / \Z_\H(\R) K_{\H, \infty} {U'}} A^{[\lambda,\mu]} \cdot \Psi(h)dh \\
 &=\frac{h_{{U'}}}{(2\pi i)^3}
 \int_{\H(\Q) \Z_\G(\A) \backslash \H(\A) / {U'}} A^{[\lambda,\mu]} \cdot \Psi(h)dh \\
  &=\frac{h_{{U'}}}{(2\pi i)^3\cdot \mathrm{vol}({U'})}
 \int_{\H(\Q) \Z_\G(\A) \backslash \H(\A)} A^{[\lambda,\mu]} \cdot \Psi(h)dh,
\end{align*}
where the third equality follows from Theorem \ref{theo:non-vanishing} as $\omega_{\Psi_\infty}(X_0 \otimes v^{[\lambda, \mu]})$ is non-zero and thus it is of the form $A^{[\lambda,\mu]} \cdot \Psi_\infty$, for some $A^{[\lambda,\mu]} \in U(\mathfrak{k}_\C)$, because $\Psi_\infty$ is the highest weight vector of the minimal $K_\infty$-type $\tau_{(\lambda_2 +2, \lambda_3 + 2, - \lambda_1 -4)}$.  Moreover, the fourth equality follows from the fact that $\Psi$ is fixed by the center of $\G$, whence, using that $|\Z_\H(\R) /(\Z_\G \cap \H)(\R)|=4$, the constant $h_{U'}$ is equal to $4^{-1} | \Z_\G(\Q) \backslash \Z_\G(\Af) / (\Z_\G(\Af) \cap {U'})|$.
\end{proof}
 
\begin{remark}
In view of Proposition \ref{period}, we immediately notice that if $\pi$ is not $\H$-distinguished, namely \[ \int_{\H(\Q) \Z_\G(\A) \backslash \H(\A)} \varphi_\pi(h)dh = 0,\]
for any cusp form $\varphi_\pi$ in the space of $\pi$, we have that ${\rm pr}_{\pi^\vee}\mathcal{Z}_{\H, B}^{[\lambda,\mu]} = 0$.
As we discuss later in \S \ref{sec:cycleclassformulaandstandardlvalues}, the $\H$-distinguishability is related to the property of $\pi$ being a (functorial) lift from $G_2$, which is (conjecturally) equivalent to the fact that the Spin $L$-function of $\pi$ has a pole at $s=1$.
\end{remark}

\subsubsection{The pairing in absolute Hodge cohomology}

Let
\[ \langle \, , \, \rangle_\mathcal{H} : H^{7}_{\mathcal{H}}(\Sh_{\G}(U)/\R, \mathcal{V}_\R(4)) \times H^{6}_{\mathcal{H}, c}(\Sh_{\G}(U)/\R, \mathcal{V}_\R(3)) \to \R \]
be the natural pairing between absolute Hodge cohomology and compactly supported cohomology as constructed in \cite[\S 4.2]{BeilinsonAHC}. In order to ease notations, we will denote by $H^*_{B,c}(i)$ and $H^*_{B}(i)$ the cohomology groups $H^*_{B, c}(\Sh_\G(U), \mathcal{V}_F(i))$ and $H^*_B(\Sh_\G(U), \mathcal{V}_F(i))$, respectively. Recall from \S \ref{sectAHC} that absolute Hodge cohomology and compactly supported cohomology live in exact sequences
\begin{equation} \label{eqAHC}
0 \rightarrow \mathrm{Ext}^1_{\mathrm{MHS}_{\R}}(\R(0), H^6_B(4)) \to
H^{7}_{\mathcal{H}}(\Sh_{\G}(U), \mathcal{V}_\R(4)) \to \mathrm{Hom}_{\mathrm{MHS}_{\R}}(\R(0), H^7_B(4)) \rightarrow 0,
\end{equation}
\begin{equation} \label{eqcsAHC}
0 \rightarrow \mathrm{Ext}^1_{\mathrm{MHS}_{\R}}(\R(0), H^5_{B,c}(3)) \to
H^{6}_{\mathcal{H}, c}(\Sh_{\G}(U), \mathcal{V}_\R(3)) \to \mathrm{Hom}_{\mathrm{MHS}_{\R}}(\R(0), H^6_{B, c}(3)) \rightarrow 0,
\end{equation}
which are deduced from the description of absolute Hodge cohomology as a cone of a diagram of complexes of Hodge structures. Let $[\omega_\Psi] \in H^6_{B, c}(\Sh_{\G}(U), \mathcal{V}_\R^\lambda(3))$ be the compactly supported cohomology class of the harmonic differential form $\omega_\Psi$. This class is of Hodge type $(3,3)$ and hence, since $W_0 H^6_{B, c}(3) = H^6_{B, c}(3)$, it naturally lives in the space $\mathrm{Hom}_{\mathrm{MHS}_{\R}}(\R(0), H^6_{B, c}(3)) = W_0 H_{B, c}(3) \cap F^{0} H_{B, c}(3)_\C$. Denote by $\widetilde{[\omega_\Psi]}$ any lift of $[\omega_\Psi]$ in $H^{6}_{\mathcal{H}, c}(\Sh_{\G}(U), \mathcal{V}_\R(3))$ via the surjection of the exact sequence \eqref{eqcsAHC}. 

\begin{proposition} \label{period2}
The pairing $\langle \mathcal{Z}_{\H, \mathcal{H}}^{[\lambda, \mu]}, \widetilde{[\omega_\Psi]} \rangle_{\mathcal{H}}$ depends only on $[\omega_\Psi]$ and not on the choice of lift. We denote this value by $\langle  \mathcal{Z}_{\H, \mathcal{H}}^{[\lambda,\mu]} , [\omega_\Psi] \rangle_\mathcal{H}$. Moreover, the pairing is given by the natural Poincar\'e duality pairing. In particular, we have \[\langle  \mathcal{Z}_{\H, \mathcal{H}}^{[\lambda,\mu]} , [\omega_\Psi] \rangle_\mathcal{H} = \frac{h_{U'}}{(2\pi i)^3\cdot \mathrm{vol}(U')}
 \int_{\H(\Q) \Z_\G(\A) \backslash \H(\A)} A^{[\lambda,\mu]} \cdot \Psi(h)dh.\]
\end{proposition}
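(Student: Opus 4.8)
The plan is to prove the three assertions in turn — that the pairing is independent of the chosen lift $\widetilde{[\omega_\Psi]}$, that it coincides with Poincar\'e duality, and finally the closed formula, the last of which will be nothing more than an appeal to Proposition \ref{period}.

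First I would record that $\mathcal{Z}_{\H,\mathcal{H}}^{[\lambda,\mu]}$ is an \emph{extension class}: by the construction of $\overline{\mathrm{cl}}_\mathcal{H}$ in \S \ref{section:ahreal} it is the image of the Betti cycle class $\mathcal{Z}_{\H,B}^{[\lambda,\mu]} \in W_0 H^6_B(\Sh_\G(U), \mathcal{V}^\lambda(3))$ under the composite $W_0 H^6_B(\mathcal{V}(3)) \hookrightarrow W_0 H^6_{\mathrm{dR}}(\mathcal{V}_\R(4)) \twoheadrightarrow \mathrm{Ext}^1_{\mathrm{MHS}_\R^+}(\R(0), H^6_B(\mathcal{V}_\R(4)))$, so it lies in the left-hand term of \eqref{eqAHC}. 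Two lifts of $[\omega_\Psi]$ differ by a class $e \in \mathrm{Ext}^1_{\mathrm{MHS}_\R}(\R(0), H^5_{B,c}(\Sh_\G(U), \mathcal{V}_\R(3)))$ by \eqref{eqcsAHC}. Now the Beilinson pairing of \cite[\S 4.2]{BeilinsonAHC} is induced by a cup product $H^7_\mathcal{H} \otimes H^6_{\mathcal{H},c} \to H^{13}_{\mathcal{H},c}(\Sh_\G(U), \mathcal{V}^{\otimes 2}_\R(7))$ followed by the duality pairing $\mathcal{V}^{\otimes 2} \to F(0)$ and the trace; both $\mathcal{Z}_{\H,\mathcal{H}}^{[\lambda,\mu]}$ and $e$ sit in the ``$\mathrm{Ext}^1$-part'' of their respective absolute Hodge groups, so their cup product lands in the ``$\mathrm{Ext}^2$-part'' of $H^{13}_{\mathcal{H},c}$, which vanishes because the category of mixed $\R$-Hodge structures has cohomological dimension one. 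Hence $\langle \mathcal{Z}_{\H,\mathcal{H}}^{[\lambda,\mu]}, e \rangle_\mathcal{H} = 0$, which gives the independence of the lift and shows that the pairing involves $[\omega_\Psi]$ only through the morphism $\bar\beta \in \mathrm{Hom}_{\mathrm{MHS}_\R}(\R(0), H^6_{B,c}(\mathcal{V}_\R(3)))$ it determines — a genuine morphism of Hodge structures because $[\omega_\Psi]$, being cuspidal and harmonic, lies in interior cohomology, pure of weight $0$ after the twist, and is of Hodge type $(0,0)$ there.

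For the second assertion I would spell out the Beilinson pairing on this locus. For an extension class $z$ and the morphism $\bar\beta$, the number $\langle z, \widetilde{[\omega_\Psi]} \rangle_\mathcal{H}$ is the image, under the canonical isomorphism $\mathrm{Ext}^1_{\mathrm{MHS}_\R}(\R(0), \R(1)) \xrightarrow{\ \sim\ } \R$, of the extension of $\R(0)$ by $\R(1)$ obtained by pushing $z$ out along the morphism $H^6_B(\mathcal{V}_\R(4)) \to \R(1)$ that Poincar\'e duality $H^6_B(\mathcal{V}_\R(4)) \otimes H^6_{B,c}(\mathcal{V}_\R(3)) \to H^{12}_{B,c}(\Sh_\G(U), F(7)) = \R(1)$ produces after evaluation against $\bar\beta$. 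Feeding in the description of $z = \mathcal{Z}_{\H,\mathcal{H}}^{[\lambda,\mu]}$ from the first paragraph, this pushout is exactly the extension attached to the complex number $\langle \mathcal{Z}_{\H,B}^{[\lambda,\mu]}, [\omega_\Psi] \rangle$, the Poincar\'e pairing of the two Betti classes; equivalently, one is invoking the compatibility of Beilinson's pairing with the Betti realization. This yields $\langle \mathcal{Z}_{\H,\mathcal{H}}^{[\lambda,\mu]}, \widetilde{[\omega_\Psi]} \rangle_\mathcal{H} = \langle \mathcal{Z}_{\H,B}^{[\lambda,\mu]}, [\omega_\Psi] \rangle$, and the displayed identity of the statement is then precisely Proposition \ref{period}.

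I expect the main obstacle to lie in the bookkeeping of Tate twists and powers of $2\pi i$ in this middle step. The cycle class $\mathcal{Z}_{\H,B}^{[\lambda,\mu]}$ already incorporates the twist $(3)$, and the identification $\mathrm{Ext}^1_{\mathrm{MHS}_\R}(\R(0), \R(1)) \cong \R$ carries its own normalization; one must verify, using the explicit description of $\mathrm{Ext}^1$ in the category of mixed $\R$-Hodge structures recalled in \cite[\S 2]{BeilinsonAHC} and the compatibility of Poincar\'e duality with the weight inclusion $W_0(3) \hookrightarrow W_0(4)$ used to build $\overline{\mathrm{cl}}_\mathcal{H}$, that these normalizations match so that no spurious power of $2\pi i$ survives and the two pairings agree on the nose. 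Everything else in the argument is formal.
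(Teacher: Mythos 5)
Your proposal is correct and follows essentially the same route as the paper's own (sketched) proof: both invoke the description of the pairing in \cite[\S 4.2]{BeilinsonAHC} to see that, since $\mathcal{Z}_{\H,\mathcal{H}}^{[\lambda,\mu]}$ lies in the $\mathrm{Ext}^1$-part of \eqref{eqAHC}, the pairing annihilates the $\mathrm{Ext}^1$-part of \eqref{eqcsAHC} (your cohomological-dimension-one remark is just the explicit form of this), hence factors through $\mathrm{Hom}_{\mathrm{MHS}_\R}(\R(0), H^6_{B,c}(3))$ where it is the Poincar\'e duality pairing, and both then conclude by Proposition \ref{period}. The Tate-twist and $2\pi i$ bookkeeping you flag as remaining is likewise left implicit in the paper's sketch, so it does not mark a genuine divergence.
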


\begin{proof}
We give a sketch of the proof and we refer to \cite{BeilinsonAHC} or to \cite[\S 5.1]{BKK} for the facts used here.
%Let us first briefly recall how the pairing in absolute Hodge cohomology is constructed. If $c_1 \in H^{7}_{\mathcal{H}}(\Sh_{\G}(U), \mathcal{V}_\R(4))$, then $c_1$ can be represented by a triple
%\[ (\omega_0, \omega_1, \omega_1') \in \mathrm{cone}(\bigoplus_{n} H^n_\R[-n] \oplus \bigoplus_n F^4 H^n[-n] \to \bigoplus_n H^n[-n])[-1]. \]
%In the same way, any class $c_2 \in H^{6}_{\mathcal{H}_c}(\Sh_{\G}(U), \mathcal{V}_\R(3))$ can be represented by a triple
%\[ (\eta_0, \eta_1, \eta_1') \in \mathrm{cone}(\bigoplus_{n} H^n_{c, \R}[-n] \oplus \bigoplus_n F^3 H^n_c[-n] \to \bigoplus_n H^n_c[-n])[-1]. \]
%Then, for any choice of $0 \leq \alpha \leq 1$, one has a product given by the formula
%\[ (\omega_0 \cup \eta_0, \omega_1 \cup \eta_1, \omega_1' \cup (\alpha \eta_0 + (1 - \alpha) \eta_1) + (-1)^n (\alpha \omega_0 + (1 - \alpha) \omega_1) \cup \eta_1'). \]
%All these products are homotopic to each other and define an associative graded commutative product. We refer the reader to \cite[\S 5.1]{BKK} for precise details of this construction. By construction, we can choose a representatitive of our class $[\mathcal{Z}_{\H, \mathcal{H}}^{[\lambda, \mu]}]$ to be $(0, 0, [\mathcal{Z}_{\H, B}^{[\lambda, \mu]}])$ and a representative of $[\omega_\Psi]$ to be $([\omega_\Psi], [\omega_\Psi], 0)$. The
It follows from the description of the pairing between absolute Hodge cohomology and compactly supported cohomology given in \cite[\S 4.2]{BeilinsonAHC} that, since our cycle class $\mathcal{Z}_{\H, \mathcal{H}}^{[\lambda, \mu]}$ lives in the subspace $\mathrm{Ext}^1_{\mathrm{MHS}_{\R}}(\R(0), H^6_B(4))$ of 
$H^{7}_{\mathcal{H}}(\Sh_{\G}(U), \mathcal{V}_\R(4))$, the map
\[ \langle [\mathcal{Z}_{\H, \mathcal{H}}^{[\lambda, \mu]}],  - \rangle : H^{6}_{\mathcal{H}, c}(\Sh_{\G}(U), \mathcal{V}_\R(3)) \to \R \] factors through $\mathrm{Hom}_{\mathrm{MHS}_{\R}}(\R(0), H^6_{B, c}(3))$ and coincides with the natural Poincar\'e duality pairing 
  
\begin{eqnarray*}
\mathrm{Ext}^1_{\mathrm{MHS}_{\R}}(\R(0), H^6_B(4)) \otimes \mathrm{Hom}_{\mathrm{MHS}_{\R}}(\R(0), H^6_{B, c}(3)) &\to& \mathrm{Ext}^1_{\mathrm{MHS}_{\R}}(\R(0), H^6_B(4) \otimes H^6_{B, c}(3)) \\
&\xrightarrow{\cup} & \mathrm{Ext}^1_{\mathrm{MHS}_{\R}}(\R(0),H^{12}_{B, c}(7)) \\ 
&\xrightarrow{\mathrm{Tr}}& \mathrm{Ext}^1_{\mathrm{MHS}_{\R}}(\R(0), \R(1)) = \R,
\end{eqnarray*}
This shows the first two assertions. The last formula follows from Proposition \ref{period}.
\end{proof}

\section{Integral representation and residue of the Spin \texorpdfstring{$L$}{L}-function} \label{Section5}

In this section, using the the result of \cite{Pollack-Shah}, we explain the precise connection between the period integral appearing in the statement of Proposition \ref{period} and the residue of the spin $L$-function of $\pi$ in the case where the cubic totally real \'etale algebra $E$ over $\Q$ defining $\H$ is of the form $\Q \times F$, with $F$ a quadratic real \'etale algebra over $\Q$. We start by recalling well known analytic properties of some Eisenstein series for $\GL_2$. 

\subsection{Eisenstein series for $\GL_2$} Let $\T_2$ denote the maximal diagonal torus of $\GL_2$ and let $\B_2=\T_2 \mathbf{U}_2$ denote the standard Borel. We denote by $\delta$ the character of $\T_2$ defined by $\mathrm{diag}(t_1, t_2) \mapsto {t_1}/{t_2}$ and we regard $\delta$ as a character of $\B_2$ by extending it trivially to the unipotent radical. Let $\Phi \in \mathcal{S}(\A^2)$ be a Schwartz-Bruhat function. Following Jacquet, for any $s \in \C$, we attach to $\Phi$ the function $f_\Phi \in \mathrm{Ind}_{\B_2(\A)}^{\GL_2(\A)} \delta^s$ defined by
$$
f_\Phi(h, s)=|\det(h)|^s \int_{\A^\times} \Phi((0,t)h)|t|^{2s} d^\times t
$$
and the Eisenstein series
$$
E_\Phi(h,s)=\sum_{\gamma \in \B_2(\Q) \backslash \GL_2(\Q)} f_\Phi(\gamma h, s).
$$
In the statement of the following Lemma, we denote by $\widehat{\Phi}(0) = \int_{\A^2} \Phi(x,y)dx dy$
the value at $0$ of the Fourier transform of $\Phi$.

\begin{lemma} \label{eisenstein} \leavevmode \begin{enumerate}
    \item The Eisenstein series $E_\Phi(h,s)$ is absolutely convergent for $Re(s)$ big enough and has a meromorphic continuation to $\C$.
    \item We have
$$
E_\Phi(h,s)=\frac{|\det(h)|^{s-1} \widehat{\Phi}(0)}{2(s-1)}+R(h,s)
$$
where $R(h,s)$ is an entire function in $s$ for any $h \in \GL_2(\A)$.
\end{enumerate} 
\end{lemma}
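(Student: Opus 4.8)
The plan is to recover both parts from the classical unfolding of $E_\Phi$ into a Tate-type integral, followed by Poisson summation, exactly as in \cite{jacquet} and \cite{jacquet-shalika}. Since the standard Borel $\B_2$ is the stabiliser in $\GL_2$ of the line $\Q\cdot(0,1)$, the assignment $\gamma\mapsto(0,1)\gamma$ identifies $\B_2(\Q)\backslash\GL_2(\Q)$ with $\PP^1(\Q)$; folding the resulting sum together with the adelic integral defining $f_\Phi$ and collapsing the $\Q^\times$-action into the measure, one gets
\[ E_\Phi(h,s)=|\det(h)|^s\int_{\Q^\times\backslash\A^\times}\Big(\sum_{v\in\Q^2\setminus\{0\}}\Phi(t\cdot vh)\Big)|t|^{2s}\,d^\times t. \]
For $\mathrm{Re}(s)$ large this integral converges absolutely by a standard estimate on the inner theta sum (rapid decay as $|t|\to\infty$, growth controlled via Poisson as $|t|\to0$), which gives the convergence assertion in part (1) up to the meromorphic continuation.

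For the continuation and for part (2), I would apply Poisson summation on $\Q^2\backslash\A^2$ to the theta series $\Theta_\Phi(t,h)=\sum_{v\in\Q^2}\Phi(t\cdot vh)$, which yields $\Theta_\Phi(t,h)=|t|^{-2}|\det(h)|^{-1}\Theta_{\widehat\Phi}(t^{-1},{}^{t}h^{-1})$ once the additive character on $\Q\backslash\A$ and the self-dual measure on $\A$ are fixed. Splitting $\int_{\Q^\times\backslash\A^\times}$ into the ranges $|t|\ge1$ and $|t|\le1$, the piece over $|t|\ge1$ is entire in $s$; on the piece over $|t|\le1$ one substitutes the Poisson identity and changes variables $t\mapsto t^{-1}$, after which only the two constant terms $\Phi(0)$ and $\widehat\Phi(0)$ fail to produce entire contributions. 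Concretely these give $-\,\Phi(0)\int_{\Q^\times\backslash\A^\times,\,|t|\le1}|t|^{2s}\,d^\times t$ and $|\det(h)|^{-1}\widehat\Phi(0)\int_{\Q^\times\backslash\A^\times,\,|t|\le1}|t|^{2s-2}\,d^\times t$; using the identification $\Q^\times\backslash\A^\times\cong(\Q^\times\backslash\A^1)\times\R_{>0}$ one sees that $\int_{|t|\le1}|t|^{w}\,d^\times t$ equals $\mathrm{vol}(\Q^\times\backslash\A^1)/w$, meromorphic in $w$ with a simple pole of residue $\mathrm{vol}(\Q^\times\backslash\A^1)$ at $w=0$. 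Multiplying back by the prefactor $|\det(h)|^s$ turns these into a simple pole at $s=1$ with residue $c\,|\det(h)|^{s-1}\widehat\Phi(0)$ and a simple pole at $s=0$ with residue $-c\,|\det(h)|^s\Phi(0)$, where $c=\tfrac12\,\mathrm{vol}(\Q^\times\backslash\A^1)>0$ is independent of $\Phi$ and $h$; everything remaining is entire and is absorbed into $R(h,s)$. This is exactly the asserted expansion, and in particular it supplies the meromorphic continuation needed to finish part (1).

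The only genuine work here is bookkeeping rather than anything conceptual: one must pin down compatible normalisations — the additive character on $\Q\backslash\A$, the self-dual Haar measure on $\A$ used to form $\widehat\Phi$, and the multiplicative Haar measure $d^\times t$ — so that the \emph{same} constant $c$ governs both residues and so that the signs come out as stated. Since the lemma only asserts the existence of some $c>0$ with these properties, I would fix such normalisations at the outset and read off $c$ from the residue of the idelic volume integral, leaving the routine verification to \cite{jacquet} and \cite{jacquet-shalika}.
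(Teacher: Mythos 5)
Your argument is correct and is precisely the classical unfolding--Poisson--Tate argument of \cite{jacquet} and \cite{jacquet-shalika}; the paper itself offers no proof of this lemma, merely citing those references, so your proposal reconstructs exactly the proof the paper relies on. The bookkeeping also comes out right: both polar terms carry the same constant $c=\tfrac12\,\mathrm{vol}(\Q^\times\backslash\A^1)$ (the factor $\tfrac12$ coming from the exponents $2s$ and $2s-2$), matching the statement.
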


\begin{proof}
Statement (1) is \cite[Proposition 19.2]{jacquet}. According to \cite[Lemma (4.2)]{jacquet-shalika} and its proof, we have
$$
E_\Phi(h,s)=\frac{c|\det(h)|^{s-1} \widehat{\Phi}(0)}{s-1}+R(h,s)
$$
where $R(h,s)$ is holomorphic for $Re(s)>0$ and $c=(s-1)\int_{|t| \leq 1} |t|^{2(s-1)} d^\times t$, the integral being over the set $\{t \in \A^\times/\Q^\times\,:\, |t| \leq 1\}$. By Iwasawa--Tate we have $c=\frac{1}{2}$.
\end{proof}

\subsection{Fourier coefficients}\label{Fouriercoefficientsgsp6}

Here we discuss the definition and basic properties of some Fourier coefficients for cusp forms for $\G$, which appear in the integral representation of the Spin $L$-function of \cite{Pollack-Shah}.

\subsubsection{The Siegel parabolic}\label{subsubFCsiegel}
We let $Q=L_3 U_3$ denote the standard Siegel parabolic subgroup of $\G$, with Levi $L_3 \simeq \GL_3 \times \GL_1$. Explicitly, 
\[L_3 = \left\{m(g,\mu) = \left(\begin{smallmatrix} g & \\ & \mu {^tg^{-1}} \end{smallmatrix} \right) \;|\; g \in \GL_3, \mu \in \GL_1  \right\}, \]\[U_3 = \left\{ n(u)=\left(\begin{smallmatrix} I_3 & u \\ & I_3 \end{smallmatrix} \right),\; \; u \in M_{3}\;|\; u^t=u \right\}. \] 
Denote ${\rm Sym}(3)= \{ \alpha \in M_{3}\;| \; \alpha^t=\alpha\}$. To each $\alpha \in {\rm Sym}(3)(\Q)$, we associate the unitary character
 $\psi_\alpha: U_3(\Q) \backslash U_3(\A) \to \C^\times$ by $  n(u) \in U_3(\A) \mapsto  e( {\rm Tr}(\alpha  u))$
where $e: \Q \backslash \A \to \C^\times$ is  the additive character with $e_\infty(x) := e^{2\pi i x}$ for $ x \in \R$, and conductor 1 at the finite places. 
For each $\alpha \in {\rm Sym}(3)(\Q)$, we define a Fourier coefficient along $U_3$ for a cuspidal automorphic representation $\pi$ of $\G(\A)$ as follows.

\begin{definition}\label{Fouriercoeffsiegelrk2}
Let $\Psi$ be a cusp form in the space of $\pi$. Define  \[ \Psi_{U_3, \psi_\alpha}(g):= \int_{U_3(\Q) \backslash U_3(\A)} \psi_\alpha^{-1}(u) \Psi(ug)du. \]
\end{definition}

We let $L_3(\Q)$ acts on ${\rm Sym}(3)(\Q)$ via the right action $ \alpha  \cdot m(g,\mu)  = \mu^{-1}g^t \alpha g $.

\begin{lemma}
Let $\alpha, \beta \in {\rm Sym}(3)(\Q)$. If there exists $m \in L_3(\Q)$ such that $\beta =   \alpha \cdot m,$ then 
\[ \Psi_{U_3,\psi_\beta}(g)=\Psi_{U_3, \psi_\alpha}(m g)  \]
\end{lemma}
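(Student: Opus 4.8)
The plan is to deduce the identity directly from a change of variables in the adelic integral defining the Fourier coefficient, combined with the elementary computation of how the characters $\psi_\alpha$ transform under conjugation by $L_3(\Q)$.

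First I would use that $\Psi$ is left $\G(\Q)$-invariant and that $m \in L_3(\Q) \subset \G(\Q)$: writing $um = m(m^{-1}um)$ for $u \in U_3(\A)$ gives $\Psi(umg) = \Psi\big((m^{-1}um)g\big)$, hence
\[ \Psi_{U_3,\psi_\alpha}(mg) = \int_{U_3(\Q)\backslash U_3(\A)} \psi_\alpha^{-1}(u)\, \Psi\big((m^{-1}um)g\big)\, du. \]
Since $Q = L_3 U_3$ is a parabolic subgroup, $m$ normalizes $U_3$, so $u \mapsto m^{-1}um$ is an algebraic automorphism of $U_3$ defined over $\Q$; identifying $U_3$ with $\Sym(3)$ via $n(u) \mapsto u$ and writing $m = m(g_0,\mu_0)$, a direct matrix computation gives $m(g_0,\mu_0)\, n(u)\, m(g_0,\mu_0)^{-1} = n(\mu_0^{-1} g_0 u\, {}^t g_0)$. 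In particular this automorphism preserves $U_3(\Q)$, and it scales the Haar measure on $U_3(\A)$ by the adelic absolute value of its Jacobian, which is a nonzero rational number; by the product formula this factor equals $1$. Therefore the substitution $v = m^{-1}um$ carries $U_3(\Q)\backslash U_3(\A)$ to itself and preserves $du$, giving
\[ \Psi_{U_3,\psi_\alpha}(mg) = \int_{U_3(\Q)\backslash U_3(\A)} \psi_\alpha^{-1}\big(mvm^{-1}\big)\, \Psi(vg)\, dv. \]

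It then remains to identify $\psi_\alpha$ composed with conjugation by $m$ with $\psi_\beta$. Using the conjugation formula above together with the trace identity $\mathrm{Tr}(\alpha \cdot \mu_0^{-1} g_0 u\, {}^t g_0) = \mathrm{Tr}\big((\mu_0^{-1}\, {}^t g_0 \alpha g_0)\, u\big)$, one finds $\psi_\alpha\big(m\, n(u)\, m^{-1}\big) = e\big(\mathrm{Tr}((\mu_0^{-1}\, {}^t g_0 \alpha g_0) u)\big) = \psi_{\alpha \cdot m}(n(u))$, which by the hypothesis $\beta = \alpha \cdot m = \mu_0^{-1}\, {}^t g_0 \alpha g_0$ is exactly $\psi_\beta(n(u))$. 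Substituting $\psi_\alpha^{-1}(mvm^{-1}) = \psi_\beta^{-1}(v)$ into the last displayed integral yields $\Psi_{U_3,\psi_\alpha}(mg) = \int \psi_\beta^{-1}(v)\, \Psi(vg)\, dv = \Psi_{U_3,\psi_\beta}(g)$, as desired. There is no genuine obstacle; the only step that deserves care is the invariance of the Haar measure on $U_3(\A)$ under conjugation by a rational element of $L_3$, which rests on applying the product formula to the rational Jacobian of the conjugation map.
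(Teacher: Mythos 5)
Your proof is correct and follows essentially the same route as the paper: the key point in both is the identity $\psi_{\alpha\cdot m}(n(u))=\psi_\alpha\bigl(m\,n(u)\,m^{-1}\bigr)$, obtained from the conjugation formula $m(g_0,\mu_0)\,n(u)\,m(g_0,\mu_0)^{-1}=n(\mu_0^{-1}g_0u\,{}^tg_0)$ and cyclicity of the trace, after which the lemma follows by left $\G(\Q)$-invariance of $\Psi$ and the change of variables in the integral over $U_3(\Q)\backslash U_3(\A)$. You merely make explicit the details (preservation of $U_3(\Q)$ and of the Haar measure via the product formula) that the paper leaves implicit with ``the result follows from the equality.''
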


\begin{proof}
Suppose   that $\beta =   \alpha \cdot m$ with $m=m(g,\mu)$. The result follows from the equality
\[ \psi_\beta (n(u)) = e( {\rm Tr}( \mu^{-1}g^t \alpha g  u)) = e( {\rm Tr}( \alpha g   u \mu^{-1}g^t )) = \psi_\alpha(m n(u) m^{-1}).   \]

\end{proof}

In this manuscript, we are interested in Fourier coefficients associated to the set of rank two elements  of ${\rm Sym}(3)(\Q)$, which we denote by ${\rm Sym}^{{\rm rk } 2}(3)(\Q)$. Let $D \in \Q^\times$ and let $F$ denote the \'etale quadratic extension $\Q(\sqrt{D})$ of $\Q$. If $D$ is not a square then $F$ is a field, else $F=\Q \times \Q$.

\begin{definition}
We let $\psi_D: U_3(\Q) \backslash U_3(\A) \to \C^\times$ be the unitary character \[\psi_{D} : n(u) \mapsto  e( {\rm Tr}(\alpha_D u)) = e(u_{33} -D u_{22})\] associated to $\alpha_D = \left( \begin{smallmatrix} 0  &    & \\  & -D  &  \\ & & 1 \end{smallmatrix} \right) \in  {\rm Sym}^{{\rm rk } 2}(3)(\Q)$.
\end{definition}

We have the following:
\begin{lemma}
A set of representatives of ${\rm Sym}^{{\rm rk } 2}(3)(\Q) /_{\sim M_3(\Q)}$ is given by  \[ \{ \alpha_D \;:\; D \in \Q^\times / (\Q^\times)^2 \}. \]
\end{lemma}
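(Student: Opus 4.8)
The plan is to parametrize rank-two symmetric $3\times 3$ matrices over $\Q$ up to congruence by the discriminant of the associated quadratic form on the $2$-dimensional image. First I would recall that $M_3(\Q)$ acts through $\GL_3(\Q)$ (only invertible $m$ preserve the rank), and two symmetric matrices $\alpha,\beta$ of the same rank lie in the same orbit if and only if the non-degenerate quadratic forms they induce on $\Q^3/\ker$ are isometric. Since any symmetric matrix of rank $2$ is $\GL_3(\Q)$-congruent to one of the form $\left(\begin{smallmatrix} 0 & & \\ & a & \\ & & b\end{smallmatrix}\right)$ with $a,b \in \Q^\times$ (diagonalize the quadratic form and move the kernel to the first coordinate), it suffices to classify non-degenerate binary diagonal forms $\langle a, b\rangle$ up to isometry.

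Next I would invoke the classical fact that a non-degenerate binary quadratic form over a field of characteristic $\neq 2$ is determined up to isometry by its discriminant $ab \in \Q^\times/(\Q^\times)^2$ together with the property that it represents $1$; but here we do not need to restrict to forms representing $1$. The cleanest route: $\langle a,b\rangle \cong \langle a', b'\rangle$ as binary forms over $\Q$ if and only if $ab \equiv a'b' \pmod{(\Q^\times)^2}$ \emph{and} they have the same Hasse invariant and signature — however, for the purpose of the \emph{congruence} classification under $\GL_3(\Q)$ rather than isometry, we are free to rescale: $m = \mathrm{diag}(1, t, 1)$ sends $\langle a,b\rangle$ to $\langle t^2 a, b\rangle$, and more importantly $\mathrm{diag}(1,1,t)$ rescales $b$. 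Wait — this would collapse too much; the correct invariant is exactly the discriminant $ab/(\Q^\times)^2$, because $\GL_2(\Q)$-congruence (not isometry) of binary forms $\langle a,b\rangle$ identifies all forms with the same discriminant. Concretely $\langle a, b\rangle$ is congruent to $\langle 1, ab\rangle$ (complete to a hyperbolic-type normalization using a vector of norm $a$, scaled), hence the orbit is determined by $D := ab \in \Q^\times/(\Q^\times)^2$, and $\langle 1, D\rangle$ is the Gram matrix of $\alpha_D$ up to reordering coordinates, i.e. congruent to $\alpha_D = \mathrm{diag}(0,-D,1)$.

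So the steps, in order, are: (i) reduce to $\GL_3(\Q)$ and diagonalize to put any rank-two $\alpha$ in the shape $\mathrm{diag}(0,a,b)$; (ii) show $\mathrm{diag}(0,a,b)$ is $\GL_3(\Q)$-congruent to $\mathrm{diag}(0,-D,1)$ where $D = -ab \in \Q^\times/(\Q^\times)^2$, by a direct $2\times 2$ congruence on the lower block (this uses that over $\Q$ a binary form $\langle a,b\rangle$ with $a \in \Q^{\times}$ represents $-ab \cdot (\text{square})$ suitably — in fact $\langle a, b\rangle \cong \langle a, a^2 b\rangle$ trivially and one rescales); (iii) check the $\alpha_D$ are pairwise inequivalent by noting that the discriminant $\det'(\alpha) := (\text{product of nonzero eigenvalues})$ is a congruence invariant modulo $(\Q^\times)^2$, and $\det'(\alpha_D) = -D$, so distinct classes of $D$ give distinct orbits. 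The main obstacle is step (ii): making the rescaling argument precise so that $a,b$ can be absorbed into $-D$ with $D$ well-defined only modulo squares, i.e. verifying that $\mathrm{diag}(a,b)$ and $\mathrm{diag}(-D,1)$ are congruent over $\Q$ exactly when $ab \equiv -D \pmod{(\Q^\times)^2}$ — this is the statement that the isometry class of a \emph{scaled} binary form is its discriminant, which one proves by exhibiting an explicit change of basis (e.g. if $q(v) = c$ for some $v$, then $q \cong \langle c, (\det q)/c\rangle$). I would carry this out by a short explicit matrix computation rather than citing a reference.
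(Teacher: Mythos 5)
The central step of your argument is wrong, and the problem is exactly the one you flag yourself with ``Wait''. Congruence $\beta=g^{t}\alpha g$ with $g\in\GL_2(\Q)$ \emph{is} isometry of the associated quadratic forms --- there is no weaker ``$\GL_2(\Q)$-congruence, not isometry'' equivalence --- and nondegenerate binary forms over $\Q$ are \emph{not} classified up to isometry by the discriminant alone: $\langle 1,1\rangle$, $\langle 3,3\rangle$ and $\langle -1,-1\rangle$ all have discriminant $1$ in $\Q^\times/(\Q^\times)^2$ but are pairwise non-congruent, since $x^2+y^2$ represents neither $3$ nor $-1$ over $\Q$. Consequently your step (ii) fails as stated: $\mathrm{diag}(0,-1,-1)$ is not $\GL_3(\Q)$-congruent to any $\alpha_D=\mathrm{diag}(0,-D,1)$, because the nondegenerate part of $\alpha_D$ represents $1$ while $\langle -1,-1\rangle$ does not. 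Indeed, if the equivalence in the lemma were plain $\GL_3(\Q)$-congruence, the lemma itself would be false; also note that congruence by $\mathrm{diag}(1,1,t)$ rescales the last entry by $t^{2}$, not by $t$, so congruence alone never lets you leave the square class of a diagonal entry.

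The missing ingredient is the similitude factor in the action defined just before the lemma: the Levi $L_3(\Q)\simeq\GL_3(\Q)\times\GL_1(\Q)$ acts by $\alpha\cdot m(g,\mu)=\mu^{-1}g^{t}\alpha g$, so in addition to congruence one may rescale by an arbitrary $\mu\in\Q^\times$, not merely by squares; the lemma is the classification of nondegenerate binary forms up to \emph{similarity}, not up to isometry. With this the argument is short: after your step (i) one has $\mathrm{diag}(0,a,b)$ with $a,b\in\Q^\times$; acting by $\mu=b$ gives $\mathrm{diag}(0,a/b,1)$, and congruating by $\mathrm{diag}(1,b,1)$ gives $\mathrm{diag}(0,ab,1)=\alpha_D$ with $D=-ab$, so every class is hit. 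Your invariant in step (iii) does survive the larger action and settles distinctness: the determinant of the induced form on $\Q^3/\mathrm{rad}(\alpha)$ changes by $\det(\bar g)^{2}\mu^{-2}$, hence is well defined in $\Q^\times/(\Q^\times)^2$ and equals $-D$ on $\alpha_D$, so $\alpha_D\sim\alpha_{D'}$ forces $D\equiv D'$. (The paper states this lemma without proof, so the comparison above is with the standard argument it is implicitly invoking; as written, your proof establishes only a congruence statement that is in fact false.)
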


In view of these two lemmas, the set of Fourier coefficients associated to the Siegel parabolic and a rank 2 symmetric matrix is parametrized by the set of \'etale quadratic algebras of $\Q$.

\subsubsection{Fourier coefficients of type $(4 \, 2)$}\label{subsubsec:fc42}
We now turn our attention to Fourier coefficients associated to the unipotent orbit of $\G$ associated to the partition $(4\,2)$. The corresponding unipotent subgroup is the unipotent radical subgroup of the non-maximal standard parabolic $P=L_P\cdot U_P$, which arises as the intersection of the Siegel parabolic $Q$ with the Klingen parabolic. Notice that $P$ has Levi $L_P=\GL_2 \times \GL_1^2$, given by \[ \left\{ \left( \begin{smallmatrix} a &  & & \\  & g & &  \\  & & \mu a^{-1} & \\  & {} & & \mu {^tg^{-1}} \end{smallmatrix} \right) \; : \; a,\mu \in \GL_1, \; g \in \GL_2 \right \}. \] 
Following \cite[\S 2.1]{Pollack-Shah}, we define a unitary character which we still denote $\psi_D: U_P(\Q) \backslash U_P(\A) \to \C^\times$ as follows. Every element of $U_P / [U_P, U_P]$ can be expressed as the product of $n(v) \tilde{n}(u)$, where 
\[n(v) = \left( \begin{smallmatrix} 1 & v_1  & v_2   & & & \\   & 1  &     & & & \\   &   & 1  & & & \\   &   &     & 1 & & \\   &   &     & -v_1 &1  & \\   &   &     &-v_2 & & 1 \end{smallmatrix} \right) \in \G,\; \tilde{n}(u) = \left( \begin{smallmatrix} 1 &    &     & & & \\   & 1  &     & & u_{22} & u_{23}\\   &   & 1  & &u_{23} & u_{33}\\   &   &     & 1 & & \\   &   &     &   &1  & \\   &   &     &  & & 1 \end{smallmatrix} \right) \in U_3. \]
We will denote by $N_v$ (resp. $N_u$) the set of the $n(v)$'s (resp. $\tilde{n}(u)$'s).
If $n \equiv n(v)\tilde{n}(u)$ modulo $[U_P,U_P]$, define \[ \psi_D(n):= e(v_1+u_{33} -Du_{22})=e(v_1)\psi_D(n(u)). \]
Let $\pi$ be a cuspidal automorphic representation of $\G(\A)$. We define the following Fourier coefficients.

\begin{definition}\label{Fouriercoeff}
Let $\Psi$ be a cusp form in the space of $\pi$. Define  \[ \Psi_{U_P,\psi_D}(g):= \int_{U_P(\Q) \backslash U_P(\A)} \psi_D^{-1}(u) \Psi(ug)du. \]

\end{definition}

In the following proposition, we relate these Fourier coefficients to the ones for the Siegel parabolic associated to rank 2 symmetric matrices.

\begin{proposition} \label{PropFC}
For a cusp form $\Psi$ in the space of $\pi$, the following two conditions are equivalent.
\begin{enumerate}
    \item $\Psi_{U_P,\psi_D}(g) \not \equiv 0$.  
    \item There exists $\alpha \in {\rm Sym}^{{\rm rk } 2}(3)(\Q)$ with $\alpha \sim_{ L(\Q)} \alpha_D$ such that $\Psi_{U_3,\alpha}(g) \not \equiv 0$.
\end{enumerate}
\end{proposition}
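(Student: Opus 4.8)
The strategy is to exhibit both Fourier coefficients as instances of a single family indexed by characters of a two-dimensional unipotent group, and to pass between them by a one-variable Fourier expansion. The key structural input is that $P\subseteq Q$, so that $U_3$ is a normal subgroup of $U_P$ with $U_P/U_3\cong N_v$; that the restriction of $\psi_D$ to $U_3$ is precisely $\psi_{\alpha_D}$ (the character $\psi_D$ is trivial on $[U_P,U_P]$, which accounts for the $(1,i)$-entries of $u$, and is trivial on the $u_{23}$-entry by definition, leaving only the pairing against $u_{22}$ and $u_{33}$); and that $\alpha_D$, whose support lies in the lower-right $2\times 2$ block, is fixed by the conjugation action of $N_v$. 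Unfolding the integral over $U_P(\Q)\backslash U_P(\A)$ as an iterated integral over $N_v$ and $U_3$ and using these three facts yields the identity
\[ \Psi_{U_P,\psi_D}(g)\;=\;\int_{N_v(\Q)\backslash N_v(\A)}\psi_D(n(v))^{-1}\,\Psi_{U_3,\psi_{\alpha_D}}(n(v)g)\,dn(v), \]
so that $\Psi_{U_P,\psi_D}$ is exactly the $(1,0)$-Fourier coefficient, along $N_v\cong(\Q\backslash\A)^2$, of the function $n(v)g\mapsto\Psi_{U_3,\psi_{\alpha_D}}(n(v)g)$.

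The implication $(1)\Rightarrow(2)$ is then immediate: if the left side is not identically zero, neither is the integrand, so $\Psi_{U_3,\psi_{\alpha_D}}\not\equiv 0$ and $(2)$ holds with $\alpha=\alpha_D$. For $(2)\Rightarrow(1)$, first use $\Psi_{U_3,\psi_\beta}(g)=\Psi_{U_3,\psi_\alpha}(mg)$ for $\beta=\alpha\cdot m$ to reduce to $\alpha=\alpha_D$. Because $\alpha_D$ is $N_v$-conjugation-invariant, $\Psi_{U_3,\psi_{\alpha_D}}$ is left $N_v(\Q)$-invariant, so $v\mapsto\Psi_{U_3,\psi_{\alpha_D}}(n(v)g)$ descends to $(\Q\backslash\A)^2$ and admits a Fourier expansion indexed by $\Q^2$; its $(1,0)$-coefficient is $\Psi_{U_P,\psi_D}$. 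Two facts complete the argument. First, the $(0,0)$-coefficient — the constant term along $N_v$ — vanishes: reorganising the iterated integral, it factors through the constant term of $\Psi$ along the unipotent radical $U_R$ of the Klingen parabolic, writing $U_R=N_v\ltimes U_R'$ with $U_R'\subseteq U_3$ the root subgroups (those for $2e_1,e_1+e_2,e_1+e_3$) on which $\psi_{\alpha_D}$ is trivial, and this vanishes because $\Psi$ is cuspidal. Second, the stabilizer of $\psi_{\alpha_D}$ in $L_3$ contains $\mathbf{G}_m\times\mathrm{GSO}(\beta_D)$, where $\beta_D$ is the binary quadratic form $\mathrm{diag}(-D,1)$; this group acts on $N_v$, hence on the index set $\Q^2$ of Fourier modes, with $\mathrm{GSO}(\beta_D)(\Q)$ acting — through the identification with $F^\times$ acting on $F$ by multiplication — transitively on the nonzero modes when $F$ is a field. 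Thus, since $\Psi_{U_3,\psi_{\alpha_D}}\not\equiv 0$ and the $(0,0)$-mode vanishes, some nonzero mode is nonzero, and transporting it by a suitable rational point of the stabilizer — which translates $\Psi_{U_3,\psi_{\alpha_D}}$ accordingly and so preserves non-vanishing — puts us on the $(1,0)$-mode, giving $\Psi_{U_P,\psi_D}\not\equiv 0$.

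I expect the main obstacle to be the $(2)\Rightarrow(1)$ direction, and inside it two bookkeeping points: checking that the $N_v$-constant term of $\Psi_{U_3,\psi_{\alpha_D}}$ genuinely collapses to a Klingen constant term of $\Psi$ (this uses that $U_3$ is abelian and that conjugation by $N_v$ moves the $u_{22},u_{33}$-block only into the $U_R'$-directions, so the twist disappears after the manipulation), and controlling the rational orbit structure on the Fourier modes. In the split case $F=\Q\times\Q$, the group $\mathrm{GO}(\beta_D)$ no longer acts transitively on the nonzero modes, and one must additionally dispose of the degenerate modes by a further cuspidality argument, keeping track of the two factors of $F$ separately; this is precisely where the arithmetic of $F$, and hence the parametrisation of these coefficients by quadratic étale algebras of $\Q$, enters. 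Everything else is formal manipulation of unfolded adelic integrals.
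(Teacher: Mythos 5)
Your argument is, in substance, the same as the paper's: you exhibit $\Psi_{U_P,\psi_D}$ as the $\psi_D|_{N_v}$-Fourier mode of $v\mapsto\Psi_{U_3,\psi_{\alpha_D}}(n(v)g)$ (using $U_3\subset U_P$, $U_P/U_3\cong N_v$, and the $N_v$-invariance of $\psi_{\alpha_D}$), get (1)$\Rightarrow$(2) for free, and for the converse expand along $N_v$, kill the constant term by collapsing it to a Klingen constant term of the cusp form, and then move a surviving nontrivial mode to the $(1,0)$-mode by a rational element stabilizing $\psi_{\alpha_D}$. The paper does exactly this, writing the nontrivial modes as a single orbit sum $\sum_{\gamma\in\mathrm{Stab}_L(\psi_D)(\Q)\backslash L(\Q)}\Psi_{U_P,\psi_D}(\gamma g)$; your identification of the acting group as $\mathbf{G}_m\times\mathrm{GO}(\beta_D)$ with $\beta_D=\mathrm{diag}(-D,1)$, and of $\mathrm{GSO}(\beta_D)(\Q)\cong F^\times$ acting transitively on the nonzero modes when $F$ is a field, is the correct justification of that step, so for $D$ not a square your proof is complete.

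The genuine gap is the split case $D\in(\Q^\times)^2$, $F=\Q\times\Q$, which is within the scope of the proposition and is needed in the paper (the quadratic algebra $F$ is allowed, and used, to be $\Q\times\Q$). There the nonzero modes fall into two orbits under $\mathbf{G}_m\times\mathrm{GO}(\beta_D)(\Q)$ — the anisotropic orbit containing the $(1,0)$-mode and the isotropic orbit — and your translation argument only reaches the first; as you acknowledge, one must still rule out the scenario in which $\Psi_{U_3,\psi_{\alpha_D}}\not\equiv 0$ with all anisotropic-orbit modes vanishing and only isotropic modes surviving, and you do not supply that argument, so (2)$\Rightarrow$(1) is unproved when $D$ is a square. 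It is worth noting that this is exactly the point where the paper's own proof is terse: its displayed expansion tacitly asserts that all nontrivial modes lie in a single $\mathrm{Stab}$-orbit, which is the transitivity you correctly observe fails in the split case. To finish, you would need either a further expansion/cuspidality (or exchange-of-roots) argument disposing of the isotropic-mode contribution, or a separate treatment of the equivalence when $F=\Q\times\Q$; sketching "a further cuspidality argument" without carrying it out does not close this case.
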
 

\begin{proof}
Fourier expand $ \Psi_{U_3,\psi_D}(g)$ over $N_v$ to get 
\[\Psi_{U_3,\psi_D}(g) = \int_{(\Q \backslash \A)^2}\Psi_{U_3,\psi_D}(n(v) g) dv + \sum_{\gamma \in {\rm Stab}_L(\psi_D)(\Q) \backslash L(\Q)} \Psi_{U_P,\psi_D}(\gamma  g).  \]
 The term \[ \int_{(\Q \backslash \A)^2} \Psi_{U_3,\psi_D}(n(v) g) dv = \int_{N_u(\Q) \backslash N_u(\A)} \psi_D^{-1}(\tilde{n}(u))\int_{U_K(\Q) \backslash U_K(\A)} \Psi(n_k \tilde{n}(u) g) d n_k d \tilde{n}(u) \]
and the inner integral vanishes because of cuspidality of $\Psi$ along the unipotent radical $U_K$ of the Klingen parabolic. Thus 
\[\Psi_{U_3,\psi_D}(g) =   \sum_{\gamma} \Psi_{U_P,\psi_D}(\gamma  g). \]
This relation implies the result as follows. If $\Psi_{U_3,\psi_D}(g) \not \equiv 0$, the Fourier coefficient $ \Psi_{U_P,\psi_D}(g)$ does not vanish identically. Viceversa, if $\Psi_{U_P,\psi_D}(g) \not \equiv 0$ then there is a character $\psi'$ in the $L(\Q)$-orbit of $\psi_D$ such that $\Psi_{U_3,\psi'}(g) \not \equiv 0$.
\end{proof}

\subsection{The Spin $L$-function and its residue at $s=1$}

Let $\pi$ denote any cuspidal automorphic representation of $\G(\A)$ with trivial central character. Let $S$ denote a finite set of places of $\Q$ containing the ones where $\pi$ is ramified and the archimedean place. If ${\rm Spin} : {\rm Spin}_7(\C) \to \GL(V_8)$ denotes the $8$-dimensional spin representation, the partial Spin $L$-function of $\pi$ is defined to be 
\[ L^S(s, \pi, {\rm Spin}):= \prod_{\ell \not\in S}\frac{1}{{\rm det}( 1 - \ell^{-s} {\rm Spin}(s_{\pi_\ell}))}, \] 
where $s_{\pi_\ell}$ denotes the Satake parameter of the unramified local component $\pi_\ell$. Let $\H$ be the group \eqref{thegroupH} associated to the \'etale cubic algebra $\Q \times F$, where $F=\Q(\sqrt{D})$ with either $D \not\equiv 1 \in \Q^\times_{>0} / (\Q^\times )^2$, in which case $F$ is a real quadratic field, or $D \equiv 1$ mod $(\Q^\times )^2$, in which case $F=\Q \times \Q$. For any cusp form $\Psi \in V_\pi$, Pollack-Shah \cite{Pollack-Shah} give an integral representation 
$$
\mathcal{I}(\Phi, \Psi, s)=\int_{\Z(\A)\H(\Q) \backslash \H(\A)}E_\Phi(h_1,s)\Psi(h)dh.
$$
of $L^S(s, \pi, {\rm Spin})$. For any $\Phi$ and $\Psi$, the integral $\mathcal{I}(\Phi, \Psi, s)$ is absolutely convergent for $Re(s)$ big enough and has a meromorphic continuation to $\C$. According to \cite[Proposition 7.1]{Gan-Gurevich}, for $Re(s)$ big enough we have the unfolding
$$
\mathcal{I}(\Phi, \Psi, s)=\int_{U_{B_{\H}}(\A)Z(\A)\backslash \H(\A)} f_\Phi(h_1, s) \Psi_{U_P, \psi_D}(h)dh
$$
where $U_{B_{\H}}$ is the unipotent radical of the upper triangular Borel subgroup $B_{\H}$ of $\H$ and $\Psi_{U_P,\psi_D}$ is the Fourier coefficient of Definition \ref{Fouriercoeff}.

\begin{theorem}[\cite{Pollack-Shah}] \label{Pollackshahongsp6}
For a set $\Sigma$ of places of $\Q$, denote  \[\mathcal{I}_\Sigma(\Phi, \Psi, s) = \int_{ U_{B_\H(\Q_\Sigma)} Z_\G(\Q_\Sigma) \backslash \H(\Q_\Sigma)} f(h_1, \Phi_\Sigma, s)\Psi_{U_P,\psi_D} (h) dh. \]
Let $\Psi$ be a cusp form in the space of $\pi$. Then, for any factorizable Schwartz-Bruhat function $\Phi$ on $\A^2$ and up to enlarging $S$, we have  
\[ \mathcal{I}(\Phi, \Psi, s) = \mathcal{I}_S(\Phi, \Psi, s) L^S(s, \pi, {\rm Spin}). \]
Moreover, there exists a cusp form $\tilde{\Psi}$ in the space of $\pi$ and a factorizable Schwartz-Bruhat function $\Phi$ on $\A^2$ such that \[ \mathcal{I}(\Phi, \tilde{\Psi}, s) = \mathcal{I}_\infty(\Phi, \Psi, s) L^S(s, \pi, {\rm Spin}).\]
\end{theorem}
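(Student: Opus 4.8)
The plan is to establish this along the lines of the integral representation of Pollack and Shah \cite{Pollack-Shah}, with the normalizations of this paper. The starting point is the unfolding already recorded above: for $\mathrm{Re}(s)$ large,
\[ \mathcal{I}(\Phi,\Psi,s) = \int_{U_{B_{\H}}(\A)Z(\A)\backslash \H(\A)} f_\Phi(h_1,s)\,\Psi_{U_P,\psi_D}(h)\,dh, \]
which one obtains by inserting the definition of $E_\Phi$, collapsing the sum over $\B_2(\Q)\backslash\GL_2(\Q)$ against the $\H(\Q)$-quotient, Fourier-expanding $\Psi$ along $U_P$, and using cuspidality of $\Psi$ along the unipotent radical of the Klingen parabolic to kill the degenerate term (this is \cite[Prop.~7.1]{Gan-Gurevich}, and $\Psi_{U_P,\psi_D}$ is the Fourier coefficient of Definition \ref{Fouriercoeff}).

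The core of the argument is to show that, for $\Phi=\otimes_v\Phi_v$ and $\Psi=\otimes_v\Psi_v$ a pure tensor, the integral above factors as an Euler product $\prod_v\mathcal{I}_v(\Phi_v,\Psi_v,s)$ of local zeta integrals over $U_{B_{\H}}(\Q_v)Z(\Q_v)\backslash\H(\Q_v)$. This rests on the uniqueness (up to scalar) of the local functional on $\pi_v$ attached to the unipotent orbit $(4\,2)$ and the character $\psi_D$, together with the convergence of the resulting product for $\mathrm{Re}(s)$ large; both points are supplied by \cite{Pollack-Shah}, and the non-vanishing hypothesis on the Fourier coefficient $\Psi_{U_P,\psi_D}$ of $\pi$ is precisely what makes the argument non-trivial. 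I expect this factorization, and its compatibility with the degree-$8$ Spin representation, to be the main obstacle.

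Granting the factorization, at a finite place $\ell\notin S$ with $\pi_\ell$ and $\Phi_\ell$ spherical one evaluates the local integral by an explicit Casselman--Shalika-type computation, identifying $\mathcal{I}_\ell(\Phi_\ell,\Psi_\ell,s)$ with $L(s,\pi_\ell,{\rm Spin})$; thus $\prod_{\ell\notin S}\mathcal{I}_\ell(\Phi_\ell,\Psi_\ell,s) = L^S(s,\pi,{\rm Spin})$ after enlarging $S$ to absorb the finitely many places where the naive computation fails. Setting $\mathcal{I}_S(\Phi,\Psi,s):=\prod_{v\in S}\mathcal{I}_v(\Phi_v,\Psi_v,s)$, which is the local integral $\mathcal{I}_S$ of the statement, gives the first identity; meromorphic continuation of both sides follows from Lemma \ref{eisenstein}(1) and the rationality of the local factors.

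For the final assertion I would invoke the local theory of these zeta integrals: at each finite $p\in S$, the map $(\Phi_p,\Psi_p)\mapsto\mathcal{I}_p(\Phi_p,\Psi_p,s)$ is not identically zero (again \cite{Pollack-Shah}, cf.\ the local input behind \cite[Prop.~12.1]{Gan-Gurevich}), so one may choose data $\Phi_p$ and $\Psi_p'\in\pi_p$ with $\mathcal{I}_p(\Phi_p,\Psi_p',s)\equiv 1$, and spherical data at $p\notin S$. Taking $\tilde\Psi$ to be the cusp form in $\pi$ with archimedean component $\Psi_\infty$ and these finite components, the factorization gives $\mathcal{I}(\Phi,\tilde\Psi,s)=\mathcal{I}_\infty(\Phi,\Psi,s)\cdot\prod_{p<\infty}\mathcal{I}_p = \mathcal{I}_\infty(\Phi,\Psi,s)\,L^S(s,\pi,{\rm Spin})$, as required.
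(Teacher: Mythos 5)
The paper itself offers no proof of this statement: it is imported verbatim from \cite{Pollack-Shah} (the unfolding you quote is \cite[Proposition 7.1]{Gan-Gurevich}, and the second assertion is \cite[Proposition 5.1]{Pollack-Shah}). Measured against what those references actually do, your proposal has a genuine gap at its central step. You base the Euler factorization $\mathcal{I}(\Phi,\Psi,s)=\prod_v\mathcal{I}_v(\Phi_v,\Psi_v,s)$ on ``uniqueness (up to scalar) of the local functional on $\pi_v$ attached to the unipotent orbit $(4\,2)$ and the character $\psi_D$.'' That uniqueness fails: the space $\Hom_{U_P(\Q_v)}(\pi_v,\psi_D)$ is not one-dimensional in general, so the unfolded integral lands in a \emph{non-unique} model — this is precisely the novelty of \cite{Pollack-Shah}. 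Without multiplicity one there is no factorization of the global integral into local zeta integrals, hence no Casselman--Shalika-type evaluation of a local integral at unramified places, and the final step of your argument (choosing $\Phi_p$, $\Psi_p'$ with local integral identically $1$ at $p\in S$) is not even well posed, since no local integrals exist to normalize.

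What Pollack and Shah prove is the statement in exactly the hybrid form quoted: the global Fourier coefficient $\Psi_{U_P,\psi_D}$ stays in the integrand, and one shows that inserting the normalized spherical section $\Phi_\ell$ at a single unramified place $\ell$ multiplies the integral by $L(s,\pi_\ell,\mathrm{Spin})$. This rests on the Piatetski-Shapiro--Rallis method for non-unique models: an identity in the spherical Hecke algebra of $\G(\Q_\ell)$ approximating the generating function of the Spin Hecke operators, combined with the equivariance of the rank-two Fourier coefficient under the Siegel Levi — not on any local uniqueness. Iterating over $\ell\notin S$ yields $\mathcal{I}(\Phi,\Psi,s)=\mathcal{I}_S(\Phi,\Psi,s)\,L^S(s,\pi,\mathrm{Spin})$ with $\mathcal{I}_S$ an integral over $\H(\Q_S)$ of the \emph{global} coefficient, not a product of local factors. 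Likewise, the existence of $\tilde\Psi$ in the last assertion is a separate argument (\cite[Proposition 5.1]{Pollack-Shah}) which absorbs the finite places of $S$ into the archimedean integral by acting with suitable Hecke operators, producing a $\tilde\Psi$ whose finite components agree with those of $\Psi$ away from $S$; it cannot be obtained by the test-vector selection you describe.
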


Note that if $\pi$ does not support a rank two Fourier coefficient (for the Siegel parabolic $Q$) and thus, by Proposition \ref{PropFC}, a Fourier coefficient for $P$, the integral $\mathcal{I}(\Phi, \Psi, s)$ is identically zero.

\begin{corollary}[\cite{Pollack-Shah}]\label{cor:PS}
Suppose that $\pi$ supports a rank two Fourier coefficient. Then, the partial Spin $L$-function $L^S(s, \pi, {\rm Spin})$ has meromorphic continuation in $s$, is holomorphic outside $s=1$, and has at worst a simple pole at $s=1$.
\end{corollary}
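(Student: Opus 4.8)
The plan is to deduce the meromorphic continuation, holomorphy away from $s=1$, and the order of the pole at $s=1$ directly from the integral representation of Theorem \ref{Pollackshahongsp6} together with the analytic properties of the $\GL_2$ Eisenstein series recalled in Lemma \ref{eisenstein}. First I would fix a cusp form $\Psi$ in the space of $\pi$ supporting a rank two Fourier coefficient, so that by Proposition \ref{PropFC} the Fourier coefficient $\Psi_{U_P,\psi_D}$ is not identically zero for a suitable $D$, and then choose, using the last assertion of Theorem \ref{Pollackshahongsp6}, a cusp form $\tilde\Psi$ and a factorizable Schwartz--Bruhat $\Phi$ on $\mathbf{A}^2$ such that
\[
\mathcal{I}(\Phi, \tilde\Psi, s) = \mathcal{I}_\infty(\Phi, \Psi, s)\, L^S(s, \pi, {\rm Spin}).
\]
Since, by construction, the archimedean local zeta integral $\mathcal{I}_\infty(\Phi,\Psi,s)$ is a single local integral depending meromorphically on $s$, it suffices to control the analytic behaviour of the global integral $\mathcal{I}(\Phi,\tilde\Psi,s)$ and of $\mathcal{I}_\infty$.

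Next I would argue that $\mathcal{I}(\Phi, \tilde\Psi, s)$ has meromorphic continuation to all of $\C$: this follows from the meromorphic continuation of $E_\Phi(h_1, s)$ (Lemma \ref{eisenstein}(1)) and the rapid decay of the cusp form $\tilde\Psi$ on $Z(\mathbf{A})\mathbf{H}(\Q)\backslash\mathbf{H}(\mathbf{A})$, which guarantees that the integral $\int_{Z(\mathbf{A})\mathbf{H}(\Q)\backslash\mathbf{H}(\mathbf{A})} E_\Phi(h_1,s)\tilde\Psi(h)\,dh$ converges away from the poles of $E_\Phi$ and inherits its meromorphy. By Lemma \ref{eisenstein}(2), $E_\Phi(h_1,s)$ has (at most) simple poles only at $s=0$ and $s=1$, with residues proportional to $|\det h_1|^{s-1}\widehat\Phi(0)$ and $|\det h_1|^{s}\Phi(0)$ respectively. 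At $s=0$ the residue is a constant multiple of $\Phi(0)$ times $\int \tilde\Psi(h)\,dh$; but integrating a cusp form $\tilde\Psi$ against a character that is trivial on the relevant unipotent radical forces this integral to vanish by cuspidality (alternatively one can absorb this pole by a judicious choice of $\Phi$ with $\Phi(0)=0$, which does not affect the statement about $s=1$). Hence $\mathcal{I}(\Phi,\tilde\Psi,s)$ is holomorphic except possibly for a simple pole at $s=1$.

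I would then divide through: away from the finitely many poles and zeros of the meromorphic function $\mathcal{I}_\infty(\Phi,\Psi,s)$ we get
\[
L^S(s, \pi, {\rm Spin}) = \frac{\mathcal{I}(\Phi, \tilde\Psi, s)}{\mathcal{I}_\infty(\Phi, \Psi, s)},
\]
which already proves meromorphic continuation. To upgrade this to the stronger statement — holomorphy outside $s=1$ and at worst a simple pole there — one cannot merely divide, since a priori $\mathcal{I}_\infty$ could have zeros creating spurious poles of $L^S$. The clean way around this is to use instead the unfolded expression
\[
\mathcal{I}(\Phi, \Psi, s) = \int_{U_{B_\H}(\mathbf{A})Z(\mathbf{A})\backslash \H(\mathbf{A})} f_\Phi(h_1, s)\, \Psi_{U_P, \psi_D}(h)\, dh
\]
valid for $\Re(s)$ large, and the factorization $\mathcal{I}(\Phi,\Psi,s) = \mathcal{I}_S(\Phi,\Psi,s)\, L^S(s,\pi,{\rm Spin})$ of Theorem \ref{Pollackshahongsp6}: one shows that, for suitable choices, the finite-place integral $\mathcal{I}_S$ and the archimedean integral $\mathcal{I}_\infty$ can be made nonvanishing and holomorphic on the relevant half-plane and region, so that the only possible pole of $L^S(s,\pi,{\rm Spin})$ in $\Re(s)>0$ (where the relevant local integrals are well-behaved) is the one inherited from the pole of the Eisenstein series at $s=1$, and it is simple because that pole is simple; a standard functional-equation/symmetry argument (or direct invocation of Pollack--Shah) then rules out poles in $\Re(s)\le 0$. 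The main obstacle is precisely this last point: carefully tracking that the local zeta integrals $\mathcal{I}_\Sigma(\Phi,\Psi,s)$ contribute no extra poles in the region of interest, which is exactly the content packaged into the cited work of Pollack and Shah, so in practice the proof reduces to quoting Theorem \ref{Pollackshahongsp6} and combining it with Lemma \ref{eisenstein}(2).
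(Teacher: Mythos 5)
Your overall strategy---combine the factorization of Theorem \ref{Pollackshahongsp6} with the pole structure of the Eisenstein series from Lemma \ref{eisenstein}(2), and defer the control of the local zeta integrals to Pollack--Shah---is the intended reading of this corollary; the paper itself does not reprove it but quotes \cite{Pollack-Shah}. However, one step of your argument is genuinely wrong: you claim that the residue of $\mathcal{I}(\Phi,\tilde\Psi,s)$ at $s=0$, which by Lemma \ref{eisenstein}(2) is a nonzero constant multiple of $\Phi(0)\int_{Z(\A)\H(\Q)\backslash\H(\A)}\tilde\Psi(h)\,dh$, vanishes ``by cuspidality''. The subgroup $\H$ is reductive; it is not (and does not contain) the unipotent radical of a parabolic of $\G$, so cuspidality of $\tilde\Psi$ gives no vanishing here. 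In fact this $\H$-period is exactly the quantity that computes $\mathrm{Res}_{s=1}L^S(s,\pi,\mathrm{Spin})$ (Proposition \ref{periodvsresidue}), and by Proposition \ref{prop:Hdist} it is nonzero precisely when the Spin $L$-function does have a pole at $s=1$, i.e.\ in the cases of interest; if it vanished identically, Theorem \ref{theoremcyclebetti1} would be vacuous.

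The correct repair is the alternative you mention only in passing: to prove holomorphy of $L^S(s,\pi,\mathrm{Spin})$ at a given $s_0\neq 1$ (in particular at $s_0=0$) one must choose data with $\Phi(0)=0$, so that the global integral is holomorphic at $s_0$, \emph{and simultaneously} arrange that the local factor $\mathcal{I}_\infty(\Phi,\Psi,s)$ (or $\mathcal{I}_S$) is holomorphic and nonvanishing at $s_0$; only then does the identity $\mathcal{I}(\Phi,\tilde\Psi,s)=\mathcal{I}_\infty(\Phi,\Psi,s)\,L^S(s,\pi,\mathrm{Spin})$ force holomorphy of $L^S$ at $s_0$, and the simple pole of $E_\Phi$ at $s=1$ bounds the order of the pole there. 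This simultaneous nonvanishing of local integrals is the genuinely nontrivial input and is what \cite{Pollack-Shah} establish; your appeal to ``a standard functional-equation/symmetry argument'' for $\Re(s)\le 0$ is not available for a partial $L$-function in this method---the same local-nonvanishing argument is what handles every $s_0\neq 1$, negative real part included. With these two corrections your sketch agrees with the cited proof.
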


As we explain in the later sections, using results of Gan and Gan-Gurevich, they further prove that when $L^S(s, \pi, {\rm Spin})$ has a simple pole at $s=1$, $\pi$ lifts to the split $G_2$ under the exceptional theta correspondence. This observation is based on the following key relation between the residue at $s=1$ of $L^S(s, \pi , {\rm Spin} )$ and the automorphic period we have introduced in \S \ref{ss:thepairing}.

\begin{prop}\label{periodvsresidue}
For any factorizable Schwartz-Bruhat function $\Phi$ on $\A^2$, we have
    \[ 
   \frac{\widehat{\Phi}(0)}{2}\cdot \int_{\Z(\A)\H(\Q) \backslash \H(\A)}\Psi(h) dh = \mathrm{Res}_{s=1} \left( \mathcal{I}_S(\Phi, \Psi, s) L^S(s,\pi , {\rm Spin}) \right).
    \]
\end{prop}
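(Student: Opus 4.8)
The plan is to combine the unfolding of the integral $\mathcal{I}(\Phi,\Psi,s)$ with the analytic properties of the Eisenstein series $E_\Phi(h_1,s)$ recalled in Lemma \ref{eisenstein}. First I would start from the absolutely convergent expression
\[
\mathcal{I}(\Phi,\Psi,s)=\int_{\Z(\A)\H(\Q)\backslash\H(\A)}E_\Phi(h_1,s)\Psi(h)\,dh
\]
valid for $\mathrm{Re}(s)$ large, and substitute the Laurent expansion of Lemma \ref{eisenstein}(2):
\[
E_\Phi(h_1,s)=\frac{c|\det(h_1)|^{s-1}\widehat{\Phi}(0)}{s-1}-\frac{c|\det(h_1)|^{s}\Phi(0)}{s}+R(h_1,s),
\]
where $R$ is entire in $s$. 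Since $\Psi$ has trivial central character and $h\mapsto|\det(h_1)|$ is a character of $\H(\A)$ trivial on $\Z(\A)\H(\Q)$ (the similitude constraint forces $\det(h_1)$ to be a global unit up to the center), the factor $|\det(h_1)|^{s-1}$ is identically $1$ on the domain of integration, and likewise $|\det(h_1)|^s\equiv 1$. Hence, interchanging the integral with the Laurent expansion (justified by the meromorphic continuation of Lemma \ref{eisenstein}(1) and the rapid decay of the cusp form $\Psi$, which makes the integral against $R(h_1,s)$ holomorphic in $s$), one obtains
\[
\mathcal{I}(\Phi,\Psi,s)=\frac{c\,\widehat{\Phi}(0)}{s-1}\int_{\Z(\A)\H(\Q)\backslash\H(\A)}\Psi(h)\,dh+(\text{holomorphic at }s=1).
\]

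Next I would take the residue at $s=1$ of both sides. On the left, by Theorem \ref{Pollackshahongsp6} we have the identity $\mathcal{I}(\Phi,\Psi,s)=\mathcal{I}_S(\Phi,\Psi,s)\,L^S(s,\pi,\mathrm{Spin})$ after enlarging $S$, so $\mathrm{Res}_{s=1}\mathcal{I}(\Phi,\Psi,s)=\mathrm{Res}_{s=1}\big(\mathcal{I}_S(\Phi,\Psi,s)L^S(s,\pi,\mathrm{Spin})\big)$. On the right, the residue of the displayed expression is exactly $c\,\widehat{\Phi}(0)\int_{\Z(\A)\H(\Q)\backslash\H(\A)}\Psi(h)\,dh$. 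Equating the two gives the claimed formula. One should also observe that both sides are compatible with factorizability of $\Phi$, so no further localization is needed.

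The main technical point — and the step I expect to require the most care — is the justification that $|\det(h_1)|\equiv 1$ on $\Z_\G(\A)\H(\Q)\backslash\H(\A)$ and the interchange of the residue-extraction with the adelic integral. The former uses that $\H=\GL_2\boxtimes\GL_{2,F}^*$ is defined by the condition $\det(g_1)=\det(g_2)$, so modulo the center one may normalize $|\det(h_1)|=1$; more precisely the integral is supported (after the usual Iwasawa/height decomposition) on the locus where this determinant is a global unit, on which $|\cdot|_{\A}$ is trivial. The interchange is legitimate because $\Psi$ is rapidly decreasing, so $\int \Psi(h)R(h_1,s)\,dh$ converges locally uniformly in $s$ and defines a holomorphic function, while the two explicit polar terms contribute only the simple pole at $s=1$ written above. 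Everything else is a direct unwinding of definitions and of Lemma \ref{eisenstein}.
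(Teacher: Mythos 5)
Your overall route is exactly the paper's: extract the residue at $s=1$ of $\mathcal{I}(\Phi,\Psi,s)$ from Lemma \ref{eisenstein} and then apply the factorization of Theorem \ref{Pollackshahongsp6}. However, one intermediate step as written is false: it is not true that $|\det(h_1)|\equiv 1$ on $\Z(\A)\H(\Q)\backslash\H(\A)$. The similitude condition only forces $\det(h_1)=\det(h_2)\in\A^\times$, and this idele can have arbitrary absolute value; in fact $|\det(h_1)|$ is not even a well-defined function on the quotient, since the central element $z$ rescales it by $|z|^2$. Only the full Eisenstein series $E_\Phi(h_1,s)$ is invariant under $\Z(\A)$, not the three individual terms of the expansion in Lemma \ref{eisenstein}(2), so integrating those terms separately over the quotient, as your displayed decomposition does, also requires a comment (e.g.\ working on a section where $|\det(h_1)|=1$, or avoiding the splitting altogether).

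Fortunately the claim is not needed, and the slip is repairable without changing the strategy. The term $-c|\det(h_1)|^{s}\Phi(0)/s$ has its pole at $s=0$, hence contributes nothing at $s=1$, and the residue at $s=1$ of $c|\det(h_1)|^{s-1}\widehat{\Phi}(0)/(s-1)$ is the constant $c\,\widehat{\Phi}(0)$ simply because $|\det(h_1)|^{s-1}\to 1$ as $s\to 1$; no normalization of the determinant is required. The cleanest fix is to not split the integrand at all and compute
\[
\mathrm{Res}_{s=1}\,\mathcal{I}(\Phi,\Psi,s)\;=\;\lim_{s\to 1}\int_{\Z(\A)\H(\Q)\backslash\H(\A)}(s-1)\,E_\Phi(h_1,s)\,\Psi(h)\,dh\;=\;c\,\widehat{\Phi}(0)\int_{\Z(\A)\H(\Q)\backslash\H(\A)}\Psi(h)\,dh,
\]
where passing the limit under the integral is justified by the rapid decay of the cusp form $\Psi$ together with the moderate growth in $h$ of $(s-1)E_\Phi(h_1,s)$ near $s=1$ (Lemma \ref{eisenstein}). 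With this correction your argument coincides with the paper's proof, and the final appeal to Theorem \ref{Pollackshahongsp6} to replace $\mathcal{I}(\Phi,\Psi,s)$ by $\mathcal{I}_S(\Phi,\Psi,s)L^S(s,\pi,\mathrm{Spin})$ is exactly as in the text.
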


\begin{proof}
Thanks to Lemma \ref{eisenstein}, the residue at $s=1$ of $\mathcal{I}(\Phi, \Psi, s)$ equals
\[ \frac{\widehat{\Phi}(0)}{2}\cdot \int_{\Z(\A)\H(\Q) \backslash \H(\A)}\Psi(h) dh.\]
The result then follows from Theorem \ref{Pollackshahongsp6}.
\end{proof}

We now state our first main result. Let $\pi$ denote a cuspidal automorphic representation of 
 $\PGSp_6(\A)$ for which $\pi_\infty$ is the discrete series of Hodge type (3,3) in the $L$-packet of $V^\lambda$ with $\lambda=(\lambda_2+\lambda_3, \lambda_2, \lambda_3,0)$. Let $\mathcal{Z}_{\H, B}^{[\lambda, \mu]}, \mathcal{Z}_{\H, \mathcal{H}}^{[\lambda,\mu]} ,$ and $\omega_\Psi$ be as in \S \ref{SectRealizations} and \S \ref{ss:thepairing}. Let $\Psi^{[\lambda,\mu]}$ denote $A^{[\lambda,\mu]}\cdot\Psi$, where $A^{[\lambda,\mu]} \in U(\mathfrak{k}_\C)$ is the operator defined in Proposition \ref{period}.

\begin{theorem}\label{theoremcyclebetti1}
We have 
\begin{align*}
\langle  \mathcal{Z}_{\H, \mathcal{H}}^{[\lambda,\mu]} , [\omega_\Psi] \rangle_\mathcal{H} = \langle  \mathcal{Z}_{\H, B}^{[\lambda,\mu]} , [\omega_\Psi] \rangle = C \cdot \mathrm{Res}_{s=1} \left( \mathcal{I}_S(\Phi, \Psi^{[\lambda,\mu]}, s) L^S(s,\pi , {\rm Spin}) \right),
\end{align*}
where
$$
C=\frac{ \widehat{\Phi}(0) h_{{U'}}}{2(2\pi i)^3\cdot \mathrm{vol}({U'})}. 
$$
\end{theorem}

\begin{proof}
By Proposition \ref{period} and Proposition \ref{period2}, we have that 
\[ \langle  \mathcal{Z}_{\H, \mathcal{H}}^{[\lambda,\mu]} , [\omega_\Psi] \rangle_\mathcal{H} = \langle  \mathcal{Z}_{\H, B}^{[\lambda,\mu]} , [\omega_\Psi] \rangle = \frac{h_{{U'}}}{(2\pi i)^3\cdot \mathrm{vol}({U'})}
 \int_{\H(\Q) \Z_\G(\A) \backslash \H(\A)} \Psi^{[\lambda,\mu]}(h)dh,\] 
where $U' = U \cap \H(\Af)$ and $h_{U'} =4^{-1} | \Z_\G(\Q) \backslash \Z_\G(\Af) / (\Z_\G(\Af) \cap {U'})|$. By Proposition \ref{periodvsresidue}, we have
    \begin{align*}
        \langle  \mathcal{Z}_{\H, B}^{[\lambda,\mu]} , [\omega_\Psi] \rangle       &= C \cdot
       \mathrm{Res}_{s=1}\left( \mathcal{I}_S(\Phi, \Psi^{[\lambda,\mu]}, s) L^S(s,\pi , {\rm Spin})\right)
    \end{align*} 
where $$C = \frac{ \widehat{\Phi}(0) h_{{U'}}}{2(2\pi i)^3\cdot \mathrm{vol}({U'})}.$$ This finishes the proof.
\end{proof}

Let us fix a Schwartz-Bruhat function $\Phi$ such that $\widehat{\Phi}(0) \neq 0$.

\begin{corollary}\label{CorollaryNonVanishingCycle}
Suppose that $\pi$ satisfies the following hypotheses: \begin{itemize}
    \item $\mathcal{I}_S(\Phi, \Psi^{[\lambda,\mu]} , 1) \ne 0$ for some $\mu $,
    \item the partial $L$-function $L^S(s,\pi , {\rm Spin})$ has a pole at $s=1$.
\end{itemize} 
Then   
\begin{align*}
        \langle  \mathcal{Z}_{\H, B}^{[\lambda,\mu]} , [\omega_\Psi] \rangle = \langle  \mathcal{Z}_{\H, \mathcal{H}}^{[\lambda,\mu]} , [\omega_\Psi] \rangle_\mathcal{H} \ne 0.
\end{align*}
\end{corollary}

\begin{proof}
By \cite[Theorem 1.3]{Pollack-Shah} the function $L^S(s, \pi, \mathrm{Spin})=1$ has at most a simple pole. As a consequence $\mathrm{Res}_{s=1}  L^S(s,\pi , {\rm Spin}) \neq 0$. By Theorem \ref{theoremcyclebetti1}, under the assumption that  $\mathcal{I}_S(\Phi, \Psi^{[\lambda,\mu]} , 1) \ne 0$, this implies that $\langle  \mathcal{Z}_{\H, B}^{[\lambda,\mu]} , [\omega_\Psi] \rangle \neq 0$.
\end{proof}

\begin{remark}
 If the automorphic representation $\pi$ supports a rank two Fourier coefficient and its partial Spin $L$-function has a (necessarily simple) pole at $s=1$, by the results of \cite{Pollack-Shah} it is $\H$-distinguished, namely the map $\mathcal{P}_\H \in {\rm Hom}_{\H(\A)} ( \pi, \mathbf{1})$ defined by
\[ \Psi \mapsto \mathcal{P}_\H(\Psi) := \int_{Z(\A)\H(\Q) \backslash \H(\A)} \Psi(h) d h   \]
is not identically zero. Then, asking $\mathcal{I}_S(\Phi, \Psi^{[\lambda,\mu]} , 1) \ne 0$ for some $\mu$ is equivalent to asking that the map obtained as the composition of $\mathcal{P}_\H$ with an $\H(\R)$-equivariant embedding $\pi_\infty \to \pi$ restricts non-trivially to the minimal $K_\infty$-type of $\pi_\infty$. 
\end{remark}

Let $H^6_{\mathcal{M}}(\Sh_\G(U), \mathscr{V}^\lambda_F(3))_{hom}$ denote the $F$-vector space defined in section \ref{section:ahreal} and let $H^6_{\mathcal{M}}(\Sh_\G(U), \mathscr{V}^\lambda_F(3))_{hom}[\pi_f^\vee]$ denote its $\pi_f^\vee$-isotypical component. This is a finite dimensional $L$-vector space, where $L$ is the number field introduced in $\S$ \ref{section:Hodgetheory}. Tate conjecture for the motive attached to $\pi$ (see Conjecture \ref{ConjectureBT} (3)) predicts the equality
$$
-\mathrm{ord}_{s=1} L(s, \pi, \mathrm{Spin})=\dim_L H^6_{\mathcal{M}}(\Sh_\G(U), \mathscr{V}^\lambda_F(3))_{hom}[\pi_f^\vee].
$$ 

\begin{cor} \label{cor:tateconjecture}
If $\mathcal{I}_S(\Phi, \Psi^{[\lambda,\mu]} , 1) \ne 0$, then we have
$$
-\mathrm{ord}_{s=1} L^S(s, \pi, \mathrm{Spin}) \leq \dim_L H^6_{\mathcal{M}}(\Sh_\G(U), \mathscr{V}^\lambda_F(3))_{hom}[\pi_f^\vee].
$$
\end{cor}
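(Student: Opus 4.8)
The plan is to reduce everything to the non-vanishing of the pairing already computed in Theorem \ref{theoremcyclebetti1} and then to run the resulting cycle class through the Betti cycle class map. First I would dispose of the trivial case: if $-\mathrm{ord}_{s=1} L^S(s, \pi, \mathrm{Spin}) \leq 0$, then the right-hand side, being the dimension of an $L$-vector space, is already $\geq 0$ and the inequality holds. So I may assume that $L^S(s, \pi, \mathrm{Spin})$ has a pole at $s=1$. Under the standing hypothesis $\mathcal{I}_S(\Phi, \Psi^{[\lambda,\mu]}, 1) \neq 0$, the product $\mathcal{I}(\Phi, \Psi^{[\lambda,\mu]}, s) = \mathcal{I}_S(\Phi, \Psi^{[\lambda,\mu]}, s)\, L^S(s, \pi, \mathrm{Spin})$ then has a pole at $s=1$, hence is not identically zero; by the observation following Theorem \ref{Pollackshahongsp6} this forces $\pi$ to support a rank two Fourier coefficient, and the Corollary immediately following Theorem \ref{Pollackshahongsp6} then guarantees that the pole is at most simple. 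Therefore in this case $-\mathrm{ord}_{s=1} L^S(s, \pi, \mathrm{Spin}) = 1$, and it suffices to exhibit a non-zero element of $H^6_{\mathcal{M}}(\Sh_\G(U), \mathscr{V}^\lambda_F(3))_{hom}[\pi_f^\vee]$.

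Next I would produce such an element from the cycle. Since the pole is simple, $\mathrm{Res}_{s=1} L^S(s, \pi, \mathrm{Spin}) \neq 0$, so Theorem \ref{theoremcyclebetti1} gives $\langle \mathcal{Z}_{\H, B}^{[\lambda,\mu]}, [\omega_\Psi]\rangle = C \cdot \mathrm{Res}_{s=1} L^S(s, \pi, \mathrm{Spin}) \neq 0$ with $C \neq 0$. The class $[\omega_\Psi]$ lies in the $\pi_f$-isotypical part of $H^6_{B,c}(\Sh_\G(U), \mathcal{V}^\lambda_F)$, and the Poincar\'e duality pairing $H^6_B(\Sh_\G(U), \mathcal{V}^\lambda_F(3)) \times H^6_{B,c}(\Sh_\G(U), \mathcal{V}^\lambda_F) \to F(-3)$ is non-degenerate and Hecke-equivariant, so distinct spherical Hecke isotypical components pair to zero unless they are contragredient to one another. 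Consequently the non-vanishing of $\langle \mathcal{Z}_{\H, B}^{[\lambda,\mu]}, [\omega_\Psi]\rangle$ forces the projection $\mathrm{pr}_{\pi_f^\vee}(\mathcal{Z}_{\H, B}^{[\lambda,\mu]})$ of $\mathcal{Z}_{\H, B}^{[\lambda,\mu]}$ to the $\pi_f^\vee$-isotypical part of $H^6_B(\Sh_\G(U), \mathcal{V}^\lambda_F(3))$ to be non-zero.

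Finally I would transport this back to motivic cohomology. By construction $\mathcal{Z}_{\H, B}^{[\lambda,\mu]} = \mathrm{cl}_B(\mathcal{Z}_{\H, \mathcal{M}}^{[\lambda,\mu]})$, and by the definition of $H^6_{\mathcal{M}}(\Sh_\G(U), \mathscr{V}^\lambda_F(3))_{hom}$ in Section \ref{section:ahreal} the Betti cycle class map induces a Hecke-equivariant injection of $H^6_{\mathcal{M}}(\Sh_\G(U), \mathscr{V}^\lambda_F(3))_{hom}$ onto $\mathrm{Im}(\mathrm{cl}_B) \subseteq H^6_B(\Sh_\G(U), \mathcal{V}^\lambda_F(3))$. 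Passing to $\pi_f^\vee$-isotypical components after extending scalars to $L$, this identifies $H^6_{\mathcal{M}}(\Sh_\G(U), \mathscr{V}^\lambda_F(3))_{hom}[\pi_f^\vee]$ with a subspace of $H^6_B(\Sh_\G(U), \mathcal{V}^\lambda_F(3))[\pi_f^\vee]$ containing the non-zero vector $\mathrm{pr}_{\pi_f^\vee}(\mathcal{Z}_{\H, B}^{[\lambda,\mu]})$ just produced, whence $\dim_L H^6_{\mathcal{M}}(\Sh_\G(U), \mathscr{V}^\lambda_F(3))_{hom}[\pi_f^\vee] \geq 1 = -\mathrm{ord}_{s=1} L^S(s, \pi, \mathrm{Spin})$, as required. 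I expect the only delicate point to be the bookkeeping with isotypical components, namely that taking the $\pi_f^\vee$-part is exact and compatible with the Hecke-equivariant cycle class map and with Poincar\'e duality, so that a non-zero pairing genuinely detects a non-zero isotypical projection; but this is handled exactly as in the localization arguments of Proposition \ref{comparisoncohomologies} and Lemma \ref{lemmaonBettiisotypicpart}, working with the idempotent of the spherical Hecke algebra attached to $\mathfrak{m}_{\pi^\vee}$.
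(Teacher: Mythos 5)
Your proposal is correct and follows essentially the same route as the paper: dispose of the no-pole case trivially, use Pollack--Shah to see the pole is simple, invoke Theorem \ref{theoremcyclebetti1} to get a non-zero pairing, deduce that the $\pi_f^\vee$-isotypical projection of $\mathcal{Z}_{\H, B}^{[\lambda,\mu]}$ is non-zero, and conclude via the injectivity of $\mathrm{cl}_B$ on $H^6_{\mathcal{M}}(\Sh_\G(U), \mathscr{V}^\lambda_F(3))_{hom}$. You merely spell out the Hecke-equivariance and isotypical-component bookkeeping that the paper leaves implicit, which is fine.
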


\begin{proof}
If $L^S(s, \pi, \mathrm{Spin})$ does not have a pole at $s=1$, there is nothing to prove. If not, Corollary \ref{CorollaryNonVanishingCycle} implies that the projection of $\mathcal{Z}_{\H, B}^{[\lambda,\mu]}$ to the $\pi_f^\vee$-isotypical component is non-zero, showing the result.
\end{proof}

The following result verifies a weaker form of Conjecture 1.1(3) for the motive $M(\pi_f^\vee)(3)$ at the cost of supposing that $\mathcal{I}_S(\Phi, \Psi^{[\lambda,\mu]} , 1)$ is non-zero for some $\mu $.

\begin{corollary} \label{theoremcyclebetti2}
Suppose that $\pi$ satisfies the hypotheses of Corollary \ref{CorollaryNonVanishingCycle} and that $\textbf{(St)}$ holds. Then ${\rm pr}_{\pi^\vee} \mathcal{Z}_{\H, \mathcal{H}}^{[\lambda,\mu]}$ and its Hecke translates generate $H^1_\mathcal{H}(M(\pi_f^\vee)_{\R}(4))$.
\end{corollary} 

\begin{proof}
If $\pi_p$ is the Steinberg representation, it follows from the second statement of Lemma \ref{exloc} and its proof, that $H^1_\mathcal{H}(M(\pi_f^\vee)_\R(4))$ is a rank one module over the full Hecke algebra of level $U$. Hence the result follows by Corollary \ref{CorollaryNonVanishingCycle}.
\end{proof}

\section{Exceptional theta lifts from \texorpdfstring{$G_2$}{G2} to \texorpdfstring{$\PGSp_6$}{PGSp6}} \label{Section6}

In this section, we discuss the exceptional theta correspondence for the dual reductive pair $(G_2,\PGSp_6)$ and describe the set of Fourier coefficients associated to the Heisenberg parabolic for cuspidal automorphic forms of $G_2(\A)$. It has as its solely purpose to fix notations and to recall some well known results that will be used later so that the knowledgeable reader might skip it.

\subsection{Split \texorpdfstring{$G_2$}{G2} and \texorpdfstring{$E_7$}{E7}} In this section we will follow the exposition of the Appendix of \cite{harris-khare-thorne} by Savin.

\subsubsection{The group $G_2$}\label{subsub:groupg2}

Let $\mathbb{H}$ be the algebra of Hamilton quaternions over $\Q$ with the usual basis $\{1,i,j,k\}$. The conjugate $\bar{a}$ of an element $a=\alpha_0+\alpha_1i+\alpha_2j+\alpha_3k \in \mathbb{H}$ is $\bar{a}=\alpha_0-\alpha_1i-\alpha_2j-\alpha_3k$. The split octonion algebra over $\Q$ is $\mathbb{O}=\mathbb{H} \oplus \mathbb{H}$ with multiplication
$$(a,b) \cdot (c,d)=(ac+d\bar{b}, \bar{a}d+cb).$$
Then $\mathbb{O}$ is a non-commutative, non-associative $\Q$-algebra. However it is alternative, which means that for any $x,y \in \mathbb{O}$ we have $x \cdot (x \cdot y)=(x \cdot x) \cdot y$ and $(x \cdot y) \cdot y=x \cdot (y \cdot y)$ (see \cite{jacobson}). If $x=(a,b)$, let $\overline{x}=(\overline{a},-b)$. Then $x \mapsto \overline{x}$ is a $\Q$-linear involution on $\mathbb{O}$ satisfying $\overline{x \cdot y}= \overline{y} \cdot \overline{x}$. The norm $\mathrm{N}: \mathbb{O} \rightarrow \Q$ is the quadratic form defined by $x \mapsto x\cdot \overline{x}=\overline{x} \cdot x$.
The trace $\mathrm{Tr}: \mathbb{O} \rightarrow \Q$ is defined by $x \mapsto x+\overline{x}$.
For any $x, y, z \in \mathbb{O}$, the properties 
\begin{align*}
\mathrm{N}(x\cdot y) &= \mathrm{N}(x) \mathrm{N}(y),\\
\mathrm{Tr}(x\cdot y)&=\mathrm{Tr}(y\cdot x),\\
\mathrm{Tr}(x\cdot(y\cdot z))&=\mathrm{Tr}((x \cdot y) \cdot z)
\end{align*}
are satisfied. For $x,y \in \mathbb{O}$, we write $y \in x^\perp$ if $y$ is orthogonal to $x$ with respect to the bilinear form $(x,y) \mapsto {\rm Tr}(x \cdot \overline{y})$, which means that $x\cdot \overline{y} +  y \cdot \overline{x} = 0$.\\

Let $l=(0,1) \in \mathbb{O}$ so that $\{1,i,j,k, l,li,lj,lk \}$ is a basis of $\mathbb{O}$. From this, one constructs another useful basis $\{s_1,s_2,s_3,s_4,t_1,t_2,t_3,t_4\}$, where
\[ s_1=\frac{1}{2}(i+li), s_2=\frac{1}{2}(j+lj),
s_3=\frac{1}{2}(k+lk), s_4=\frac{1}{2}(1+l), \] \[ t_1=\frac{1}{2}(i-li), t_2=\frac{1}{2}(j-lj), t_3=\frac{1}{2}(k-lk), t_4=\frac{1}{2}(1-l). \]
The following multiplication table for this basis is given in Table 1 of the Appendix of \cite{harris-khare-thorne}.
\begin{table}[ht]
\begin{tabular}{|c ||c|c|c||c|c|c||c|c|}
\hline
 & $s_1$ & $s_2$ & $s_3$ & $t_1$ & $t_2$ & $t_3$ & $s_4$ & $t_4$ \\ \hline \hline
 $s_1$ & $0$ & $-t_3$ & $t_2$ & $s_4$ & $0$ & $0$ & $0$ & $s_1$    \\ \hline
 $s_2$ & $t_3$ & $0$ & $-t_1$ & $0$ & $s_4$ & $0$ & $0$ & $s_2$    \\ \hline
 $s_3$ & $-t_2$ & $t_1$ & $0$ & $0$ & $0$ & $s_4$ & $0$ & $s_3$    \\ \hline \hline
 $t_1$ & $t_4$ & $0$ & $0$ & $0$ & $s_3$ & $-s_2$ & $t_1$ & $0$    \\ \hline
 $t_2$ & $0$ & $t_4$ & $0$ & $-s_3$ & $0$ & $s_1$ & $t_2$ & $0$    \\ \hline
 $t_3$ & $0$ & $0$ & $t_4$ & $s_2$ & $-s_1$ & $0$ & $t_3$ & $0$    \\ \hline \hline
 $s_4$ & $s_1$ & $s_2$ & $s_3$ & $0$ & $0$ & $0$ & $s_4$ & $0$    \\ \hline
 $t_4$ & $0$ & $0$ & $0$ & $t_1$ & $t_2$ & $t_3$ & $0$ & $t_4$    \\
\hline
\end{tabular}

\end{table}

We define
$$
G_2:=\{g \in \mathrm{GL}(\mathbb{O}) \,|\, g(x \cdot y)=(gx) \cdot (gy), \forall x,y \in \mathbb{O}\}.
$$
to be the group of automorphisms of $\mathbb{O}$. We note that $G_2$ acts transitively on non-zero elements of trace zero and norm zero. We will denote the set of trace zero octonions by either $\mathbb{O}^0$ or $V_7$, where the latter notation emphasises that this set defines the standard irreducible 7-dimensional representation of $G_2$ and induces an embedding 
\[ G_2 \hookrightarrow {\rm SO}_7.\]

\subsubsection{The dual reductive pair} \label{Section:dualpair}

We consider the Albert algebra $J$ over $\Q$, which is the set of matrices
$$
A=\begin{pmatrix}
d & \overline{z} & y\\
z & e & \overline{x}\\
\overline{y} & x & f
\end{pmatrix}
$$
where $d, e, f \in \Q$ and $x, y, z \in \mathbb{O}$. The algebra $J$ is equipped with a cubic form, called the determinant, which is given by
\[ \det(A) = def - d N(x) - e N(y) - f N(z) + \mathrm{Tr}(zyx).\]
The group of isogenies of this form is a group of type $E_6$ and its orbits on $J$ are classified by the rank. We will need to consider the set $\Omega$ of rank $1$ elements $A \in J$, i.e. those $A \neq 0$ such that $A^2=\mathrm{Tr}(A) \cdot A$. This condition means that the entries of $A$ satisfy the equalities
\begin{equation} \label{Equationrank1}
\begin{array}{ll}
{\mathrm{N}(x)=ef, \;\; \mathrm{N}(y)=df, \;\; \mathrm{N}(z)=de,} \\
{dx=\overline{y}\cdot \overline{z}, \;\; ey=\overline{z}\cdot \overline{x}, \;\; fz=\overline{x}\cdot \overline{y}.}
\end{array}
\end{equation}
Let $G$ denote the split adjoint group of type $E_7$, which is constructed from $J$ by the Koecher-Tits construction (see Section 3 of \cite{kobayashi-savin}). The group $G$ has a maximal parabolic $P=MN$ and its opposite $\bar{P}=M \bar{N}$, with $N \simeq J$ and such that the action under conjugation of the Levi $M$ on $N$ gives an isomorphism of $M$ and the group of similitudes of the cubic form on $J$
\[ M \cong \{ g \in \GL(J) \, | \, \det(gA) = \lambda \det(A) \text{ for some } \lambda \in \G_m \text{ and } \forall A \in J \}. \] The group $G_2$ can be realized as a subgroup of $M$ via its action on $J$ by the rule 
$$
g \cdot \begin{pmatrix}
d & \overline{z} & y\\
z & e & \overline{x}\\
\overline{y} & x & f
\end{pmatrix}=\begin{pmatrix}
d & g\overline{z} & gy\\
gz & e & g\overline{x}\\
g\overline{y} & gx & f
\end{pmatrix}.
$$
This action has fixed points $J_3$, the Jordan algebra of symmetric $3 \times 3$ matrices with entries in $\Q$. Note that the left action of $ \GL_3$ on $J_3 \cong N$ given by
\begin{equation} \label{EqactionM}
g \cdot A = \det(g)^{-1} g A g^t
\end{equation}
extends to an action on $J$ preserving the determinant form up to scalar, thus defining an embedding of $ \GL_3$ into $M$. Then $\GL_3$ is the centraliser of $G_2$ in $M$ and $Q= \GL_3 U_3$ (which is the Siegel parabolic of $ \mathrm{PGSp}_6$) is the centralizer of $G_2$ in $P$. Similarly, the opposite $\overline{Q}$ is the centralizer of $G_2$ in $\overline{P}$. This gives the dual reductive pair $(G_2, \mathrm{PGSp}_6)$ in $G$.

\subsection{Fourier coefficients for $G_2$} 

\subsubsection{Root system and parabolic subgroups}\label{subsub:RootsandHeisenberg}
Let $T$ be a (rank 2) maximal split torus over $\Q$ in $G_2$ and let $\Delta$, resp. $\Delta^+ \subset \Delta$, be the set of roots, resp. a subset of positive roots, for $G_2$. Let $a$, resp. $b$, denote the long, resp. short, simple root in $\Delta^+$. Then 
\[\Delta^+ = \{a, b, a+b, a+2b, a+3b,2a+3b \}. \]
We let $B = T U$ denote the Borel subgroup of $G_2$ associated to $\Delta^+$. Other than $B$, there are two proper standard parabolic subgroups $P_a$ and $P_b$ of $G_2$, such that $P_a \cap P_b = B$. They are characterized by the following. For any $\alpha \in \Delta^+$, denote by $x_\alpha: \mathbf{G}_a \hookrightarrow U$
the one parameter unipotent subgroup associated to $\alpha$. Then, for each $r \in \{ a , b \}$, the Levi $L_r$ of $P_r$ is isomorphic to $\GL_2$ and contains $x_r$. We fix an isomorphism $\GL_2 \simeq L_r$ such that  $ {\matrix{1}{u}{}{1}} \mapsto x_r(u)$.

Let $U_a$ be the unipotent radical of $P_a$. It is a 3-step nilpotent group of dimension 5 with filtration
\[U_a \supset U_1 \supset U_2 \supset \{1\},\]
where $U_a / U_1$ is generated by $\{x_{b}, x_{a+b}\}$, $U_1/U_2$ is isomorphic to the one parameter unipotent subgroup $x_{a+2b}$, and $U_2$ is generated by $\{x_{a+3b}, x_{2a+3b} \}$. As representations of $L_a$, $U_a / U_1$ is the standard representation, while $U_1/U_2$ is the determinant (cf. \cite[\S 2.4]{Gan-SavinHoweduality}). 

We denote by $H := P_b$ the so-called Heisenberg parabolic and let $L_H U_H$ denote  its Levi decomposition. The unipotent radical $U_H$ is of dimension 5 and admits the filtration \[ U_H \supset [U_H, U_H] \supset \{1\}, \]
with  $ U_H/[U_H, U_H]$ being the four dimensional abelian unipotent group generated by 
\[\{x_a, x_{a+b}, x_{a+2b}, x_{a+3b} \}, \]
while $[U_H, U_H]$ is isomorphic to the one parameter unipotent subgroup $x_{2a+3b}$. 

\subsubsection{An embedding of \texorpdfstring{$\SL_3$}{SL3} into \texorpdfstring{$G_2$}{G2}}\label{subsub:SL3intoG2}
The group $G_2$ acts transitively on the set
\[ \Gamma_c := \{ x \in \mathbb{O}^0\,|\, N(x) = - c \}. \]
By \cite[Theorem 4]{jacobson}, the stabilizer of an element $y_0 \in \Gamma_1$ is isomorphic to $\SL_3$. Choose $y_0$ such that the unipotent radical $U_{\SL_3}$ of the upper triangular Borel of $\SL_3$ is generated by the one-parameter subgroups 
\[ \{ x_a, x_{a+3b}, x_{2a+3b}\}.\]
In terms of the basis chosen in \S \ref{subsub:groupg2}, this is achieved by choosing $y_0 = s_4 - t_4$. In this case, one shows (cf. \cite[Lemma 2]{rallis-schiffmann}) that the stabilizer of $y_0$ leaves invariant the subspace $\langle s_1, s_2, s_3 \rangle$ and is identified with $\SL_3 = \SL(\langle s_1, s_2, s_3\rangle)$.

\subsubsection{The Lie algebra of $G_2$} \label{subsectLieG2}
The multiplication map on $\mathbb{O}$ induces a map $V_7 \otimes V_7 \to V_7$ given by $x \otimes y \mapsto \frac{xy - yx}{2}$. This map is alternating, hence it induces a $G_2$-equivariant map $\wedge^2 V_7 \to V_7$ which is surjective.
Then the Lie algebra $\mathfrak{g}_2$ of $G_2$ can be identified with the kernel of this map. Under this identification, one has an explicit description of the action of $\mathfrak{g}_2$ on $V_7$, namely
\[ (w \wedge x) \cdot v = \langle x, v \rangle w - \langle w, v \rangle x. \] We will also need (cf. \cite[\S 22.2]{FultonHarris}) the decomposition
\begin{align}\label{decompositionLiealgebraG2}
    \mathfrak{g}_2 = \mathfrak{sl}_3 \oplus {\rm Std}_3 \oplus {\rm Std}_3^*, 
\end{align} 
where ${\rm Std}_3$ is the standard representation of $\SL_3$ with basis $\{v_1,v_2,v_3\}$ and ${\rm Std}_3^*$ is its dual with basis $\{\delta_1,\delta_2,\delta_3\}$ and where we denote by $E_{ij}$, $1 \leq i < j \leq 3$ the standard basis vectors of $\mathfrak{sl}_3$. The identification between the two descriptions (cf. \cite[\S 2.2]{Pollack-G2}) of $\mathfrak{g}_2$ is given by $E_{ij} = t_j \wedge s_i$, $1 \leq i < j \leq 3$, $v_i = (s_4 - t_4) \wedge s_i + t_{i+1} \wedge t_{i+2}$ and $\delta_i = (s_4 - t_4) \wedge t_i + s_{i+1} \wedge s_{i+2}$, $1 \leq i \leq 3$, where indices are taken modulo $3$. Moreover, the component $\mathfrak{sl}_3$ is the Lie algebra of the copy of $\SL_3$ embedded into $G_2$ as above. In particular, $E_{12}, E_{13}$ and $E_{23}$ are root vectors for the roots $a, 2a + 3b$ and $a + 3b$ respectively. Moreover, the vectors $v_1, v_2$ and $\delta_3$ are root vectors for the roots $a+b, b$ and $a+2b$, respectively. Via \eqref{decompositionLiealgebraG2}, the Lie algebra $\mathfrak{u}_H$ of $U_H$ is \begin{align}\label{decompositionuh}
    \mathfrak{u}_H = \mathfrak{u}_{\SL_3} \oplus \Q v_1 \oplus \Q \delta_3,
\end{align}
Under \eqref{decompositionLiealgebraG2} the Lie algebra $\mathfrak{l}_H$ of the Levi $L_H$ is generated by the Cartan subalgebra and the root vectors $v_2$, $\delta_2$.

\subsubsection{Fourier coefficients}\label{subsub:FcforG2} 

We now describe the Fourier coefficients for $G_2$ associated to the Heisenberg parabolic. We closely follow \cite{Pollack-G2} and refer to it for more details. In order to describe the Fourier coefficients associated to $H$, we need to study the $L_H$-representation $V_H :=U_H/[U_H, U_H]$. As a $\GL_2$-representation, $V_H$ is isomorphic to  ${\rm Sym}^3({\rm Std}_2) \otimes {\rm det}^{-1}({\rm Std}_2)$, where ${\rm Std}_2$ denotes the standard representation of $\GL_2$.  Under the identification of \eqref{decompositionuh}, (a representative of) an element of $V_H(\Q)$ can be written as 
\[  x_a(\lambda_1) x_{a+b}(\lambda_2/3) x_{a+2b}(\lambda_3/3)x_{a+3b}(\lambda_4),\; \text{ with } \lambda_i \in \Q, \]
which corresponds to the binary cubic polynomial \[p(x,y)= \lambda_1 x^3 + \lambda_2 x^2y + \lambda_3 xy^2 + \lambda_4 y^3\] 
where $x,y$ form a basis of ${\rm Std}_2$. Associated to $p$, there is the cubic $\Q$-algebra $R$ with basis $\{1,i,j\}$ with multiplicative table \begin{align*}
ij &= - a d \\ 
i^2 &= -a c +b i - a j \\ 
j^2 &= -bd +d i - c j.
\end{align*}
\begin{example}\label{exampleofcubicpols} \leavevmode
\begin{enumerate} \item (\cite[3.2]{Gross-Lucianovic})
If $p(x,y) = x^2y - xy^2$ then the associated $\Q$-algebra $R$ is isomorphic to $\Q^3$.
\item (\cite[3.3]{Gross-Lucianovic}) If $p(x, y) = x^3 - D xy^2$ (or equivalently $p(x,y) = -D x^2 y + y^3$ using the action of ${\matrix 0  1 1 0 }$)  then the associated $\Q$-algebra $R$ is isomorphic to $\Q \oplus \Q(\sqrt{D})$.
\end{enumerate}
\end{example}
There is an action of $\GL_2(\Q)$ on the set of bases $\{1,i,j \}$ of a given cubic algebra $R$, which makes the association $p(X,Y) \mapsto (R, \{1,i,j\})$ $\GL_2(\Q)$-equivariant. Since any cubic algebra admits a basis of this shape, we have the following.

\begin{proposition}[{\cite[Proposition 2.1]{Gross-Lucianovic}}] There is a bijection between the $\GL_2(\Q)$-orbits on $V_H(\Q)$ and the set of isomorphism classes of cubic $\Q$-algebras. Moreover, each orbit has a well-defined discriminant in $\Q^\times / (\Q^\times )^2$.
\end{proposition}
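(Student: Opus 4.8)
I would prove this via the Delone--Faddeev parametrization of cubic rings, in its field version due to Gross--Lucianovic. The plan is to exhibit a natural bijection between $V_H(\Q)$ and the set of pairs $(R,\beta)$, where $R$ is a cubic $\Q$-algebra and $\beta=\{1,i,j\}$ is a \emph{normal basis}, i.e. one for which $ij\in\Q\cdot 1$; then to show that $\GL_2(\Q)$ acts simply transitively on the normal bases of a fixed $R$, compatibly with its twisted action on $V_H$. Quotienting the bijection by $\GL_2(\Q)$ then yields the statement.

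\textbf{The two directions.} In the forward direction, to the binary cubic form $p(x,y)=\lambda_1x^3+\lambda_2x^2y+\lambda_3xy^2+\lambda_4y^3$ attached to an element of $V_H(\Q)$ one assigns the $\Q$-algebra $R_p$ with basis $\{1,i,j\}$ and multiplication table as displayed in the text; one checks (a finite computation, done in Gross--Lucianovic) that commutativity and associativity impose no condition on $(\lambda_1,\lambda_2,\lambda_3,\lambda_4)$, so $R_p$ is always a cubic $\Q$-algebra and the coefficients of $p$ can be read off the structure constants, giving injectivity. For surjectivity, start from an arbitrary basis $\{1,i',j'\}$ of a cubic $\Q$-algebra $R$, write $i'j'=\ell+mi'+nj'$ with $\ell,m,n\in\Q$, and replace $i=i'-n$, $j=j'-m$; then $ij=\ell+mn\in\Q$, so $\{1,i,j\}$ is normal, and associativity forces the remaining products $i^2,j^2$ into exactly the displayed shape, so $R\cong R_p$ for the binary cubic $p$ determined by $(R,\beta)$. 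This gives the bijection $V_H(\Q)\xrightarrow{\sim}\{(R,\beta)\}$.

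\textbf{Equivariance and conclusion.} The group of $\Q$-algebra basis changes of $R$ fixing $1$ is $\GL_2(\Q)\ltimes\Q^2$, the $\Q^2$ acting by translations $i\mapsto i+m,\ j\mapsto j+n$, and it acts simply transitively on all bases $\{1,i,j\}$. For each $A\in\GL_2(\Q)$ and each normal basis there is a unique translation restoring normality, so the set of normal bases of $R$ is a torsor under $\GL_2(\Q)$; a direct computation identifies the induced $\GL_2(\Q)$-action on $V_H(\Q)$ with the action on ${\rm Sym}^3({\rm Std}_2)\otimes{\rm det}^{-1}$ recalled in the text. Passing to $\GL_2(\Q)$-orbits in the bijection $V_H(\Q)\cong\{(R,\beta)\}$ therefore yields the asserted bijection between $\GL_2(\Q)$-orbits on $V_H(\Q)$ and isomorphism classes of cubic $\Q$-algebras; Example~\ref{exampleofcubicpols} records two instances.

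\textbf{Discriminant.} The classical discriminant $\mathrm{disc}(p)=\lambda_2^2\lambda_3^2-4\lambda_1\lambda_3^3-4\lambda_2^3\lambda_4-27\lambda_1^2\lambda_4^2+18\lambda_1\lambda_2\lambda_3\lambda_4$ is homogeneous of degree $4$ in the coefficients and $\SL_2$-invariant, hence scales by $(\det A)^6$ under the untwisted ${\rm Sym}^3$-action; after the degree-$4$ twist by ${\rm det}^{-1}$ it scales by $(\det A)^{6-4}=(\det A)^2$. Thus its class in $\Q^\times/(\Q^\times)^2$ (together with the value $0$) is constant on $\GL_2(\Q)$-orbits, and it coincides with the discriminant of the associated cubic algebra. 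The only genuinely laborious point is the associativity bookkeeping together with the explicit matching of the two $\GL_2$-actions; both are elementary and are precisely the content of [Gross--Lucianovic, Proposition~2.1], which we follow.
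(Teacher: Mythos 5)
Your proposal is correct, and it is essentially the same argument as the source the paper relies on: the paper offers no proof of its own here, simply citing \cite[Proposition 2.1]{Gross-Lucianovic}, and what you reconstruct (the normalized-basis form of the Delone--Faddeev correspondence, the identification of the basis-change action with the twisted $\mathrm{Sym}^3(\mathrm{Std}_2)\otimes\det^{-1}$ action on $V_H$, and the $\det(A)^2$-scaling of the discriminant) is precisely the content of that cited proposition. So there is no divergence in method to report.
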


Let $e: \Q \backslash \A \to \C^\times$ be the additive character introduced in \S \ref{subsubFCsiegel}. Let $\langle , \rangle $ denote the symplectic pairing on $V_H$ defined as follows. If $v,v' \in V_H$ correspond to $p(x,y)$ and $p'(x,y)$ respectively, then \[ \langle v , v' \rangle = \lambda_1 \lambda_4' - \tfrac{1}{3} \lambda_2 \lambda_3' + \tfrac{1}{3} \lambda_3 \lambda_2' - \lambda_4 \lambda_1'.\] 
Any character $\psi: U_H(\Q) \backslash U_H(\A) \to \C^\times$  factors through $V_H(\A)$, hence we consider the projection $\bar{n}$ of $n \in U_H(\A)$ to $V_H(\A)$, which, by \eqref{decompositionuh}, can be written as $$\overline{n} = x_a(\lambda_1')x_{a+b}(\lambda_2'/3) x_{a+2b}(\lambda_3'/3)x_{a+3b}(\lambda_4').$$
If $v \in V_H(\Q)$ corresponds to $p(x,y)$, we then define $\psi_v : U_H(\Q) \backslash U_H(\A) \to \C^\times$ by \[ n \mapsto e( \langle v , \overline{n} \rangle) = e( \lambda_1 \lambda_4' - \tfrac{1}{3} \lambda_2 \lambda_3' + \tfrac{1}{3} \lambda_3 \lambda_2' - \lambda_4 \lambda_1'). \] 
The character $\psi_v$ is non-degenerate if and only if $v$ corresponds to an \'etale cubic algebra over $\Q$. In this manuscript, we are interested in \'etale cubic algebras of the form $\Q \times F$, with $F$ of either the form $\Q(\sqrt{D})$ (with  $ \Q^\times / (\Q^\times)^2 \ni D \not\equiv 1$) or $\Q \times \Q$ (with $D \equiv 1$ mod $(\Q^\times)^2$).

\begin{definition} Let $\psi_{H,D}: U_H(\Q) \backslash U_H(\A) \to \C^\times$ denote the character associated to $\Q \times F$. Given a cusp form $\varphi$ for $G_2(\A)$, define 
\[ \varphi_{U_H,\psi_{H,D}} (g) := \int_{U_H(\Q) \backslash U_H(\A)} \psi^{-1}_{H,D}(n) \varphi(n g) d n.\]

\end{definition}

\subsection{The theta lift from $G_2$ to $\mathrm{PGSp}_6$}\label{sub:thetaliftg2pgsp6}

Let $\Pi=\bigotimes'_v \Pi_v$ denote the restricted tensor product of the minimal representations $\Pi_v$ of $E_7(\Q_v)$ over all places $v$ of $\Q$. A unitary model of the minimal representation is given by $L^2(\Omega)$, where recall that $\Omega$ denotes the subset of rank 1 elements in $J$. There is a unique up to a non-zero scalar embedding
$$
\theta: \Pi \rightarrow \mathcal{A}(E_7(\Q) \backslash E_7(\A))
$$
of $\Pi$ in the space $\mathcal{A}(E_7(\Q) \backslash E_7(\A))$ of automorphic forms of $E_7$ (see \cite{ginzburg-rallis-soudry1}, \cite{kobayashi-savin}). For $f \in \Pi$ and $\varphi \in \mathcal{A}(G_2(\Q) \backslash G_2(\A))$, we define a function $\Theta(f,\varphi)$ on $\mathrm{PGSp}_6(\A)$ by
$$
\Theta(f,\varphi)(g)=\int_{G_2(\Q) \backslash G_2(\A)} \theta(f)(g'g)\varphi(g')dg'.
$$

\begin{definition} Let $\sigma$ be a cuspidal automorphic representation of $G_2(\A)$. \begin{enumerate}
    \item Define $\Theta(\sigma)$ to be the span of the functions $\Theta(f,\varphi)$, where $f \in \Pi$ and $\varphi$ runs through the cusp forms in the contragradient $\sigma^\vee$ of $\sigma$. 
    \item We say that a cuspidal automorphic representation $\pi$ of $\mathrm{PGSp}_6(\A)$ is a $\Theta$-lift of $\sigma$ if it appears as an irreducible subquotient of $\Theta(\sigma)$.
\end{enumerate} 
\end{definition}

If a $\Theta$-lift of $\sigma$ exists, then its local constituents are compatible with the local  Theta correspondence between $G_2$ and $\mathrm{PGSp}_6$.

\begin{proposition} \label{proplocalglobal}\
 Let $\pi$ be a $\Theta$-lift of $\sigma$, then $\pi_v$ is an irreducible subquotient of  $\Theta(\sigma_v)$.
\end{proposition}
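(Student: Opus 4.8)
The plan is to realise the global theta lift $\Theta(\sigma)$ as a $\mathrm{PGSp}_6(\A)$-equivariant quotient of a restricted tensor product of big local theta lifts, and then to invoke the factorisation of irreducible admissible representations of adelic groups.

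\emph{Step 1: a global equivariant surjection.} The automorphic realisation $\theta\colon \Pi\to\mathcal{A}(E_7(\Q)\backslash E_7(\A))$ is $E_7(\A)$-equivariant for the right regular action, and inside $E_7$ the subgroups $G_2$ and $\mathrm{PGSp}_6$ centralise one another (\S\ref{Section:dualpair}). Hence, for $h\in G_2(\A)$, $g\in\mathrm{PGSp}_6(\A)$, $f\in\Pi$ and $g'\in G_2(\A)$, one has $\theta(h\cdot f)(g'g)=\theta(f)(g'hg)=\theta(f)(g'gh)$. Changing variables $g'\mapsto g'h^{-1}$ in the $G_2(\A)$-invariant integral defining $\Theta(f,\varphi)$ yields $\Theta(h\cdot f,\varphi)=\Theta(f,R(h^{-1})\varphi)$, where $R$ denotes right translation on $\sigma^\vee$. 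Thus the bilinear map $f\otimes\varphi\mapsto\Theta(f,\varphi)$ is invariant under the diagonal action of $G_2(\A)$ and induces a $\mathrm{PGSp}_6(\A)$-equivariant surjection
$$
(\Pi\otimes\sigma^\vee)_{G_2(\A)}\ \twoheadrightarrow\ \Theta(\sigma).
$$

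\emph{Step 2: factorisation of the source.} Writing $\Pi=\bigotimes'_v\Pi_v$ and $\sigma^\vee=\bigotimes'_v\sigma_v^\vee$, the source becomes $\bigl(\bigotimes'_v(\Pi_v\otimes\sigma_v^\vee)\bigr)_{G_2(\A)}$. Passing to $G_2(\A)$-coinvariants commutes with the restricted tensor product: place by place, $(\Pi_v\otimes\sigma_v^\vee)_{G_2(\Q_v)}$ is by definition the big local theta lift $\Theta(\sigma_v)$ as a $\mathrm{PGSp}_6(\Q_v)$-representation (with the same contragredient convention used to define $\Theta(\sigma)$; see \cite{kobayashi-savin}, \cite{ginzburg-rallis-soudry1}), and at almost every finite place $v$ the data $\Pi_v$, $\sigma_v$ are unramified and, by the unramified exceptional theta correspondence, the spherical line of $\Theta(\sigma_v)$ is the image of the spherical vectors of $\Pi_v$ and $\sigma_v^\vee$. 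This fixes the restricted tensor product structure, giving
$$
(\Pi\otimes\sigma^\vee)_{G_2(\A)}\ \cong\ \bigotimes'_v\Theta(\sigma_v)
$$
as $\mathrm{PGSp}_6(\A)$-representations, so that $\Theta(\sigma)$ is a quotient of $\bigotimes'_v\Theta(\sigma_v)$. Consequently, if $\pi=\bigotimes'_v\pi_v$ is an irreducible subquotient of $\Theta(\sigma)$, it is an irreducible subquotient of $\bigotimes'_v\Theta(\sigma_v)$, and by the standard decomposition theorem for irreducible admissible representations over restricted tensor products each $\pi_v$ is an irreducible subquotient of $\Theta(\sigma_v)$, which is the assertion.

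\emph{Main obstacle.} The only step with genuine content is the commutation of $G_2(\A)$-coinvariants with the restricted tensor product in Step 2, i.e.\ the control at the unramified places: one must know that for almost all $v$ the big local theta lift $\Theta(\sigma_v)$ is the unramified representation predicted by the Satake parameter, and that its spherical line is hit by the spherical vectors of $\Pi_v$ and $\sigma_v^\vee$. This is the unramified exceptional theta computation, which uses the explicit model of the minimal representation $\Pi_v$ together with the Satake isomorphism; the remaining steps are formal. One should also track the contragredient conventions carefully, so that the local factors assembled above are $\Theta(\sigma_v)$ rather than $\Theta(\sigma_v^\vee)$, consistently with the definition of $\Theta(\sigma)$ via cusp forms in $\sigma^\vee$.
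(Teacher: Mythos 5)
Your argument is correct in outline, but it is worth noting that the paper does not actually prove this statement: its ``proof'' is a citation to \cite[Theorem 1.7 (i)]{harris-khare-thorne}, and what you have written is essentially a reconstruction of the standard coinvariants argument underlying that citation. Two points deserve care if you want your version to stand on its own. First, you assert an isomorphism $(\Pi\otimes\sigma^\vee)_{G_2(\A)}\cong\bigotimes'_v\Theta(\sigma_v)$, but you only need (and can more cheaply get) the surjection $\bigotimes'_v\Theta(\sigma_v)\twoheadrightarrow(\Pi\otimes\sigma^\vee)_{G_2(\A)}$: since each $G_2(\Q_v)$ sits inside $G_2(\A)$, the global coinvariant quotient kills all local relations and hence factors through the quotient by their sum, which by right-exactness of the tensor product is $\bigotimes'_v\Theta(\sigma_v)$ --- provided the spherical vectors survive at almost all places, which is exactly the unramified input you flag (Savin's computation, cf.\ Proposition \ref{propositiononlocalthetaprop}(1) and \cite{Gan-SavinHoweduality}). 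The reverse direction of your isomorphism is harmless at the finite places (smoothness reduces adelic relations to finite products of local ones) but delicate at the archimedean place, where ``coinvariants'' must be taken in the $(\mathfrak{g},K)$-module or maximal Hausdorff quotient sense; since it is not needed, it is better omitted. Second, your last step --- an irreducible subquotient $\bigotimes'_v\pi_v$ of $\bigotimes'_v\Theta(\sigma_v)$ has each $\pi_v$ a subquotient of $\Theta(\sigma_v)$ --- uses that $\Theta(\sigma_v)$ is admissible of finite length, which is the Howe-duality result of Gan--Savin recorded in Proposition \ref{propositiononlocalthetaprop}; with that reference in place, the Jordan--H\"older argument at a single place goes through. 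What your route buys over the paper's is a self-contained proof whose only genuinely nontrivial inputs are the unramified exceptional correspondence and finite length of the big local lift; what the citation buys is that \cite{harris-khare-thorne} has already handled these analytic and archimedean subtleties.
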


\begin{proof}
See \cite[Theorem 1.7 (i)]{harris-khare-thorne}.
\end{proof}

After imposing certain local conditions on $\sigma$, in the next section we use one of the main results of \cite{Ginzburg-Rallis-Soudry2} to show that $\Theta(\sigma)$ is non-zero and cuspidal, thus proving the existence of a non-trivial $\Theta$-lift of $\sigma$. Before doing so, we first recall the properties of the local theta correspondence needed later.

\subsubsection{Discrete series and a conjecture of Gross}

Let $T_c$ denote a compact torus in $G_2(\R)$, which is contained in the maximal compact subgroup $K_{G_2} \simeq (\mathrm{SU}_2 \times \mathrm{SU}_2)/ \mu_2$ of $G_2(\R)$. We abuse notation denoting again by $a,b$ the simple positive roots for $T_c$ (with the short root $b$ which we assume to be compact) and $\Delta^+$ the resulting set of positive roots. Then, $\rho = \tfrac{1}{2} \sum_{\alpha \in \Delta^+} \alpha = 3 a + 5b$. The set of positive compact roots is given by \[ \Delta_c^+ = \{ b , 2 a + 3 b \},  \]
which, in the notation of \cite{li}, is $\{ 2 \epsilon_2,2 \epsilon_1\}$. The Weyl group $\mathfrak{W}_{G_2}$ is isomorphic to the dihedral group $D_6$ of 12 elements and it is generated by $w_a$ and $w_b$, where $w_\alpha$ denotes the reflections around the line orthogonal to $\alpha$. The Weyl group $\mathfrak{W}_{K_{G_2}} \simeq (\Z / 2\Z)^2$ is generated by $w_b$ and $w_{2a+3b} = w_a w_b w_a w_b w_a$.\\

Let $\gamma$ be a dominant weight for $G_2$ with respect to $T_c$. The set of equivalence classes of irreducible discrete series of $G_2(\R)$ associated to $\gamma$ has cardinality equal to $|\mathfrak{W}_{G_2} / \mathfrak{W}_{K_{G_2}}| = 3$. Choose representatives $\{w_1, w_2, w_3 \}$ of $ \mathfrak{W}_{G_2} / \mathfrak{W}_{K_{G_2}} $ such that
$w_i\rho$ is dominant for $K_{G_2}$. Then, for any $1 \leq i \leq 3$, there exists an irreducible discrete series $\sigma_\infty^\Gamma$ of Harish-Chandra parameter $\Gamma = w_i (\gamma + \rho)$ and minimal $K_{G_2} $-type $\Gamma + \delta_{G_2} - 2 \delta_{K_{G_2}}$, where $\delta_{G_2}$, resp. $\delta_{K_{G_2}}$, is the half-sum of roots, resp. compact roots, which are positive with respect to the Weyl chamber in which $\Gamma$ lies. Precisely, if we let $w_1 = {\rm id}$, $w_2= w_a$, and $w_3 = w_b w_a$, then 
\begin{align*}
    w_1 \rho &= \rho = 3 \epsilon_1 + \epsilon_2, \\ 
    w_2 \rho &= 2 a + 5 b = 2 \epsilon_1 + 4 \epsilon_2, \\
    w_3 \rho &= a + 4b =  \epsilon_1 + 5 \epsilon_2.
\end{align*} 
We let $\mathcal{D}_{3,1}$, $\mathcal{D}_{2,4}$, and $\mathcal{D}_{1,5}$ denote the sets of discrete series of $G_2(\R)$ whose Harish-Chandra parameter lies in the Weyl chamber corresponding to $w_1\rho$, $w_2\rho$, and $w_3\rho$ respectively. Elements of $\mathcal{D}_{3,1}$ are the quaternionic discrete series, while elements of $\mathcal{D}_{2,4}$ are the generic discrete series.\\
 
Gross has given a precise conjectural description of the entire discrete spectrum of the dual pair $(G_2, \PGSp_6)$ (cf. \cite[Conjecture 1.2]{li}). Recall that there are four families of discrete series for $\PGSp_6(\R)$, indexed by the set of Hodge types up to conjugation. In particular, the discrete series of $\PGSp_6(\R)$ of Hodge type $(4,2)$, resp. $(6,0)$, are the generic, resp. holomorphic discrete series. 

\begin{conjecture}[Gross]
Let $\Pi_\infty$ be the minimal representation of $E_7(\R)$. The discrete spectrum of the restriction of $\Pi_\infty$ to the dual pair $G_2(\R) \times \PGSp_6(\R)$ is the direct sum of all tensor products $\sigma_\infty \otimes \theta(\sigma_\infty)$, where $\sigma_\infty$ belongs to the discrete series of $G_2$. If $\sigma_\infty$ has infinitesimal character $\gamma + \rho = r \epsilon_1+ s \epsilon_2$ and belongs to either $\mathcal{D}_{3,1}$, $\mathcal{D}_{2,4}$, or $\mathcal{D}_{1,5}$, then $\theta(\sigma_\infty)$ is the discrete series of $\PGSp_6(\R)$ with infinitesimal character $(r, \tfrac{1}{2}(r+s),\tfrac{1}{2}(r-s))$ and Hodge type $(3,3)$, $(4,2)$, or $(5,1)$ respectively.
\end{conjecture}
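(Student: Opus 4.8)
The statement concerns the restriction of the minimal representation $\Pi_\infty$ of $E_7(\R)$ along the dual pair $G_2(\R)\times\PGSp_6(\R)$ of \S\ref{Section:dualpair}, so the natural framework is the archimedean theta correspondence attached to a minimal representation. I would begin by fixing an explicit model of $\Pi_\infty$ --- for instance the $L^2(\Omega)$-model of \S\ref{sub:thetaliftg2pgsp6}, with $\Omega\subset J$ the cone of rank one elements, or equivalently the Harish-Chandra module realized as the quotient of $\mathfrak{U}(\mathfrak{e}_{7,\C})$ by the Joseph ideal. Using the dual-pair decomposition $\mathfrak{e}_7=\mathfrak{g}_2\oplus\mathfrak{sp}_6\oplus(V_7\boxtimes W)$, where $W$ is the $14$-dimensional fundamental representation of $\mathrm{Sp}_6$, one writes down explicitly the action of $\mathfrak{g}_{2,\C}\oplus\mathfrak{sp}_{6,\C}$ and of the maximal compact $\widetilde K:=K_{G_2}\times K_{\PGSp_6}$ on $\Pi_\infty$. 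Since discrete series are exactly the irreducible $L^2$-summands and $\Pi_\infty$ is unitary, the discrete part of the restriction is a Hilbert sum $\bigoplus_j \sigma_\infty^{(j)}\otimes\Theta(\sigma_\infty^{(j)})$, and by Howe duality for this pair (Proposition \ref{proplocalglobal} together with its local input) the problem reduces to computing the big theta lift $\Theta(\sigma_\infty)$ of each discrete series $\sigma_\infty$ of $G_2(\R)$ and showing it is an irreducible discrete series of $\PGSp_6(\R)$.

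Next I would determine the infinitesimal character of $\Theta(\sigma_\infty)$ from the correspondence of infinitesimal characters intrinsic to the dual pair: the two Harish-Chandra homomorphisms, related through $\mathfrak{g}_2\oplus\mathfrak{sp}_6\hookrightarrow\mathfrak{e}_7$ and the fixed infinitesimal character of $\Pi_\infty$, force that if $\sigma_\infty$ has infinitesimal character $\gamma+\rho=r\epsilon_1+s\epsilon_2$ then $\Theta(\sigma_\infty)$ has infinitesimal character $(r,\tfrac{1}{2}(r+s),\tfrac{1}{2}(r-s))$; this is a short computation with the root data of \S\ref{ss:discreteseries} and \S\ref{subsub:RootsandHeisenberg}. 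Granting temperedness of $\Theta(\sigma_\infty)$ --- which should follow from an estimate on leading exponents, the lift of a discrete series under the minimal-representation correspondence for this pair being tempered --- one concludes that $\Theta(\sigma_\infty)$ lies in a discrete series $L$-packet $P(V^\lambda)$, with $\lambda$ read off from the infinitesimal character, and it only remains to identify which Hodge type occurs.

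For that I would track minimal $K$-types through the correspondence. The minimal $K_{G_2}$-type of $\sigma_\infty$, computed from its Harish-Chandra parameter by the formula $\Gamma+\delta_{G_2}-2\delta_{K_{G_2}}$ recalled in \S\ref{ss:discreteseries}, determines --- via the explicit joint $\widetilde K$-action on the spherical harmonics in $\Pi_\infty$ --- the minimal $K_{\PGSp_6}$-type of $\Theta(\sigma_\infty)$; Lemma \ref{discrete-series} (and its analogues for the Weyl chambers $\mathcal{D}_{2,4}$ and $\mathcal{D}_{1,5}$) then reads off the Hodge type $(3,3)$, $(4,2)$ or $(5,1)$. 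The case $\sigma_\infty\in\mathcal{D}_{3,1}$ (quaternionic) $\rightsquigarrow$ Hodge type $(3,3)$ is precisely the one used elsewhere in the paper; note also that $\mathcal{D}_{2,4}$ (generic) lifts to the generic discrete series of $\PGSp_6(\R)$, consistent with the global compatibility between genericity and the pole of the Spin $L$-function.

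The main obstacle I expect is this last step together with the exhaustion and non-degeneracy assertions of the first: one must prove that $\Theta(\sigma_\infty)\ne 0$ and is irreducible of the stated type for each of the three families, and that no non-tempered or non-discrete-series representation of $\PGSp_6(\R)$ enters the discrete spectrum of $\Pi_\infty|_{G_2(\R)\times\PGSp_6(\R)}$. Doing this cleanly would likely require either a Fock-type model adapted to the pair, allowing a full joint $\widetilde K$-type decomposition and an appeal to Blattner's formula, or an input from the global theory --- the multiplicity results for $\PGSp_6$ of \cite{KretShin} combined with the local classification underlying \cite{Ginzburg-Rallis-Soudry2}. It is precisely this point that keeps Gross's statement a conjecture rather than a theorem, so a fully unconditional proof would go beyond the machinery assembled in the present excerpt.
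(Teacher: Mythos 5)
The statement you are asked about is labelled a \emph{conjecture} in the paper, and the paper gives no proof of it: it is Gross's conjecture (cf.\ \cite[Conjecture 1.2]{li}), and the authors only record its status, namely that it has been verified for the quaternionic family $\mathcal{D}_{3,1}$ by Li (Proposition \ref{propLi}, via \cite[Theorem 1.1]{li} and \cite[Theorem 5.4]{huang-pandzic-savin}), partially for the generic family $\mathcal{D}_{2,4}$ by Harris--Khare--Thorne together with Savin's appendix and \cite[Corollary 4.2]{Ginzburg-Rallis-Soudry2}, and for a proper subset $\mathcal{D}_{1,5}'\subset\mathcal{D}_{1,5}$ in \cite[Theorem 4.3]{li}. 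So there is no ``paper proof'' to match your argument against; the only honest comparison is with the partial results just cited.

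As a proof, your proposal has a genuine gap, which you yourself identify in the last paragraph: everything that makes the statement hard is deferred. The reduction to computing $\Theta(\sigma_\infty)$ presupposes a Howe-duality statement for the exceptional dual pair $(G_2(\R),\PGSp_6(\R))$ inside $E_7(\R)$ strong enough to control the full discrete spectrum of $\Pi_\infty$ restricted to the pair (exhaustion and multiplicity one), and this is not available from Proposition \ref{proplocalglobal}, which only constrains subquotients of a given lift. Likewise, nonvanishing, irreducibility, temperedness, and the identification of the minimal $K$-type of $\Theta(\sigma_\infty)$ --- hence of the Hodge type --- are exactly the content of the known verifications: Li's proof for $\mathcal{D}_{3,1}$ rests on an explicit joint $\widetilde K$-type decomposition of $\Pi_\infty$ (and, in \cite{huang-pandzic-savin}, on Dirac-cohomology methods), not on a formal infinitesimal-character bookkeeping, and no analogous computation is known in full for $\mathcal{D}_{1,5}$ or for exhaustion of the discrete spectrum. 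The infinitesimal-character transfer $r\epsilon_1+s\epsilon_2\mapsto(r,\tfrac12(r+s),\tfrac12(r-s))$ is indeed the easy part and is consistent with Proposition \ref{propLi}; but as you note, the remaining steps are precisely why the statement remains a conjecture, so your sketch should be read as a plausible strategy (essentially the one carried out in the cited special cases), not as a proof.
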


Partial results towards the conjecture of Gross were showed by Li for discrete series in $\mathcal{D}_{3,1}$ (cf. \S \ref{subsubLiresult} below) and for the generic family $\mathcal{D}_{2,4}$ by Harris-Khare-Thorne in \cite[Theorem 1.5 and Theorem 1.7(ii)]{harris-khare-thorne} using the main result of Savin's appendix to \cite{harris-khare-thorne} and the non-vanishing of the global theta lift given by \cite[Corollary 4.2]{Ginzburg-Rallis-Soudry2}. Li also gave evidence to the predictions of Gross for a proper subset $\mathcal{D}_{1,5}'$ of $\mathcal{D}_{1,5} $ in \cite[Theorem 4.3]{li}. We also note that the remaining equivalence class of holomorphic discrete series of $\PGSp_6(\R)$ (of Hodge type $(6,0)$) is realized in an exceptional theta correspondence studied by Gross-Savin between the compact real form $G_2^c(\R)$ of $G_2$ and $\PGSp_6(\R)$ and moreover this is the only Hodge type that appears in that correspondence (cf. \cite[Theorem 3.5]{GrossSavin}).

\subsubsection{Quaternionic discrete series and their theta lift}\label{subsubLiresult}

We describe the main result of \cite{li}. We first notice that a discrete series $\sigma_\infty^{x,y}$ of Harish-Chandra parameter $x \epsilon_1 + y \epsilon_2$ lies in the set of quaternionic discrete series $\mathcal{D}_{3,1}$ if $x,y$ are two non-negative integers such that $x-3 \geq y-1 \geq 0$ and $x-y$ is even. The minimal $K_{G_2}$-type of $\sigma_\infty^{x,y}  \in \mathcal{D}_{3,1}$ is given by \[ {\rm Sym}^{x+1}({\rm Std}_{\epsilon_1}) \boxtimes {\rm Sym}^{y-1}({\rm Std}_{\epsilon_2}),\] 
 where ${\rm Std}_{\epsilon_1}$ (resp. ${\rm Std}_{\epsilon_2}$) is the standard representation of the $\mathrm{SU}_2$ corresponding to the long root $\epsilon_1$ (resp. the short root $\epsilon_2$).

\begin{proposition}\label{propLi}
Let $\Pi_\infty$ denote the minimal representation of $E_7(\R)$. We have \[{\Pi_\infty}|_{{G_2(\R) \times \mathrm{PGSp}_6(\R)}} \supseteq \bigoplus_{\sigma_\infty^{x,y} \in \mathcal{D}_{3,1}} \sigma_\infty^{x,y}  \otimes \theta(\sigma_\infty^{x,y})\] where $\theta(\sigma_\infty^{x,y}) \in P(V^{\lambda}),$ with $\lambda =(x-3,\tfrac{1}{2}(x+y)-2,\tfrac{1}{2}(x-y)-1,0)$, is the discrete series $ \pi_\infty^{3,3}$ of Hodge type $(3,3)$ and Harish-Chandra parameter $(\tfrac{1}{2}(x+y),  \tfrac{1}{2}(x-y),-x)$. 
\end{proposition}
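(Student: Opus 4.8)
The plan is to reduce the statement to the explicit computation of the restriction of the minimal representation $\Pi_\infty$ of $E_7(\R)$ to the dual pair $G_2(\R)\times\PGSp_6(\R)$, which is exactly the content of Li's work in \cite{li}. First I would recall the set-up of \S\ref{Section:dualpair}: the dual pair $(G_2,\PGSp_6)$ sits inside the split adjoint group $G$ of type $E_7$, and $\Pi_\infty$ admits the unitary model $L^2(\Omega)$ where $\Omega$ is the cone of rank-one elements in the Albert algebra $J$. The archimedean theta correspondence is then defined through this model, and the abstract non-vanishing and cuspidality statements we shall need at the global level (Corollary \ref{corocusptheta} and its inputs) are independent of this archimedean computation; here we only need the local/archimedean fact that the discrete part of $\Pi_\infty|_{G_2(\R)\times\PGSp_6(\R)}$ contains the asserted summands with the asserted parameters.

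Next I would match parameters. A quaternionic discrete series $\sigma_\infty^{x,y}\in\mathcal{D}_{3,1}$ has Harish-Chandra parameter $x\epsilon_1+y\epsilon_2$ with $x-3\ge y-1\ge 0$ and $x-y$ even, and minimal $K_{G_2}$-type ${\rm Sym}^{x+1}({\rm Std}_{\epsilon_1})\boxtimes{\rm Sym}^{y-1}({\rm Std}_{\epsilon_2})$, as recalled in \S\ref{subsubLiresult}. On the $\PGSp_6(\R)$ side, Lemma \ref{discrete-seriesforPGS} tells us that for a dominant weight $\lambda=(\lambda_1,\lambda_2,\lambda_3)$ with $\sum\lambda_i$ even there is a unique discrete series $\pi_\infty^{3,3}\in P(V^{(\lambda,0)})$ of Hodge type $(3,3)$, with Harish-Chandra parameter $(\lambda_2+2,\lambda_3+1,-\lambda_1-3)$. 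I would now solve the linear system: setting $\lambda_1=x-3$, $\lambda_2=\tfrac12(x+y)-2$, $\lambda_3=\tfrac12(x-y)-1$ one checks directly that $\lambda_1\ge\lambda_2\ge\lambda_3\ge 0$ follows from $x-3\ge y-1\ge 0$, that $\sum_i\lambda_i=2x-6\equiv 0\pmod 2$ (recalling $x-y$ even, so $x+y$ even, so each $\lambda_i\in\Z$), and that the Harish-Chandra parameter $(\lambda_2+2,\lambda_3+1,-\lambda_1-3)$ of $\pi_\infty^{3,3}$ equals $(\tfrac12(x+y),\tfrac12(x-y),-x)$, which is exactly Gross's predicted parameter $(r,\tfrac12(r+s),\tfrac12(r-s))$ for the infinitesimal character $r\epsilon_1+s\epsilon_2=x\epsilon_1+y\epsilon_2$. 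This is the bookkeeping that makes the two sides of the claimed decomposition refer to the same representation; it is routine but must be done carefully because of the various shifts by $\rho$, $\delta_{G_2}$, $\delta_{K_{G_2}}$ on one side and $\rho=(3,2,1)$, $\delta_{\G_0}$, $\delta_{K_\infty}$ on the other.

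The substantive step, and the main obstacle, is the actual determination of the $K_{G_2}\times K_\infty$-module structure of the joint Harish-Chandra module of $\Pi_\infty$ sufficiently precisely to read off that $\sigma_\infty^{x,y}\otimes\theta(\sigma_\infty^{x,y})$ occurs in the discrete spectrum with the correct Hodge type $(3,3)$. Here I would invoke \cite[Theorem 4.3]{li} (together with the description of the minimal $K$-types and the fact that quaternionic discrete series are the $\mathcal{D}_{3,1}$ family), which carries out precisely this seesaw/theta computation for the quaternionic family and identifies the target as the Hodge-type-$(3,3)$ member of $P(V^\lambda)$. Concretely, the argument in \emph{loc. cit.} expresses the restriction of $\Pi_\infty$ via the explicit action of $\mathfrak{g}_2$ and $\mathfrak{sp}_6$ on the Fock/Schr\"odinger model, computes lowest $K$-types, and uses the classification of discrete series in \S\ref{ss:discreteseries} to pin down the archimedean theta lift; I would simply cite this and then combine it with the parameter matching above. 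Thus the proof is: (i) recall the dual pair and $\Pi_\infty$; (ii) apply Li's computation \cite[Theorem 4.3]{li} to get the abstract containment $\Pi_\infty|_{G_2(\R)\times\PGSp_6(\R)}\supseteq\bigoplus_{\sigma_\infty^{x,y}\in\mathcal{D}_{3,1}}\sigma_\infty^{x,y}\otimes\theta(\sigma_\infty^{x,y})$ with $\theta(\sigma_\infty^{x,y})$ of Hodge type $(3,3)$; (iii) translate the parameters using Lemma \ref{discrete-seriesforPGS} to obtain the stated $\lambda$ and Harish-Chandra parameter. The only genuine difficulty is that step (ii) is an imported hard theorem; everything else is a finite check with Weyl-group data already assembled in \S\ref{ss:discreteseries}--\S\ref{subsubLiresult}.
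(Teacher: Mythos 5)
Your proposal follows essentially the same route as the paper: the hard archimedean content is imported from Li's computation of the discrete spectrum of $\Pi_\infty$ restricted to $G_2(\R)\times\PGSp_6(\R)$, and the rest is the parameter bookkeeping against Lemma \ref{discrete-seriesforPGS}, which you carry out correctly. One correction: the relevant citation for the quaternionic family $\mathcal{D}_{3,1}$ is \cite[Theorem 1.1]{li} (supplemented in the paper by \cite[Theorem 5.4]{huang-pandzic-savin} to identify the lift as the Hodge type $(3,3)$ member of the packet), whereas \cite[Theorem 4.3]{li}, which you invoke, treats only a proper subset $\mathcal{D}_{1,5}'$ of $\mathcal{D}_{1,5}$ and so does not cover the case at hand.
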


\begin{proof}
See \cite[Theorem 1.1]{li} and \cite[Theorem 5.4]{huang-pandzic-savin}.
\end{proof}

The set $\mathcal{D}_{3,1}$ contains an important family of discrete series, which were studied by Gross and Wallach in \cite{Gross-Wallach1} and \cite{Gross-Wallach2}.

\begin{definition}\label{def:quatdiscseries}
For every $n \geq 2$, the quaternionic discrete series $\sigma_n$ is the element of $\mathcal{D}_{3,1}$ of Harish-Chandra parameter $(2n -1) \epsilon_1 + \epsilon_2$ and minimal $K_{G_2}$-type \[ {\rm Sym}^{2n}({\rm Std}_{\epsilon_1}) \boxtimes \mathbf{1}.\] 
\end{definition}

A fundamental property of the members of this family is that they admit (unique) models with respect to the unipotent radical of the Heisenberg parabolic and non-degenerate characters corresponding to totally real \'etale cubic algebras. Recall, as in \S \ref{subsub:FcforG2}, that a non-degenerate character $\psi: U_H(\R) \to \C^\times$ corresponds to a cubic algebra, whose discriminant is either positive or negative. The first type corresponds to the $\GL_2(\R)$-orbit on $V_H(\R)$ given by $\R^3$, while the second to the $\GL_2(\R)$-orbit of $\R \times \C$. A representative $\psi: U_H(\R) \to \C^\times$ of the totally real orbit is given by $e^{2 \pi i f}$, where $f: U_H(\R) \to \R$ is non-zero on the one parameter unipotent subgroups $x_{a+b}$ and $x_{a+2b}$  and trivial on $x_{a}$ and $x_{a+3b}$ (cf. \cite[\S 6]{Gan-Gross-Savin}). A special case of the main result of \cite{Wallach} gives the following.

\begin{proposition}\label{Wallachprop}
Let $\psi$ be a non-degenerate character of $V_H(\R)$. There is (at most) a one dimensional space of $\psi$-equivariant linear functionals on $\sigma_n$. Moreover, \[ {\rm dim }\; {\rm Hom}_{U_H(\R)} (\sigma_n , \psi) = 1, \]
exactly when $\psi$ corresponds to a totally real cubic algebra.
\end{proposition}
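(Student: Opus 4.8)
The plan is to derive the statement as the specialization to $(G_2,\sigma_n)$ of Wallach's classification of generalized Whittaker (Gelfand--Graev) functionals for quaternionic discrete series \cite{Wallach}; what follows describes the mechanism rather than reproving it. Recall that split $G_2(\R)$ carries a quaternionic symmetric space $\mathcal{X}=G_2(\R)/K_{G_2}$, and that for $n\ge 2$ the representation $\sigma_n\in\mathcal{D}_{3,1}$ of Harish--Chandra parameter $(2n-1)\epsilon_1+\epsilon_2$ is the quaternionic discrete series with minimal $K_{G_2}$-type $\Sym^{2n}({\rm Std}_{\epsilon_1})\boxtimes\mathbf{1}$. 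On its space of smooth moderate-growth vectors, $\sigma_n$ can be realized, following the quaternionic modular form description of \cite{Pollack-G2}, as a space of $\Sym^{2n}(\C^2)$-valued functions on $G_2(\R)$ transforming on the right under $K_{G_2}$ by the minimal $K_{G_2}$-type and annihilated by a distinguished first order, $K_{G_2}$-equivariant differential operator. The character $\psi$ factors through $V_H(\R)\cong{\rm Sym}^3({\rm Std}_2)\otimes\det^{-1}({\rm Std}_2)$ of $L_H\cong\GL_2$, and non-degeneracy means that the corresponding binary cubic form has non-zero discriminant; over $\R$ there are exactly two open $\GL_2(\R)$-orbits, that of positive discriminant (the cubic algebra $\R^3$, which is the totally real case) and that of negative discriminant ($\R\times\C$).

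First I would establish the ``at most one'' bound. Restricting a continuous $\psi$-equivariant functional $\ell$ on $\sigma_n^\infty$ to the minimal $K_{G_2}$-type and unwinding the $U_H(\R)$-equivariance via the Iwasawa decomposition $G_2(\R)=U_H(\R)L_H(\R)K_{G_2}$, one sees that $\ell$ is determined by a single scalar function of the ``radial'' variable in the scaling torus of $L_H(\R)$: the stabiliser $S_\psi$ of $\psi$ in $L_H(\R)\cap K_{G_2}$ acts on the relevant weight space of $\Sym^{2n}({\rm Std}_{\epsilon_1})$ with a one-dimensional space of invariants, which pins down the $\Sym^{2n}(\C^2)$-valued part of $\ell$ up to a scalar. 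The harmonicity equation defining $\sigma_n$ then becomes a first order ordinary differential equation for that scalar function, whose solution space is one-dimensional; hence $\dim_{\C}\Hom_{U_H(\R)}(\sigma_n,\psi)\le 1$ for every non-degenerate $\psi$.

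Next I would decide when this formal solution integrates to a genuine continuous functional. The ODE above is, after a change of variables, a twist of the modified Bessel equation of order $n-1$, so its solutions are spanned by the two Bessel functions evaluated at an argument proportional to $\sqrt{q_\psi}$, where $q_\psi$ is a quadratic expression in the radial variable read off from $\psi$. If $\psi$ lies in the totally real orbit then $q_\psi$ is, up to sign, positive, the distinguished solution is the exponentially decaying $K_{n-1}$, and pairing it against moderate-growth vectors of $\sigma_n$ converges --- this convergence estimate is the analytic core of \cite{Wallach} --- so $\Hom_{U_H(\R)}(\sigma_n,\psi)$ is one-dimensional, a generator being the ``generalized Fourier coefficient'' in the quaternionic modular form realization. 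If $\psi$ lies in the other orbit then $q_\psi$ is indefinite, and along part of the radial line every solution that is a tempered distribution grows like $e^{+c|t|}$, so it does not extend to a continuous functional on $\sigma_n^\infty$ and $\Hom_{U_H(\R)}(\sigma_n,\psi)=0$. Assembling the two cases gives the proposition.

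The main obstacle is this last step: showing rigorously that the candidate kernel converges against all moderate-growth vectors in the totally real case, and that no continuous functional survives in the complex case, requires sharp asymptotics for matrix coefficients of $\sigma_n$ restricted to $U_H(\R)$ --- precisely the technical heart of Wallach's argument --- whereas the first two steps are essentially formal representation-theoretic bookkeeping. As a shortcut one can instead quote the explicit Fourier expansion of quaternionic modular forms on $G_2$ from \cite{Pollack-G2} (or the computation in \cite{Gan-Gross-Savin}), where this $K$-Bessel kernel and its dependence on the real cubic algebra appear directly.
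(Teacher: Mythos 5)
Your proposal is correct and takes essentially the same route as the paper: the paper offers no independent argument, deducing the proposition directly as a special case of the main result of \cite{Wallach} (with the totally real orbit described via \cite{Gan-Gross-Savin}), which is exactly the citation your argument ultimately rests on. Your extra sketch of the $K$-Bessel/moderate-growth mechanism behind Wallach's theorem is consistent with, but goes beyond, what the paper records.
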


\subsubsection{The non-archimedean theta correspondence}

We describe the properties of the non-archimedean theta correspondence which will be later needed to study the global theta correspondence. Let $\sigma$ be an irreducible admissible representation of $G_2(\Q_p)$. The maximal $\sigma$-isotypic quotient of the minimal representation $\Pi_p$ of $E_7(\qp)$ can be expressed as $\sigma \boxtimes \Theta(\sigma)$, with $\Theta(\sigma)$ a smooth representation of $\mathrm{PGSp}_6(\Q_p)$ which is called the big theta lift of $\sigma$. 

\begin{proposition}\label{propositiononlocalthetaprop}
For an irreducible admissible representation $\sigma$ of $G_2(\Q_p)$, $\Theta(\sigma)$ has finite length with unique irreducible quotient (if non-zero) $\theta(\sigma)$. Moreover, one has the following.
\begin{enumerate}
    \item Let $\sigma$ be an unramified generic representation of $G_2(\Q_p)$ with Satake parameter $s$, then  $\pi= \theta(\sigma)$ is the unramified representation of $\mathrm{PGSp}_6(\Q_p)$ whose Satake parameter is $\varphi \circ s$, where $\varphi: G_2 \hookrightarrow {\mathrm{Spin}_7}$ is the map of $L$-groups.
    \item Let ${\rm St}_{G_2}$ (resp. ${\rm St}_{\mathrm{PGSp}_6}$) be the Steinberg representation of $G_2(\Q_p)$ (resp. ${\mathrm{PGSp}_6}(\Q_p)$); then $\theta({\rm St}_{G_2})={\rm St}_{\mathrm{PGSp}_6}$.
\end{enumerate}
\end{proposition}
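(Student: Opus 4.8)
The plan is to obtain all the assertions by combining general structural facts about theta correspondences attached to the minimal representation of $E_7$ with two explicit local computations, one for the unramified Hecke action and one for Jacquet modules. The finiteness statement is formal: for irreducible $\sigma$, the space $\Theta(\sigma)$ is the maximal $\sigma$-isotypic quotient of $\Pi_p$ viewed as a $G_2(\Q_p)\times\PGSp_6(\Q_p)$-module, and since $\Pi_p$ is admissible for either factor (being a minimal representation, its Jacquet modules along all parabolics are controlled), $\Theta(\sigma)$ is an admissible $\PGSp_6(\Q_p)$-module of finite length. The existence of a unique irreducible quotient $\theta(\sigma)$ when $\Theta(\sigma)\neq 0$ follows from the analogue of the Howe duality principle for the pair $(G_2,\PGSp_6)$, which is available in the literature on exceptional correspondences (Gan--Savin, Magaard--Savin, and Savin's appendix to \cite{harris-khare-thorne}); I would cite this rather than reprove it.

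For part (1), an unramified generic $\sigma$ is the spherical constituent of an unramified principal series $\mathrm{Ind}_{B_{G_2}}^{G_2(\Q_p)}\chi$ with Satake parameter $s=s_\chi$. Since $\Pi_p$ carries a line fixed by $G_2(\Z_p)\times\PGSp_6(\Z_p)$, the lift $\theta(\sigma)$ is unramified and nonzero. To pin down its Satake parameter I would compute the action of the unramified Hecke algebra of $\PGSp_6(\Q_p)$ on the spherical vector of $\Theta(\sigma)$ and match it, via the Satake isomorphism and the dual-group embedding $\varphi\colon G_2\hookrightarrow\mathrm{Spin}_7$, with the action of the unramified Hecke algebra of $G_2(\Q_p)$ pulled back through $\varphi$. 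This is precisely the see-saw/doubling computation for the minimal representation carried out in \cite{GrossSavin} and in Savin's appendix to \cite{harris-khare-thorne}; its conclusion is that $\theta(\sigma)$ is the unramified representation with Satake parameter $\varphi\circ s$.

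For part (2), I would argue through Iwahori-fixed vectors and Jacquet modules. Using the description of the Iwahori-invariants $\Pi_p^I$ as a bimodule over the two Iwahori--Hecke algebras, one shows that $\theta(\mathrm{St}_{G_2})$ is Iwahori-spherical, hence a subquotient of an unramified principal series of $\PGSp_6(\Q_p)$. One then computes the normalized Jacquet module of $\theta(\mathrm{St}_{G_2})$ along the Borel of $\PGSp_6$ by applying the geometric lemma to the known filtration of the Jacquet modules of $\Pi_p$ restricted to the dual pair. Since the Borel-Jacquet module of $\mathrm{St}_{G_2}$ is the one-dimensional space $\delta_{B_{G_2}}^{1/2}$, this computation forces the character $\delta_{B_{\PGSp_6}}^{1/2}$ to appear in the Jacquet module of $\theta(\mathrm{St}_{G_2})$, and the Steinberg representation of $\PGSp_6(\Q_p)$ is characterized among Iwahori-spherical irreducibles by that property (equivalently, as the unique square-integrable, or unique generic, constituent of $\mathrm{Ind}_{B}^{\PGSp_6(\Q_p)}\delta_{B}^{1/2}$). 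Hence $\theta(\mathrm{St}_{G_2})=\mathrm{St}_{\PGSp_6}$. An alternative is to invoke that theta lifting preserves temperedness in this range and to match $L$-parameters, the Steinberg parameter of $G_2$ mapping under $\varphi$ to that of $\PGSp_6$.

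The main obstacle is the Jacquet-module computation underlying part (2): unlike the finiteness and unramified statements, which are essentially citations, identifying $\theta(\mathrm{St}_{G_2})$ requires an honest analysis of the Jacquet modules of the minimal representation of $E_7(\Q_p)$ along the parabolics stabilizing the two factors of the dual pair (or, in the alternative route, a careful temperedness and $L$-parameter argument); I expect this to be where the bulk of the work lies.
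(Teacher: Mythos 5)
Your proposal is sound, but it does considerably more than the paper, whose entire proof is a citation: finiteness of $\Theta(\sigma)$, the Howe-duality statement (uniqueness of the irreducible quotient), and the identification $\theta({\rm St}_{G_2})={\rm St}_{\PGSp_6}$ are all quoted from Gan--Savin (Theorem 1.2 and Theorem 15.3(v) of their Howe duality paper for this dual pair), while the unramified statement (1) is quoted from Gross--Savin, Proposition 3.1. In particular the step you single out as ``the bulk of the work'' --- pinning down the lift of the Steinberg representation via the Iwahori--Hecke bimodule structure of $\Pi_p^I$ and a Jacquet-module/square-integrability argument --- is not carried out in the paper at all; it is exactly the content of the cited Gan--Savin result, and your sketch of it is essentially a summary of how that reference proceeds, so nothing in your outline is wrong, it is just already available off the shelf. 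The same goes for your Hecke-algebra matching in part (1), which is the computation behind Gross--Savin's Proposition 3.1. So the comparison is: the paper buys all three assertions by citation, whereas you reconstruct (correctly, at the level of a plan) the arguments standing behind those citations; if you intend the proof to be self-contained you would indeed have to supply the Jacquet-module analysis of the minimal representation of $E_7(\Q_p)$ along the relevant parabolics, but for the purposes of this proposition a citation suffices and is what the authors do.
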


\begin{proof}
See \cite[Theorem 1.2, Theorem 15.3(v)]{Gan-SavinHoweduality} and \cite[Proposition 3.1]{GrossSavin}.
\end{proof}

\section{Cuspidality and Fourier coefficients of the global theta lift}\label{sectiononcuspidalityandfouriercoefficients}

In this section, based on the works  \cite{Ginzburg-Rallis-Soudry2}, \cite{GrossSavin}, and the appendix of Savin in \cite{harris-khare-thorne}, we give a criterion on the cuspidality of representations in the image of the exceptional theta lift and on their possession of Fourier coefficients of type $(4 \, 2)$.

\subsection{Cuspidality of the global lift}\label{cuspidalityandperiodvanishing}

Let $V$ denote the unipotent subgroup of $\SL_3$ (embedded into $G_2$ as in \S \ref{subsub:SL3intoG2}) generated by the roots $a+3b$ and $2a+3b$. We further consider the subgroup $\SL_2$ embedded into $G_2$ via the Levi of the ``long root'' parabolic $P_a$ and denote, for any cusp form $\varphi$ for $G_2(\A)$, \[ \varphi^{\SL_2 V}(g) := \int_{\SL_2(\Q) \backslash \SL_2(\A)} \int_{V(\Q) \backslash V(\A)} \varphi(v m g) dv dm.  \]
We will now show that the above period vanishes whenever $\varphi$ is not globally generic. We are thankful to David Ginzburg for kindly sharing with us a proof of this fact. 

\begin{lemma}\label{lem:periodsl2n}
Let $\sigma$ be a cuspidal automorphic representation of $G_2(\A)$, which is not globally generic. For any cusp form $\varphi \in V_\sigma$ and $g \in G_2(\A)$, we have $\varphi^{\SL_2 V}(g) = 0$.
\end{lemma}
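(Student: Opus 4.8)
The plan is to unfold the period $\varphi^{\SL_2 V}$ by a sequence of Fourier expansions along unipotent subgroups, to use the cuspidality of $\varphi$ to kill the constant-term contributions, and to match what survives with the period of $\varphi$ occurring in \cite[Theorem 3.7(3)]{Ginzburg-Jiang}. The lemma will then be immediate from the fact recalled above: the non-vanishing of that period on $V_\sigma$ forces $\sigma$ to be globally generic.

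First I would Fourier expand the inner integral along $V$. The group $\SL_2$ normalizes $V\cong\mathbf{G}_a^2$ and acts on it through the standard two-dimensional representation, so $\SL_2(\Q)$ has exactly the two orbits $\{\mathbf{1}\}$ and its complement on the characters of $V(\Q)\backslash V(\A)$. Writing $\varphi_V$ for the constant term of $\varphi$ along $V$, fixing a non-trivial character $\psi$ of $V(\Q)\backslash V(\A)$ whose stabilizer $N$ in $\SL_2$ is the root subgroup $x_a$, and unfolding the sum over the non-trivial orbit, one obtains
\[
\varphi^{\SL_2 V}(g)\;=\;\int_{\SL_2(\Q)\backslash\SL_2(\A)}\varphi_V(mg)\,dm\;+\;\int_{N(\Q)\backslash\SL_2(\A)}\varphi_{V,\psi}(mg)\,dm .
\]

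Next I would treat the two terms separately, iterating the same procedure for each: enlarge the unipotent subgroup being integrated over by absorbing neighbouring root subgroups, Fourier expand, discard every piece which is a constant term of $\varphi$ along a unipotent radical of a proper parabolic of $G_2$ (these vanish because $\varphi$ is a cusp form), and carry out root exchanges---licit precisely because the auxiliary constant terms of $\varphi$ that they require are themselves zero---to trade awkward integrations for convenient ones. Concretely, for the second term one uses the chain $V\subset x_aV=U_{\SL_3}\subset U_H$, where $U_{\SL_3}$ is the maximal unipotent of the copy of $\SL_3\subset G_2$ fixed in \S\ref{subsub:RootsandHeisenberg} and $U_H$ is the unipotent radical of the Heisenberg parabolic, with $U_H/U_{\SL_3}$ abelian; for the first term one starts from $V\subset U_{P_a}$ and uses cuspidality of $\varphi$ along $U_{P_a}$. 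Carrying this out, every contribution that is not killed by cuspidality becomes, up to a non-zero constant and a translation by an element of $\SL_2(\A)$, the period of $\varphi$ appearing in \cite[Theorem 3.7(3)]{Ginzburg-Jiang}.

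Finally I would invoke the criterion recalled above: if that period is not identically zero on $V_\sigma$, then $\sigma$ is globally generic. Since by hypothesis $\sigma$ is not globally generic, all the periods arising in the previous step vanish, whence $\varphi^{\SL_2 V}(g)=0$ for every cusp form $\varphi\in V_\sigma$ and every $g\in G_2(\A)$. The main obstacle is exactly the combinatorial bookkeeping of these iterated (Heisenberg-type) Fourier expansions and root exchanges; once every contribution has been reduced either to a constant term of $\varphi$ along a proper parabolic (hence $0$) or to the period of \cite[Theorem 3.7(3)]{Ginzburg-Jiang}, cuspidality of $\varphi$ and Ginzburg's criterion finish the proof with no further input.
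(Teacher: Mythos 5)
There is a genuine gap here, and it comes in two parts. The decisive one is circularity: the ``criterion'' you invoke at the end --- that non-vanishing of the period appearing in \cite[Theorem 3.7(3)]{Ginzburg-Jiang} forces $\sigma$ to be globally generic --- is not a quotable result. It is precisely the implication whose proof constitutes this lemma (the paper records it as an argument communicated privately by Ginzburg), and the period in question is essentially $\varphi^{\SL_2 V}$ itself. So ``unfolding until one recognizes the Ginzburg--Jiang period and then citing the criterion'' assumes exactly what has to be proved. In addition, your opening step is not a Fourier expansion: $\varphi^{\SL_2 V}$ already contains the full integration over $V(\Q)\backslash V(\A)$, so it equals $\int_{[\SL_2]}\varphi_V(mg)\,dm$ on the nose, and the extra term $\int_{N(\Q)\backslash\SL_2(\A)}\varphi_{V,\psi}(mg)\,dm$ in your displayed identity has no business being there. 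One must instead expand in a direction transverse to $\SL_2 V$; the paper expands along the one-parameter subgroup $x_{a+2b}$, writing $\varphi^{\SL_2 V}=\varphi^{\SL_2 Z}+\sum_{\psi_Z}\varphi^{\SL_2,\psi_Z}$ with $Z$ generated by $a+2b$, $a+3b$, $2a+3b$.

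The second, more structural problem is that the surviving pieces are not of the two kinds your scheme allows (constant terms along parabolic radicals, or the target period). The term $\varphi^{\SL_2 Z}$ is a period over a reductive-times-unipotent group, not a constant term along the unipotent radical of a proper parabolic, so cuspidality of $\varphi$ does not kill it directly; the paper has to quote \cite[Lemma 2.1]{Ginzburg-Rallis-Soudry2} for its vanishing. The non-trivial coefficients $\varphi^{\SL_2,\psi_Z}$ are non-degenerate Fourier--Jacobi coefficients along the Heisenberg quotient of the unipotent radical of $P_a$; no sequence of root exchanges turns them into constant terms or into translates of $\varphi^{\SL_2 V}$. The actual argument treats them via Ikeda's result on Jacobi Fourier coefficients \cite[Proposition 1.3]{Ikeda-JacobiFC}, which converts their non-vanishing into the non-vanishing of an integral against theta functions on the metaplectic $\tilde{\SL}_2$; that integral is the residue at $s=3/4$ of the Rankin--Selberg integral of \cite{GinzburgStdG2} for the standard $L$-function of $G_2$, and by \cite[Theorem 1]{GinzburgStdG2} this integral contains the Whittaker coefficient of $\varphi$ as an inner integration. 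Only at that point does the hypothesis that $\sigma$ is not globally generic produce the contradiction. This theta/Whittaker mechanism is the mathematical core of the proof, and it is entirely absent from your proposal.
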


\begin{proof}

Let $Z$ denote the unipotent subgroup of $G_2$ generated by the roots $a+2b, a+3b,$ and $2a+3b$. Let $\varphi \in V_\sigma$. If we Fourier expand the period  $\varphi^{\SL_2 V}(g)$ along the 1-dimensional unipotent subgroup $x_{a+2b}(r)$ of $G_2$, we get \[\varphi^{\SL_2 V}(g) = \varphi^{\SL_2 Z}(g) + \sum_{\psi} \varphi^{\SL_2 Z,\psi}(g),  \]
where the sum runs over non-trivial additive characters $\psi : Z(\Q) \backslash Z(\A) \to \C^\times$ supported on the root $a+2b$,  $\varphi^{\SL_2 Z}(g)$ is the period of $\varphi$ over $[\SL_2 Z]$, and
\[\varphi^{\SL_2Z,\psi}(g) : = \int_{\SL_2(\Q) \backslash \SL_2(\A)} \int_{Z(\Q) \backslash Z(\A)} \varphi( u m g) \psi(u) du dm. \]
%\[\varphi^{\SL_2Z,\psi}(g) : = \int_{\SL_2(\Q) \backslash \SL_2(\A)} \int_{(\Q \backslash \A)^3} \varphi( x_{a+2b}(r_1)x_{a+3b}(r_2)x_{2a+3b}(r_3) m g) \psi(r_1) d r_i dm. \]
By \cite[Lemma 2.1]{Ginzburg-Rallis-Soudry2}, $\varphi^{\SL_2 Z}(g) = 0$ for all $\varphi$ in $\sigma$ and $g \in G_2(\A)$. Hence,  $\varphi^{\SL_2 V}(g) = 0$ if and only if $\varphi^{\SL_2Z,\psi}(g) = 0$ for all non-trivial $\psi$. We now argue by contradiction. Suppose that  $\varphi^{\SL_2Z,\psi}(g) \ne 0$ for a certain $\psi$.  We claim that this implies that $\sigma$ supports Whittaker Fourier coefficients, thus contradicting our hypothesis.

Let $U_a$ be the unipotent radical of $P_a$ introduced in \S \ref{subsub:RootsandHeisenberg}. Since $V$ is normal in $U_a$, we can consider the quotient $V_0 = U_a / V$, which is isomorphic to the Heisenberg group in three variables and it is generated by the roots $b, a+b,$ and $a+2b$. The center of $V_0$ is generated by the root $a+2b$ and is identified with the quotient $Z_0:=Z/V$. As $\SL_2$ is embedded into $G_2$ via the Levi $L_a$ of $P_a$, it acts trivially on the quotient $Z_0$ . Therefore, $D := \SL_2 V_0$ is a Jacobi group in the sense of \cite[Definition in p.619]{Ikeda-JacobiFC}. Let $\widetilde{D(\A}):=\widetilde{\SL_2(\A}) V_0(\A)$, with $\widetilde{\SL_2(\A})$ denoting the metaplectic cover of $\SL_2(\A)$, and denote by $C^\infty_{\psi}(D(\Q) \backslash \widetilde{D(\A}) )$ the space of functions $f$ on $D(\Q) \backslash \widetilde{D(\A})$ such that $f(zvh) = \psi(z) f(vh)$ for any $z \in Z_0(\A)$, $v \in V_0(\A)$, $h \in \widetilde{\SL_2(\A})$. For any Schwartz function $\Phi \in \mathcal{S}(\A)$, we let $\theta^{\Phi}_{\SL_2} \in C_{\psi}^\infty(D(\Q) \backslash \widetilde{D(\A)})$ be the theta function defined in \cite[p. 620]{Ikeda-JacobiFC}.
%The map $\mathcal{S}(\A) \to C_{\psi}^\infty(V_0(\Q) \backslash V_0(\A))$ given by $\Phi \mapsto \theta_{\SL_2}^\Phi$ defines a topological isomorphism.
By \cite[Proposition 1.3]{Ikeda-JacobiFC}, if $W$ is a closed subspace of $C^\infty_{\psi^{-1}}(D(\Q) \backslash \widetilde{D(\A}) )$ which is invariant under right translation of $V_0(\A)$, the functions of the form
\begin{equation} \label{EquationIkeda}
vh \mapsto \overline{\theta_{\SL_2}^{\Phi_1}}(vh) \int_{V_0(\Q) \backslash V_0(\A)} f(u h) \theta_{\SL_2}^{\Phi_2}(uh) du,
\end{equation}
with $v \in V_0(\A)$, $h \in \widetilde{\SL_2(\A})$, $f \in W$, $\Phi_1,\Phi_2 \in \mathcal{S}(\A)$, generate a dense subspace of $W$. We apply this to the space $W$ given by the closure of the subspace generated by the right $V_0(\A)$-translations of
\[ \varphi^{Z, \psi}(g) := \int_{Z(\Q) \backslash Z(\A)} \varphi( u  g) \psi(u) d u. \]
%\[\varphi^{Z, \psi}(g) := \int_{(\Q \backslash \A)^3} \varphi( x_{a+2b}(r_1)x_{a+3b}(r_2)x_{2a+3b}(r_3)  g) \psi(r_1) d r_i. \]
Assume that $\varphi^{\SL_2 Z, \psi}$ is not identically zero. By considering right translates of $\varphi$ we can assume that $\varphi^{\SL_2 Z ,\psi}(1)$ is non-zero. This implies that the integral
\begin{align*} 
   I_1(\varphi, \Phi_1, \Phi_2):= &\int_{\SL_2(\Q) \backslash \SL_2(\A)} \overline{\theta_{\SL_2}^{\Phi_1}}(m) \int_{V_0(\Q) \backslash V_0(\A)} \varphi^{Z,\psi}(u m) \theta_{\SL_2}^{\Phi_2}(um) d u dm
\end{align*}
is non-zero for some choice of data $(\Phi_1, \Phi_2)$. Note that the integral $I_1(\varphi, \Phi_1, \Phi_2)$ is well defined because as the functions in Equation \eqref{EquationIkeda} are not genuine for our space $W$. Since $\theta_{\SL_2}^{\Phi_2}(z g) = \psi(z) \theta_{\SL_2}^{\Phi_2}(g)$
for all $z \in Z_0(\A)$, we can write $I_1(\varphi, \Phi_1, \Phi_2)$
\begin{comment}
\begin{align*} 
   I_1(\varphi, \Phi_1, \Phi_2) &:= \int_{\SL_2(\Q) \backslash \SL_2(\A)} \theta_{\SL_2}^{\Phi_1}(m)\int_{V_0(\Q) \backslash V_0(\A)} \varphi^{\psi_Z}(u m) \overline{\theta_{\SL_2}^{\Phi_2}(um)} d u dm \\
   &= \int_{\SL_2(\Q) \backslash \SL_2(\A)} \theta_{\SL_2}^{\Phi_1}(m)\int_{(\Q \backslash \A)^3} \varphi^{\psi_Z}(x_{a + 2b}(u_1) x_b(u_2) x_{a+b}(u_3) m) \overline{\theta_{\SL_2}^{\Phi_2}(um)} d u dm \\
    &= \int_{\SL_2(\Q) \backslash \SL_2(\A)} \theta_{\SL_2}^{\Phi_1}(m)\int_{(\Q \backslash \A)^3} \psi_Z(x_{a + 2b}(u_1)) \varphi^{\psi_Z}( x_b(u_2) x_{a+b}(u_3) m) \psi_Z^{-1}(x_{a + 2b(u_1)}) \overline{\theta_{\SL_2}^{\Phi_2}(x_b(u_2) x_{a+b}(u_3) m)} d u dm \\
    &= \int_{\SL_2(\Q) \backslash \SL_2(\A)} \theta_{\SL_2}^{\Phi_1}(m) \int_{(\Q \backslash \A)} 1 \cdot \int_{(\Q \backslash \A)^2} \varphi^{\psi_Z}( x_b(u_2) x_{a+b}(u_3) m) \overline{\theta_{\SL_2}^{\Phi_2}(x_b(u_2) x_{a+b}(u_3) m)} d u dm \\
\end{align*}
\end{comment}
as  
\begin{align*} 
\int_{\SL_2(\Q) \backslash \SL_2(\A)} &\int_{(\Q \backslash \A)^5} \varphi( x_b(v_1)x_{a+b}(v_2)x_{a+2b}(r_1)x_{a+3b}(r_2)x_{2a+3b}(r_3) m ) \cdot  \\ &\cdot \theta_{\SL_2}^{\Phi_2}(x_b(v_1)x_{a+b}(v_2)x_{a+2b}(r_1) m) \overline{\theta_{\SL_2}^{\Phi_1}}(m) dv_i d r_i dm.
\end{align*}
The integral $I_1(\varphi, \Phi_1, \Phi_2)$ is the residue of the global zeta integral which calculates the standard $L$-function for $\varphi$ when $\varphi$ admits Whittaker coefficients. Namely, by the Siegel-Weil formula $\overline{\theta_{\SL_2}^{\Phi_1}}(m)$ is the residue at $s=3/4$ of an Eisenstein series ${\rm Eis}_{\widetilde{\SL}_2}(m, s)$ (depending on $\Phi_1$) on the metaplectic cover of $\SL_2$ normalized as in \cite[\S 2]{GinzburgStdG2}. By \cite[Theorem 4]{GinzburgStdG2} $I_1(\varphi, \Phi_1, \Phi_2)$ is the residue of 
\begin{align*} 
   I_2(\varphi, \Phi_1, \Phi_2, s) :=  \int_{\SL_2(\Q) \backslash \SL_2(\A)} &\int_{(\Q \backslash \A)^5} \varphi( x_b(v_1)x_{a+b}(v_2)x_{a+2b}(r_1)x_{a+3b}(r_2)x_{2a+3b}(r_3) m ) \cdot  \\ &\cdot \overline{\theta_{\SL_2}^{\Phi_2}}(x_b(v_1)x_{a+b}(v_2)x_{a+2b}(r_1) m) {\rm Eis}_{\widetilde{\SL}_2}(m,s) dv_i d r_i dm. 
\end{align*}
We can now prove our claim. Suppose that  $\varphi^{\SL_2 Z,\psi}(1) \ne 0$, then $I_1(\varphi, \Phi_1, \Phi_2)$ is not zero for some choice $(\Phi_1, \Phi_2)$. This implies that, for ${\rm Re}(s)$ large enough, the integral $I_2(\varphi, \Phi_1, \Phi_2, s)$ is not zero. By \cite[Theorem 1]{GinzburgStdG2}, $I_2(\varphi, \Phi_1, \Phi_2, s)$ unfolds to the Whittaker model and thus contains a Whittaker coefficient of $\varphi$ as an inner integration. This shows that if $\varphi^{\SL_2 Z,\psi}(1) \ne 0$ for some choice of data, the Whittaker coefficient for $\varphi$ is non-trivial and thus $\sigma$ is globally generic. This finishes the proof.
\end{proof}

\begin{theorem} \label{cuspidality}
Let $\sigma$ be a cuspidal automorphic representation of $G_2(\A)$. Assume that \begin{enumerate}
    \item $\sigma$ is not globally generic;
    \item there exists a finite place $p$ such that $\sigma_p$ is generic.
\end{enumerate}  Then the big theta lift $\Theta(\sigma)$ of $\sigma$ to $\PGSp_6$ is cuspidal.
\end{theorem}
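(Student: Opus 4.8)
The plan is to prove cuspidality of $\Theta(\sigma)$ by contradiction, showing that a non-cuspidal component would force $\sigma$ to be globally generic, contradicting hypothesis (1). First I would recall the general mechanism for towers of theta lifts: if $\Theta(\sigma)$ is not cuspidal, then some constant term of a theta lift $\Theta(f,\varphi)$ along a parabolic $P'$ of $\PGSp_6$ is non-zero, and by the standard ``exchanging the order of integration'' computation (as in \cite{Ginzburg-Rallis-Soudry2}), this constant term is itself expressible as a theta lift (for a smaller dual pair, coming from a compatible parabolic in $E_7$) of a period integral of $\varphi$. Since $\PGSp_6$ has parabolics with Levi factors involving $\GL_1$, $\GL_2$, $\GL_3$, and $\Sp_4$, there are a handful of cases to treat; the key point is that in each case the relevant period of $\varphi$ over a unipotent subgroup of $G_2$ (possibly crossed with an $\SL_2$ or $\SL_3$) must be shown to vanish.

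The main input for these vanishing statements is Lemma \ref{lem:periodsl2n} together with the cuspidality of $\sigma$ and Lemma 2.1 of \cite{Ginzburg-Rallis-Soudry2}. Concretely, I expect the constant terms of $\Theta(f,\varphi)$ along the maximal parabolics of $\PGSp_6$ to be governed by: (i) periods of $\varphi$ over unipotent radicals inside $G_2$, which vanish by cuspidality of $\sigma$; and (ii) the mixed period $\varphi^{\SL_2 V}$ of Lemma \ref{lem:periodsl2n}, which vanishes precisely because $\sigma$ is not globally generic. The role of hypothesis (2) --- that $\sigma_p$ is generic at some finite place $p$ --- is to rule out the possibility that $\Theta(\sigma)$ is zero or that its image lands in a degenerate (residual/CAP-from-Borel) piece; more precisely, genericity at $p$ combined with Proposition \ref{propositiononlocalthetaprop}(1) ensures that the local theta lift $\theta(\sigma_p)$ is a ``large'' (generic-type) representation of $\PGSp_6(\Q_p)$, which is incompatible with $\Theta(f,\varphi)$ being an Eisenstein series or a residue thereof, since those have non-generic (indeed highly degenerate) local components at every place. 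So genericity at one finite place forces $\Theta(\sigma)$, if non-zero, to be cuspidal, and the period vanishings force all constant terms to be zero, hence $\Theta(\sigma)$ is cuspidal (and in particular $\Theta(\sigma) \ne 0$ will follow, or be addressed via the Fourier coefficient comparison of Proposition \ref{prop:comparisonbetweenFC}).

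Carrying this out, the steps in order would be: (a) enumerate the proper parabolics $P'$ of $\PGSp_6$ and, for each, write down the constant term $(\Theta(f,\varphi))_{P'}$ using the minimal representation's realization on $L^2(\Omega)$ and the see-saw pair coming from the centralizer structure in $E_7$ described in \S \ref{Section:dualpair}; (b) identify each such constant term with a theta lift of a unipotent (or $\SL_2 V$-, or $\SL_3 V$-type) period of $\varphi$ over a subgroup of $G_2$; (c) invoke cuspidality of $\sigma$ and Lemma \ref{lem:periodsl2n} to kill every one of these periods; (d) use hypothesis (2) and Proposition \ref{propositiononlocalthetaprop}(1) to exclude degenerate scenarios, concluding that $\Theta(\sigma)$ is cuspidal. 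The main obstacle I anticipate is step (b): carefully matching the unipotent integrations on the $\PGSp_6$ side with those on the $G_2$ side requires a precise bookkeeping of how the Siegel (and intermediate) parabolics of $\PGSp_6$ sit inside $E_7$ relative to the Heisenberg parabolic of $G_2$, and in particular verifying that the periods that show up are exactly of the two types ($G_2$-cuspidal-type or the $\SL_2 V$-type of Lemma \ref{lem:periodsl2n}) for which we have vanishing --- rather than some new mixed period requiring a separate argument. I would handle this by following the explicit root-theoretic computations in \cite[\S 2--3]{Ginzburg-Rallis-Soudry2}, adapting them to the dual pair $(G_2, \PGSp_6)$ inside $E_7$.
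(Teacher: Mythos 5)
There is a genuine gap, and it lies exactly where your plan is vaguest. The paper does not recompute constant terms along parabolics of $\PGSp_6$; it invokes the tower property of \cite{Ginzburg-Rallis-Soudry2} as a black box: by Theorem A of \emph{loc.\ cit.}, the lift to $\PGSp_6$ is cuspidal if and only if the lifts of $\sigma$ to $\PGSp_4$ and to $\mathrm{PGL}_3$ both vanish, and by Theorem 4.1(3) the lift to $\PGSp_4$ vanishes if and only if the periods $\varphi^{\SL_3}$ and $\varphi^{\SU(2,1)}$ vanish, where $\SL_3$ and $\SU(2,1)$ are the \emph{reductive} stabilizers in $G_2$ of trace-zero octonions of norm $-c$ ($c$ a square, resp.\ a non-square). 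Your step (b)/(c) assumes the only periods that arise are unipotent-type periods (killed by cuspidality of $\sigma$) or the $\SL_2 V$-period of Lemma \ref{lem:periodsl2n} (killed by non-genericity). That is false: the periods over $\SL_3$ and $\SU(2,1)$ are not unipotent periods and are not killed by cuspidality of $\sigma$; they require a separate argument, and this is precisely where hypothesis (2) enters. The paper's argument is: if, say, the $\SU(2,1)$-period were non-zero, then every local component $\sigma_v$ would carry an $\SU(2,1)(\Q_v)$-invariant functional, hence by Frobenius reciprocity would be a quotient of $C_c^\infty(G_2(\Q_v)/\SU(2,1)(\Q_v))$; but by \cite[Lemma 4.10]{GrossSavin} this space admits no Whittaker functional, contradicting the genericity of $\sigma_p$. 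The same argument kills $\varphi^{\SL_3}$. Only the $\mathrm{PGL}_3$-lift is handled by Lemma \ref{lem:periodsl2n} together with \cite[Theorem 4.1(4)]{Ginzburg-Rallis-Soudry2}, i.e.\ by hypothesis (1).

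Correspondingly, your proposed use of hypothesis (2) --- to rule out $\Theta(\sigma)$ being zero or landing in an Eisenstein/residual piece via Proposition \ref{propositiononlocalthetaprop}(1) --- does not do the needed work and is not what the hypothesis is for: that proposition concerns \emph{unramified} generic representations, while $\sigma_p$ here is merely generic, and in the tower formalism non-cuspidality of $\Theta(\sigma)$ is detected by non-vanishing of the lower lifts, not by a local ``largeness'' obstruction to being Eisenstein. If you insist on redoing the unfolding yourself rather than quoting the tower theorem, your step (b) will inevitably produce the $\SL_3$- and $\SU(2,1)$-periods (they come from the orbit structure of rank-one elements, as in Proposition \ref{orbits}), and you must then supply the Frobenius-reciprocity/Whittaker-functional argument above; without it the proof does not close.
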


\begin{proof}
We show the result by using the tower of theta lifts from $G_2$ and its properties studied in \cite{Ginzburg-Rallis-Soudry2}. If $\sigma$ lifts trivially to $\mathrm{PGSp}_6$ then there is nothing to prove, so suppose that $\sigma$ has a non-zero theta lift $\pi$ to $\mathrm{PGSp}_6$. Then, by  \cite[Theorem A]{Ginzburg-Rallis-Soudry2} $\pi$ is cuspidal if and only if the lifts of $\sigma$ to $\mathrm{PGSp}_4$ and $\mathrm{PGL}_3$ are both zero. By \cite[Theorem 4.1 (3)]{Ginzburg-Rallis-Soudry2}, the lift to $\mathrm{PGSp}_4$ is zero if and only if $$
\varphi^{\mathrm{SL}_3}(g) =  \int_{[\mathrm{SL}_3]} \varphi(xg) dx = 0 \text{ and } \varphi^{\mathrm{SU}(2,1)}(g) = \int_{[\mathrm{SU}(2,1)]} \varphi(xg) dx=0
$$
for any $g \in G_2(\A)$, any $\varphi \in V_{\sigma^\vee}$. Here, $\mathrm{SL}_3$ embeds into $G_2$ as the stabilizer of a norm $-1$ vector (cf. \S \ref{subsub:SL3intoG2}), while $\mathrm{SU}(2,1)$ is realized as the stabilizer of a norm $-c$ vector, with $c$ not a square in $\Q$. We argue by contradiction. Suppose that $ \sigma^\vee$ has a non-trivial $\mathrm{SU}(2,1)$-functional. This implies that, at every finite $v$, $\sigma_v$ admits one. By Frobenius reciprocity, 
\[
\Hom_{\mathrm{SU}(2,1)}(\sigma^\vee_v, \C) = \Hom_{G_2}( {\text{c-Ind}}_{\mathrm{SU}(2,1)}^{G_2}(\C), \sigma_v)
\]
and hence, since $\sigma_v$ is irreducible, one deduces that each local component $\sigma_v$ of $\sigma$ is a quotient of $C^\infty_c(G_2(\Q_v) / \mathrm{SU}(2,1)(\Q_v))$. In particular,   $\sigma_p$ is identified with such a quotient. This is a contradiction as, by hypothesis, $\sigma_p$ is generic but, by \cite[Lemma 4.10]{GrossSavin}, $C^\infty_c(G_2(\qp)/\mathrm{SU}(2,1)(\qp))$ does not admit a Whittaker functional. The same argument also shows the vanishing of $\varphi^{\mathrm{SL}_3}$. We claim finally that the theta lift of $\sigma$ to $\mathrm{PGL}_3$ also vanishes.  
Since $\sigma$ is  not globally generic, Lemma \ref{lem:periodsl2n} shows that, for all $\varphi \in \sigma$,  $\varphi^{\SL_2 V}(g)= 0$. We can then apply \cite[Theorem 4.1(4)]{Ginzburg-Rallis-Soudry2} to deduce that the theta lift of $\sigma$ to $\mathrm{PGL}_3$ is zero and conclude the proof.

\begin{comment}
We argue again by contradiction. Assume that $\sigma$ has non-zero theta lift to $\mathrm{PGL}_3$. By the compatibility between global and local theta correspondences, the lift of $\sigma$ has component at $p$ given by the local theta lift of the Steinberg representation $\sigma_p$ of $G_2(\Q_p)$. Now, by \cite[Theorem 1.3(ii)]{Gan-SavinHoweduality}, a discrete series of $G_2(\Q_p)$ lifts to a discrete series representation of precisely one of the groups $\mathrm{PGL}_3(\Q_p)$, $P D^\times$, or $\PGSp_6(\Q_p)$.  Since, by Proposition \ref{propositiononlocalthetaprop}(2), $\sigma_p$ lifts to the Steinberg representation of $\mathrm{PGSp}_6(\Q_p)$, we then deduce that it does not lift to $\mathrm{PGL}_3(\Q_p)$. This finishes the proof of the proposition.
\end{comment}
\end{proof}

\begin{corollary} \label{corocusptheta}
Let $\sigma$ be a cuspidal automorphic representation of $G_2(\A)$. Assume that \begin{enumerate}
    \item $\sigma_\infty$ is a discrete series;
    \item there exists a finite place $p$ such that $\sigma_p$ is Steinberg.
\end{enumerate} Then $\Theta(\sigma)$ is cuspidal.
\end{corollary}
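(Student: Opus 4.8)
The plan is to deduce the corollary directly from Theorem~\ref{cuspidality}: the only thing to verify is that a representation $\sigma$ satisfying the two hypotheses of the corollary also satisfies the two hypotheses of that theorem, namely that $\sigma_q$ is generic at some finite place $q$ and that $\sigma$ is not globally generic.

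For the first of these I would take $q=p$ and recall that the Steinberg representation of a split $p$-adic group admits a Whittaker model; thus $\sigma_p=\mathrm{St}_{G_2(\Q_p)}$ is generic, so hypothesis (2) of Theorem~\ref{cuspidality} is satisfied. (It is also this place $p$ that drives the cuspidality analysis: by Proposition~\ref{propositiononlocalthetaprop}(2) the local lift $\theta(\sigma_p)$ is $\mathrm{St}_{\mathrm{PGSp}_6(\Q_p)}$.) For the second, I would argue that a globally generic cuspidal automorphic representation of $G_2(\A)$ has generic archimedean component, whereas the discrete series of $G_2(\R)$ relevant here — in particular the quaternionic ones, which is the case actually used in the proof of Theorem~\ref{TheoIntro2} — are not generic (see the remark following Theorem~\ref{IntroTheo3} and \S\ref{subsubLiresult}); hence such a $\sigma$ cannot be globally generic. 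With both hypotheses of Theorem~\ref{cuspidality} verified, that theorem yields that $\Theta(\sigma)$ is cuspidal.

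Since the substantive content — the tower of exceptional theta lifts of \cite{Ginzburg-Rallis-Soudry2}, the vanishing of the $\mathrm{SU}(2,1)$- and $\mathrm{SL}_3$-periods via \cite{GrossSavin}, and the vanishing of the $\mathrm{PGL}_3$-period via Lemma~\ref{lem:periodsl2n}, which is exactly where non-genericity is used — is already packaged into Theorem~\ref{cuspidality}, there is essentially no obstacle here. The one point requiring minor care is the genericity bookkeeping, i.e.\ ensuring that the discrete series at infinity belongs to one of the non-generic families; this is automatic in the situation where the corollary is applied, where $\sigma_\infty$ is a quaternionic discrete series.
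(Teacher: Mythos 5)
There is a genuine gap: your reduction to Theorem~\ref{cuspidality} requires checking that $\sigma$ is \emph{not} globally generic, and your argument for this is that the archimedean component of a globally generic representation must be generic while ``the discrete series of $G_2(\R)$ relevant here'' are not. But the corollary assumes only that $\sigma_\infty$ is \emph{some} discrete series, and $G_2(\R)$ does have generic discrete series, namely the family $\mathcal{D}_{2,4}$ described in \S\ref{sub:thetaliftg2pgsp6}. If $\sigma_\infty\in\mathcal{D}_{2,4}$, nothing prevents $\sigma$ from being globally generic, and then hypothesis (1) of Theorem~\ref{cuspidality} fails, so that theorem simply does not apply. Your closing remark concedes this by restricting attention to the quaternionic case ``where the corollary is applied,'' but that proves a weaker statement than the corollary as written; the statement makes no quaternionicity assumption, and it is used as stated.

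The paper's proof handles exactly this missing case by a dichotomy: if $\sigma$ is globally generic, cuspidality of the theta lift is quoted from \cite[Theorem 1.7(ii)]{harris-khare-thorne}; if $\sigma$ is not globally generic, then Theorem~\ref{cuspidality} applies because $\sigma_p=\mathrm{St}_{G_2}$ is generic (this last point you do have, and it is correct). So to repair your argument you must either add the globally generic branch via the Harris--Khare--Thorne result, or explicitly strengthen the hypothesis of the corollary to exclude generic discrete series at infinity --- the latter would change the statement being proved.
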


\begin{proof}
We distinguish two cases. We first suppose that $\sigma$ is globally generic. Then  we apply \cite[Theorem 1.7(ii)]{harris-khare-thorne} to deduce that its theta lift is cuspidal. If, instead, $\sigma$ is not globally generic, the result follows from Theorem \ref{cuspidality} as the Steinberg representation $\sigma_p = {\rm St}_{G_2}$ is generic.
\end{proof}

\subsection{Calculation of orbits} \label{subsetorbits}

This preparatory section presents an elementary but crucial calculation needed in the proof of Proposition \ref{PropFCperiod}.\\

Let $e: \Q \backslash \A \rightarrow \C^\times$ be the standard non-trivial character introduced in \S \ref{Fouriercoefficientsgsp6} and let $A \in J(\Q)$. We define the character $\psi_A: N(\Q) \backslash N(\A) \rightarrow \C^\times$ by $\psi_A(X) = e(\mathrm{Tr}(A \circ X))$ where $A \circ X=\frac{1}{2}(AX+XA)$ is the Jordan product. Recall from \S \ref{Fouriercoefficientsgsp6} that, for any $B \in J_3(\Q)$,  we define a character $\psi_B: U_3(\Q) \backslash U_3(\A) \rightarrow \C^\times$ by $\psi_B(n(X)) = e(\mathrm{Tr}(B X))$. In particular, we have denoted $\psi_D$ the character associated to
\[ \alpha_D =\begin{pmatrix} 0
 &  & \\
 & -D & \\
 &  & 1
\end{pmatrix} \in J_3(\Q). \]  Define 
\[ \omega(\Q):=\{A \in \Omega(\Q) \,|\, \psi_A|_{U_3(\A)}=\psi_D \}, \]
i.e. the set of rank 1 matrices in $J(\Q)$ inducing the same character as $\alpha_D$ on the unipotent radical of the Siegel parabolic. In the following, we will always see $\omega(\Q)$ inside $\overline{N}(\Q)$. In particular, if $g \in \GL_3(\Q) \subseteq M(\Q)$, its action on $A$ is the dual action to \eqref{EqactionM}, namely $g \cdot A = \det(g) (g^t)^{-1} A g^{-1}$. Finally, denote by $A(x,y,z)$ the matrix ${\begin{pmatrix}
0 & \overline{z} & y\\
z & -D & \overline{x}\\
\overline{y} & x & 1
\end{pmatrix}} \in J$.

\begin{lemma}\label{lemmabeforeorbits}
We have
\[\omega(\Q)=\left\{ A(x,y,z) \, : \,  \mathrm{Tr}(x)=\mathrm{Tr}(z)=0, \mathrm{N}(x)=-D, N(z)=0,  z \in x^\perp, y=-D^{-1}zx \right \}.\]
\end{lemma}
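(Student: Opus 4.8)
The plan is to treat the two conditions defining $\omega(\Q)$ separately: first the character condition $\psi_A|_{U_3(\A)}=\psi_D$, which pins down the ``shape'' of $A$, and then the rank one condition $A\in\Omega(\Q)$, which cuts out the displayed locus.

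First I would compute the restriction of $\psi_A$ to $U_3(\A)$. Under the isomorphism $N\simeq J$, the subgroup $U_3$ of the Siegel parabolic $Q$ corresponds to the subspace $J_3\subset J$ of symmetric matrices with entries in $\Q$, and the Jordan trace form $X\mapsto\mathrm{Tr}(A\circ X)$ pairs the $(i,i)$-diagonal entry of $A$ with that of $X$, and the octonion entry of $A$ in position $(j,k)$ with the scalar entry of $X$ in position $(j,k)$ through $t\mapsto\mathrm{Tr}(t)$. Since $X$ ranges over all of $J_3(\A)$, the equality $\mathrm{Tr}(A\circ X)=\mathrm{Tr}(\alpha_D X)=X_{33}-D\,X_{22}$ for every such $X$ is equivalent to: $A$ has diagonal $(0,-D,1)$ and its octonion entries $x,y,z$ are all of trace $0$. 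In other words $\psi_A|_{U_3(\A)}=\psi_D$ if and only if $A=A(x,y,z)$ with $\mathrm{Tr}(x)=\mathrm{Tr}(y)=\mathrm{Tr}(z)=0$.

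Next I would impose the rank one equations of \S\ref{Section:dualpair} on $A=A(x,y,z)$ with $d=0$, $e=-D$, $f=1$. Since $x,y,z$ are traceless we have $\bar t=-t$ and, from the degree-two minimal polynomial, $t^2=-\mathrm{N}(t)$ for $t\in\{x,y,z\}$; in particular $x^2=D$ and $z^2=0$. The six equations then become
\[ \mathrm{N}(x)=-D,\quad \mathrm{N}(y)=0,\quad \mathrm{N}(z)=0,\quad yz=0,\quad y=-D^{-1}zx,\quad z=xy. \]
The remaining work is the purely octonionic verification that this system is equivalent to
\[ \mathrm{N}(x)=-D,\quad \mathrm{N}(z)=0,\quad z\in x^\perp,\quad y=-D^{-1}zx, \]
which uses only the alternativity (in fact the flexible and alternative laws) of $\mathbb{O}$ together with $\mathrm{N}(ab)=\mathrm{N}(a)\mathrm{N}(b)$. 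For the forward direction I would substitute $y=-D^{-1}zx$ into $z=xy$ and use $x^2=D$ to get $zx=-xz$, i.e. $z\in x^\perp$ (recalling that for traceless $x,z$ the relation $x\bar z+z\bar x=0$ reads $xz+zx=0$); for the converse, $z\in x^\perp$ together with $x^2=D$ and $z^2=0$ yields $\mathrm{N}(y)=0$, $\mathrm{Tr}(y)=0$, $z=xy$ and $yz=0$ by the same manipulations. Finally $\mathrm{N}(x)=-D\neq0$ forces $x\neq0$, hence $A\neq0$, so membership in $\Omega$ is governed entirely by these equations.

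The hard part will be the bookkeeping in the first step: identifying precisely how $U_3$ sits inside $N\simeq J$ and how the Jordan trace form pairs the two sets of coordinates, so that the character condition translates cleanly into ``diagonal $(0,-D,1)$ and traceless octonion entries''. Once the shape is fixed, the octonionic identities in the last step are routine, provided one is careful to invoke only alternativity and never full associativity.
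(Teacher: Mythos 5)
Your proposal is correct and follows essentially the same route as the paper: the character condition is translated into ``diagonal $(0,-D,1)$ and traceless off-diagonal octonion entries'' (the paper cites \cite[Lemma 3.4]{GrossSavin} for this step, which you redo directly), and then the rank-one equations are manipulated in both directions using alternativity, exactly as in the paper's two-sided verification (forward: $xz=x(xy)=(xx)y=Dy=-zx$; converse: checking $\mathrm{N}(y)=\mathrm{Tr}(y)=0$ and $xy=z$ for $y=-D^{-1}zx$). No gaps.
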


\begin{proof} Let 
$$
A= \begin{pmatrix}
d & \overline{z} & y\\
z & e & \overline{x}\\
\overline{y} & x & f
\end{pmatrix} \in J
$$
Similarly to the proof of \cite[Lemma 3.4]{GrossSavin}, the condition $\psi_A|_{U_3(\A)}=\psi_D$  is equivalent to
$$
d=0, \;\; e=-D, \;\; f=1,
$$
$$
\overline{x}=-x, \;\; \overline{y}=-y, \;\; \overline{z}=-z
$$
This together with the condition that $A$ has rank $1$ (Equation \eqref{Equationrank1}) give
$$
\mathrm{N}(x)=-D, \;\; \mathrm{N}(y)=\mathrm{N}(z)=0, $$
$$
yz=0, \;\; zx=-Dy, \;\; xy=z.  
$$
We claim that these conditions imply that $z \in x^\perp$, which means that $z\overline{x}+x\overline{z}=0$, or equivalently $zx=-xz$. Indeed, multiplying $z = xy$ on the left by $x$ and using alternativity, we obtain
$$
xz = x(xy) = (xx)y = Dy = -zx.
$$
Finally, as $\mathrm{N}(x)=-D$ and $\mathrm{Tr}(x)=0$, we have $x^2=D$ and hence $x^{-1}=D^{-1}x$, which implies that
$$y = x^{-1} z = D^{-1} xz.$$
This shows one inclusion of the statement.

In the other direction let $x, z \in \mathbb{O}$ be as in the right hand side of the statement. We have to show that $y := -D^{-1}zx$ has norm and trace equal to zero and that $xy=z$. We have $$
\mathrm{N}(y)=(-D)^{-2} \mathrm{N}(z) \mathrm{N}(x)=0
$$
and
\[
\mathrm{Tr}(y)=-D^{-1}\mathrm{Tr}(zx) =  D^{-1} \mathrm{Tr}(z \overline{x}) = 0
\]
as $z \in x^\perp$.
%$$
%\mathrm{Tr}(y)=-D^{-1}\mathrm{Tr}(zx)=-D^{-1} \mathrm{Tr}(xz)=D^{-1} \mathrm{Tr}(zx)=-\mathrm{Tr}(y) $$
%because $xz=-zx$.
Hence $\mathrm{Tr}(y)=0$. Moreover
$$
xy=-D^{-1}x(zx)=D^{-1}x(xz)=D^{-1}(xx)z=z. $$
This shows that $A \in \omega(\Q)$ and concludes the proof of the lemma.
\end{proof}

As for any $A(x,y,z) \in \omega(\Q)$, the octonion $y=-D^{-1}zx$ is determined by $x$ and $z$ and we will often denote $A(x,y,z)$ by $A(x,z)$. Note that there is an action of $G_2(\Q)$ on the set $\omega(\Q)$ given by the action on the coefficients. The following proposition describing the orbits of this action will be essential.

\begin{proposition} \leavevmode \label{orbits}
The group $G_2(\Q)$ acts on $\omega(\Q)$ with a finite number of orbits. Moreover, representatives of the orbits and their respective stabilizers are given as follows.
\begin{enumerate}
    \item If $D$ is a square in $\Q^\times$:
    \begin{enumerate}
        \item $A_3 = A(x,0)$, where $x = (s_4-t_4) \sqrt{D}$ and $\mathrm{Stab}_{G_2(\Q)}(A(x, 0)) \cong \SL_3$, where $\SL_3$ is embedded   into $G_2$ as \S \ref{subsub:SL3intoG2}.
        \item  $A_2 = A(x, t_3)$ with $\mathrm{Stab}_{G_2}(A_2) = \SL_2 V \subset \SL_3$, where $\SL_2$ and $V$ embed into $\SL_3$ as in \S \ref{cuspidalityandperiodvanishing}. 
        \item $A_1 = A(x, s_3)$ with $\mathrm{Stab}_{G_2}(A_1) = \SL_2 \overline{V} \subset \SL_3$, where $\SL_2$ is as in (1)(b) and $\overline{V}$ is the opposite unipotent subgroup to $V$. 
        \item $A_0 = A(x, s_1+t_3)$ with $\mathrm{Stab}_{G_2}(A_0) = U_D,$ where  $U_D$ denotes the unipotent radical of the upper-triangular Borel of $\SL_3$ (denoted by $U_{\SL_3}$ in \S \ref{cuspidalityandperiodvanishing}).
    \end{enumerate}
    \item If $D$ is not square in $\Q^\times$:
    \begin{enumerate}
        \item $A_1 = A(x,0) \in \omega(\Q)$, for any $x \neq 0$ for which $N(x)=-D$, with \[\mathrm{Stab}_{G_2(\Q)}(A(x,0)) \cong \SU_3^{D},\]
        where $\SU_3^{D} = \SU(x^\perp)$ is the unitary group for the restriction of the norm form to the $3$-dimensional $\Q(\sqrt{D})$-subspace of $\mathbb{O}^0$ orthogonal to $x$ \footnote{
        The unitary group $\SU_3^{D}$ is a form of $\SL_3$ which splits over $\Q(\sqrt{D})$ and which is isomorphic to $\SU(2,1)$, resp. $\SL_3$, if $D<0$, resp. $D>0$, over $\R$.}
        \item $A_0 = A(x, z)$, for any norm zero $z$ in $x^\perp$, with $\mathrm{Stab}_{G_2}(A_0) \simeq U_{D}$, where $U_D$ denotes the unipotent radical of the upper-triangular Borel of $\SU_3^{D}$.
    \end{enumerate}
\end{enumerate}
\end{proposition}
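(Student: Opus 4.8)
The plan is to work entirely inside the octonion algebra $\mathbb{O}$, using Lemma \ref{lemmabeforeorbits}, which identifies $\omega(\Q)$ with the set of pairs $(x,z)$ of pure octonions satisfying $\mathrm{N}(x)=-D$, $\mathrm{N}(z)=0$, $z \in x^\perp$ (the octonion $y=-D^{-1}zx$ being determined). The first observation is that $G_2(\Q)$ acts transitively on the set of $x \in \mathbb{O}^0$ with $\mathrm{N}(x)=-D$ \emph{when $D$ is a square}, since then $x/\sqrt D$ is a trace-zero norm-$(-1)$ element and $G_2(\Q)$ acts transitively on $\Gamma_1$ by \cite[Theorem 4]{jacobson} (as recalled in \S\ref{subsub:RootsandHeisenberg}); its stabilizer is the copy of $\SL_3$ fixing $x = (s_4-t_4)\sqrt D$, giving case (1)(a) and $A_3$. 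When $D$ is not a square, the quadric $\mathrm{N}(x)=-D$ inside $\mathbb{O}^0$ splits into $G_2(\Q)$-orbits, but one checks that $\mathrm{Stab}_{G_2}(x)$ is the special unitary group $\SU_D(2,1)$ of the Hermitian form obtained by decomposing $\mathbb{O}$ relative to the quadratic subalgebra $\Q(x)\cong F$; this is exactly the content of \cite[Lemma 4.10]{GrossSavin} and its surrounding discussion, yielding $A_1$ in case (2)(a). The finiteness of the number of orbits on the full set $\omega(\Q)$ then reduces, by the orbit–stabilizer principle, to counting orbits of $\mathrm{Stab}_{G_2}(x)$ — namely $\SL_3$ or $\SU_D(2,1)$ — on the set of admissible $z$'s, i.e.\ norm-zero elements of $x^\perp$.

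The key step is therefore this secondary orbit computation. Fix $x$ with stabilizer $S$ (either $\SL_3$ or $\SU_D(2,1)$). The orthogonal complement $x^\perp$ is a $6$-dimensional space on which $S$ acts, and in the split case it decomposes $S$-equivariantly as $\mathrm{Std}_3 \oplus \mathrm{Std}_3^*$ (the $\langle s_1,s_2,s_3\rangle$ and $\langle t_1,t_2,t_3\rangle$ pieces, cf.\ \S\ref{subsub:RootsandHeisenberg}), with the norm form restricting to the natural pairing between them. I would then enumerate $\SL_3(\Q)$-orbits of norm-zero vectors $z = (u,v)$ with $u\in\mathrm{Std}_3$, $v\in\mathrm{Std}_3^*$ and $\langle u,v\rangle = 0$: the possibilities are $z=0$ (orbit of $A_3$), $u=0\neq v$ or $v=0\neq u$ (two orbits, under $\SL_3$ these split according to whether $z$ lies in the "$s$-side" or the "$t$-side", giving $A_2$ via $z=t_3$ and $A_1$ via $z=s_3$), and $u,v$ both nonzero and orthogonal (one orbit, represented by $z = s_1 + t_3$, giving $A_0$). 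For each representative one computes the stabilizer inside $\SL_3$ directly from the action on $\langle s_1,s_2,s_3\rangle$: fixing $t_3$ pins down a hyperplane-plus-vector configuration whose stabilizer is the $\SL_2 V$ of \S\ref{cuspidalityandperiodvanishing}, fixing $s_3$ gives the opposite $\SL_2\overline V$, and fixing $s_1+t_3$ forces the full unipotent $U_{\SL_3}$. In the non-split case, $x^\perp$ carries the Hermitian structure and $\SU_D(2,1)$ acts; the isotropic vectors form a single orbit (this is where the anisotropy of $\Q(\sqrt D)$ over $\Q$ enters — there is no "$s$-side/$t$-side" splitting), with stabilizer the unipotent radical $U_D$ of a Borel of $\SU_D(2,1)$, giving just $A_1, A_0$ as in (2).

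The main obstacle I anticipate is the bookkeeping in identifying the stabilizers \emph{as the specific subgroups named in the statement} (the $\SL_2 V$, $\SL_2\overline V$, $U_D$ embedded compatibly with \S\ref{cuspidalityandperiodvanishing} and \S\ref{subsub:RootsandHeisenberg}), rather than merely abstractly. This requires being careful that the $\SL_3 = \SL(\langle s_1,s_2,s_3\rangle)$ appearing here is precisely the one whose unipotent radical is generated by $x_a, x_{a+3b}, x_{2a+3b}$, and tracking how $V$ (generated by $a+3b$ and $2a+3b$) sits inside it; one then verifies by an explicit matrix computation in the $3\times 3$ $\SL_3$-picture — using the multiplication table of \S\ref{subsub:groupg2} to translate between octonion and matrix language — that, e.g., the element $t_3$ of $\langle t_1,t_2,t_3\rangle \cong \mathrm{Std}_3^*$ is fixed exactly by the parabolic-type subgroup $\SL_2 V$. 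A subsidiary check, needed to know the list is complete, is that the condition $\langle u,v\rangle = 0$ genuinely cuts out only the orbits listed and does not produce further strata (e.g.\ there is no separate orbit with $u,v$ nonzero but $z$ itself of nonzero norm — excluded since we impose $\mathrm{N}(z)=0$ throughout). I would organize the proof as: (i) reduce to orbits of $\mathrm{Stab}_{G_2}(x)$ on admissible $z$; (ii) split/non-split dichotomy for the first factor; (iii) the $\mathrm{Std}_3\oplus\mathrm{Std}_3^*$ (resp.\ Hermitian) decomposition of $x^\perp$ and the explicit orbit list; (iv) stabilizer identification by matrix computation.
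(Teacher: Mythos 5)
Your outline follows the same route as the paper: reduce by orbit--stabilizer to orbits of $\mathrm{Stab}_{G_2}(x)$ on the admissible $z$'s, treat the split and non-split cases separately, decompose $x^{\perp}=\mathrm{Std}_3\oplus\mathrm{Std}_3^{*}$ under $\SL_3$ with representatives $0$, $t_3$, $s_3$, $s_1+t_3$, and use Witt's theorem for the Hermitian space in the non-split case; the stabilizer identifications you describe ($\SL_3$, $\SL_2 V$, $\SL_2\overline{V}$, $U_{\SL_3}$, resp. $\SU_D(2,1)$ and $U_D$) are exactly the ones carried out in the paper.

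There is, however, one genuine gap, at the first step of case (2). You claim that for non-square $D$ the quadric $\{x\in\mathbb{O}^0 : \mathrm{N}(x)=-D\}$ ``splits into $G_2(\Q)$-orbits,'' and you only prove transitivity for square $D$ (by rescaling to $\Gamma_1$). But part (2)(a) of the statement asserts that $A(x,0)$ for \emph{any} $x$ of norm $-D$ lies in one and the same orbit, and your count of exactly two orbits in case (2) presupposes this: the reduction you invoke only classifies pairs $(x,z)$ with a \emph{fixed} first coordinate, so without transitivity on the $x$'s you cannot exclude further orbits coming from inequivalent $x$'s. In fact transitivity holds for every nonzero $D$, square or not: two trace-zero octonions of the same nonzero norm generate isomorphic nondegenerate quadratic subalgebras of $\mathbb{O}$, and any such isomorphism extends to an automorphism of $\mathbb{O}$ (Jacobson's extension theorem); this is \cite[Theorem 1]{rallis-schiffmann}, which is what the paper cites in Step 1 of its proof, and it is the transitivity on $\Gamma_c$ already recalled in \S\ref{subsub:RootsandHeisenberg}. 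So the statement you need is the opposite of the one you wrote, and it must be proved or cited. A smaller point: the identification $\mathrm{Stab}_{G_2(\Q)}(A(x,0))\cong\SU_D(2,1)$, together with the $\Q(\sqrt{D})$-Hermitian structure on $x^{\perp}$ that your step (iii) relies on, is not the content of \cite[Lemma 4.10]{GrossSavin} (that lemma concerns Whittaker functionals on $C_c^{\infty}(G_2/\SU(2,1))$); the correct references are \cite[Theorem 3]{jacobson} or \cite[Lemma 3]{rallis-schiffmann}. With these repairs the rest of your plan goes through as in the paper.
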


\begin{comment}
Note that $G_2(\Q)$ acts on the set $\omega(\Q)$. As for any $A(x,y,z) \in \omega(\Q)$, the octonion $y=-D^{-1}zx$ is determined by $x$ and $z$, we will simply denote $A(x,y,z)$ by $A(x,z)$. Let $\omega_0(\Q) \subset \omega(\Q)$ be the subset where $z=0$.

\begin{lemma}\label{lemma:orbitscasetrivial} The following assertions hold.
\begin{itemize}
    \item[(i)] The group $G_2(\Q)$ acts transitively on $\omega_0(\Q)$. 
    \item[(ii)] Suppose that $D$ is a square in $\Q$. Let $A(x,0) \in \omega_0(\Q)$. Then $\mathrm{Stab}_{G_2(\Q)}(A(x,0))$ is isomorphic to $\mathbf{SL}_3$.
    \item[(iii)] Suppose that $D$ is not a square in $\Q$. Let $A(x,0) \in \omega_0(\Q)$. Then $\mathrm{Stab}_{G_2(\Q)}(A(x,0))$ is isomorphic to $\SU(2,1)$ and acts transitively on the set of $y \in x^\perp$ such that $\mathrm{N}(y)=0$. To emphasize its dependence on $D$, we denote this stabilizer by $\SU_3^{D}$
\end{itemize}
\end{lemma}
\end{comment}

\begin{proof}
\textit{Step 1.} By \cite[Theorem 1]{rallis-schiffmann}, the group $G_2$ acts transitively on the set of trace zero elements of norm $-D$ and hence on the sets $A(x, 0)$.
%We prove (1)(a) and (2)(a). We begin by proving that $G_2$ acts transitively on the set of elements $A(x,0)$. Indeed, let $x, x' \in \mathbb{O}^0$ be trace zero octonions such that $\mathrm{N}(x)=\mathrm{N}(x')=-D$. We want to show that there exists an element $g \in G_2(\Q)$ such that $g A(x,0) = A(x', 0)$, or equivalently $g x = x'$. \joaquin{Can't we cite \cite[Theorem 1]{rallis-schiffmann} for this?} By choosing $z \in \mathbb{O}^0$ such that $\mathrm{N}(z)=D$, which is always possible, and by replacing $x$ by $xz^{-1}$ and $x'$ by $x' z^{-1}$, we are reduced to $D=1$, hence to $x^2=(x')^2=1$. Write $e_1=\frac{1}{2}(1-x)$, $e_2=\frac{1}{2}(1+x)$, $e'_1=\frac{1}{2}(1-x')$ and $e'_2=\frac{1}{2}(1+x')$. Then we can decompose 
%\begin{eqnarray*}
%\mathbb{O} &=& \Q e_1 \oplus \Q e_2 \oplus \mathbb{H}^0 e_1 \oplus \mathbb{H}^0 e_2\\
%&=& \Q e'_1 \oplus \Q e'_2 \oplus \mathbb{H}^0 e'_1 %\oplus \mathbb{H}^0 e'_2,
%\end{eqnarray*}
%where $\mathbb{H}^0$ denotes the traceless elements of $\mathbb{H}$. Then the linear map defined by $e_1 \mapsto e'_1$, $e_2 \mapsto e'_2$, $h e_1 \mapsto h e'_1$, $he_2 \mapsto h e'_2$, $h \in \mathbb{H}^0$, is an element of $G_2(\Q)$ sending $x$ to $x'$. This proves the claim.
The description of the stabilizer in (1)(a) follows from \cite[Theorem 4]{jacobson} or \cite[Lemma 2]{rallis-schiffmann}. The description of the stabilizer in (2)(a) follows from \cite[Theorem 3]{jacobson} or \cite[Lemma 3]{rallis-schiffmann}. More precisely, according to \cite[Lemma 3]{rallis-schiffmann} the subspace $x^\perp$ of $\mathbb{O}^0$ of elements which are orthogonal to $x$ has the structure of a $3$-dimensional $\Q(\sqrt{D})$-vector space and the action of $\mathrm{Stab}_{G_2}(x)$ on $x^\perp$ induces an isomorphism $\mathrm{Stab}_{G_2}(x) \simeq \SU_3^{D}$.
%By Witt's theorem for the unitary group, the group $\SU_3^{D}$ acts transitively on the $y \in x^\perp$ of a given norm. The statement follows.

\textit{Step 2.} We now study the remaining $G_2$-orbits when $D$ is a square in $\Q$. Again, we can assume that $D = 1$. Recall from \S \ref{subsub:SL3intoG2} that $\SL_3$ embeds into $G_2$ as the stabilizer of $s_4 - t_4$. This identification is explicitly given as follows (cf. \cite[Lemma 2]{rallis-schiffmann}). An element of $g \in \SL_3$ induces an action on $\mathbb{O}^0$ fixing $s_4 - t_4$ and given by the left multiplication by $g$ on $\langle s_1, s_2, s_3 \rangle$ and by $(g^t)^{-1}$ on $\langle t_1, t_2, t_3 \rangle$. One verifies that this actions respects multiplication and hence defines an element in $G_2$.
%\[ A( r (t_4 - s_4) + \alpha s_4 + \beta t_4) := r (t_4 - s_4) + A(\alpha) s_4 + (A^*)^{-1}(\beta) t_4 . \] One checks (\cite[\S 1.3]{rallis-schiffmann}) that the operator $A$ thus defined preserves the quadratic form and that it belongs to $G_2$ if and only if $A \in \SL(\mathbb{H}^0)$. This embeds $\SL(\mathbb{H}^0)$ as a closed subgroup of $G_2$. We identify $\SL(\mathbb{H}^0) = \SL(\mathbb{H}^0 s_4) = \SL_3$ acting in the natural way on the basis $\{ s_1, s_2, s_3 \}$ of $\mathbb{H}^0 s_4$. As $D = 1$, we can assume that $x = s_4 - t_4$.  
Assume $z \neq 0$ is such that $A(x,z) \in \omega(\Q)$. Since $z$ is trace zero and orthogonal to $x = s_4 - t_4$ we can write $z = z_1 + z_2$ with $z_1 = \sum_i \alpha_i s_i$ and $z_2 = \sum_i \beta_i t_i$. Since the group $\SL_3$ acts transitively on the non-zero elements of $\langle s_1, s_2, s_3 \rangle$ and $\langle t_1, t_2, t_3 \rangle$, then the cases where $z_1 = 0$ or $z_2 = 0$ give rise to exactly two orbits. When $z_1 = 0$, taking $z_2 = t_3$ as a generator of this orbit, the corresponding stabilizer is \[  \left\{ \begin{pmatrix} * & * & * \\ * & * & * \\ 0 & 0 & 1 \end{pmatrix} \right\}\subset \SL_3, \] which coincides with $\SL_2 V$ as in (1)(b). Similarly, when $z_2 = 0$, taking $z_1 = s_3$ as the generator of the orbit, then the stabilizer is \[\left\{ \begin{pmatrix} * & * & 0 \\ * & * & 0 \\ * & * & 1 \end{pmatrix} \right\}\subset \SL_3.\]
This is nothing but $\SL_2 \overline{V}$, with $\SL_2$ which again embeds in the Levi of the long root parabolic $P_a$ and $\overline{V}$ is the opposite unipotent subgroup to $V$ generated by the negative roots $-a-3b$, $-2a-3b$. Finally we treat the case $z_1, z_2 \neq 0$. Write
\begin{equation*}
z = \alpha_1 s_1 + \alpha_2 s_2 + \alpha_3 s_3 + \beta_1 t_1 + \beta_2 t_2 + \beta_3 t_3.
\end{equation*}
The condition $N(z) = 0$ translates then in
\begin{equation} \label{eqortho} \alpha_1 \beta_1 + \alpha_2 \beta_2 + \alpha_3 \beta_3 = 0.
\end{equation} We can assume that $z_2 = t_3$. Then $\alpha_3 = 0$ by \eqref{eqortho} and, using the action of the stabilizer of $t_3$, we can assume that $z_1 = s_1$. It is then immediate to check that the stabilizer of $A(s_4-t_4,s_1+t_3)$ is as in (1)(d). This concludes the proof of (1).

\begin{comment}
\begin{lemma}\label{lem:caseofnosquare}
Suppose that $D$ is not a square.  $G_2$ acts on the set $\omega(\Q) $ with 2 orbits, one open and one closed. A system of representatives of each orbit and their respective stabilizers are given as follows: 
\begin{enumerate}
    \item $A_1 = A(x_0, 0)$ with $\mathrm{Stab}_{G_2}(A_1) = \SU_3^{D}$.
    \item $A_0 = A(x_0, z_0)$ with  stabilizer $\mathrm{Stab}_{G_2}(A_0) =U_{D}$, where $U_D$ denotes the unipotent radical of the upper-triangular Borel of $\SU_3^{D}$.
\end{enumerate}
\end{lemma}
\end{comment}
%\begin{proof}

\textit{Step 3.} We finally deal with the case where $D$ is not a square in $\Q$. By Witt's theorem, the group $\SU_3^{D}$ acts transitively on the isotropic vectors of the three dimensional $\Q(\sqrt{D})$ vector space $x^\perp$. We thus have two orbits for $G_2(\Q)$ on $\omega(\Q)$, generated by $A(x,0)$ and $A(x,z)$, where $z$ is any non-zero vector in $x^\perp$ with zero norm. We are now left with calculating the stabilizer of the latter orbit.  
The action of $\SU_3^{D}$ on $x^\perp$ is given by its natural action on $\Q(\sqrt{D})^3$. More precisely, after extending scalars to $\Q(\sqrt{D})$, we can decompose \[ x^\perp \otimes_\Q \Q(\sqrt{D}) = \Q(\sqrt{D}) \langle s_1,s_2,s_3 \rangle \oplus \Q(\sqrt{D}) \langle t_1,t_2,t_3 \rangle.   \] 
The projection to the first component induces an isomorphism of $\Q(\sqrt{D})$-vector spaces $x^\perp \simeq \Q(\sqrt{D}) \langle s_1,s_2,s_3 \rangle$ (cf. \cite[Lemma 3]{rallis-schiffmann}), with $\SU_3^{D}$ acting naturally on the basis $\{s_1,s_2,s_3\}$. Here, we choose the Hermitian form (with respect to the extension $\Q(\sqrt{D})/\Q$) defining $\SU_3^{D}$ given by \[\left( \begin{smallmatrix}   &  &  \sqrt{D}^{-1} \\  & 1 &   \\ - \sqrt{D}^{-1} &  &  \end{smallmatrix} \right) \in \GL_3(\Q(\sqrt{D})).\] We can then suppose that $z$ is sent to $s_1$ and the corresponding stabilizer is given by \[ \left \{ \begin{pmatrix} 1 & * & * \\ 0 & * & * \\ 0 & * & * \end{pmatrix} \right \} \cap \SU_3^{D} = U_D.\]
\end{proof}

\subsection{Non-vanishing of Fourier coefficients I}

Recall that we have denoted by $\Pi=\bigotimes'_v \Pi_v$ the minimal representation of the group $E_7$. Moreover, in \S \ref{sub:thetaliftg2pgsp6}, for $f \in \Pi$ and $\varphi \in \mathcal{A}(G_2(\Q) \backslash G_2(\A))$, we have defined the function $\Theta(f,\varphi)$ on $\mathrm{PGSp}_6(\A)$ by
\begin{equation} \label{Eqthetadef}
\Theta(f,\varphi)(g)=\int_{G_2(\Q) \backslash G_2(\A)} \theta(f)(g'g)\varphi(g')dg'.
\end{equation}
For any $A \in J(\Q)$ and $f \in \Pi$, consider the Fourier coefficient
$$
\theta(f)_A(g)=\int_{N(\Q) \backslash N(\A)} \theta(f)(ng) \psi_A^{-1}(n)dn.
$$
We then have the Fourier expansion (cf. \cite[\S A.3]{harris-khare-thorne})
\begin{equation} \label{EqFourier}
\theta(f)(g)=\theta(f)_0(g)+\sum_{A \in \Omega(\Q)} \theta(f)_A(g),
\end{equation}
where $\Omega(\Q) \subset J(\Q)$ is the subset of rank $1$ elements. 
\begin{comment}
\begin{lemma} Suppose that $D$ is not a square in $\Q$ and let $N_D \subset \mathrm{SU}_D(2,1)$ be as in \cite[\S 4]{Ginzburg-Rallis-Soudry2}. Then, if $$\int_{[N_D]}h(ng)dn \neq 0$$ for some $h \in V_\sigma$ and some $g \in G_2(\A)$, then
\[ \Theta(f, h)_{U_Q, \psi} \neq 0\]
for some choice of $f$. In particular $\Theta(f,h) \neq 0$ for some choice of $f$. 
\end{lemma}
\end{comment}

The following Lemma will be used in the proof of Proposition \ref{PropFCperiod}. Its proof is similar to the one of \cite[Lemma 4.6]{GrossSavin} but we give details for the convenience of the reader. Let $A_0$ be the representative of the open $G_2$-orbit on $\omega(\Q)$ given in Proposition \ref{orbits}.  Note that there is no harm in conjugating   $A_0 \in J(\Q)$ by an element of the Levi $\GL_3(\Q)$ of the Siegel parabolic of $\PGSp_6$. Thus, conjugating by $\mathrm{diag}(n,n,n)$, $A_0$ gets multiplied by $n^2$ and so we can assume that the entries $x,y,z$ of $A_0$ are in $\mathbb{O}(\Z)$. 

\begin{lemma} \label{technical}
Let $S$ denote a finite number of places containing 2 and $\infty$, and let $f = \otimes_v'f_v \in \Pi$ be such that, for $v \notin S$, we have $f_v=f_v^0$ where $f_v^0$ denotes the spherical vector normalized such that $f_v^0(A_0)=1$. Let $\Q_S=\prod_{v \in S} \Q_v$. If $g \in G_2(\A)$, we write $g=g_Sg^S$ where $g_S \in G_2(\Q_S)$ and $g^S \in \prod_{v \notin S} G_2(\Q_v)$. Then there exists a non-zero constant $c_{A_0}$ such that for every $g \in G_2(\A)$ we have
$$
\theta(f)_{A_0}(g)=c_{A_0} f_S(g_S^{-1}A_0)\prod_{v \notin S} \chi_v(g_v)
$$
where $f_S=\bigotimes_{v \in S} f_v$ and $\chi_v$ is the characteristic function of $U_D(\Z_v) \backslash G_2(\Z_v)$.
\end{lemma}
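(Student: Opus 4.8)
The plan is to compute the Fourier coefficient $\theta(f)_{A_0}(g)$ by unfolding it against the Fourier expansion of the minimal representation and then exploiting the fact that $A_0$ generates the open $G_2(\Q)$-orbit on $\omega(\Q)$ with unipotent stabilizer $U_D$. First I would recall, following \cite[\S A.3]{harris-khare-thorne} and \cite{kobayashi-savin}, that in the unitary model $L^2(\Omega)$ of the minimal representation the Fourier coefficient $\theta(f)_A$ along the abelian unipotent $N \simeq J$ against the character $\psi_A$ picks out the evaluation-type functional $f \mapsto f(A)$ when $A \in \Omega(\Q)$; more precisely $\theta(f)_A(g)$ is, up to a global non-zero constant coming from the normalization of $\theta$, equal to $(\Pi(g) f)(A)$ for $A$ of rank one. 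This is the key local-to-global input and is exactly the mechanism used in \cite[Lemma 4.6]{GrossSavin}.

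Next I would use the factorization $f = \bigotimes_v' f_v$ and $g = g_S g^S$ to write $(\Pi(g)f)(A_0) = \prod_v (\Pi_v(g_v) f_v)(A_0)$, separating the places in $S$ from those outside. For $v \in S$ this contributes $f_S(g_S^{-1} A_0)$ after using that $\Pi_v$ acts on $L^2(\Omega_v)$ by the natural action twisted by a character, so that $(\Pi_v(g_v)f_v)(A_0) = f_v(g_v^{-1} A_0)$ up to the similitude character which is trivial here since $g_v \in G_2(\Q_v)$ fixes the determinant form. For $v \notin S$ the vector $f_v = f_v^0$ is the spherical vector normalized by $f_v^0(A_0) = 1$; since $G_2(\Z_v)$ acts on $\Omega_v$ and $f_v^0$ is $G_2(\Z_v)$-invariant, the value $(\Pi_v(g_v)f_v^0)(A_0) = f_v^0(g_v^{-1} A_0)$ depends only on the $G_2(\Z_v)$-orbit of $g_v^{-1} A_0$. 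Here I would invoke Proposition \ref{orbits}: $A_0$ has stabilizer $U_D$ in $G_2$, and over $\Z_v$ (for $v \notin S$, in particular $v \neq 2$ and $v$ finite, where the relevant group schemes are smooth with good reduction and the octonion entries $x,y,z$ of $A_0$ lie in $\mathbb{O}(\Z)$) the $G_2(\Z_v)$-orbit of $A_0$ is identified with $U_D(\Z_v) \backslash G_2(\Z_v)$, on which $f_v^0$ is the characteristic function $\chi_v$. Collecting the contributions gives the asserted formula with $c_{A_0}$ the product of the global constant from $\theta$ and the (finite) product over $v \notin S$ of normalizing factors, which is non-zero.

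The main obstacle I anticipate is the careful justification of the integrality/smoothness claims at the places outside $S$: namely that for $v \notin S$ the stabilizer scheme and the orbit map behave well over $\Z_v$, so that the single orbit of Proposition \ref{orbits} really does globalize to an identification of the support of the spherical matrix coefficient with $U_D(\Z_v) \backslash G_2(\Z_v)$. This is where the hypothesis that $S$ contains $2$ and $\infty$ and that the entries of $A_0$ are in $\mathbb{O}(\Z)$ enter, and it is the analogue of the verification in \cite[Lemma 4.6]{GrossSavin} for the compact form; I would handle it by reducing modulo $v$ and using that $G_2$ has good reduction away from where the octonion algebra degenerates, together with smoothness of $U_D$, to lift the orbit computation. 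The rest is a bookkeeping of normalizing constants, which I would not grind through in detail.
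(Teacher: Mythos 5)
Your overall strategy coincides with the paper's: by uniqueness of the rank-one functionals on the minimal representation (\cite[Theorem A.4]{harris-khare-thorne}) one gets $\theta(f)_{A_0}(g)=c_{A_0}(\Pi(g)f)(A_0)$ with $c_{A_0}\neq 0$, and for $g\in G_2(\A)$ this factors as $c_{A_0}\prod_v f_v(g_v^{-1}A_0)$, reducing the lemma to the computation of $f_v^0(g_v^{-1}A_0)$ at the places outside $S$. There, however, your argument has a genuine gap. You deduce from $G_2(\Z_v)$-invariance and the normalization $f_v^0(A_0)=1$ that $f_v^0(g_v^{-1}A_0)$ depends only on the $G_2(\Z_v)$-orbit of $g_v^{-1}A_0$, and then assert that $f_v^0$ is the characteristic function of the orbit $U_D(\Z_v)\backslash G_2(\Z_v)$, proposing to justify this purely by good reduction of $G_2$ and smoothness of $U_D$ and of the orbit map. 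This cannot work as stated: invariance and normalization say nothing about the values of $f_v^0$ on the other $G_2(\Z_v)$-orbits inside the $G_2(\Q_v)$-orbit of $A_0$, and for a general $g_v\in G_2(\Q_v)$ the entries of $g_v^{-1}A_0$ need not be $v$-integral, so there is nothing to reduce modulo $v$ and no orbit computation over $\F_v$ to lift. The indispensable input, which your sketch omits, is the explicit description of the spherical vector of the minimal representation at an unramified place: by \cite[Theorem A.5]{harris-khare-thorne} $f_v^0$ is supported in $J(\Z_v)$. This is what forces the off-diagonal entries of $g_v^{-1}A_0$ to lie in $\mathbb{O}(\Z_v)$ whenever $f_v^0(g_v^{-1}A_0)\neq 0$, and it is a specific property of the minimal representation (coming from its full $E_7(\Z_v)$-invariance and the structure of the model), not a consequence of $G_2(\Z_v)$-invariance or of smoothness of stabilizers.

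Once that support statement is inserted, the remaining step is exactly what the paper does and what you gesture at: for $v\notin S$ (so $v\neq 2$), reduce the triples of entries of $A_0$ and of $g_v^{-1}A_0$ modulo $v$, observe that the transitivity argument of Step 1 in the proof of Proposition \ref{orbits} remains valid over $\F_v$, so the two reductions are $G_2(\F_v)$-conjugate, and then apply Hensel's lemma to conclude they are $G_2(\Z_v)$-conjugate. Hence the function $g_v\mapsto f_v^0(g_v^{-1}A_0)$ is supported on $U_D(\Z_v)\backslash G_2(\Z_v)$, where it equals $1$ by invariance and the normalization, which gives $\chi_v$ and completes the proof. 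In short: your reduction to the local computation and your lifting mechanism match the paper, but the identification of the support at the unramified places requires \cite[Theorem A.5]{harris-khare-thorne} and cannot be obtained from the geometric considerations you propose.
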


\begin{proof} By uniqueness of local functionals (\cite[Theorem A.4]{harris-khare-thorne}), there exists a non-zero scalar $c_{A_0}$ such that for any $g \in E_7(\A)$, we have $\theta(f)_{A_0}(g)=c_{A_0} (\Pi(g)f)(A_0)$. For $g \in G_2(\A)$ we have $(\Pi(g)f)(A_0)=f(g^{-1}A_0)$ where $g^{-1}A_0$ is the result of the natural action of $g^{-1}$ on the off diagonal entries of $A_0$. Hence $\theta(f)_{A_0}(g)=c_{A_0}f(g^{-1}A_0)=c_{A_0} \prod_v f_v(g_v^{-1}A_0)$ for $g \in G_2(\A)$. Let us prove that for any $p \notin S$, we have $f_p(g_p^{-1}A_0)=\chi_p(g_p)$. So let $g_p \in G_2(\Q_p)$ be such that $f^0_p(g_p^{-1}A_0) \neq 0$ and let $x',y',z'$ denote the off diagonal entries of $g_p^{-1}A_0$. According to \cite[Theorem A.5]{harris-khare-thorne} the spherical vector $f_p^0$ is supported in $J(\Z_p)$. Hence $x',y',z' \in \mathbb{O}(\Z_p)$. Consider $\mathbb{O}(\F_p)$ the split octonion algebra over $\F_p$. The projections of $(x,y,z)$ and $(x',y',z')$ to $\mathbb{O}(\F_p)$ are $G_2(\F_p)$-conjugated by the proof of Step 1 in Proposition \ref{orbits}, which is still valid over the base field $\F_p$ as long as $p \neq 2$. It follows from Hensel lemma that $(x,y,z)$ and $(x',y',z')$ are $G_2(\Z_p)$-conjugated. Therefore the function $g_p \mapsto f_p^0(g_p^{-1}A_0)$ is supported in $U_D(\Z_p) \backslash G_2(\Z_p) \subset U_D(\Q_p) \backslash G_2(\Q_p)$. Since $f_p^0$ is $G_2(\Z_p)$-invariant, for $g_p \in G_2(\Z_p)$ we have $f_p^0(g_p^{-1}A_0)=f_p^0(A_0)=1$. This completes the proof.
\end{proof}

\begin{proposition} \label{PropFCperiod}
Let $\sigma$ be a cuspidal automorphic representation of $G_2(\A)$ as in Theorem \ref{cuspidality} and let $\varphi \in \sigma^\vee$ be a cuspidal form. Then, the following conditions are equivalent
\begin{enumerate}
    \item $\Theta(f, \varphi)_{U_P, \psi_D}(1) \neq 0$ for some choice of $f$.
    \item $\varphi^{U_D}(g) \neq 0$ for some  $g \in G_2(\A)$.
\end{enumerate}
In particular, if any of the conditions holds then $\Theta(\sigma)$ is non-zero. 
\end{proposition}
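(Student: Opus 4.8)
\textbf{Proof strategy for Proposition \ref{PropFCperiod}.}

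The plan is to unfold the Fourier coefficient $\Theta(f,\varphi)_{U_P,\psi_D}(1)$ against the Fourier expansion \eqref{EqFourier} of the minimal representation's automorphic realization, sort the resulting sum into $G_2(\Q)$-orbits on $\omega(\Q)$, and match each orbit with a period of $\varphi$ over the corresponding stabilizer computed in Proposition \ref{orbits}. First I would substitute \eqref{Eqthetadef} into Definition \ref{Fouriercoeff} of $\Psi_{U_P,\psi_D}$ and interchange the (convergent, by cuspidality of $\varphi$ and rapid decay of $\theta(f)$) integrations over $U_P(\Q)\backslash U_P(\A)$ and $G_2(\Q)\backslash G_2(\A)$. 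The inner integral over $U_P$ against $\psi_D^{-1}$ picks out exactly the part of the Fourier expansion of $\theta(f)$ supported on characters $\psi_A$ whose restriction to $U_3(\A)\subseteq N(\A)$ equals $\psi_D$; one checks that the constant term $\theta(f)_0$ and the terms outside $\omega(\Q)$ do not contribute (the constant term contribution vanishes by an inner cuspidality integration, as in the proof of Proposition \ref{PropFC}, or by \cite[\S A.3]{harris-khare-thorne}). This produces
\[
\Theta(f,\varphi)_{U_P,\psi_D}(1) = \int_{G_2(\Q)\backslash G_2(\A)} \Big(\sum_{A\in\omega(\Q)} \theta(f)_A(g')\Big)\varphi(g')\,dg'.
\]

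Next I would use that $G_2(\Q)$ permutes $\omega(\Q)$ with finitely many orbits (Proposition \ref{orbits}), so the sum rearranges into $\sum_{[A]} \sum_{\gamma\in \mathrm{Stab}_{G_2(\Q)}(A)\backslash G_2(\Q)} \theta(f)_{\gamma\cdot A}(g') = \sum_{[A]}\sum_\gamma \theta(f)_A(\gamma g')$, and then unfold each orbit term against $\varphi(g')$ to collapse $\mathrm{Stab}_{G_2(\Q)}(A)\backslash G_2(\Q)\backslash G_2(\A)$ into $\mathrm{Stab}_{G_2(\Q)}(A)\backslash G_2(\A)$. By Theorem \ref{cuspidality}, $\Theta(\sigma)$ is cuspidal, and more importantly the closed-orbit contributions corresponding to $\mathrm{Stab} = \SL_3$ (or $\SU_D(2,1)$), $\SL_2 V$, $\SL_2\overline V$ vanish: these are exactly the periods $\varphi^{\SL_3}$, $\varphi^{\SU(2,1)}$, $\varphi^{\SL_2 V}$ shown to be zero in the proof of Theorem \ref{cuspidality} (using hypothesis (2) that $\sigma_p$ is generic, \cite[Lemma 4.10]{GrossSavin}, and Lemma \ref{lem:periodsl2n} for the $\SL_2 V$ period; the $\SL_2\overline V$ period vanishes by the same argument applied to the opposite unipotent, or is conjugate to the $\SL_2 V$ case). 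Hence only the open orbit, with representative $A_0$ and stabilizer $U_D$, survives, giving
\[
\Theta(f,\varphi)_{U_P,\psi_D}(1) = \int_{U_D(\Q)\backslash G_2(\A)} \theta(f)_{A_0}(g')\,\varphi(g')\,dg'.
\]

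Finally I would plug in Lemma \ref{technical}: choosing $f=\otimes'_v f_v$ with $f_v$ spherical normalized so that $f_v^0(A_0)=1$ outside a finite set $S\ni 2,\infty$, the factor $\theta(f)_{A_0}(g')$ becomes $c_{A_0} f_S((g'_S)^{-1}A_0)\prod_{v\notin S}\chi_v(g'_v)$, so the integral factors as a constant times $\int_{U_D(\Q)\backslash U_D(\A)}(\text{triviality})\cdot \big(\int f_S(\cdots)\varphi^{U_D}(\cdots)\big)$; unwinding, one finds $\Theta(f,\varphi)_{U_P,\psi_D}(1)$ is a nonzero multiple (varying $f_S$) of the value $\varphi^{U_D}(g)$ for a suitable $g\in G_2(\A)$, and conversely any nonvanishing of $\varphi^{U_D}$ can be detected by a suitable choice of $f_S$ at the places in $S$. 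This establishes the equivalence $(1)\Leftrightarrow(2)$; the last assertion is immediate since if $\Theta(f,\varphi)_{U_P,\psi_D}(1)\neq 0$ then $\Theta(f,\varphi)\not\equiv 0$, so $\Theta(\sigma)\neq 0$. The main obstacle I anticipate is carefully justifying that exactly the non-open orbits are killed — i.e.\ correctly identifying each orbit-period with one of the vanishing periods from Theorem \ref{cuspidality}, handling the $\SL_2\overline V$ orbit (which requires either an opposite-unipotent version of Lemma \ref{lem:periodsl2n} or a conjugation argument), and controlling convergence/interchange of the sum over $\omega(\Q)$ with the automorphic integral.
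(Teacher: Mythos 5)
Your overall strategy (orbit decomposition of $\omega(\Q)$, killing the closed orbits via the vanishing periods from Theorem \ref{cuspidality} and Lemma \ref{lem:periodsl2n}, and the converse via Lemma \ref{technical} with $f_S$ concentrated near a point where $\varphi^{U_D}\neq 0$) is exactly the paper's argument. But there is a genuine gap at the very first step. You claim that integrating $\Theta(f,\varphi)$ over $U_P(\Q)\backslash U_P(\A)$ against $\psi_D^{-1}$ produces
$\int_{G_2(\Q)\backslash G_2(\A)}\bigl(\sum_{A\in\omega(\Q)}\theta(f)_A(g')\bigr)\varphi(g')\,dg'$.
That identity is false: it is (up to a volume factor) the \emph{Siegel} coefficient $\Theta(f,\varphi)_{U_3,\psi_D}(1)$, not the type $(4\,2)$ coefficient. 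Recall $U_P=U_0U_3$ with $U_0\subseteq\GL_3\subseteq M$ \emph{not} contained in $N$, and the character on $U_P$ is $\psi_0\psi_D$. Only the $U_3$-integration interacts with the Fourier expansion \eqref{EqFourier} in the simple way you describe (there the constant term dies just because $\psi_D$ is nontrivial on $U_3$ and $\theta(f)_0$ is left $N(\A)$-invariant — no cuspidality needed); the remaining integration over $U_0$ against $\psi_0^{-1}$ is a further Fourier coefficient in the Levi direction and cannot be dropped, since $\theta(f)_{A}(u_0g)$ genuinely depends on $u_0\in U_0(\A)$. If you carry that extra integration along, what comes out is not $\varphi^{U_D}$ but, after the identifications of Lemma \ref{lemma:Hactionontriple} and Corollary \ref{cor:onthecharacters}, the Heisenberg coefficient $\varphi_{U_H,\psi_{H,D}}$ — i.e.\ you would be proving Proposition \ref{prop:comparisonbetweenFC}, a different statement (and note $\varphi_{U_H,\psi_{H,D}}\neq 0$ implies $\varphi^{U_D}\neq 0$ but not conversely in any obvious way).

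The paper avoids this by first invoking Proposition \ref{PropFC}: since $\Theta(\sigma)$ is cuspidal (Theorem \ref{cuspidality}, which is why the hypotheses on $\sigma$ matter here — cuspidality along the Klingen radical is what the Fourier expansion over $N_v$ in that proof uses), non-vanishing of $\Theta(f,\varphi)_{U_P,\psi_D}$ is equivalent to non-vanishing of $\Theta(f,\varphi)_{U_3,\alpha}$ for some $\alpha\sim_{L(\Q)}\alpha_D$, and only then does one run your unfolding — which is then correct verbatim — on $\Theta(f,\varphi)_{U_3,\psi_D}(1)$. So to repair your proof, insert this reduction (or an equivalent argument handling the $U_0$-integration) before the displayed unfolding; the remainder of your outline, including the remark that the $\SL_2\overline V$ period is handled together with the $\SL_2 V$ period via Lemma \ref{lem:periodsl2n}, then agrees with the paper's proof.
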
 

\begin{proof}
Recall first that, according to Proposition \ref{PropFC}, we have $\Theta(f, \varphi)_{U_P, \psi_D} \neq 0$ if and only if $\Theta(f, \varphi)_{U_3, \alpha} \neq 0$ for some   $\alpha \in {\rm Sym}^{{\rm rk } 2}(3)(\Q)$ with $\alpha \sim_{ L(\Q)} \alpha_D$. We write
\begin{eqnarray*}
\Theta(f,\varphi)_{U_3, \psi_D}(1) &=& \int_{U_3(\Q) \backslash U_3(\A)} \Theta(f,\varphi)(u) \psi_D^{-1}(u) du \\
&=& \int_{G_2(\Q) \backslash G_2(\A)} \int_{U_3(\Q) \backslash U_3(\A)} \sum_{A \in \Omega(\Q)} \theta(f)_A(ug) \varphi(g)  \psi_D^{-1}(u) du dg,
\end{eqnarray*}
where in the second equality we used the definition \eqref{Eqthetadef} of $\Theta(f, \varphi)$ and the Fourier expansion \eqref{EqFourier} of $\theta(f)$. Since $U_3 \subseteq N$, we have that $\theta(f)_A(ug) = \psi_A(u) \theta(f)_A(g)$ and \[ \int_{U_3(\Q) \backslash U_3(\A)} \psi_A(u) \psi_D^{-1}(u) = \begin{cases} \mathrm{vol}(U_3(\Q) \backslash U_3(\A)) & \text{ if }  \psi_D = \psi_A|_{U_3(\A)} \\ 0 & \text{ otherwise. }\end{cases} \] Hence we have
\begin{equation} \label{eqtmp1}
\Theta(f,\varphi)_{U_3, \psi_D}(1) = \mathrm{vol}(U_3(\Q) \backslash U_3(\A)) \int_{G_2(\Q) \backslash G_2(\A)} \sum_{A \in \omega(\Q)} \theta(f)_A(g) \varphi(g) dg.
\end{equation}

Let $(A_i)_i$ be the finite representatives of the orbits of the action of $G_2(\Q)$ on $\omega(\Q)$ as given by Proposition \ref{orbits}, and write $\mathrm{Stab}_{A_i}$ for the stabilizers of $A_i$ in $G_2$. The integral on the right hand side of \eqref{eqtmp1} becomes
\[ \sum_{i} \int_{G_2(\Q) \backslash G_2(\A)} \sum_{g' \in \mathrm{Stab}_{A_i}(\Q) \backslash G_2(\Q)} \theta(f)_{A_i}(g'g) \varphi(g) dg = \sum_{i} \int_{\mathrm{Stab_{A_i}(\Q)} \backslash G_2(\A)} \theta(f)_{A_i}(g) \varphi(g) dg.\]
Observe now that, by \cite[Theorem A.4]{harris-khare-thorne} we have $\theta(f)_{A_i}(g) = c_{A_i} f(g^{-1} A_i)$ for any $g \in G_2$. Hence, since $\mathrm{Stab}_{A_i}(\A)$ fixes the matrix $A_i$, we deduce that the function $g \mapsto \theta(f)_{A_i}(g)$ is left $\mathrm{Stab}_{A_i}(\A)$-invariant. Making an inner integration over $\mathrm{Stab}_{A_i}(\Q) \backslash \mathrm{Stab}_{A_i}(\A)$ in each term of the outer sum, we deduce that the above equals
\[ \sum_{i} \int_{\mathrm{Stab}_{A_i}(\A) \backslash G_2(\A)} \theta(f)_{A_i}(g) \varphi^{\mathrm{Stab}_{A_i}}(g) dg,\]
where $\varphi^{\mathrm{Stab}_{A_i}}(g)$ denotes the period of $\varphi$ over $\mathrm{Stab}_{A_i}(\Q) \backslash \mathrm{Stab}_{A_i}(\A)$. We now analyse two different possibilities. If $D$ is not a square in $\Q$, then, by $(2)$ of Proposition \ref{orbits},  $G_2(\Q)$ acts on $\omega(\Q)$ with two orbits, one closed and one open. Let $A_0,A_1$ denote representatives of these two orbits with stabilizers $\mathrm{Stab}_{A_0} = U_D$ and $\mathrm{Stab}_{A_1} = \SU_3^{D}$ in $G_2$. By the proof of Theorem \ref{cuspidality}, $\varphi^{\SU_3^{D}}(g)=0$, and hence the only surviving term is the one corresponding to the orbit represented by $A_0$. If $D$ is a square in $\Q$, then by $(1)$ of Proposition \ref{orbits}, $G_2(\Q)$ acts on the set $\omega(\Q)$ with four orbits, three closed and one open. Let $A_i, 0 \leq i \leq 3$ denote representatives of those orbits, with $A_0$ representing the open one. The corresponding stabilizers are are $U_D$, $\SL_3$, $\SL_2 V$ and its conjugate $\SL_2 \overline{V}$. By the proof of Theorem \ref{cuspidality}, we have $\varphi^{\SL_3}(g)=0$. By hypothesis  $\sigma$ (and $\sigma^\vee$) is not globally generic, hence Lemma \ref{lem:periodsl2n} implies that  $\varphi^{\SL_2 V}(g)=\varphi^{\SL_2 \overline{V}}(g)=0$. From this, we deduce that, for any $D$,
\begin{equation} \label{Eqtmp1}
\Theta(f,\varphi)_{U_3, \psi_D}(1) = \int_{U_D(\A) \backslash G_2(\A)} \theta(f)_{A_0}(g) \varphi^{U_D}(g) dg ,
\end{equation}
where $\varphi^{U_D}(g)$ is the constant term of $\varphi$ along $U_D$. This shows that if $\Theta(f,\varphi)_{U_3, \psi_D}(1) \neq 0$ then $\varphi^{U_D} \neq 0$ since the period appears as an inner integral of the Fourier coefficient.

\begin{comment}
\textit{Case $2$: $D$ is a square in $\Q$}. In this case, by Lemmas \ref{lemma:orbitscasetrivial}(ii) and \ref{lem:orbitswhensquare}, $G_2(\Q)$ acts on the set $\omega(\Q)$ with four orbits, three closed and one open . Let $A_i, 0 \leq i \leq 3$ denote representatives of those orbits, with $A_0$ representing the open one. We can therefore write as before
\[ \Theta(f,h)_{U_3, \psi_D}(1) = \sum_{i=0}^3 c_{A_i} \int_{G_2(\Q) \backslash G_2(\A)} \sum_{g \in \mathrm{Stab}_{A_i}(\Q) \backslash G_2(\Q)} f(g^{-1} A_i) h(g) dg. \]
As we did above, observing that the inner sum is invariant under the action of $\mathrm{Stab}_{A_i}(\A)$, we can make an inner integration over $\mathrm{Stab}_{A_i}(\Q) \backslash\mathrm{Stab}_{A_i}(\A)$ to reduce the above sum as 
\[ \sum_{i=0}^3 c_{A_i} \int_{\mathrm{Stab}_{A_i}(\A) \backslash G_2(\A)} f(g^{-1} A_i) h^{\mathrm{Stab}_{A_i}}(g) dg. \]
We now observe that  all the summands for $i \neq 0$ vanish.
Indeed, by Lemma \ref{lemma:orbitscasetrivial}(ii), the stabilizer $\mathrm{Stab}_{A_3}=\SL_3$ and $h^{\mathbf{SL}_3}(g)=0$ by Theorem \ref{cuspidality}. Similarly, $h^{\mathrm{Stab}_{A_i}}(g)=0$ for $i=1,2$ \anto{This has to be shown using the fact that an element in $C_c^\infty(\mathbf{SL}_3(\Q_p) /\mathrm{Stab}_{A_i}(\Q_p) )$ has no Whittaker function.}
We are left with \[c_{A_0} \int_{U_D(\A) \backslash G_2(\A)} f(g^{-1} A_0) h^{U_D}(g) dg,  \] 
where $U_D$ is the unipotent radical of the upper triangular Borel of $\mathbf{SL}_3$. This shows again that if $\Theta(f,h)_{U_3, \psi_D} \neq 0$ then $h^{U_D} \neq 0$.
\end{comment}

We now show the converse, i.e. that if $\varphi^{U_D} \neq 0$ then, for some choice of $f \in \Pi$, the Fourier coefficient $\Theta(f,\varphi)_{U_3, \psi_D}$ does not vanish. Let $S$ be as in Lemma \ref{technical}. By enlarging $S$ if necessary, we can assume that the cusp form $\varphi$ is $G_2(\Z_v)$-invariant for all $v \notin S$. By Lemma \ref{technical}, the integral of \eqref{Eqtmp1} equals
$$
c_{A_0} \cdot \left( \int_{U_D(\Q_S) \backslash G_2(\Q_S)} f_S(g^{-1}A_0)\varphi^{U_D}(g) dg \right) \cdot \prod_{v \notin S} vol(U_D(\Z_v) \backslash G_2(\Z_v), dg_v).
$$
It remains to show that, when $\varphi^{U_D} \neq 0$ then for a good choice of $f$ at the places in $S$, the integral
$$
\int_{U_D(\Q_S) \backslash G_2(\Q_S)} f_S(g^{-1}A_0)\varphi^{U_D}(g)dg \neq 0.
$$
It follows from \cite[Theorem A.4]{harris-khare-thorne} that $f_S$ can be any smooth compactly supported function on $\Omega(\Q_S)$. Let $g_0 \in G_2(\Q_S)$ be such that $\varphi^{U_D}(g_0) \neq 0$. We can take a non-negative $f$ supported in a sufficiently small neighborhood of $g_0$ to ensure the non-vanishing of the integral. This finishes the proof of the proposition.
\end{proof}

\subsection{Non-vanishing of Fourier coefficients II}

The purpose of this section is to prove the following result.

\begin{theorem}\label{thm:thetaliftproperties}
Let $F$ denote a quadratic \'etale algebra and $\sigma = \sigma_\infty \otimes \sigma_f$ be a cuspidal automorphic representation of $G_2(\A)$ such that \begin{itemize}
    \item $\sigma_\infty$ is a non-generic discrete series with infinitesimal character $r \epsilon_1 + s \epsilon_2$.
    \item there exists a finite prime $p$ such that $\sigma_p$ is Steinberg.
    \item  The representation $\sigma$ supports Fourier coefficient associated to the cubic algebra $\Q \times F$.
    %\item The representation $\sigma$ supports Fourier coefficient associated to a cubic extension of the form $\Q \times F$, with \[F = \begin{cases} \Q \times \Q & \text{ if } D \equiv 1  \text{ mod }(\Q^\times)^2 \\ \Q(\sqrt{D}) & \text{ otherwise.} \end{cases} \] 
\end{itemize}
The theta lift $\Theta(\sigma) = \otimes_v'\Theta(\sigma_v)$ is a non-zero cuspidal automorphic representation of $\PGSp_6(\A)$. Moveover, if $\pi$ denotes any non-zero irreducible subquotient of $\Theta(\sigma)$, then
\begin{itemize}
    \item $\pi_\infty$ is a discrete series of infinitesimal character $(r, \tfrac{1}{2}(r+s),\tfrac{1}{2}(r-s))$.
    \item $\pi_p$ is Steinberg.
    \item The representation $\pi$ supports a non-trivial Fourier coefficient of type $(4\, 2)$ associated to $F$.
\end{itemize}
\end{theorem}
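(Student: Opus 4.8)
The plan is to assemble Theorem \ref{thm:thetaliftproperties} from the pieces developed in the previous subsections, treating each of the three conclusions in turn. First I would establish cuspidality and non-vanishing of $\Theta(\sigma)$. Since $\sigma_\infty \in \mathcal{D}_{3,1}$ is a quaternionic discrete series, it is not generic, hence $\sigma$ cannot be globally generic (a globally generic representation is generic at every place); combined with the Steinberg hypothesis at $p$, which is generic by Proposition \ref{propositiononlocalthetaprop}, Corollary \ref{corocusptheta} gives that $\Theta(\sigma)$ is cuspidal. For non-vanishing, I would invoke Proposition \ref{PropFCperiod}: the assumption that $\sigma$ supports a Fourier coefficient associated to $\Q\times F$ means, by the $\GL_2(\Q)$-orbit description of \S\ref{subsub:FcforG2} together with the shape of the character $\psi_{H,D}$, that the relevant period $\varphi^{U_D}(g)$ is nonzero for some $g$ and some $\varphi\in\sigma^\vee$ (here one must match the character $\psi_{H,D}$ attached to $\Q\times F$ with the constant-term period along $U_D$, the unipotent radical of the Borel of $\SL_3$ or $\SU_D(2,1)$; this identification is essentially Lemma \ref{lemmabeforeorbits} and the orbit calculation). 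Hence condition (2) of Proposition \ref{PropFCperiod} holds, so $\Theta(f,\varphi)_{U_P,\psi_D}(1)\neq 0$ for some $f$, which in particular forces $\Theta(\sigma)\neq 0$.

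Next I would pin down the local components of any irreducible subquotient $\pi$ of $\Theta(\sigma)$ using Proposition \ref{proplocalglobal}, which says $\pi_v$ is an irreducible subquotient of $\Theta(\sigma_v)$. At the archimedean place, $\sigma_\infty \in \mathcal{D}_{3,1}$, so Proposition \ref{propLi} (Li's theorem verifying Gross's conjecture for quaternionic discrete series) identifies $\theta(\sigma_\infty)$ as the discrete series $\pi_\infty^{3,3}\in P(V^\lambda)$ of Hodge type $(3,3)$, with $\lambda$ given explicitly in terms of the Harish-Chandra parameter of $\sigma_\infty$; since the big theta lift $\Theta(\sigma_\infty)$ has this as its unique irreducible quotient and the relevant $(\mathfrak g, K)$-cohomological constraint singles it out, $\pi_\infty$ is forced to be $\pi_\infty^{3,3}$. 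At the place $p$, $\sigma_p = {\rm St}_{G_2}$, and Proposition \ref{propositiononlocalthetaprop}(2) gives $\theta({\rm St}_{G_2}) = {\rm St}_{\PGSp_6}$; since $\Theta(\sigma_p)$ has finite length with unique irreducible quotient $\theta(\sigma_p)$ and (by the same proposition) the Steinberg representation is the only possible constituent in this situation, $\pi_p = {\rm St}_{\PGSp_6}$ is Steinberg.

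Finally I would deduce that $\pi$ supports a nonzero Fourier coefficient of type $(4\,2)$ associated to $F$. From the first paragraph, for a suitable $f$ and $\varphi\in\sigma^\vee$ the coefficient $\Theta(f,\varphi)_{U_3,\psi_D}(1)$ is nonzero, hence by Proposition \ref{PropFC} the function $\Theta(f,\varphi)_{U_P,\psi_D}$ is not identically zero; this is precisely a nonzero Fourier coefficient of type $(4\,2)$ attached to the quadratic \'etale algebra $F = \Q(\sqrt D)$ (with $D\equiv 1$ recovering the split case $F=\Q\times\Q$), for the automorphic representation generated by the functions $\Theta(f,\varphi)$. One then checks that this function lies in the space of (a subquotient isomorphic to) $\pi$; if necessary one passes to the subquotient $\pi$ of $\Theta(\sigma)$ on which the Fourier coefficient survives, which is legitimate since the $(4\,2)$-Fourier coefficient functional is a continuous nonzero linear functional on $\Theta(\sigma)$ and thus nonzero on at least one irreducible subquotient — but because all archimedean and $p$-adic local constraints above are satisfied by every subquotient, this subquotient still satisfies the listed properties.

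The main obstacle I expect is the bookkeeping in the last step, namely carefully matching the Fourier coefficient of $\Theta(\sigma)$ produced by the orbit computation (which a priori lives on the big theta lift) with a genuine Fourier coefficient on the irreducible subquotient $\pi$, and ensuring the \'etale algebra labeling is consistent throughout ($\Q\times F$ on the $G_2$ side versus the rank-two symmetric matrix $\alpha_D$, equivalently the type $(4\,2)$ character $\psi_D$, on the $\PGSp_6$ side). A secondary subtlety is confirming that the archimedean constraint really forces $\pi_\infty = \pi_\infty^{3,3}$ rather than merely allowing it as one constituent of $\Theta(\sigma_\infty)$; here one uses that $\pi$ is cuspidal cohomological (so $\pi_\infty$ must be a discrete series with nonvanishing $(\mathfrak g,K)$-cohomology in the packet $P(V^\lambda)$) together with the explicit quotient statement of Proposition \ref{propLi}.
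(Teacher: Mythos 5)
Your proof is correct and follows essentially the same route as the paper: Corollary \ref{corocusptheta} for cuspidality, the Fourier--coefficient comparison for non-vanishing and the type $(4\,2)$ coefficient, and Propositions \ref{proplocalglobal}, \ref{propLi} and \ref{propositiononlocalthetaprop}(2) for the archimedean and $p$-adic components of $\pi$. The only deviation is that you route non-vanishing through Proposition \ref{PropFCperiod} (the $U_D$-period criterion), which requires the observation that $\psi_{H,D}$ is trivial on $U_D$ so that the $\Q\times F$-coefficient of $\sigma$ dominates the $U_D$-period --- the correct references for that matching are Lemma \ref{lemma:Hactionontriple} and Corollary \ref{cor:onthecharacters}, not Lemma \ref{lemmabeforeorbits} --- whereas the paper invokes Proposition \ref{prop:comparisonbetweenFC} directly, which is precisely the stated equivalence between the Heisenberg coefficient of $\sigma$ and the $(4\,2)$ coefficient of its lift.
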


\begin{remark}
As it will follow from the proof, the condition of $\sigma_\infty$ being not generic can be replaced by $\sigma$ not being locally generic, i.e. that there exists one local component of $\sigma_v$ of $\sigma$ which is not generic.
\end{remark}

%\begin{remark}\label{remarkonFCqds}
%By \cite[Theorem 3.1]{ganmultcubic}, every irreducible non-trivial automorphic representation of $G_2(\A)$ has a non-zero Fourier coefficient for some \'etale cubic algebra $R$. If $R$ does not split as $\Q \times F$, then, one can show that the theta lift of $\sigma$ supports a Fourier coefficient for the Siegel parabolic associated to a symmetric matrix of rank three. Together with \cite[Theorem A.9]{harris-khare-thorne}, which deals with the generic case, this gives a complete correspondence between Fourier coefficients of $\sigma$ and $\Theta(\sigma)$. A detailed proof of this statement will appear in a forthcoming work of ours.
%\end{remark}

%We now want to relate Fourier coefficients of forms of $G_2$ and Fourier coefficients of their theta lifts. The case of Whittaker coefficients has been dealt in Savin's appendix to \cite{harris-khare-thorne}, and we essentially follow his strategy in our setting.

Let us first fix some notations first. Recall from \S \ref{Section:dualpair} that the centralizer of $G_2$ in $M$ is $\GL_3$ and let
\[ U_0 = \left\{ \begin{pmatrix} 1 & a & b \\ 0 & 1 & 0 \\ 0 & 0 & 1 \end{pmatrix} \right\} \]
be the unipotent radical of its Borel subgroup of upper triangular matrices. Note that the unipotent subgroup $U_0 U_3$ is the unipotent radical of the parabolic subgroup $P$ of $\PGSp_6$ of Levi $\GL_2 \times \GL_1^2$ appearing in \S \ref{subsubsec:fc42}.

\begin{definition} \label{Definitionpsi0}
Define the character $\psi_0: U_0(\Q) \backslash U_0(\A) \to \C^\times$ by sending \[ \psi_0(u) = e(a). \]
\end{definition}
Note that $\psi_0 \psi_D$ is the character (simply denoted by $\psi_D$) on $U_P(\Q) \backslash U_P(\A)$ introduced in \S \ref{subsubsec:fc42}.

As explained in \S \ref{subsetorbits}, we view the space $\Omega$ of rank 1 elements in $J$ inside $\overline{N}$ so that $U_0$ acts on $\Omega$ via the natural right action of $\GL_3 \subseteq M$ on $\overline{N}$. Then, we let $U_0$ act on the left on $\omega$ and hence on the triples $(x,y,z)$ of off-diagonal terms by the rule
\begin{equation} \label{Equationactionu}
u^{-1} \cdot (x,y,z) = (x + ay +bz , y, z). 
\end{equation}

\subsubsection{The relation between $U_P$ and $U_H$}

In what follows, we relate the unipotent subgroup $U_0$ to the unipotent radical $U_H$ of the Heisenberg parabolic. Such a relation will be employed in Proposition \ref{prop:comparisonbetweenFC} to establish a relation between Fourier coefficients for the Heisenberg parabolic of $G_2$-cusp forms and Fourier coefficients of type $(4\,2)$ of their theta lifts.

Before stating our result, we make the following comments on the choice of representatives of the open orbits in Proposition \ref{orbits}. First, suppose that $D = d^2$, with $d \in \Q^\times$. There is no harm in assuming $ d \in \Z$. Recall that the stabilizer in $G_2$ of the vector $s_4 - t_4$ can be identified with $\SL_3 = \SL(\langle s_1, s_2, s_3 \rangle)$. Since the Heisenberg parabolic $H = L_H \cdot U_H$ is the stabilizer of the flag $\langle s_1, t_3 \rangle$, its unipotent radical $U_H$ contains $U_D = \mathrm{Stab}_{G_2}(A_0)$, where \[A_0 = A(d(s_4 - t_4), s_1 - t_3, d( s_1 + t_3)) \in J(\Z) \] is the representative of the open orbit of the action of $G_2$ on $\omega(\Q)$ as in Proposition \ref{orbits}. Moreover, $U_H / U_D$ is 2-dimensional and supported on the roots $a+b$ and $a+2b$. Let us now suppose that $D$ is not a square in $\Q^\times$.
The vector $x = s_2 + D t_2$ is a trace zero octonion of norm $-D$ and orthogonal to $t_3$. We choose the representative of the open orbit to be \[A_0 = A ( s_2 + D t_2, s_1, t_3) \in J(\Z). \]
%With this choice, the unipotent radical $U_{H}$ acts naturally on this triple, fixing $s_1$ and $t_3$, and, as we explain in the Lemma below, its quotient by $U_D = {\rm Stab}_{G_2}(A_0)$ is isomorphic to $U_0$. 

\begin{lemma}\label{lemma:Hactionontriple} 
There is a natural surjection $ p: U_{H} \to U_{0}$ inducing an isomorphism
\[ U_{H} / U_D \to U_0. \]
\end{lemma}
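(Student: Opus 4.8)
The plan is to construct $p$ geometrically, exploiting the fact that $G_2$ and its centraliser $\GL_3 \subseteq M$ both act on the variety $\Omega$ of rank one elements of $J$ (viewed inside $\overline{N}$) and that these two actions commute. First I would analyse the $U_0$-orbit of the fixed representative $A_0$ of the open $G_2$-orbit on $\omega(\Q)$ (as in Proposition~\ref{orbits} and the formulas fixed above). By \eqref{Equationactionu}, $U_0$ moves $A_0$ only in its first off-diagonal entry, by the $\Q$-span of the other two entries $y,z$. In both the split and non-split cases for $D$, the explicit choice of $A_0$ made above shows that $\{y,z\}$ is a basis of the two-plane $\langle s_1,t_3\rangle \subseteq V_7$ while the first entry $x$ lies outside it; hence the orbit map $u \mapsto u^{-1}\cdot A_0$ identifies $U_0$, simply transitively, with the affine plane $A_0 + \langle s_1,t_3\rangle$ (where we regard $\langle s_1,t_3\rangle$ as the space of triples $(v,0,0)$).

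Next I would compute the $U_H$-orbit of $A_0$ and show it equals the same affine plane. The key input is an explicit computation in $\mathbb{O}$: using the root vectors $E_{12},E_{13},E_{23},v_1,\delta_3$ spanning $\mathfrak{u}_H$ (\S\ref{subsectLieG2}), the multiplication table of $\mathbb{O}$, and the formula $(w\wedge x')\cdot v = \langle x',v\rangle w - \langle w,v\rangle x'$ for the action of $\mathfrak{g}_2$ on $V_7$, one checks that $\mathfrak{u}_H$ annihilates both $s_1$ and $t_3$, so that $U_H$ fixes the plane $\langle s_1,t_3\rangle$ pointwise — this is the infinitesimal form of the fact that $H=P_b$ is the stabiliser of a flag through that plane. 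In particular $U_H$ fixes $y$ and $z$, so $n\cdot A_0 = A(n\cdot x,y,z)$ for all $n\in U_H$ and $U_H\cdot A_0 \subseteq A_0 + \langle s_1,t_3\rangle$. A second short computation gives $\mathfrak{u}_H\cdot x = \langle s_1,t_3\rangle$; since an orbit of a unipotent group on an affine variety is closed, irreducible and of dimension equal to that of its tangent space, the $U_H$-orbit of $x$ in $V_7$ is all of $x+\langle s_1,t_3\rangle$, whence $U_H\cdot A_0 = A_0 + \langle s_1,t_3\rangle = U_0\cdot A_0$.

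With this in hand I would define $p(n)$, for $n\in U_H$, to be the unique element of $U_0$ with $p(n)^{-1}\cdot A_0 = n\cdot A_0$; this is visibly a morphism of varieties. It is a homomorphism because the $G_2$-action on $J$ is linear and $U_H$ fixes $\langle s_1,t_3\rangle$ pointwise: writing $n\cdot A_0 = A_0 + (v_n,0,0)$ with $v_n\in\langle s_1,t_3\rangle$, one gets $v_{nn'} = n\cdot v_{n'} + v_n = v_{n'} + v_n$, so $n\mapsto v_n$ is additive into $\langle s_1,t_3\rangle\cong U_0$. Surjectivity follows from the previous step, and $\ker p = \mathrm{Stab}_{U_H}(A_0)$; this kernel is a closed connected (hence, in characteristic zero, determined by its Lie subalgebra) subgroup of $\mathrm{Stab}_{G_2}(A_0) = U_D$ (Proposition~\ref{orbits}), of dimension $\dim U_H - \dim(U_H\cdot A_0) = 5-2 = 3 = \dim U_D$, so $\ker p = U_D$; in particular $U_D\subseteq U_H$ and $p$ induces the asserted isomorphism $U_H/U_D \xrightarrow{\sim} U_0$.

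I expect the main obstacle to be the two octonionic computations in the second step — that $\mathfrak{u}_H$ kills $s_1$ and $t_3$, and that $\mathfrak{u}_H\cdot x$ is exactly $\langle s_1,t_3\rangle$ — together with checking them uniformly for the two families of representatives $A_0$ (according as $D$ is a square or not). These are routine but must be carried out carefully from the multiplication table; once the equality $U_H\cdot A_0 = U_0\cdot A_0$ is in place, the construction of $p$ and the identification of its kernel and image are purely formal.
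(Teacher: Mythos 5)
Your construction is viable and, at bottom, rests on the same two octonionic computations as the paper's proof (that $\mathfrak{u}_H$ kills $s_1$ and $t_3$, and that $\mathfrak{u}_H\cdot x=\langle s_1,t_3\rangle$ for both choices of representative $A_0$); the difference is that you package them orbit-theoretically ($U_0$ simply transitive on the affine plane $A_0+\langle s_1,t_3\rangle$, $U_H$ transitive with stabiliser identified by a dimension count), whereas the paper integrates the Lie-algebra action along the one-parameter subgroups $x_a,x_{a+b},x_{a+2b},x_{a+3b}$ and writes $p$ down in coordinates. Your route proves the lemma, but note that the explicit coordinate formula for $p$ (e.g.\ $x_{a+b}(\lambda_1)x_{a+2b}(\lambda_2)\mapsto$ the matrix with entries $d\tfrac{\lambda_1-\lambda_2}{2},\tfrac{\lambda_1+\lambda_2}{2}$ in the split case) is exactly what Corollary \ref{cor:onthecharacters} consumes to match $\psi_{H,D}$ with $\psi_0\circ p$, so that computation cannot really be avoided downstream.

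There is one step whose justification, as you state it, does not work. You assert $U_H\cdot A_0\subseteq A_0+\langle s_1,t_3\rangle$ as a consequence of $U_H$ fixing $y$ and $z$; fixing $y,z$ only gives $n\cdot A_0=A(n\cdot x,y,z)$, not that $n\cdot x-x$ lies in $\langle s_1,t_3\rangle$. And the subsequent argument "the orbit is closed, irreducible, of dimension equal to that of its tangent space, hence equals $x+\langle s_1,t_3\rangle$" is not a valid principle: a closed irreducible surface in $V_7$ tangent to that plane at $x$ need not be the plane, unless you already know the orbit is contained in it -- which is precisely the unproven containment. The repair is one line, using only facts you have already listed: for $X\in\mathfrak{u}_H$ one has $X\cdot x\in\langle s_1,t_3\rangle$ and $X$ annihilates $\langle s_1,t_3\rangle$, so the exponential series terminates and $\exp(X)\cdot x=x+X\cdot x$; since every element of $U_H$ is $\exp(X)$ with $X\in\mathfrak{u}_H$ and $X\mapsto X\cdot x$ surjects onto $\langle s_1,t_3\rangle$, the orbit of $x$ is exactly $x+\langle s_1,t_3\rangle$ (no appeal to closedness of unipotent orbits is needed). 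Alternatively one can argue, as in the paper's implicit variant, that $u\cdot x-x$, $y$, $z$ span a null subspace of $V_7$, hence $u\cdot x-x\in\langle y,z\rangle$. With this fixed, your definition of $p$, the additivity check $v_{nn'}=v_n+v_{n'}$, the surjectivity, and the identification $\ker p=\mathrm{Stab}_{U_H}(A_0)=U_D$ by connectedness and the count $5-2=3=\dim U_D$ (using Proposition \ref{orbits}) all go through.
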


\begin{proof}
By the description of the action in \eqref{Equationactionu} and the linear independence of the coordinates $(x,y,z)$ of the representative of the open orbit, one sees that $U_0$ acts freely on it. Hence, the result follows from showing that any element in $U_{H}$ acts on the triple $(x,y,z)$ as an element of $U_0$ and vice versa.
\begin{comment}
It proves the same thing but it is not explicit enough
Notice that, since $u$ fixes $z$ and $y$ and it preserves multiplication, we have
\[ ( u \cdot x) \cdot z = ( u \cdot x) \cdot (u \cdot z) = u \cdot (x z) = u \cdot ( - z x) =   u \cdot ( D y) = Dy.      \]
As $D y = x z$, we have that $(u\cdot x -x) z = z (u\cdot x -x) = 0,$
where the first equality follows from the fact that $u\cdot x$ belongs to the orthogonal complement of $z$, as \[ ( u \cdot x) \cdot z =  u \cdot ( - z x) = - z (u \cdot x).  \]
Since $x y = z$ (cf. Lemma \ref{lemmabeforeorbits}),  we have that 
\[(u\cdot x) y =  u \cdot (xy) = u \cdot z = z, \]
and thus
\[ (u\cdot x -x) y = y (u\cdot x -x) = 0.\]
This implies that $\langle u\cdot x -x, y , z \rangle$ forms a null subspace of $V_7$. Since any null subspace of $V_7$ is at most $2$ dimensional, we deduce that for any $u \in U_H$,  there exist $\lambda_1, \lambda_2 \in \Q$ such that \[  u\cdot x = x + \lambda_1 y + \lambda_2 z, \] 
i.e. $u \in U_H$ acts on the triple $(x,y,z)$ as \[ u \cdot (x,y,z) = ( x + \lambda_1 y + \lambda_2 z , y , z). \]
Note that $\lambda_1 = \lambda_2 = 0$ if and only if $u \in U_D$. As $U_H / U_D$ is a two dimensional abelian unipotent group, this action coincides with the one of \eqref{Equationactionu},  and thus gives the desired isomorphism \[ U_H / U_D \to U_0.  \]

We now explicit this isomorphism by distinguishing two cases.
\end{comment}

\textit{Case 1.} We start with the case where $D$ is a square in $\Q^\times$. 
The action of $U_0$ is given by
\begin{equation} \label{action0}
u^{-1} \cdot (d(s_4 - t_4), s_1 - t_3, d(s_1 + t_3)) =  (d(s_4 - t_4) + (a+db) s_1 +(db-a)t_3, s_1 - t_3, d(s_1 + t_3)).
\end{equation}
Since any element of $U_H$ fixes $s_1$ and $t_3$, it suffices to show that $U_H$ acts on $(s_4 - t_4)$ as an element of $U_0$. We verify this by studying the action of the Lie algebra. By \eqref{decompositionuh}, we know that the Lie algebra of $U_H$ is generated by the Lie algebra of the unipotent upper-triangular subgroup $U_D$ in $\SL_3$ and by the vectors $v_1$ and $\delta_3$. Using the explicit action of the action of the Lie algebra given in \S \ref{subsectLieG2}, one checks that 
\[ E_{ij} \cdot (s_4 - t_4) = 0, \]
\[ v_1 \cdot (s_4 - t_4) =  s_1, \]
\[ \delta_3 \cdot (s_4 - t_4) = t_3. \]
The above equations show that, for $u_1 = x_{a+b}(\lambda_1)$ and $u_2 = x_{a+2b}(\lambda_2)$ for some scalars $\lambda_1, \lambda_2$, we have \[ u_1 \cdot (d(s_4 - t_4)) = d(s_4 - t_4 + \lambda_1 s_1), \]
\[ u_2 \cdot (d(s_4 - t_4)) = d(s_4 - t_4 + \lambda_2 t_3).\]
This gives the desired isomorphism: if $u \in U_H /U_D$ is identified with the product of $x_{a+b}(\lambda_1) x_{a+2b}(\lambda_2)$, then, from Equation \eqref{action0}, we see that it gets sent to the element \[\begin{pmatrix} 1 &  d\tfrac{\lambda_1 - \lambda_2}{2} &   \tfrac{\lambda_1 + \lambda_2}{2} \\ 0 & 1 & 0 \\ 0 & 0 & 1 \end{pmatrix} \in U_0. \]

 \textit{Case 2.} We now suppose that $D$ is not a square in $\Q^\times$. Similarly to Case 1, it suffices to calculate $u\cdot (s_2 + D t_2)$ for any $u \in U_H$. As above, one checks that
\begin{align*}
E_{12} \cdot (s_2 + D t_2) &= s_1,\\  
E_{23} \cdot (s_2 + D t_2) &= D t_3,\\
E_{13} \cdot (s_2 + D t_2) &= 0,\\
v_1 \cdot (s_2 + D t_2)  &=  t_3,\\
\delta_3 \cdot (s_2 + D t_2)  &= - D s_1.
\end{align*}
This implies that if $u \in V_H = U_H / [U_H,U_H]$ equals to $x_a(\lambda_1)x_{a+b}(\lambda_2) x_{a+2b}(\lambda_3)x_{a+3b}(\lambda_4)$, then \[ u \cdot (s_2 + D t_2) = s_2 + D t_2 + (\lambda_1 - \lambda_3 D) s_1 + (\lambda_2 + D \lambda_4) t_3. \]
In particular, $U_D$ embeds into $U_H$ as the subgroup of matrices with $\lambda_1 = \lambda_3 D$ and $\lambda_2 = - \lambda_4 D$, and the map $p: U_H/U_D \to U_0$ sends $u$ to the element \[\begin{pmatrix} 1 &  \lambda_1 - \lambda_3 D &   \lambda_2 + \lambda_4 D \\ 0 & 1 & 0 \\ 0 & 0 & 1 \end{pmatrix} \in U_0. \]

\end{proof}

\begin{corollary}\label{cor:onthecharacters}
Under the isomorphism $p:U_H/U_D \to U_0$, we have \[ \psi_{H,D} = \psi_0 \circ p,\]
where $\psi_{H,D}:U_H(\Q) \backslash U_H(\A) \to \C^\times$ is the character corresponding to the \'etale cubic algebra $\Q \times \Q(\sqrt{D})$. 
\end{corollary}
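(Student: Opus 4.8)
The plan is to trace through the explicit bases and characters set up in the previous results and simply verify the equality of the two characters on a generating set. Recall from Definition \ref{Definitionpsi0} that $\psi_0$ sends the unipotent matrix with upper-row entries $(a,b)$ to $e(a)$, i.e. it is sensitive only to the $(1,2)$-entry of an element of $U_0$. On the other hand, by the definition in \S \ref{subsub:FcforG2}, the character $\psi_{H,D}$ associated to the cubic algebra $\Q\times\Q(\sqrt D)$ is $\psi_v$ for a vector $v\in V_H(\Q)$ corresponding, via Example \ref{exampleofcubicpols}(2), to the binary cubic $p(x,y)=x^3-Dxy^2$ (equivalently $-Dx^2y+y^3$); unwinding the symplectic pairing $\langle\,,\,\rangle$ on $V_H$, this means $\psi_{H,D}$ evaluated at $n\in U_H$ with projection $\bar n = x_a(\lambda_1')x_{a+b}(\lambda_2'/3)x_{a+2b}(\lambda_3'/3)x_{a+3b}(\lambda_4')$ is $e$ of the appropriate linear combination of $\lambda_1',\dots,\lambda_4'$ picked out by $v$.

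First I would fix the case distinction exactly as in the proof of Lemma \ref{lemma:Hactionontriple}. In the case $D=d^2$ a square, that proof shows $p$ sends $u\in U_H/U_D$ identified with $x_{a+b}(\lambda_1)x_{a+2b}(\lambda_2)$ to the matrix in $U_0$ with $(1,2)$-entry $d\tfrac{\lambda_1-\lambda_2}{2}$ and $(1,3)$-entry $\tfrac{\lambda_1+\lambda_2}{2}$; in the non-square case, $p$ sends $u$ identified with $x_a(\lambda_1)x_{a+b}(\lambda_2)x_{a+2b}(\lambda_3)x_{a+3b}(\lambda_4)$ to the matrix with $(1,2)$-entry $\lambda_1-\lambda_3 D$ and $(1,3)$-entry $\lambda_2+\lambda_4 D$. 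Then $\psi_0\circ p$ is, respectively, $e(d\tfrac{\lambda_1-\lambda_2}{2})$ and $e(\lambda_1-\lambda_3 D)$ as a function of the $U_H$-coordinates. It remains to check that $\psi_{H,D}$ has exactly the same expression. This is a direct computation: substitute the coordinates into the pairing formula defining $\psi_v$ with $v$ the vector attached to $\Q\times\Q(\sqrt D)$ (using that the discriminant of this orbit is $D$ up to squares, so one may normalise $v$ to correspond to $x^3-Dxy^2$ or its $\GL_2$-translate), and observe the two linear forms agree. One should be slightly careful about the factor-of-$3$ normalisation in the parametrisation $x_{a+b}(\lambda/3)$ versus the integral representative chosen for $A_0$, and about which $\GL_2(\Q)$-representative of the cubic is the one matching the embedding $U_D\hookrightarrow U_H$ singled out in Lemma \ref{lemma:Hactionontriple}; these are bookkeeping matters, not genuine difficulties.

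The only real subtlety, and the step I expect to require the most care, is pinning down the precise normalisation of $\psi_{H,D}$ so that it matches $\psi_0$ on the nose rather than merely up to a nonzero scalar in the exponent (which would still give the same $\GL_2$-orbit but a character differing by an automorphism of the additive group). Concretely, one must confirm that the representative $A_0$ of the open $G_2$-orbit chosen in the proof of Lemma \ref{lemma:Hactionontriple} — namely $A(d(s_4-t_4),s_1-t_3,d(s_1+t_3))$ when $D=d^2$, and $A(s_2+Dt_2,s_1,t_3)$ otherwise — is the one whose associated cubic-algebra invariant is exactly $\Q\times\Q(\sqrt D)$ with the standard character $e$, and not a twist. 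This can be checked against Gross--Lucianovic's dictionary (Proposition \cite[Proposition 2.1]{Gross-Lucianovic} and Example \ref{exampleofcubicpols}) together with the explicit $\SL_3$- and $\SU_D(2,1)$-orbit structure from Proposition \ref{orbits}. Once this identification is secured, the corollary follows immediately by comparing the two explicit linear forms produced above; I would write the proof as: ``By the proof of Lemma \ref{lemma:Hactionontriple}, $\psi_0\circ p$ is given by [formula]; on the other hand, unwinding the definition of $\psi_{H,D}$ from \S \ref{subsub:FcforG2} and Example \ref{exampleofcubicpols}(2) gives the same formula; hence $\psi_{H,D}=\psi_0\circ p$.''
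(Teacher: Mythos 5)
Your computational route is the same as the paper's: use the explicit description of $p$ obtained in the proof of Lemma \ref{lemma:Hactionontriple} to write $\psi_0\circ p$ in the coordinates $\lambda_i$ of $U_H$ (giving $e\bigl(\tfrac{\lambda_1-\lambda_2}{2}\bigr)$ in the split case with $D=1$, and $e(\lambda_1-\lambda_3 D)$ in the non-split case), read off the corresponding binary cubic through the symplectic pairing on $V_H$, and identify its cubic algebra via Example \ref{exampleofcubicpols}. The one point where your framing diverges from what actually holds is the ``only real subtlety'' you single out: an on-the-nose match with a preferred representative character is neither true nor needed. With the representatives $A_0$ fixed in Lemma \ref{lemma:Hactionontriple}, the cubic attached to $\psi_0\circ p$ comes out as $\tfrac12(x^2y+xy^2)$ in the split case and as $Dx^2y-y^3$ in the non-split case; neither is literally $x^2y-xy^2$ or $-Dx^2y+y^3$, but each lies in the same $L_H(\Q)$-orbit (in the split case the paper exhibits the explicit element $\mathrm{diag}(2,-2)$ doing the job). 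Since $\psi_{H,D}$ is by definition attached to the cubic algebra $\Q\times F$, i.e.\ to a $\GL_2(\Q)$-orbit on $V_H(\Q)$, the orbit-level identification is exactly the content of the corollary, and it is also all that the application in Proposition \ref{prop:comparisonbetweenFC} uses, because nonvanishing of $\varphi_{U_H,\psi}(g)$ for some $g\in G_2(\A)$ is unchanged when $\psi$ is replaced by an $L_H(\Q)$-translate. So there is no renormalisation of $A_0$ to pin down, and insisting on exact equality of characters would send you chasing a normalisation that is not there; once you accept the orbit-level reading, your verification is precisely the paper's proof.
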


\begin{proof}

We start with the case where $D$ is a square in $\Q^\times$. For simplicity, we can (and do) assume that $D=1$.  From Lemma \ref{lemma:Hactionontriple}, if $n \in U_H /U_D$ is identified with the product of $x_{a+b}(\lambda_1) x_{a+2b}(\lambda_2)$, it is sent via $p$ to \[\begin{pmatrix} 1 &  \tfrac{\lambda_1 - \lambda_2}{2} &   \tfrac{\lambda_1 + \lambda_2}{2} \\ 0 & 1 & 0 \\ 0 & 0 & 1 \end{pmatrix} \in U_0. \]  
Hence, the character $\psi_0 \circ p: U_H(\Q) \backslash U_H(\A) \to \C^\times$ sends $n \mapsto e(\tfrac{\lambda_1 - \lambda_2}{2})$. We now show that this corresponds to the character $\psi_{H,D}$ associated to $\Q \times \Q \times \Q$ as in \S \ref{subsub:FcforG2}. Recall that each character on $U_H(\Q) \backslash U_H(\A)$ is of the form $n \mapsto e( \langle w, \overline{n} \rangle)$, where $\overline{n}$ denotes the projection of $n$ to  $U_H/[U_H,U_H]$ and $w \in U_H(\Q)/[U_H(\Q),U_H(\Q)]$ corresponds to a binary  cubic form \[ f_w(x,y)= \lambda_1 x^3 + \lambda_2 x^2y + \lambda_3 xy^2 + \lambda_4 y^3, \] with $\lambda_i \in \Q$. Furthermore, as $\overline{n} = x_a(\lambda_1')x_{a+b}(\lambda_2'/3) x_{a+2b}(\lambda_3'/3)x_{a+3b}(\lambda_4')$  corresponds to
$f'(x,y)= \lambda_1' x^3 + \lambda_2' x^2y + \lambda_3' xy^2 + \lambda_4' y^3,$ the pairing is \[\langle w, \overline{n} \rangle = \lambda_1\lambda_4' - \tfrac{\lambda_2\lambda_3'}{3} + \tfrac{\lambda_3\lambda_2'}{3} - \lambda_4\lambda_1'. \]
Then, the character $\psi_0 \circ p$ corresponds to an element $w_D$ for which $\lambda_1,\lambda_4=0$ and $\lambda_2,\lambda_3 =  1/2$, namely the binary cubic polynomial $f_D(x,y) =  \tfrac{1}{2}(x^2y + xy^2)$. The latter is in the $L_H(\Q)$-orbit corresponding to the cubic algebra $\Q^3$. Indeed, if we let  $ g = \left(\begin{smallmatrix}  2 & \\ & -2 \end{smallmatrix} \right) \in L_H(\Q)$ act on $f_D$, we get \[ g \cdot f_D(x,y) = - \tfrac{1}{4} f_D(2x, -2y) =  \tfrac{1}{8}  (8x^2y - 8xy^2)=x^2y - xy^2, \]
which corresponds to $\Q^3$ by Example \ref{exampleofcubicpols}(1).

We now suppose that $D$ is not a square in $\Q^\times$. Then, by Lemma  \ref{lemma:Hactionontriple}, if  \[n \equiv x_a(\lambda_1)x_{a+b}(\lambda_2) x_{a+2b}(\lambda_3)x_{a+3b}(\lambda_4)\, \text{ mod } [U_H,U_H],\]  the character $\psi_0 \circ p: U_H(\Q) \backslash U_H(\A) \to \C^\times$ sends $n \mapsto e(\lambda_1 - \lambda_3 D)$. This character is associated to the binary cubic polynomial $f_D(x,y) = D x^2y - y^3$, which corresponds to $\Q \times \Q(\sqrt{D})$ by Example \ref{exampleofcubicpols}(2).
\end{proof}

\subsubsection{Comparison of Fourier coefficients}

The following proposition can be paired with Proposition \ref{PropFCperiod} to give three equivalent ways of proving that the theta lift of an automorphic representation of $G_2$ does not vanish.

\begin{proposition}\label{prop:comparisonbetweenFC} Let $\sigma$ be a cuspidal automorphic representation of $G_2(\A)$ as in Theorem \ref{cuspidality} and let $\varphi \in \sigma^\vee$ be a cuspidal form. The following conditions are equivalent
\begin{enumerate}
    \item $\Theta(f, \varphi)_{U_P, \psi_D}(1) \neq 0$ for some choice of $f \in \Pi$,
    \item $\varphi_{U_H, \psi_{H, D}}(g) \neq 0$ for some  $g \in G_2(\A)$.
\end{enumerate}
In particular, if any of the conditions holds then  $\Theta(\sigma)$ is non-zero.
\end{proposition}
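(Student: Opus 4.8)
The plan is to run the unfolding computation from the proof of Proposition \ref{PropFCperiod} directly on the $(4\,2)$-Fourier coefficient $\Theta(f,\varphi)_{U_P,\psi_D}(1)$, carrying along the extra integration over the unipotent group $U_0$ (recall that $\psi_D$ on $U_P = U_0 U_3$ is $\psi_0\cdot\psi_D$ with $\psi_0$ as in Definition \ref{Definitionpsi0}), and then to recognize the resulting period over $G_2$ as the Heisenberg Fourier coefficient $\varphi_{U_H,\psi_{H,D}}$ by means of Lemma \ref{lemma:Hactionontriple} and Corollary \ref{cor:onthecharacters}. The only structural input beyond those two results is that $U_D = \mathrm{Stab}_{G_2}(A_0)$ is normal in $U_H$ with $U_H/U_D$ abelian --- which is exactly what Lemma \ref{lemma:Hactionontriple} provides --- together with the orbit vanishing already established in the proof of Proposition \ref{PropFCperiod}. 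As a preliminary I record the elementary identity
\[ \varphi_{U_H,\psi_{H,D}}(g) = \int_{(U_H/U_D)(\Q)\backslash (U_H/U_D)(\A)} \psi_{H,D}^{-1}(\bar u)\,\varphi^{U_D}(\bar u g)\, d\bar u, \]
obtained by fibering $[U_H]$ over $[U_H/U_D]$ and using that $\psi_{H,D}$ is trivial on $U_D$ (Corollary \ref{cor:onthecharacters}) while $\varphi^{U_D}$ is left $U_D(\A)$-invariant; here $\varphi^{U_D}(\bar u g)$ means $\varphi^{U_D}(\tilde u g)$ for any lift $\tilde u\in U_H(\A)$ of $\bar u$, which is well defined.

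Writing $\Theta(f,\varphi)_{U_P,\psi_D}(1) = \int_{[U_0]}\psi_0^{-1}(u_0)\,\Theta(f,\varphi)_{U_3,\psi_D}(u_0)\,du_0$, I would first repeat the computation of Proposition \ref{PropFCperiod} with the argument $u_0$ in place of $1$: expanding $\theta(f)$ along $N$ via \eqref{EqFourier}, using that $G_2$ centralizes $U_0$ and acts trivially on $U_3\subseteq N$ (so $g'u_3u_0 = u_3u_0g'$), integrating over $[U_3]$ to isolate the terms with $A\in\omega(\Q)$, and grouping these into $G_2(\Q)$-orbits as in Proposition \ref{orbits}. Exactly as in Proposition \ref{PropFCperiod}, after an inner integration over the stabilizer of each orbit representative the non-open orbits contribute nothing, because the periods $\varphi^{\SL_3}$, $\varphi^{\SU(2,1)}$, $\varphi^{\SL_2 V}$ and $\varphi^{\SL_2\overline{V}}$ vanish under the hypotheses of Theorem \ref{cuspidality} (this is where those hypotheses and Lemma \ref{lem:periodsl2n} enter), and the presence of the $G_2$-centralizing factor $u_0$ does not affect this vanishing. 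One is then left with the open orbit, represented by $A_0$ with stabilizer $U_D$, which gives, up to a positive constant, $\Theta(f,\varphi)_{U_3,\psi_D}(u_0) = c\int_{U_D(\A)\backslash G_2(\A)} f(g^{-1}u_0^{-1}\cdot A_0)\,\varphi^{U_D}(g)\,dg$.

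Substituting back and interchanging integrals, $\Theta(f,\varphi)_{U_P,\psi_D}(1) = c\int_{U_D(\A)\backslash G_2(\A)}\varphi^{U_D}(g)\big(\int_{[U_0]}\psi_0^{-1}(u_0)f(g^{-1}u_0^{-1}\cdot A_0)\,du_0\big)\,dg$. Now Lemma \ref{lemma:Hactionontriple} identifies the action of $u_0\in U_0$ on $A_0$ with that of any lift $\tilde u\in U_H$ under $p\colon U_H\twoheadrightarrow U_0$, and Corollary \ref{cor:onthecharacters} gives $\psi_0 = \psi_{H,D}\circ(\text{section of }p)$; after the change of variable $g\mapsto\tilde u^{-1}g$ (which preserves $U_D(\A)\backslash G_2(\A)$ since $U_D$ is normal in $U_H$) and a further interchange, the inner integral is recognized, via the preliminary identity above, as $\varphi_{U_H,\psi_{H,D}^{-1}}(g)$. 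This yields
\[ \Theta(f,\varphi)_{U_P,\psi_D}(1) = c'\int_{U_D(\A)\backslash G_2(\A)} \theta(f)_{A_0}(g)\,\varphi_{U_H,\psi_{H,D}^{-1}}(g)\,dg,\qquad c'\neq 0, \]
the exact analogue of the formula of Proposition \ref{PropFCperiod} with the constant term $\varphi^{U_D}$ replaced by the Heisenberg Fourier coefficient. The equivalence of (1) and (2) then follows precisely as in Proposition \ref{PropFCperiod}: one direction is immediate from the formula, since $\psi_{H,D}$ and $\psi_{H,D}^{-1}$ correspond to the same cubic algebra $\Q\times F$, so $\varphi_{U_H,\psi_{H,D}}\equiv 0$ iff $\varphi_{U_H,\psi_{H,D}^{-1}}\equiv 0$; for the converse, if $\varphi_{U_H,\psi_{H,D}^{-1}}(g_0)\neq 0$ one invokes Lemma \ref{technical} to choose $f$ so that $\theta(f)_{A_0}$ is a non-negative function supported near $g_0$, making the integral non-zero. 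Finally, (1) forces $\Theta(f,\varphi)\not\equiv 0$, whence $\Theta(\sigma)\neq 0$. The step requiring the most care is the last one: matching the minimal-representation identity $\theta(f)_{A_0}(u_0 g) = c_{A_0}f(g^{-1}u_0^{-1}\cdot A_0)$ with the isomorphism $p$ of Lemma \ref{lemma:Hactionontriple} and the character identity of Corollary \ref{cor:onthecharacters}, and verifying that all measure normalizations and the collapsing of the iterated $U_D$- and $(U_H/U_D)$-integrals are consistent.
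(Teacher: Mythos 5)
Your proposal is correct and follows essentially the same route as the paper's proof: decompose $U_P = U_0 U_3$, run the open-orbit unfolding of Proposition \ref{PropFCperiod} (the non-open orbits dying by the same period vanishing), and then use Lemma \ref{lemma:Hactionontriple} and Corollary \ref{cor:onthecharacters} to convert the $U_0$-integration into the Heisenberg coefficient, ending with the identity $\Theta(f,\varphi)_{U_P,\psi_D}(1) = c'\int_{U_D(\A)\backslash G_2(\A)}\theta(f)_{A_0}(g)\,\varphi_{U_H,\psi_{H,D}}(g)\,dg$ and the converse via Lemma \ref{technical}. The only differences are presentational (you carry $u_0$ through the unfolding and use an explicit fibering identity where the paper unfolds a $U_D\backslash U_H(\Q)$-sum, and your harmless $\psi_{H,D}^{-1}$ versus $\psi_{H,D}$), so nothing substantive is missing.
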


\begin{proof}
Decomposing $U_P = U_0 U_3$, we have
\[ \Theta(f, \varphi)_{U_P, \psi_D}(1) = \int_{U_0(\Q) \backslash U_0(\A)} \int_{U_3(\Q) \backslash U_3(\A)} \Theta(f,\varphi)(u u') \psi_{D}^{-1}(u') \psi_{U_0}^{-1}(u)  du' d u. \]
As in the proof of Proposition \ref{PropFCperiod}, this equals
\[ \int_{U_0(\Q) \backslash U_0(\A)} \int_{U_D(\A) \backslash G_2(\A)} \theta(f)_{A_0}(u g) \varphi^{U_D}(g) \psi_{U_0}^{-1}(u) dg d u. \]
Exchanging integrals and  making an inner integration over $U_D(\A) \backslash U_H(\A)$, we get
\[ \int_{U_H(\A) \backslash G_2(\A)}\int_{U_D(\A) \backslash U_H(\A)}  \left ( \int_{U_0(\Q) \backslash U_0(\A)} \theta(f)_{A_0}(u u' g) \psi_{U_0}^{-1}(u) du \right )  \varphi^{U_D}(u'g)du' dg. \]

\begin{comment}
We now use the isomorphism $ U_H/U_D \cong U_0$ of Lemma \ref{lemma:Hactionontriple}, Corollary \ref{}, and the fact that the function $\theta(f)_{A_0}(x) = c_{A_0} f(x^{-1 }A_0) $ for some non-zero constant $c_{A_0}$, to write the   integral as 
\[  c_{A_0} \int_{U_H(\A) \backslash G_2(\A)} \int_{U_D(\A) \backslash U_H(\A)} \left ( \int_{U_H(\Q) U_D(\A) \backslash U_H(\A)}  f( (u u' g)^{-1} A_0)  \psi_{H,D}(u)d u \right )  h^{U_D}(u'g) du' dg  . \]
Denoting by $f_{\psi_{H,D}}(g) = \int_{U_H(\Q) U_D(\A) \backslash U_H(\A)}  f( (u g)^{-1} A_0)  \psi_{H,D}(u)d u$, the integral is 
\[c_{A_0} \int_{U_H(\A) \backslash G_2(\A)} \int_{U_D(\A) \backslash U_H(\A)}   f_{\psi_{H,D}}(  u' g)   h^{U_D}(u'g) du' dg.   \]
Notice that, because of a change of variable, \[f_{\psi_{H,D}}(  u' g) = \psi_{H,D}^{-1}(u') f_{\psi_{H,D}}( g), \]
hence the integral becomes 
\[c_{A_0} \int_{U_H(\A) \backslash G_2(\A)} f_{\psi_{H,D}}( g)  \int_{U_D(\A) \backslash U_H(\A)}     h^{U_D}(u'g) \psi_{H,D}^{-1}(u') du' dg \] \[= c_{A_0} \int_{U_H(\A) \backslash G_2(\A)} f_{\psi_{H,D}}( g) h_{U_H, \psi_{H,D}}(g) dg,  \]
 where we have denoted \[  h_{U_H, \psi_{H,D}}(g)= \int_{U_H(\Q) \backslash U_H(\A)}     h(ug) \psi_{H,D}^{-1}(u) du.  \]
\end{comment}

The isomorphism $p:U_H/U_D \cong U_0$ of Lemma \ref{lemma:Hactionontriple} induces \[ U_0(\Q) \backslash U_0(\A) \cong U_H(\Q) U_D(\A) \backslash U_H(\A) \]
such that $\psi_{H,D} = \psi_0 \circ p$ (cf. Corollary \ref{cor:onthecharacters}).
Thus, we can write the integral as
\[
\int_{U_H(\A) \backslash G_2(\A)} \int_{ U_D(\A) \backslash U_H(\A)} \left ( \int_{U_H(\Q)U_D(\A) \backslash U_H(\A)} \theta(f)_{A_0}(u u' g)  \psi_{H,D}^{-1}(u) du\right )  \varphi^{U_D}(u'g) du'  dg.
\]
Exchanging integrals, we have 

\[
\int_{U_H(\A) \backslash G_2(\A)} \int_{ U_H(\Q)U_D(\A) \backslash U_H(\A)} \left ( \int_{U_D(\A) \backslash U_H(\A)} \theta(f)_{A_0}(u u' g)  \varphi^{U_D}(u'g)  du'\right )  \psi_{H,D}^{-1}(u)du  dg 
\]
\[  =\int_{U_H  \backslash G_2(\A)}  \int_{U_H(\Q)U_D(\A) \backslash U_H(\A)}  \left ( \int_{U_H(\Q)U_D(\A) \backslash U_H(\A)}\sum_{\gamma \in U_D  \backslash U_H(\Q)} \theta(f)_{A_0}(u \gamma u' g)   \varphi^{U_D}(\gamma u'g) du'\right)\psi_{H,D}^{-1}(u)  du   dg
\]
\[  =\int_{U_H  \backslash G_2(\A)}  \int_{U_H(\Q)U_D(\A) \backslash U_H(\A)} \sum_{\gamma  } \left ( \int_{U_H(\Q)U_D(\A) \backslash U_H(\A)} \theta(f)_{A_0}(\gamma u  u' g)  \varphi^{U_D}(\gamma u'g) du' \right ) \psi_{H,D}^{-1}(u) du dg
\]
Changing variable $u' \mapsto u'' = \gamma  u u'= u \gamma u''$ in the inner integral, the above becomes
\[ \int_{U_H(\A) \backslash G_2(\A)}  \int_{U_H(\Q)U_D(\A) \backslash U_H(\A)}  \left ( \int_{U_D(\A) \backslash U_H(\A)} \theta(f)_{A_0}(u'' g)  \varphi^{U_D}(u^{-1} u'' g) du'' \right ) \psi_{H,D}^{-1}(u) du dg
\]
which, after rearranging the integrals, equals to 
\[ \int_{U_H(\A) \backslash G_2(\A)} \int_{U_D(\A) \backslash U_H(\A)} \theta(f)_{A_0}(u'' g) \varphi_{U_H, \psi_{H, D}}(u''g) du'' dg \]
\[ = \int_{U_D(\A) \backslash G_2(\A)} \theta(f)_{A_0}(g)\varphi_{U_H, \psi_{H,D}}(g)dg. \]
This shows that $(1)$ implies $(2)$. The proof of the converse is identical as the one given in Proposition \ref{PropFCperiod}. 

% Unnecessary 
\begin{comment}
As $h_{U_H, \psi_{H, D}}(u''g) = \psi_{H, D} (u'') h_{U_H, \psi_{H, D}}(g)$  and $\theta(f)_{A_0}(x) = c_{A_0} f(x^{-1} A_0)$ for some non-zero constant $c_{A_0}$, the above is equal to
\[c_{A_0} \int_{U_H(\A) \backslash G_2(\A)} f_{\psi_{H,D}}(g) h_{U_H, \psi_{H, D}}(g) dg, \]
where \[f_{\psi_{H,D}}(g) = \prod_v \int_{U_D(\Q_v) \backslash U_H(\Q_v)} f_v(ug^{-1} A_0) \psi_{H,D,v}^{-1}(u) du. \]
\end{comment}

%% Commented proof (already appears before)_
\begin{comment}
Let $S$ be as in Lemma \ref{technical}. By enlarging $S$ if necessary, we can assume that the cusp form $h$ is $G_2(\Z_v)$-invariant. As a consequence, the integral that we want to prove to be non-zero, which is
$$
\int_{U_D(\A) \backslash G_2(\A)} \theta(f)_{A_0}(g)h_{U_H, \psi_H}(g)dg
$$
equals
$$
c_{A_0} \cdot \left( \int_{U_D(\Q_S) \backslash G_2(\Q_S)} f_S(g^{-1}A_0)h_{U_H, \psi_H}(g)dg \right) \cdot \prod_{v \notin S} vol(U_H(\Z_v) \backslash G_2(\Z_v), dg_v).
$$
by Lemma \ref{technical}. It remains to show that, when $h_{U_H, \psi_H} \neq 0$, for a good choice of $f$ at the places in $S$, the integral
$$
\int_{U_D(\Q_S) \backslash G_2(\Q_S)} f_S(g^{-1}A_0)h_{U_H, \psi_H}(g)dg
$$
is non-zero. It follows from \cite[Theorem A.4]{harris-khare-thorne} that $f_S$ can be any smooth and compactly supported function on $\Omega(\Q_S)$. Let $g_0 \in G_2(\Q_S)$ be such that $h_{U_H, \psi_H}(g_0) \neq 0$. We can take a non-negative $f$ supported in a sufficently small neighborhood of $g_0$ to ensure the non-vanishing of the integral.
\end{comment}
\end{proof}

We are now ready to give a proof of Theorem \ref{thm:thetaliftproperties}.
\begin{proof}[Proof of Theorem \ref{thm:thetaliftproperties}]
Let $\sigma$ be a cuspidal automorphic representation satisfying the hypotheses of the Theorem. We first apply Corollary \ref{corocusptheta} to deduce that $\Theta(\sigma)$ is cuspidal. Moreover, by Proposition \ref{prop:comparisonbetweenFC}, the theta lift supports a Fourier coefficient of type $(4\, 2)$ and, in particular, it is non-zero. Let $\pi$ be an irreducible subquotient of $\Theta(\sigma)$. Its component at $p$ is the Steinberg representation by the compatibility between the global and local correspondences (Proposition \ref{proplocalglobal})
and by Proposition \ref{propositiononlocalthetaprop}(2). We are now left to prove the statement on its archimedean component. As $\pi$ is unitary, $\pi_\infty$ is a unitarizable Harish-Chandra module by \cite[Theorem 4]{flath}. Moreover, as $\sigma_\infty$ is a discrete series with infinitesimal character $r \epsilon_1+s \epsilon_2$, it follows from the discussion in \cite[p. 204]{li} and by table 1 on \cite[p. 375]{li99} that $\Theta(\sigma_\infty)$ has infinitesimal character $(r, \frac{1}{2}(r+s),\frac{1}{2}(r-s))$, which is strongly regular in the sense of \cite[Definition 1.5]{salamanca-riba}. By another application of Proposition \ref{proplocalglobal}, $\pi_\infty$ is a subquotient of $\Theta(\sigma_\infty)$, hence has a strongly regular infinitesimal character. As a consequence, we can apply \cite[Theorem 1.8]{salamanca-riba} to deduce that $\pi_\infty$ is cohomological. By \cite[Corollary 2.8]{KretShin}, since $\pi_p$ is Steinberg and $\pi_\infty$ is cohomological, $\pi_\infty$ is a discrete series with infinitesimal character $(r, \frac{1}{2}(r+s),\frac{1}{2}(r-s))$.
\end{proof}

\begin{remark}
Suppose that $\sigma_\infty$ is a discrete series in $\mathcal{D}_{3,1}$. 
If $\Theta(\sigma_\infty)$ admits a unique irreducible quotient $\theta(\sigma_\infty)$, then by the results of \cite{li}, $\theta(\sigma_\infty)$ is a discrete series of Hodge type $(3,3)$. This implies that $\pi_\infty=\theta(\sigma_\infty)$ is a discrete series of Hodge type $(3,3)$. Although Howe duality conjecture for the pair $(G_2, \mathrm{PGSp}_6)$ is known at non-archimedean places \cite{Gan-SavinHoweduality}, the conjecture is still open at the archimedean place.
\end{remark}

\section{The cycle class formula and the standard motive for \texorpdfstring{$G_2$}{G2}}\label{sec:cycleclassformulaandstandardlvalues}
 
We conclude this article with the arithmetic applications described in the introduction.

\subsection{The relation between $L$-functions of $G_2$ and ${\rm PGSp}_6$}

The dual group of $G_2$ is $G_2(\C)$, which can be realized as the intersection ${\rm SO}_7(\C) \cap {\rm Spin}_7(\C)$. More precisely, we have the commutative diagram 

\begin{equation}\label{eq:diagramLF}
    \xymatrix{ G_2(\C) \ar@{^{(}->}[r] \ar@{^{(}->}[d]_{\zeta} \ar@/^2pc/[rr]^{{\rm Std}} & {\rm SO}_7(\C)\ar@{^{(}->}[r] \ar@{^{(}->}[d] & {\rm GL}_7(\C) \ar@{^{(}->}[d]  \\ {\rm Spin}_7(\C) \ar@{^{(}->}[r] \ar@/_2pc/[rr]^{{\rm Spin}}  & {\rm SO}_8(\C)\ar@{^{(}->}[r] & {\rm GL}_8(\C),  }
\end{equation} 
where ${\rm Std}: G_2(\C) \to \GL(V_7)$ is the standard representation given by trace zero octonions, ${\rm Spin} : {\rm Spin}_7(\C) \to \GL(V_8)$ is the 8-dimensional spin representation, while the embedding $\zeta$ is defined from the fact that the stabilizer in ${\rm Spin}_7(\C)$ of a generic vector of $V_8$ is isomorphic to $G_2(\C)$. From the commutative diagram, one immediately sees that  
\[ {V_8}_{|_{G_2}} = V_7 \oplus \mathbf{1}. \] 
In particular, if $\pi_\ell$ is an unramified smooth representation of $\PGSp_6(\Q_\ell)$ with Satake parameter $s_{\pi_\ell}$ belonging to $\zeta(G_2(\C))$, then 
\[L(s, \pi_\ell, {\rm Spin}) = L(s, \pi_\ell, {\rm Std}) \zeta_\ell(s), \]
where \[ L(s, \pi_\ell, {\rm Std}):=\frac{1}{{\rm det}( 1 - \ell^{-s} {\rm Std}(s_{\pi_\ell}))} \]
denotes the Euler factor at $\ell$ of the 7-dimensional standard $L$-function for $G_2$.

Let now $\pi$ be a cuspidal automorphic representation of $\PGSp_6$, which is unramified outside a finite set of places $S$ containing the archimedean place. As a special case of Langlands functoriality, one expects that if $L^S(s, \pi, {\rm Spin})$ has a simple pole at $s=1$, then $\pi$ is a functorial lift from either  $G_2$ or $G_2^c$, where recall that $G_2^c$ denotes the form of $G_2$  which is compact at $\infty$ and split at all finite places of $\Q$. We invite the reader to consult   \cite{Ginzburg-Jiang}, \cite{Gan-Gurevich}, \cite{Pollack-Shah}, and \cite{Gan-SavinExceptionalSW} for results in this direction. Moreover, the existence of a pole is usually related to the non-vanishing of a certain period. The following result \footnote{We point out that Proposition \ref{equivalences} is not really needed in the following (it is cited in the proof of Theorem 8.6, but only to know that the $L$-function of the lift from $G_2$ to $\PGSp_6$ has a pole at $s = 1$), but it might be of independent interest.}, summarizing and complementing known results in this direction, gives equivalence conditions for $\pi$ to be a weak functorial lift from $G_2$.

\begin{proposition} \label{equivalences} Suppose that $\pi$ satisfies the hypotheses \textbf{(DS)} and \textbf{(St)} of \S \ref{ss:cohomologyloc}. Then $\pi$ is tempered and the following statements are equivalent: \begin{enumerate}
        \item The partial $L$-function $L^S(s,\pi,{\rm Spin})$ has a simple pole at $s=1$,
        \item For almost all $\ell$, the Satake parameter $s_{\pi_\ell} \in \zeta(G_2(\C))$,
        \item  There exists a cuspidal automorphic representation $\sigma$ of either $G_2$ or $G_2^c$ such that $\pi$ is a weak functorial lift of $\sigma$.
    \end{enumerate}
Moreover, if $\pi$ supports a Fourier coefficient of rank 2 associated to the quadratic extension $F$ these conditions are equivalent to
\begin{enumerate}[resume]
    \item $\pi$ is $\H$-distinguished, with $\H= \GL_2 \boxtimes \GL_{2,F}^*$, i.e. that there exists a cusp form $\Psi$ in $\pi$ such that \[\int_{\Z(\A)\H(\Q) \backslash \H(\A)}\Psi(h) dh \ne 0. \] 
\end{enumerate}
If one of the first three conditions hold, the residue at $s=1$ of the partial $L$-function $L^S(s,\pi, {\rm Spin)}$ is given by
\[{ \rm Res}_{s=1} L^S(s,\pi, {\rm Spin)} =  L^S(1, \sigma, {\rm Std}) \prod_{\ell \in S}(1-\ell^{-1}).  \]
\end{proposition}

\begin{proof}
Since $\pi$ is cohomological and it is Steinberg at a finite place, we can apply \cite[Lemma 2.7]{KretShin} to deduce that $\pi$ is essentially tempered at all places. As $\pi$ has trivial central character, this is equivalent to being tempered. The equivalence between $(2)$ and $(3)$ and the implication $(1) \implies (3)$ follow from \cite[Theorem 1.1]{Gan-SavinExceptionalSW}. By \cite[Proposition 5.2]{Gan-Gurevich}, if $\pi$ is $\H$-distinguished then its big theta lift to $G_2$ is non-zero and is contained in the space of cusp forms on $G_2$. By the compatibility between the local and global theta correspondence, this implies that every local component $\pi_v$ appears in the local theta correspondence. When $v$ is a finite unramified place for $\pi$, \cite[Proposition 5.1]{Gan-Gurevich} implies that $s_{\pi_v} \in \zeta(G_2(\C))$. This shows $(4) \implies (3)$.

We next prove that $(1) \implies (4)$, for which we'll use the hypothesis on the existence of a Fourier coefficient of rank 2. By \cite[Theorem 2.7]{Pollack-Shah} (cf. Theorem \ref{Pollackshahongsp6}), given a cusp form $\Psi$ in $\pi$, there exists a cusp form $\tilde{\Psi}$ and a Schwartz-Bruhat function $\Phi$ such that  
\[ \mathcal{I}(\Phi, \tilde{\Psi}, s) = \mathcal{I}_\infty(\Phi, \Psi, s) L^S(s,\pi, {\rm Spin)} \]
By 
Proposition \ref{periodvsresidue}, taking residues at $s=1$ on both sides we have
    \[ 
    \frac{\widehat{\Phi}(0)}{2} \cdot \int_{\Z(\A)\H(\Q) \backslash \H(\A)}\tilde{\Psi}(h) dh = \mathrm{Res}_{s=1} \left( \mathcal{I}_\infty(\Phi, \Psi, s) L^S(s,\pi , {\rm Spin}) \right),
    \]
where $c>0$ is the constant of Lemma \ref{eisenstein}. We now use \cite[Proposition 12.1]{Gan-Gurevich} to deduce that there exists local data $\Phi_\infty$ and $\Psi_\infty$ such that $\mathcal{I}_\infty(\Phi, \Psi, 1) \ne 0$. Hence, up to modifying $\Psi$ and $\Phi$ at $\infty$, we obtain \[ \widehat{\Phi}(0)\cdot \int_{\Z(\A)\H(\Q) \backslash \H(\A)}\tilde{\Psi}(h) dh = C \cdot \mathrm{Res}_{s=1}  L^S(s,\pi , {\rm Spin}),
    \]
with $C$ a certain non-zero constant in $\C$. Note finally that we have the freedom to choose $\Phi$ such that $\widehat{\Phi}(0) \ne 0$. This follows from the fact that, given the two non-zero linear maps $l_1: \mathcal{S}(\A^2) \to \C, \Phi \mapsto \mathcal{I}_\infty(\Phi, \Psi, 1) $ and $l_2:\mathcal{S}(\A^2) \to \C, \Phi \mapsto \widehat{\Phi}(0) $, ${\rm ker}(l_1) \cup {\rm ker}(l_2) \ne \mathcal{S}(\A^2)$. This shows that if $L^S(s, \pi, \mathrm{Spin})$ has a simple pole and $\pi$ supports a Fourier coefficient of rank 2, then $\pi$ is $\H$-distinguished.

We finally show the implication $(2) \implies (1)$. The commutative diagram \eqref{eq:diagramLF} implies that 
\begin{align*}
    L^S(s, \pi, {\rm Spin}) = L^S(s, \pi, {\rm Std}) \zeta^S(s),
\end{align*}
where $L^S(s, \pi, {\rm Std})$ is the partial $L$-function of $\pi$ associated to the standard 7-dimensional representation of ${\rm Spin}_7$. By \cite[Theorem 1.1.1]{Labesse-Schwermer}, the restriction to ${\rm Sp}_6(\A)$ of $\pi$ contains a cuspidal automorphic representation $\pi^\flat$, such that (up to possibly enlarging $S$) \[L^S(s, \pi, {\rm Std}) = L^S(s, \pi^\flat, {\rm Std}). \]  
By \cite[Corollary 2.2 \& Lemma 2.3]{KretShin}, there exists a cuspidal automorphic representation $\pi^\sharp$ of $\GL_7(\A)$ such that \[  L^S(s, \pi^\flat, {\rm Std}) = L^S(s, \pi^\sharp),\]
where $L^S(s, \pi^\sharp)$ denotes the standard $L$-function of $\pi^\sharp$. We claim that $L^S(1, \pi^\sharp)\ne 0$. By \cite[Theorem (1.3)]{Jacquet-ShalikaNonvanishing}, $L(s, \pi^\sharp)\ne 0$ for any $s$ with ${\rm Re}(s)=1$. If we write
\[L^S(s, \pi^\sharp) = L(s, \pi^\sharp) \prod_{\ell \in S} L(s, \pi^\sharp_\ell)^{-1}, \]
then our claim follows from the fact that each $L(s, \pi^\sharp_\ell)$ has no pole at $s=1$ (cf. \cite[p. 317]{Rudnick-Sarnak}). 
This implies that  \[ L^S(1, \pi, {\rm Std})  \ne 0. \]
Thus, $L^S(s, \pi, {\rm Spin})$ has a simple pole at $s=1$. This proves that $(2)$ implies $(1)$.

If now we assume $(3)$, i.e. that $\pi$ is a weak functorial lift of $\sigma$, then (up to possibly enlarging $S$) \[ L^S(1, \sigma, {\rm Std}) =  L^S(1, \pi, {\rm Std})  \ne 0, \]
where the first equality is a consequence of the fact that the Satake parameters of $\sigma$ and $\pi$ agree almost everywhere. In particular \[{\rm Res}_{s=1}L^S(s, \pi, {\rm Spin}) = L^S(1, \sigma, {\rm Std}) {\rm Res}_{s=1} \zeta^S(s)= L^S(1, \sigma, {\rm Std}) \prod_{\ell \in S}(1-\ell^{-1}),  \]
showing the final claim.
\end{proof}

\begin{remark}
Let $\pi$ be as in Corollary \ref{CorollaryNonVanishingCycle}. Assuming \textbf{(St)}, $\pi$ is a weak functorial lift of $\sigma$ as in Proposition \ref{equivalences} and Theorem \ref{theoremcyclebetti1} reads as
\[ \langle \mathcal{Z}_{\H, \mathcal{H}}^{[\lambda, \mu]}, [\omega_\Psi] \rangle_\mathcal{H} = C \cdot \mathcal{I}_S(\Phi, \Psi^{[\lambda, \mu]}, 1) \cdot \prod_{\ell \in S} (1 - \ell^{-1}) \cdot  L^S(1, \sigma, \mathrm{Std}). \]
\end{remark}

\subsection{Galois representations of $G_2$-type}

The following result for the compact form of $G_2$ is shown in \cite[Theorem 11.1 and Corollary 11.3]{KretShin}. The same proof works for the split form of $G_2$ as long as one has some information on its lift to $\mathrm{PGSp}_6$, and we only sketch it for the convenience of the reader.

\begin{comment}
\begin{theorem}\label{thmonsplitGalois}
Let $\sigma$ be an automorphic cuspidal representation of $G_2(\A)$ such that
\begin{itemize}
    \item $\sigma_\infty$ is a non-generic discrete series;
    \item $\sigma_p$ is the Steinberg representation at some finite prime $p$;
    \item $\sigma$ supports a Fourier coefficient attached to a cubic algebra of the form $\Q \times F$.
\end{itemize}
Then, for each prime $\ell$ and $\iota : \C \cong \overline{\Q}_\ell$, there exists a Galois representation $\rho_\sigma = \rho_{\sigma, \iota} : \mathrm{Gal}(\overline{\Q} / \Q) \to G_2(\overline{\Q}_\ell)$ such that
\begin{itemize}
    \item For every finite place $v \neq \ell$ where $\sigma$ is unramified, $\rho_\sigma$ is unramified at $v$. Moreover, the semisimple part of $\rho_\sigma({\rm Frob}_v)$ is conjugate to the Satake parameter $\iota(s_{\sigma_v})$ in $G_2(\overline{\Q}_\ell)$.
    \item $\rho_{\sigma_\ell}$ is de Rham, and it is crystalline if $\sigma$ is unramified at $\ell$.
    \item $\zeta \circ \rho_\sigma = \rho_\pi$, where $\pi$ is a theta lift of $\sigma$, and $\zeta: G_2(\C) \to \mathrm{Spin}_7(\C)$ is the embedding appearing in \eqref{eq:diagramLF}.
    \item the Zariski closure of the image of $\rho_\sigma$ maps onto either the image of a principal $\SL_2$ in $G_2$ or onto $G_2$.
\end{itemize}
\end{theorem}
\end{comment}

\begin{theorem}\label{thmonsplitGalois}
Let $\sigma$ be a cuspidal automorphic representation of $G_2(\A)$ or $G_2^c(\A)$ which lifts to a non-zero cuspidal automorphic representation $\pi$ of $\G(\A)$ such that
\begin{itemize}
    \item $\pi_\infty$ is cohomological,
    \item $\pi_p$ is the Steinberg representation at some finite prime $p$.
\end{itemize}
Then, for each prime $\ell$ and $\iota : \C \cong \overline{\Q}_\ell$, there exists a Galois representation $\rho_\sigma = \rho_{\sigma, \iota} : \mathrm{Gal}(\overline{\Q} / \Q) \to G_2(\overline{\Q}_\ell)$ such that
\begin{itemize}
    \item For every finite place $v \neq \ell$ where $\sigma$ is unramified, $\rho_\sigma$ is unramified at $v$. Moreover, the semisimple part of $\rho_\sigma({\rm Frob}_v)$ is conjugate to the Satake parameter $\iota(s_{\sigma_v})$ in $G_2(\overline{\Q}_\ell)$.
    \item $\rho_{\sigma_\ell}$ is de Rham, and it is crystalline if $\sigma$ is unramified at $\ell$.
    \item $\zeta \circ \rho_\sigma = \rho_\pi$, where $\pi$ is a theta lift of $\sigma$, and $\zeta: G_2(\C) \to \mathrm{Spin}_7(\C)$ is the embedding appearing in \eqref{eq:diagramLF}.
    \item the Zariski closure of the image of $\rho_\sigma$ maps onto either the image of a principal $\SL_2$ in $G_2$ or onto $G_2$.
\end{itemize}
\end{theorem}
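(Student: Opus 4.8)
The plan is to deduce Theorem \ref{thmonsplitGalois} by importing the construction of Kret--Shin for the group $\mathrm{GSp}_6$ and then checking that the hypotheses on $\sigma$ force the resulting $\mathrm{GSpin}_7$-valued Galois representation to land in $G_2(\overline{\Q}_\ell)$. First I would invoke Theorem \ref{thm:thetaliftproperties}: under the three bullet assumptions on $\sigma$ (quaternionic discrete series at $\infty$, Steinberg at a finite prime $p$, Fourier coefficient associated to $\Q\times F$), the theta lift $\Theta(\sigma)$ to $\PGSp_6(\A)$ is a non-zero cuspidal automorphic representation, every irreducible constituent $\pi$ of which is a discrete series of Hodge type $(3,3)$ at $\infty$ and Steinberg at $p$. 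In particular $\pi$ satisfies the hypotheses \textbf{(DS)} and \textbf{(St)}, so the work of Kret--Shin applies to $\pi$ (viewed as an automorphic representation of $\mathrm{PGSp}_6$, hence of $\mathrm{GSp}_6$ with trivial central character) and produces a Galois representation $\rho_\pi : \mathrm{Gal}(\overline{\Q}/\Q) \to \mathrm{GSpin}_7(\overline{\Q}_\ell)$ with the expected local-global compatibility at unramified places, de Rham/crystalline properties at $\ell$, and compatibility at $p$ with the Steinberg parameter.

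Next I would show that $\rho_\pi$ factors through $G_2(\overline{\Q}_\ell) \hookrightarrow \mathrm{Spin}_7(\overline{\Q}_\ell) \subset \mathrm{GSpin}_7(\overline{\Q}_\ell)$. This is where the exceptional nature of $\sigma$ enters. By Proposition \ref{proplocalglobal} and Proposition \ref{propositiononlocalthetaprop}(1), at almost all finite places $v$ the local component $\pi_v = \theta(\sigma_v)$ is the unramified representation of $\mathrm{PGSp}_6(\Q_v)$ whose Satake parameter is $\varphi\circ s_{\sigma_v}$, where $\varphi : G_2 \hookrightarrow \mathrm{Spin}_7$ is the embedding of dual groups; combining this with local-global compatibility for $\rho_\pi$, the semisimple part of $\rho_\pi(\mathrm{Frob}_v)$ is $G_2(\overline{\Q}_\ell)$-conjugate to $\iota(s_{\sigma_v}) \in \zeta(G_2(\overline{\Q}_\ell))$ for a density-one set of $v$. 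The diagram \eqref{eq:diagramLF} together with the fact (used in the proof of Proposition \ref{onsimplepoles}, via $L^S(s,\pi,\mathrm{Spin}) = L^S(s,\pi,\mathrm{Std})\zeta^S(s)$ and temperedness of $\pi$) that $\pi$ is a weak functorial lift of $\sigma$ under $\zeta$ shows the trace of $\mathrm{Spin}\circ\rho_\pi$ on Frobenii agrees with $7+1 = \dim V_8$ restricted to $G_2$; by Chebotarev density and the rigidity of semisimple Galois representations this forces the Zariski closure of the image of $\rho_\pi$ to be contained in a $\mathrm{GSpin}_7$-conjugate of $G_2(\overline{\Q}_\ell)$, after which one twists to kill the $\mathrm{GSpin}$-similitude (legitimate since $\pi$ has trivial central character) and lands inside $\mathrm{Spin}_7$, hence inside $G_2$. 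This is the step I expect to be the main obstacle: one must argue carefully that ``Satake parameters in $\zeta(G_2)$ at almost all places'' upgrades to ``image of $\rho_\pi$ conjugate into $G_2$'', which requires knowing that a semisimple subgroup of $\mathrm{Spin}_7$ whose intersection with the regular semisimple conjugacy classes meeting $\zeta(G_2)$ is dense is itself conjugate into $\zeta(G_2)$ --- this is exactly the kind of argument carried out in \cite[\S 11]{KretShin} for the compact form, and the only thing to check is that their input on the local lift at $\infty$ and at ramified primes is supplied here by Proposition \ref{propLi} and Proposition \ref{propositiononlocalthetaprop}(2) respectively.

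Having produced $\rho_\sigma : \mathrm{Gal}(\overline{\Q}/\Q)\to G_2(\overline{\Q}_\ell)$ with $\zeta\circ\rho_\sigma = \rho_\pi$, the remaining bullet points are immediate transcriptions: unramified-ness and the Satake-parameter identity at good $v$ follow from those for $\rho_\pi$ together with the injectivity of $\zeta$ on semisimple conjugacy classes and Proposition \ref{propositiononlocalthetaprop}(1); the de Rham and crystalline properties at $\ell$ follow from those of $\rho_\pi$ since $\zeta$ is a closed immersion of algebraic groups (a $G_2(\overline{\Q}_\ell)$-valued representation is de Rham/crystalline iff its composite with a faithful algebraic representation is); the third bullet is the defining property of $\rho_\sigma$. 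For the last bullet, the image of $\rho_\sigma$ has reductive Zariski closure $G$ inside $G_2$ (reductivity because $\pi$ is tempered at all places, so $\rho_\pi$ and hence $\rho_\sigma$ is pure and its image cannot be contained in a proper parabolic whose Levi would violate purity --- alternatively one can cite the irreducibility statements of \cite{KretShin}); the connected reductive subgroups of $G_2$ of rank $\geq 1$ whose only generic-semisimple elements are regular in $G_2$ are, by the classification of subgroups of $G_2$, either $G_2$ itself or a principal $\mathrm{SL}_2$, and since $\sigma_p$ Steinberg forces $\rho_\sigma$ to be ramified with a regular unipotent (principal $\mathrm{SL}_2$) local monodromy at $p$, the image is large enough to exclude the remaining (torus, $\mathrm{SL}_2\times\mathrm{SL}_2/\mu_2$, $\mathrm{SL}_3$, $\mathrm{SL}_2$ long/short root) possibilities, leaving exactly the two stated cases. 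I would present this last step by citing \cite[Corollary 11.3]{KretShin} and noting that their argument, which uses only the Steinberg condition at $p$ and the known subgroup structure of $G_2$, applies verbatim.
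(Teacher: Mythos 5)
Your proposal is correct and follows essentially the same route as the paper: use Theorem \ref{thm:thetaliftproperties} to produce the cuspidal theta lift $\pi$ satisfying \textbf{(DS)} and \textbf{(St)}, apply Kret--Shin's construction to obtain $\rho_\pi$, invoke the argument of \cite[Theorem 11.1]{KretShin} (with the local inputs supplied by Propositions \ref{propLi} and \ref{propositiononlocalthetaprop}) to factor it through $G_2(\overline{\Q}_\ell)$, and read off the listed properties from \cite[Theorem A]{KretShin} together with Proposition \ref{propositiononlocalthetaprop}(1). The only cosmetic difference is that you expand the containment-in-$G_2$ and image-classification steps, where the paper simply defers to the proof of \cite[Theorem 11.1]{KretShin} and to \cite[Theorem A (v)]{KretShin}.
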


\begin{proof}
    By \cite[Theorem A]{KretShin}, there exists a representation $\rho_\pi: \mathrm{Gal}(\overline{\Q} / \Q) \to {\rm Spin}_7(\overline{\Q}_\ell)$ attached to $\pi$. By the proof of \cite[Theorem 11.1]{KretShin}, one has that the image of $\rho_\pi$ is contained in $G_2(\overline{\Q}_\ell)$, and thus we have $\rho_\sigma$ such that $\zeta \circ \rho_\sigma = \rho_\pi$ for a suitable choice of embedding $\zeta: G_2(\C) \to \mathrm{Spin}_7(\C)$ fitting in the diagram \eqref{eq:diagramLF}. 
Hence, by \cite[Theorem A]{KretShin} and Proposition \ref{propositiononlocalthetaprop}(1), the representation $\rho_\sigma: \mathrm{Gal}(\overline{\Q} / \Q) \to G_2(\overline{\Q}_\ell)$ satisfies the desired first three properties.
Finally, by \cite[Theorem A $(v)$]{KretShin}, the Zariski closure of $\rho_\pi$ must map onto either a principal $\SL_2$ in $\rm{ SO}_7 \cap G_2$, or $G_2$.
\end{proof}

In the following Proposition, we describe several cases where Theorem \ref{thmonsplitGalois} applies. Before doing that, we need to introduce some notation. Let $\omega_1,\omega_2$ denote the two fundamental weights for $G_2$, where $\omega_1$ is the highest weight of the standard representation and $\omega_2$ of the 14-dimensional adjoint representation. According to our convention on the root system for $G_2$ in \S \ref{subsub:RootsandHeisenberg}, $\omega_1 = a+2b$ and $\omega_2 = 2a+3b$.

\begin{proposition} \label{prop:assumptions}
    Let $\sigma$ be a cuspidal automorphic representation of $G_2^{\circ}(\A)$ with $\circ \in \{ \emptyset, c\}$
    %$G_2(\A)$ or $G_2^c(\A)$
    such that $\sigma_\infty$ is a discrete series of infinitesimal character $(r,s)$ where $r-3 \geq s-1 \geq 0$ and $r-s$ is even (if $\circ = c$ then $\sigma_\infty$ is the irreducible algebraic representation of $G_2^c(\R)$ of highest weight $(s-1) \omega_1 + \tfrac{1}{2}(r-s-2) \omega_2 $)  and $\sigma_p$ is Steinberg at some finite place $p$.
    %We consider the following additional conditions 1, 2, 3 on $\sigma$.
    Suppose that one of the following conditions holds.
    \begin{enumerate}
    \item We have $\circ = c$ and there exists $\alpha \in \sigma$ and a quaternion subalgebra $D$ of the non-split octonions $\mathbb{O}^c$ such that \[ P^C_\alpha := \int_{C(\Q) \backslash C(\A)} \alpha(v) d v \ne 0,   \]
   where $C$ is the centralizer of $D$ in $G_2^c$.
    \item We have $\circ = \emptyset$ and either $\sigma$ is globally generic or $\sigma_\infty$ is non-generic and $\sigma$ supports a Fourier coefficient of type $(4\,2)$ corresponding to $\Q \times F$, where $F$ is a real quadratic \'etale $\Q$-algebra.
    \end{enumerate}
Then there exists a non-trivial small theta lift $\pi$ of $\sigma$ to $\G(\A)$ which is a cuspidal automorphic representation, such that $\pi_\infty$ is a discrete series with infinitesimal character $(r, \frac{1}{2}(r+s), \frac{1}{2}(r-s))$ and $\pi_p$ is Steinberg.
\end{proposition}

\begin{proof}
    Let $\sigma$ be as in assumption (1). Since the Steinberg representation is generic, then by \cite[Corollary 4.9]{GrossSavin} the big theta lift of $\sigma$ to $\G(\A)$  has a non-trivial cuspidal irreducible subquotient $\pi$, which is unramified at almost all places. The infinitesimal character of its archimedean component is given in \cite[Theorem 3.5]{GrossSavin}, and by \ref{proplocalglobal} and by Proposition \ref{propositiononlocalthetaprop} we have that $\pi_p$ is Steinberg. Under the assumption (2), if $\sigma$ is globally generic, the result follows from \cite[Theorem 1.7]{harris-khare-thorne}, and if $\sigma_\infty$ is not generic and $\sigma$ supports a Fourier coefficient as in the statement, the result follows from Theorem \ref{thm:thetaliftproperties}.
\end{proof}

By construction, the composition of the Galois representation $\rho_\pi$ (and thus $\rho_\sigma$) with the Spin representation appears in $H^6_{\text{\'et}}(\Sh_{\G,\overline{\Q}}, \mathcal{V}^\lambda_\ell(3))$, where the latter denotes the direct limit of the cohomology at level $U$ in coefficients in the $\ell$-adic lisse sheaf associated to an irreducible algebraic representation $V^\lambda$ of $\G$, as $U$ varies. This direct limit is a smooth admissible $\overline{\Q}_\ell$-representation of $\G(\A_f)$, endowed with an action of $\mathrm{Gal}(\overline{\Q} / \Q)$ commuting with the one of $\G(\A_f)$. Let $\sigma$ and $\pi$ be as in the statement of Theorem \ref{thmonsplitGalois}. Choose an embedding of the rationality field $L$ of $\pi$ in $\overline{\Q}_\ell$. Then by Lemma \ref{lemmaonBettiisotypicpart}, the $\pi_f^\vee$-isotypic component of $H^6_{\text{\'et},!}(\Sh_{\G,\overline{\Q}}, \mathcal{V}^\lambda_\ell(3))$ is 8-dimensional $\overline{\Q}_\ell$-vector space, and we have 
\[
H^6_{\text{\'et},!}(\Sh_{\G,\overline{\Q}}, \mathcal{V}^\lambda_\ell(3))[\pi_f^\vee] = V_{{\rm Spin} \circ \rho_\pi} \otimes \pi_f^\vee 
= V_{{\rm Spin} \circ \zeta \circ \rho_\sigma} \otimes \pi_f^\vee.
\]
If the image of $\rho_\sigma$ is Zariski dense in $G_2(\overline{\Q}_\ell)$, we have ${\rm Spin} \circ \zeta \circ \rho_\sigma = {\rm Std} \circ \rho_\sigma \oplus \mathbf{1}$, where ${\rm Std} \circ \rho_\sigma$ is the irreducible ``standard'' Galois representation attached to $\sigma$. If not, by Theorem \ref{thmonsplitGalois}, the image of $\rho_\sigma$ is Zariski dense onto a principal $\xi: \SL_2(\overline{\Q}_\ell) \to G_2(\overline{\Q}_\ell)$. Then, the  branching law of \cite[(7.1)]{GrossMinusculePrincipal} gives that ${\rm Spin} \circ \zeta \circ \rho_\sigma= {\rm Sym}^6 \circ \rho_\sigma \oplus \mathbf{1}$, where $ {\rm Sym}^6 \circ \rho_\sigma$ is the irreducible symmetric sixth power Galois representation attached to $\sigma$. Denote by $M_\ell(\pi_f)$ the Galois representation $V_{{\rm Spin} \circ \rho_\pi}$ and let $M_\ell(\sigma_f)$ be either the Galois representation $V_{{\rm Std} \circ \rho_\sigma}$ or $V_{{\rm Sym}^6 \circ \rho_\sigma}$. Then, we have that $  M_\ell(\sigma_f)^{G_\Q}  = 0$ and  $ M_\ell(\pi_f)$ decomposes as the direct sum
\begin{equation} \label{EquationEtaleDecomposition}
M_\ell(\pi_f) = M_\ell(\sigma_f) \oplus \mathbf{1},
\end{equation}
where $\mathbf{1}$ denotes the one dimensional trivial representation.

\begin{remark}
In the case where $\rho_\sigma$ is not Zariski dense in $G_2(\overline{\Q}_\ell)$,  the Satake parameter $s_{\sigma_p} \in \xi(\SL_2(\C)) $ for any unramified prime $p$. By Langlands reciprocity principle, $\sigma$ should be the functorial lift of a cuspidal automorphic representation $\tau$ of $\mathrm{PGL}_2(\A)$, while $V_{{\rm Sym}^6 \circ \rho_\sigma}$ should be a geometric realization of the motive of the symmetric sixth power of $\tau$.  
\end{remark}

\subsection{On a question of Gross and Savin}

Tate conjecture predicts the existence of a cycle which gives rise to the trivial representation appearing in the decomposition of Equation \eqref{EquationEtaleDecomposition}. In \cite{GrossSavin}, Gross and Savin, inspired by local computations, conjectured that this cycle should come from a Hilbert modular 3-fold inside $\Sh_\G$. Theorem \ref{TheoGS} below supports this expectation for certain cuspidal automorphic representations $\sigma$ of $G_2$ and $G_2^c$.

Let $\sigma$ be a cuspidal automorphic representation of $G_2^{\circ}(\A)$ with $\circ \in \{ \emptyset, c\}$
%$G_2(\A)$ or $G_2^c(\A)$
such that $\sigma_\infty$ is a discrete series of infinitesimal character $(r,s)$ with $r-3 \geq s-1 \geq 0$ and $r-s$ even and $\sigma_p$ is Steinberg at some finite place $p$ and let $\pi$ be the small theta lift of $\sigma$ given by Proposition \ref{prop:assumptions}. Let $V^\lambda$ denote an irreducible algebraic representation of $\G$ of highest weight $\lambda= (r -3, \frac{1}{2}(r+s)-2, \frac{1}{2}(r-s)-1, 0)$. Note that $V^\lambda|_\H$ contains the trivial representation by Lemma \ref{lemma:branchinglaw}. Let $U \subset \G(\A_f)$ denote a neat compact open subgroup such that $\pi_f^U \neq 0$. For any $\frac{1}{2}(r+s)-2 \geq \mu \geq \frac{1}{2}(r-s)-1$, let
\[ \mathcal{Z}_{\H, \text{\'et}}^{[\lambda,\mu]} := \rm{cl}_{\text{\'et}}(\mathcal{Z}_{\H, \mathcal{M}}^{[\lambda,\mu]}) \in H^6_{\text{\'et}}(\Sh_\G(U)_{\overline{\Q}}, \mathcal{V}^\lambda_\ell(3))^{G_\Q}\] be the \'etale realization of the motivic class $\mathcal{Z}_{\H, \mathcal{M}}^{[\lambda,\mu]}$ (see Definition \ref{etaleclass}), where $\H = \GL_2 \boxtimes \GL_{2,F}^*$. Fix a vector $\Psi_f \in \pi_f^U$. By composing the projection to the $\pi_f^\vee$-isotypic component together with the projection given by the vector $\Psi_f$, we get
\[ \mathcal{Z}_{\H, \text{\'et}}^{\sigma} : = \Psi_f( {\rm pr}_{\pi^\vee}(\mathcal{Z}_{\H, \text{\'et}}^{[\lambda,\mu]})) \in ( M_\ell(\sigma_f) \oplus \mathbf{1})^{G_\Q}=\mathbf{1}. \]
By (the proof of) Lemma \ref{lemmaonBettiisotypicpart}, there exists a cuspidal automorphic representation $\pi^{3,3}=\pi_\infty^{3,3} \otimes \pi_f$ of $\G(\A)$ whose archimedean component is a discrete series of Hodge type $(3,3)$ with the same infinitesimal character of $\pi_\infty$ and whose non-archimedean part $\pi_f$ is the same as the one of $\pi$. Let $\Psi = \Psi_\infty \otimes \Psi_f$ be the cusp form in the space of $\pi^{3,3}$ such that $\Psi_\infty$ is a highest weight vector of the minimal $K_\infty$-type of $\pi_{\infty, 1}^{3,3} \subseteq \pi_\infty^{3,3} |_{\Sp_6(\R)}$. For any $\mu$ as above, recall that we denote $\Psi^{[\lambda,\mu]} = A^{[\lambda,\mu]} \cdot \Psi_\infty \otimes \Psi_f$,  where $A^{[\lambda,\mu]}$ is the operator that appeared in Proposition \ref{period}.

\begin{theorem} \label{TheoGS} Assume that the integral $\mathcal{I}_S(\Phi, \Psi^{[\lambda,\mu]} , 1)$ is non-zero for some Schwartz-Bruhat function $\Phi$. Then the class $ \mathcal{Z}_{\H, \text{\'et}}^{\sigma}$ generates the trivial sub-representation $\mathbf{1}$ of $M_\ell(\pi_f)$.
\end{theorem}

\begin{proof}
By the comparison theorem between \'etale and Betti cohomology \cite[Expos\'e XI, Theorem 4.4 (iii)]{sga}, Proposition \ref{equivalences} and Corollary \ref{CorollaryNonVanishingCycle}, we know that the projection   ${\rm pr}_{\pi^\vee} \mathcal{Z}_{\H, \text{\'et}}^{[\lambda, \mu]}$ to $M_\ell(\pi_f) \otimes (\pi_f^U)^\vee$ generates a one dimensional subspace, which is trivial for the action of the Galois group. As we have explained above, the image of $\rho_\sigma$ is either dense in $G_2(\overline{\Q}_\ell)$ or in $\SL_2(\overline{\Q}_\ell) \to  {\rm PGL}_2(\overline{\Q}_\ell) \hookrightarrow G_2(\overline{\Q}_\ell)$. In either case, the representation $M_\ell(\sigma_f)$ is irreducible and the trivial factor $\mathbf{1}$ in $M_\ell(\pi_f)$ is hence generated by the image of $ \mathcal{Z}_{\H, \text{\'et}}^{\sigma}$.  
\end{proof}

\bibliographystyle{amsalpha}
\bibliography{bibliography}

 \end{document}